        \title[The Farrell-Jones Conjecture for cocompact lattices]
         {The Farrell-Jones Conjecture for  cocompact lattices in virtually
         connected Lie groups}
       \author{Bartels, A.}
       \address{Westf\"alische Wilhelms-Universit\"at M\"unster\\
               Mathematicians Institut\\
               Einsteinium.~62,
               D-48149 M\"unster, Germany}
        \email{bartelsa@math.uni-muenster.de}
        \urladdr{http://www.math.uni-muenster.de/u/bartelsa}
        \author{Farrell, F.T}
        \address{Department of Mathematics\\Suny, Binghamton\\ Ny, 13902, \\U.S.A.}
        \email{farrell@math.binghamton.edu}
        \author{L\"uck, W.}
        \address{Mathematicians Institut der Universit\"at Bonn\\
                Endenicher Allee 60\\
                53115 Bonn, Germany}
         \email{wolfgang.lueck@him.uni-bonn.de}
          \urladdr{http://www.him.uni-bonn.de/lueck}
         \date{March, 2013}
     \keywords{Farrell-Jones Conjecture, $K$- and $L$-theory of group rings,
    cocompact lattices in virtually connected Lie groups,
     fundamental groups of $3$-manifolds.}
    \subjclass[2010]{18F25, 19A31, 19B28, 19G24, 22E40, 57N99}
\DeclareMathAlphabet{\matheurm}{U}{eur}{m}{n}
\DeclareMathOperator{\asmb}{asmb}
\DeclareMathOperator{\aut}{aut}
\DeclareMathOperator{\coker}{coker}
\DeclareMathOperator{\colim}{colim}
\DeclareMathOperator{\euc}{euc}
\DeclareMathOperator{\GL}{GL}
\DeclareMathOperator{\id}{id}
\DeclareMathOperator{\im}{im}
\DeclareMathOperator{\Isom}{Isom}
\DeclareMathOperator{\pt}{pt}
\DeclareMathOperator{\pr}{pr}
\DeclareMathOperator{\rk}{rk}
\DeclareMathOperator{\vcd}{vcd}
\DeclareMathOperator{\Wh}{Wh}
\newcommand{\Fin}{{\mathcal{F}\text{in}}}
\newcommand{\VCyc}{{\mathcal{V}\mathcal{C}\text{yc}}}
  \newcommand{\IQ}{\mathbb{Q}}
  \newcommand{\IR}{\mathbb{R}}
  \newcommand{\IZ}{\mathbb{Z}}
  \newcommand{\cala}{\mathcal{A}}
  \newcommand{\calf}{\mathcal{F}}
  \newcommand{\calg}{\mathcal{G}}
  \newcommand{\calh}{\mathcal{H}}
  \newcommand{\call}{\mathcal{L}}
  \newcommand{\calu}{\mathcal{U}}
  \newcommand{\calv}{\mathcal{V}}
  \newcommand{\calw}{\mathcal{W}}
  \newcommand{\bfK}{{\mathbf K}}
  \newcommand{\bfL}{{\mathbf L}}
\newcommand{\EGF}[2]{E_{#2}(#1)}
\newcommand{\intgf}[2]{\mbox{$\int_{#1} #2$}}
\newcommand{\OrGF}[2]{\matheurm{Or}_{#2}(#1)}
\theoremstyle{plain}
\newtheorem{theorem}{Theorem}[section]
\newtheorem{lemma}[theorem]{Lemma}
\newtheorem{corollary}[theorem]{Corollary}
\newtheorem{proposition}[theorem]{Proposition}
\newtheorem*{theorem*}{Theorem}
\newtheorem*{theoremA*}{Theorem A}
\newtheorem*{theoremB*}{Theorem B}
\theoremstyle{definition}
\newtheorem{definition}[theorem]{Definition}
\newtheorem{example}[theorem]{Example}
\newtheorem{remark}[theorem]{Remark}
\newtheorem{notation}[theorem]{Notation}
\newtheorem*{definition*}{Definition}
\theoremstyle{remark}
\let\c@equation=\c@theorem\makeatother
\newcommand{\ox}{{\otimes}}
\newcommand{\e}{{\varepsilon}}
\newcommand{\CAT}{\operatorname{CAT}}
\newcommand{\ev}{\operatorname{ev}}
\newcommand{\version}[1]              
{\begin{center} last edited on #1\\
last compiled on \today\\
name of tex-file: \jobname
\end{center}
}
\begin{document}

\maketitle

\begin{abstract}
Let $G$ be a cocompact lattice in a virtually connected Lie group
or the fundamental group of a $3$-dimensional manifold.
We prove the $K$- and $L$-theoretic  Farrell-Jones Conjecture  
for $G$.
\end{abstract}

\newlength{\origlabelwidth} \setlength\origlabelwidth\labelwidth


\typeout{------------------- Introduction -----------------}
\section*{Introduction}
\label{sec:introduction}


\subsection{Motivation and Summary}
\label{subsec:Motivation_and_summary}

The algebraic $K$-theory and $L$-theory of group rings has gained a lot of attention in
the last decades, in particular since they play a prominent role in the classification of
manifolds. Computations are very hard and here the \emph{Farrell-Jones Conjecture} comes into
play.  It identifies the algebraic $K$-theory and $L$-theory 
of group rings with the evaluation of an equivariant homology theory on the
classifying space for the family of virtually cyclic subgroups. This is the analogue of
classical results in the representation theory of finite groups such as the induction theorem of Artin or Brauer,
where the value of a functor for finite groups is computed in terms of its values on a
smaller family, for instance of cyclic or hyperelementary subgroups; in the Farrell-Jones setting the
reduction is to virtually cyclic groups. The point is that this equivariant homology theory
is much more accessible than the algebraic $K$- and $L$-groups themselves.
Actually, most of all computations for infinite groups in the literature use the Farrell-Jones Conjecture 
and concentrate on the equivariant homology side.

The Farrell-Jones Conjecture is not only important for calculations, but also gives
structural insight, since the isomorphism occurring in its formulation has also geometric
interpretations. This has the consequence that the Farrell-Jones Conjecture implies a
variety of other well-known conjectures such as the ones due to \emph{Bass}, \emph{Borel}, \emph{Kaplansky},
\emph{Novikov}, and \emph{Serre} which concern character theory for infinite groups,
algebraic topology, the classification of manifolds,  the ring structure of group rings,
and group theory. We will discuss them in more detail in
Subsection~\ref{subsec:applications}.

The main result of this paper is to prove the Farrell-Jones Conjecture for a new prominent classes of groups,
namely, \emph{cocompact lattices in almost connected Lie groups}.
We mention that the operator theoretic analog of the Farrell-Jones-Conjecture,
the Baum-Connes Conjecture,  is  known only for a few groups in this class.
With the exception of the Novikov conjecture,  
the conjecture listed above have not been known for this 
class so that our result
presents also new contributions to them. Since we address a general version of the Farrell-Jones Conjectures,
where one allows coefficients in additive categories, very powerful inheritance properties are valid which we will describe
in Subsection~\ref{subsec:Inheritance_properties}. For instance, if this general version of the Farrell-Jones Conjecture
holds for a group, it holds automatically for all subgroups, and it passes to free and direct products and to colimits of
directed systems (with not necessarily injective) structure maps.

\subsection{Statement of results}
\label{subsec:statement_of_results}

Next we give the precise formulation of our main results, 
more technical explanations will follow in the main body of the text.

\begin{theorem}[Virtually poly-$\IZ$-groups]
  \label{the:FJC_virtually_poly_Z-groups}
  Let $G$ be a virtually poly-$\IZ$-group 
  (see Definition~\ref{def:virtually_poly-Z}).

  Then both the $K$-theoretic and the $L$-theoretic Farrell-Jones
  Conjecture with additive categories as coefficients with respect to
  the family $\VCyc$ (see
  Definition~\ref{def:K-theoretic_Farrell-Jones_Conjecture}
  and~\ref{def:L-theoretic_Farrell-Jones_Conjecture}) hold for $G$.
\end{theorem}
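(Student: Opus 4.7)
The plan is to argue by induction on the Hirsch length $h := h(G)$ of $G$. In the base case $h = 0$ the group $G$ is finite, so $G \in \VCyc$ and both the $K$- and $L$-theoretic assembly maps are tautologically isomorphisms.

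For the inductive step, assume the theorem holds for every virtually poly-$\IZ$-group of Hirsch length strictly less than $h \geq 1$. Using that the Farrell-Jones Conjecture with additive-category coefficients is inherited under passage to and from finite-index subgroups, I may first replace $G$ by a finite-index poly-$\IZ$ normal subgroup and assume $G$ itself is poly-$\IZ$. Then $G$ fits into a short exact sequence $1 \to N \to G \xrightarrow{\pi} Q \to 1$ with $Q \cong \IZ$ and $h(N) = h - 1$. Let $\calf'$ denote the family of subgroups of $G$ whose image in $Q$ is virtually cyclic. The Transitivity Principle reduces the assertion for $G$ with respect to $\VCyc$ to two sub-tasks: (i) verifying the conjecture for $G$ with respect to the larger family $\calf'$, and (ii) verifying the conjecture with respect to $\VCyc$ for every member of $\calf'$. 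Task (i) reduces, via a standard transfer argument, to the (trivial) conjecture for $Q \in \VCyc$ combined with the inductive hypothesis applied to $N$.

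The genuine difficulty is task (ii): one must establish the conjecture with respect to $\VCyc$ for every preimage $H = \pi^{-1}(V)$ with $V \leq Q$ virtually cyclic, and such an $H$ has $h(H) = h(N) + h(V) = h$, matching $G$, so the inductive hypothesis is unavailable. This is the main obstacle; the algebraic bookkeeping of extensions does not close here. To resolve it, one exploits that, after passing to a further finite-index subgroup, $H$ embeds as a cocompact lattice in a simply connected nilpotent or solvable Lie group carrying a proper cocompact action on a $\CAT(0)$-space (for instance by realising $H$ as a crystallographic-style extension whose kernel is virtually $\IZ^k$). One then invokes the Farrell-Jones Conjecture for $\CAT(0)$-groups with additive-category coefficients (Bartels-Lück for $L$-theory, Wegner for $K$-theory). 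This $\CAT(0)$ step is where the algebraic induction is genuinely supplemented by geometric input and is the technical heart of the proof.
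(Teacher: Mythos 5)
Your proposal correctly identifies the central obstruction: in the naive filtration $1\to N\to G\to\IZ\to 1$ with $h(N)=h-1$, the preimage of an infinite cyclic subgroup of $\IZ$ has the same Hirsch length as $G$, so the induction cannot close algebraically. But your proposed escape route is fatally flawed. You claim that, after passing to a finite-index subgroup, such a preimage $H$ ``embeds as a cocompact lattice in a simply connected nilpotent or solvable Lie group carrying a proper cocompact action on a $\CAT(0)$-space.'' This is impossible whenever $H$ is not virtually abelian. By the Solvable Subgroup Theorem for $\CAT(0)$-groups (Bridson--Haefliger, III.$\Gamma$.7.8), every virtually solvable subgroup of a group acting geometrically on a $\CAT(0)$-space is virtually abelian. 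The discrete Heisenberg group, or $\IZ^2\rtimes_A\IZ$ for a hyperbolic matrix $A\in\GL_2(\IZ)$, are poly-$\IZ$ of Hirsch length $3$ and not virtually abelian, hence are \emph{not} $\CAT(0)$-groups, and no finite-index subgroup of them is either. The ``technical heart'' you describe therefore cannot be carried out, and no amount of geometric supplementation of this form will fill the gap.

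The paper avoids the problem in two coordinated ways. First, instead of the short exact sequence $1\to N\to G\to\IZ\to 1$, it uses the Farrell--Jones decomposition (Lemma~\ref{lem:reduction_to_affine_groups}): $1\to G_0\to G\to\Gamma\to 1$, where $G_0$ is either finite or virtually poly-$\IZ$ with $\vcd(G_0)\le\vcd(G)-2$, and $\Gamma$ is a crystallographic or special affine group. Then the preimage of any virtually cyclic $V\subseteq\Gamma$ has $\vcd\le\vcd(G_0)+1<\vcd(G)$, so the induction hypothesis \emph{does} apply to it, and Theorem~\ref{the:extensions} reduces the problem to proving FJC for $\Gamma$ itself. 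Second, for the resulting crystallographic and irreducible special affine groups $\Gamma$, the paper does \emph{not} use $\CAT(0)$ geometry; it proves they are Farrell--Hsiang groups (Definition~\ref{def:Farrell-Hsiang}), a condition involving finite quotients, hyperelementary subgroups, Dirichlet primes, expansive self-maps, and explicitly constructed contracting maps to low-dimensional complexes. This is the content of Sections~\ref{sec:Virtually_finitely_generated_abelian_groups} and~\ref{sec:Irreducible_special_affine_groups} and is the genuine technical core of the argument; it has no analogue in your proposal. ($\CAT(0)$ input appears in the paper only much later, in Section~\ref{sec:Cocompact_lattices_in_virtually_connected_Lie_groups}, for the semisimple factor in the lattice theorem, where non-positive curvature really is available.)
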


This is the main new ingredient in proving the following results.

A virtually connected Lie group is a Lie group with finitely many path components.
A subgroup $G \subseteq L$ of a Lie group $L$ is called \emph{lattice}
if $G$ is discrete and $L/G$ has finite volume and is called a \emph{cocompact lattice}
if $G$ is discrete and $L/G$ is compact.

\begin{theorem}[Cocompact lattices in virtually connected
  Lie groups]
  \label{the:FJC_lattices}
  Let $G$ be a cocompact lattice in a virtually connected Lie group.

  Then both the $K$-theoretic and the $L$-theoretic Farrell-Jones
  Conjecture with additive categories as coefficients with respect to
  the family $\VCyc$ (see
  Definition~\ref{def:K-theoretic_Farrell-Jones_Conjecture}
  and~\ref{def:L-theoretic_Farrell-Jones_Conjecture}) hold for $G$.
\end{theorem}

An argument due to 
Roushon~\cite{Roushon(2008FJJ3), Roushon(2008IC3surf)}
shows that the above results imply the 
corresponding result for fundamental groups
of $3$-manifolds.

\begin{corollary}[Fundamental groups of $3$-manifolds]
  \label{cor:fundamental_groups_of_3-manifolds}
  Let $\pi$ be the fundamental group of a $3$-manifold 
  (possibly non-compact,
  possibly non-orientable and possibly with boundary).

  Then both the $K$-theoretic and the $L$-theoretic Farrell-Jones
  Conjecture with additive categories as coefficients with respect to
  the family $\VCyc$ (see
  Definition~\ref{def:K-theoretic_Farrell-Jones_Conjecture}
  and~\ref{def:L-theoretic_Farrell-Jones_Conjecture}) hold
   for $\pi$.
\end{corollary}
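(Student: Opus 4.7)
The plan is to carry out Roushon's reduction~\cite{Roushon(2008FJJ3), Roushon(2008IC3surf)}, combining it with Theorem~\ref{the:FJC_lattices} and Theorem~\ref{the:FJC_virtually_poly_Z-groups} and the standard inheritance properties of the Farrell-Jones Conjecture with additive category coefficients (closure under directed colimits, passage to subgroups and to finite overgroups, and vertex/edge inheritance for graphs of groups whose vertex and edge groups satisfy the conjecture).

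First I would reduce to the case $\pi=\pi_1(M)$ with $M$ a compact orientable $3$-manifold. Closure under directed colimits allows replacing $\pi$ by its finitely generated subgroups, and Scott's compact core theorem realises each such subgroup as the fundamental group of a compact $3$-manifold; passing to the orientable double cover, together with the inheritance from a finite index subgroup to the whole group, further reduces to $M$ orientable. The Kneser-Milnor prime decomposition then writes $\pi_1(M)$ as a free product of fundamental groups of prime summands and copies of $\IZ$ coming from $S^2\times S^1$ factors, and graph of groups inheritance reduces matters to $\pi_1(N)$ for $N$ compact orientable and prime.

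For such $N$ I invoke the Thurston-Perelman geometrization theorem together with the JSJ decomposition along incompressible tori. The edge groups are copies of $\IZ^2$ and are handled by Theorem~\ref{the:FJC_virtually_poly_Z-groups}. A Seifert fibred vertex piece has fundamental group that is, up to finite index, a central cyclic extension of a $2$-orbifold Fuchsian group, which realises it as a cocompact lattice in a virtually connected Lie group (of type $\widetilde{\operatorname{SL}}_2(\IR)$, Nil, $\operatorname{Isom}(\IE^3)$, or a product of one of these with $\IR$), so Theorem~\ref{the:FJC_lattices} applies. A closed hyperbolic vertex piece has fundamental group a cocompact lattice in $\operatorname{Isom}(\IH^3)$, again covered by Theorem~\ref{the:FJC_lattices}.

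The main obstacle is the finite-volume, non-compact hyperbolic vertex piece, whose fundamental group is a non-cocompact lattice in $\operatorname{Isom}(\IH^3)$ and is therefore not directly addressed by Theorem~\ref{the:FJC_lattices}. Roushon's argument handles this remaining case by invoking prior Farrell-Jones results for fundamental groups of complete non-positively curved Riemannian manifolds, applied to the truncated convex core of the cusped hyperbolic structure. Once every vertex group is controlled, graph of groups inheritance applied to the full JSJ splitting and to the prime decomposition completes the verification.
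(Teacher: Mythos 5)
Your overall plan follows Roushon's reduction and is in the spirit of the paper's proof, but the paper itself takes a shorter route and your detailed sketch contains several inaccuracies that would need to be repaired before it could serve as a proof. The paper does not redo Roushon's argument at all: it cites K\"uhl's axiomatization of Roushon's work, which reduces the FJC with wreath products for all $3$-manifold groups to exactly three inputs, namely (a) the FJC with wreath products for $\IZ^2\rtimes_\phi\IZ$ for every $\phi\in\GL_2(\IZ)$, (b) the FJC for fundamental groups of \emph{closed} nonpositively curved Riemannian manifolds, and (c) the directed-colimit and extension inheritance theorems. Input (a) follows from Theorem~\ref{the:FJC_virtually_poly_Z-groups} together with the observation that a finite wreath product of a virtually poly-$\IZ$ group is again virtually poly-$\IZ$; input (b) is covered by the $\CAT(0)$ result from~\cite{Bartels-Lueck(2009borelhyp)}; and input (c) is Theorem~\ref{cor:directed_colimits} and Theorem~\ref{the:extensions}.

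The concrete gaps in your sketch are the following. First, the inheritance properties listed in Section~1 do \emph{not} include passing from a finite-index subgroup to the whole group; that implication only holds for the version with wreath products (Remark~\ref{rem:finite_wreath_product}), and you gloss over this when invoking the orientable double cover and when reducing to subgroups of finite index in Seifert-fibred pieces. Second, your claim that a Seifert-fibred JSJ piece has fundamental group which is (virtually) a cocompact lattice in a Lie group is false for pieces with boundary: the base $2$-orbifold has boundary, so its orbifold group is virtually free, and the resulting central extension is typically something like $F_r\times\IZ$, which is not a cocompact lattice in any Lie group. Third, there is no ``graph-of-groups inheritance'' theorem among the paper's inheritance results; Theorem~\ref{the:free_products_and_direct_products} covers plain free products, which suffices for the Kneser--Milnor decomposition, but it does not apply to amalgams and HNN extensions over $\IZ^2$ coming from the JSJ tori. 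That step in Roushon's argument requires a specific geometric input (Leeb's theorem that non-geometric closed Haken manifolds carry nonpositively curved metrics, together with a doubling trick for the bounded hyperbolic pieces), which your sketch neither states nor substitutes for. Finally, the axiom K\"uhl uses asks for \emph{closed} nonpositively curved manifolds, so the ``truncated convex core'' of a cusped hyperbolic piece does not directly satisfy the hypothesis; the missing step is again to double along the totally geodesic boundary.
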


We can also handle virtually weak strongly poly-surface groups
(see~Remark~\ref{rem:Virtually_weak_strongly_poly-surface_groups})
and virtually nilpotent groups 
(see Remark~\ref{rem:virtually_nilpotent_groups}).

\begin{remark}[Finite wreath products] \label{rem:finite_wreath_product}
  Actually, all the results above do hold for the more general 
  version of the
  Farrell-Jones Conjecture, where one 
  allows \emph{finite wreath products}, i.e,
  the ``with finite wreath product'' version holds for a group $G$, if the
  version above holds for the wreath product $G \wr F$ for any finite 
  group $F$.
  The ``with finite wreath product'' version has the extra feature 
  that it holds
  for a group $G$ if it holds for some subgroup $H \subseteq G$ 
  of finite index.
\end{remark}


The paper is organized as follows.
We will briefly review the Farrell-Jones Conjecture and its relevance in
Section~\ref{sec:A_brief_review_of_the_Farrell_Jones_Conjecture_with_coefficients}.
In this section we also collect a number of results about the
Farrell-Jones Conjecture that will be used throughout this paper. 
In Sections~\ref{sec:Virtually_finitely_generated_abelian_groups}
we treat the case of virtual finitely  generated abelian groups.
Here we follow an argument of Quinn~\cite[Sections~2 and~3.2]{Quinn(2012virtab)}
and extend it to our setting.
The main difference is that the present proof depends on
a different control theory;
we use 
Theorem~\ref{the:Farrell-Jones_Conjecture_for_Farrell-Hsiang_groups} 
(which is proved in~\cite{Bartels-Lueck(2012Farrell-Hsiang)}) 
instead of results 
from~\cite{Quinn(2012controlledI)}.
The main work of this paper is done in 
Section~\ref{sec:Irreducible_special_affine_groups} 
where we treat special affine groups. 
This section builds on ideas from
Farrell-Hsiang~\cite{Farrell-Hsiang(1981b)}
and
Farrell-Jones~\cite{Farrell-Jones(1988b)}.
The proof of Theorem~\ref{the:FJC_virtually_poly_Z-groups}
is given in Section~\ref{sec:Virtually_poly-Z-groups}. 
Theorem~\ref{the:FJC_lattices} is proved in
Section~\ref{sec:Cocompact_lattices_in_virtually_connected_Lie_groups}
by reducing it to Theorem~\ref{the:FJC_virtually_poly_Z-groups}
and the main results from~\cite{Bartels-Lueck(2012annals), Wegner(2012higher-cat0)}.

Fundamental groups of $3$-manifolds are discussed 
in~\ref{sec:Fundamental_groups_of_3-manifolds}. 
In Section~\ref{sec:Reducing_the_family_VCyc} 
we reduce the family of virtually cyclic
subgroups to a smaller family extending previous 
results of Quinn~\cite{Quinn(2012virtab)} for 
untwisted coefficients in rings to the more general 
setting of coefficients in additive categories.

\subsection{Acknowledgements}
The work was financially supported by the Sonderforschungsbereich 878
\--- Groups, Geometry \& Actions \--- and the
Leibniz-Preis of the last author.
And the second author was
partially supported by an NSF grant.

We thank the referee for a long list of helpful comments.


\typeout{----- A brief review of the Farrell Jones Conjecture with
  coefficients --------------}

\section{A brief review of the Farrell-Jones Conjecture with
  coefficients}
\label{sec:A_brief_review_of_the_Farrell_Jones_Conjecture_with_coefficients}

We briefly review the $K$-theoretic and $L$-theoretic
Farrell-Jones Conjecture with additive categories as
coefficients.


\subsection{The formulation of the Farrell-Jones Conjecture}

\label{subsec:The_formulation_of_the_Farrell-Jones_Conjecture}

\begin{definition}[$K$-theoretic Farrell-Jones Conjecture with
  additive categories as coefficients]
  \label{def:K-theoretic_Farrell-Jones_Conjecture}
  Let $G$ be a group and let $\calf$ be a family of subgroups.  Then
  $G$ satisfies the \emph{$K$-theoretic Farrell-Jones Conjecture with
    additive categories as coefficients with respect to $\calf$} if
  for any additive $G$-category $\cala$ the assembly map
  \begin{eqnarray*}
    & \asmb^{G,\cala}_n \colon H_n^G\bigl(\EGF{G}{\calf};\bfK_{\cala}\bigr) \to
    H_n^G\bigl(\pt;\bfK_{\cala}\bigr)
    = K_n\left(\intgf{G}{\cala}\right)
    &
  \end{eqnarray*}
  induced by the projection $\EGF{G}{\calf} \to \pt$ is bijective for
  all $n \in \IZ$.
\end{definition}

\begin{definition}[$L$-theoretic Farrell-Jones Conjecture with
  additive categories as coefficients]
  \label{def:L-theoretic_Farrell-Jones_Conjecture}
  Let $G$ be a group and let $\calf$ be a family of subgroups.  Then
  $G$ satisfies the \emph{$L$-theoretic Farrell-Jones Conjecture with
    additive categories as coefficients with respect to $\calf$} if
  for any additive $G$-category with involution $\cala$ the assembly
  map
  \begin{eqnarray*}
    & \asmb^{G,\cala}_n \colon
    H_n^G\bigl(\EGF{G}{\calf};\bfL_{\cala}^{\langle - \infty\rangle}\bigr) \to
    H_n^G\bigl(\pt;\bfL_{\cala}^{\langle - \infty\rangle}\bigr)
    = L_n^{\langle - \infty\rangle}\left(\intgf{G}{\cala}\right)
    &
  \end{eqnarray*}
  induced by the projection $\EGF{G}{\calf} \to \pt$ is bijective for
  all $n \in \IZ$.
\end{definition}

Here are some explanations.

Given a group $G$, a \emph{family of subgroups} $\calf$ is a
collection of subgroups of $G$ such that $H\in \calf, g \in G$ implies
$gHg^{-1} \in \calf$ and for any $H \in \calf$ and any subgroup $K
\subseteq H$ we have $K \in \calf$.

For the notion of a \emph{classifying space $\EGF{G}{\calf}$ for a
  family $\calf$} we refer for instance to the survey article~\cite{Lueck(2005s)}.

The natural choice for $\calf$ in the Farrell-Jones Conjecture is the
family $\VCyc$ of virtually cyclic subgroups but sometimes it is
useful to consider in between other families for technical reasons.

\begin{notation}[Abbreviation FJC]\label{not_abbrev_for_FJC}
In the sequel the abbreviation FJC stands
for ``Farrell-Jones Conjecture with additive categories as coefficients
with respect to the family $\VCyc$''.
\end{notation}

\begin{remark}[Relevance of the additive categories as coefficients]
The versions of the  Farrell-Jones Conjecture appearing in
Definition~\ref{def:K-theoretic_Farrell-Jones_Conjecture}
and Definition~\ref{def:L-theoretic_Farrell-Jones_Conjecture} are formulated
and analyzed in~\cite{Bartels-Lueck(2009coeff)},
\cite{Bartels-Reich(2007coeff)}.  They encompass the versions for
group rings $RG$ over arbitrary rings $R$, where one can built in a
twisting into the group ring or treat more generally crossed product
rings $R \ast G$ and one can allow orientation homomorphisms $w \colon
G \to \{\pm 1\}$ in the $L$-theory case. Moreover, inheritance properties
are built in and one does not have to pass to fibered versions anymore as explained
in Subsection~\ref{subsec:Inheritance_properties}.
\end{remark}

\begin{example}[Torsionfree $G$ and regular $R$]
If $R$ is regular and $G$ is torsionfree, then the Farrell-Jones
Conjecture reduces to the claim that the classical assembly maps
\begin{eqnarray*}
H_n(BG;\bfK_R)  & \to & K_n(RG);
\\
H_n\bigl(BG;\bfL_R^{\langle - \infty \rangle}\bigr) & \to & L_n^{\langle -\infty \rangle}(RG),
\end{eqnarray*}
are bijective for $n \in \IZ$, where $BG$ is the classifying space of $BG$ and
$H_*\bigl(-;\bfK_R\bigr)$ and $H_*\bigl(-;\bfL^{\langle -\infty \rangle}_R\bigr)$
are generalized homology theories with
$H_n\bigl(\pt;\bfK_R\bigr) \cong K_n(R)$ and
$H_n\bigl(\pt;\bfL^{\langle -\infty \rangle}_R\bigr) \cong L_n^{\langle -\infty \rangle}(R)$
for $n \in \IZ$.
\end{example}



The original source for the (Fibered) Farrell-Jones Conjecture is the
paper by Farrell-Jones~\cite[1.6 on page~257 and~1.7 on
page~262]{Farrell-Jones(1993a)}.



\subsection{Applications}

\label{subsec:applications}

As remarked in the introduction, the Farrell-Jones Conjecture 
implies a number of other conjectures.
For a detailed discussion of these applications we refer 
to~\cite{Bartels-Lueck-Reich(2008appl)} and the survey
article~\cite{Lueck-Reich(2005)}.  
Here we summarize these applications as follows. 

\begin{itemize}

\item \emph{Bass Conjecture}\\
  One version of the Bass Conjecture predicts the possible values of the
  Hattori-Stallings rank of a finitely generated $RG$-module extending
  well-known results for finite groups to infinite groups.  
  If $R$ is a field of
  characteristic zero, it follows from the $K$-theoretic FJC.

\item \emph{Borel Conjecture}\\
 The Borel Conjecture says that a closed aspherical 
 topological manifold $N$ is
 topologically rigid, i.e, any homotopy equivalence 
 $M \to N$ with a closed
 topological manifold as source and $N$ as target is homotopic to a
 homeomorphism. The Borel Conjecture is known to be true in 
 dimensions $\le 3$.
 It holds in dimension $\ge 5$ if the fundamental group satisfies the
 $K$-theoretic FJC  and the $L$-theoretic {FJC}.

\item \emph{Homotopy invariance of $L^2$-torsion}\\
  There is the conjecture that for two homotopy equivalent finite connected
  $CW$-complexes whose universal coverings are $\det$-$L^2$-acyclic the
  $L^2$-torsion of their universal coverings agree.  This follows from the
  $K$-theoretic FJC.

\item \emph{Kaplansky Conjecture}\\
  The Kaplansky Conjecture predicts for an integral domain $R$ and a torsionfree
  group $G$ that $0$ and $1$ are the only idempotents in $RG$. If $R$ is a field
  of characteristic zero or if $R$ is a skewfield and $G$ is sofic, it follows
  from the $K$-theoretic FJC.

\item \emph{Moody's Induction Conjecture}\\
  If $R$ is a regular ring with $\IQ \subseteq R$, e.g., a skewfield of
  characteristic zero, then Moody's Induction Conjecture predicts that the map
$$\colim_{\OrGF{G}{\Fin}} K_0(RH) \xrightarrow{\cong} K_0(RG)$$
is bijective. Here the colimit is taken over the full subcategory of the orbit
category whose objects are homogeneous spaces $G/H$ with finite $H$.  It follows
from the $K$-theoretic FJC.

If $F$ is a skewfield of prime characteristic $p$, then Moody's Induction
Conjecture predicts that the map
$$\colim_{\OrGF{G}{\Fin}} K_0(FH)[1/p] \xrightarrow{\cong} K_0(RG)[1/p]$$
is bijective. This also follows from the $K$-theoretic FJC.

\item \emph{Poincar\'e duality groups}\\
  Let $G$ be a finitely presented Poincar\'e duality group of dimension $n$.
  Then there is the conjecture that $G$ is the fundamental group of a compact
  ANR-homology manifold. This follows in dimension $n \ge 6$ if the fundamental
  group satisfies the $K$-theoretic FJC  and the
  $L$-theoretic FJC (see~\cite[Theorem~1.2]{Bartels-Lueck-Weinberger(2009)}).
  In order to replace ANR-homology manifolds by topological
  manifold, one has to deal with Quinn's resolution obstruction
  (see~\cite{Bryant-Ferry-Mio-Weinberger(1996)}, \cite{Quinn(1987_resolution)}).

\item \emph{Novikov Conjecture}\\
  The Novikov Conjecture predicts for a group $G$ that the higher $G$-signatures
  are homotopy invariants and follows from the $L$-theoretic {FJC}.

\item \emph{Serre Conjecture}\\
  The Serre Conjecture predicts that a group $G$ of type {FP} is of type {FF}.  It
  follows from the $K$-theoretic FJC.

\item \emph{Vanishing of the reduced projective class group}\\
  Let $G$ be a torsionfree group and $R$ a regular ring. Then there is the
  conjecture that the change of rings map $K_0(R) \to K_0(RG)$ is bijective.  In
  particular the reduced projective class group $\widetilde{K}_0(RG)$ vanishes
  if $R$ is a principal ideal domain.  This follows from the $K$-theoretic FJC.

\item \emph{Vanishing of the Whitehead group}\\
  There is the conjecture that the Whitehead group $\Wh(G)$ of a torsionfree
  group $G$ vanishes.  This follows from the $K$-theoretic FJC.

\end{itemize}

\subsection{Inheritance properties}

\label{subsec:Inheritance_properties}

The formulation of the Farrell-Jones Conjecture with
additive categories as coefficients has the advantage that the
various inheritance properties which led to and are guaranteed by the
so called fibered versions are automatically built in
(see~\cite[Theorem~0.7]{Bartels-Lueck(2009coeff)}). 
This implies the following results.
(see~\cite[Corollary~0.9, Corollary~0.10 and
Corollary~0.11]{Bartels-Lueck(2009coeff)}
and~\cite[Lemma~2.3]{Bartels-Lueck(2012annals)}).

\begin{theorem}[Directed colimits]\label{cor:directed_colimits}
  Let $\{G_i \mid i \in I\}$ be a directed system (with not
  necessarily injective structure maps) and let $G$ be its colimit
  $\colim_{i \in I} G_i$. Suppose that $G_i$ satisfy the $K$-theoretic
  FJC for every $i \in I$. Then $G$ satisfies the $K$-theoretic {FJC}.

  The same is true for the  $L$-theoretic {FJC}.
\end{theorem}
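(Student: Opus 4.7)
The plan is to show that the assembly map for $G$ is a filtered colimit (over $i \in I$) of assembly maps for the $G_i$; since filtered colimits preserve isomorphisms of abelian groups, the conclusion will follow from the hypothesis. Let $\phi_i \colon G_i \to G$ denote the structure maps. An additive $G$-category $\cala$ pulls back along $\phi_i$ to an additive $G_i$-category $\phi_i^{*}\cala$, and these coefficient categories form a directed system whose colimit is $\cala$ itself (as a $G$-category).

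On the target side, the groupoid $\intgf{G}{\cala}$ is a filtered colimit of $\intgf{G_i}{\phi_i^{*}\cala}$ (colimits of small groupoids being computed objectwise). Since nonconnective $K$-theory, respectively $L^{\langle -\infty\rangle}$-theory, of additive categories commutes with filtered colimits, the target of the $G$-assembly is identified with $\colim_i K_n\bigl(\intgf{G_i}{\phi_i^{*}\cala}\bigr)$.

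On the source side, set $\calf_i := \{H \le G_i \mid \phi_i(H) \in \VCyc\}$. Using that the equivariant $\bfK_\cala$-homology is manufactured from a spectrum-valued functor on the orbit category, together with a model of $\EGF{G}{\VCyc}$ as a homotopy colimit of the induced $G$-spaces $G \times_{G_i} \EGF{G_i}{\calf_i}$, one identifies $H_n^G\bigl(\EGF{G}{\VCyc};\bfK_\cala\bigr)$ with $\colim_i H_n^{G_i}\bigl(\EGF{G_i}{\calf_i};\bfK_{\phi_i^{*}\cala}\bigr)$. Naturality of the assembly map in the group produces the desired compatible system.

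The main obstacle is that $\calf_i$ can be strictly larger than $\VCyc$ on $G_i$: preimages $\phi_i^{-1}(V)$ of virtually cyclic $V \le G$ sit in extensions $1 \to \ker\phi_i \cap \phi_i^{-1}(V) \to \phi_i^{-1}(V) \to V \to 1$ and in general fail to be virtually cyclic. So the hypothesis ``FJC for $G_i$ with respect to $\VCyc$'' must be upgraded to ``FJC for $G_i$ with respect to $\calf_i$.'' This is achieved by the transitivity principle, combined with the key feature of the additive-category formulation: subgroups inherit FJC automatically, so every $H \in \calf_i$, being a subgroup of $G_i$, already satisfies FJC with respect to $\VCyc$; this forces the relative assembly from $\EGF{G_i}{\VCyc}$ to $\EGF{G_i}{\calf_i}$ to be an isomorphism. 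Combining the three steps gives the $K$-theoretic claim; the $L$-theoretic and up-to-dimension-one variants follow by the same argument, since filtered colimits preserve both isomorphisms and surjections in each degree.
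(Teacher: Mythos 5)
The paper does not prove this result itself: it cites \cite[Corollaries~0.9--0.11]{Bartels-Lueck(2009coeff)} and \cite[Lemma~2.3]{Bartels-Lueck(2009borelhyp)}, which rest on the Bartels--Echterhoff--L\"uck treatment of colimits \cite{Bartels-Echterhoff-Lueck(2008colim)}. Your sketch follows exactly that route: express both sides of the $G$-assembly map as filtered colimits of assembly maps for the $G_i$ (continuity of $K$- and $L^{\langle-\infty\rangle}$-theory on the target side, a colimit model of $\EGF{G}{\VCyc}$ built from the induced $G$-spaces $G\times_{G_i}\EGF{G_i}{\calf_i}$ on the source side), and then upgrade ``FJC for $G_i$ with respect to $\VCyc$'' to ``FJC for $G_i$ with respect to the pullback family $\calf_i$'' via the Transitivity Principle~\ref{the:transitivity} together with the subgroup inheritance Theorem~\ref{the:subgroups}. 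You correctly single out that upgrade as the crux, and your resolution is precisely what the additive-category formulation is designed to deliver. The remaining work you gloss over is the source-side identification --- verifying that the colimit of the induced $G$-spaces really is a model for $\EGF{G}{\VCyc}$ (isotropy in $\VCyc$, contractible fixed sets for virtually cyclic $V\le G$), and establishing the induction isomorphism $H_n^G\bigl(G\times_{G_i}\EGF{G_i}{\calf_i};\bfK_\cala\bigr)\cong H_n^{G_i}\bigl(\EGF{G_i}{\calf_i};\bfK_{\phi_i^{*}\cala}\bigr)$ --- which is precisely the technical content supplied by the cited colimit paper; modulo those details your argument is correct and matches the intended proof.
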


\begin{theorem}[Extensions] \label{the:extensions} Let $1 \to K \to G
  \xrightarrow{p} Q \to 1$ be an extension of groups.  Suppose that
  the group $Q$ and for any virtually cyclic subgroup $V \subseteq Q$
  the group $p^{-1}(V)$ satisfy the $K$-theoretic {FJC}. Then the
  group $G$ satisfies the $K$-theoretic {FJC}.

  The same is true for the  $L$-theoretic {FJC}.  
\end{theorem}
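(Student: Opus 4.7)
The plan is to combine a pullback (induction) construction for additive coefficients along $p$ with the transitivity principle for families. Define the family $\calf := \{H \subseteq G \mid p(H) \in \VCyc\}$ on $G$. Since every $H \in \calf$ satisfies $H \subseteq p^{-1}(p(H))$ with $p(H) \in \VCyc$, the family $\calf$ consists precisely of the subgroups of the various $p^{-1}(V)$ for $V \in \VCyc$ a virtually cyclic subgroup of $Q$.

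First I would show that $G$ satisfies the $K$-theoretic FJC with respect to $\calf$. A model for $\EGF{G}{\calf}$ is $p^*\EGF{Q}{\VCyc}$, i.e., $\EGF{Q}{\VCyc}$ regarded as a $G$-CW-complex via $p$: a subgroup $H \subseteq G$ has a fixed point in the pullback if and only if $p(H)$ has a fixed point in $\EGF{Q}{\VCyc}$, which is equivalent to $p(H) \in \VCyc$. Given an additive $G$-category $\cala$, the coefficient machinery of~\cite{Bartels-Lueck(2009coeff)} supplies an additive $Q$-category $p_*\cala$ together with a natural isomorphism
\[
H^G_n\bigl(p^*Z;\bfK_{\cala}\bigr) \;\cong\; H^Q_n\bigl(Z;\bfK_{p_*\cala}\bigr)
\]
for every $Q$-CW-complex $Z$, compatible with the collapse to a point. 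Applying this to $Z = \EGF{Q}{\VCyc}$ and $Z = \pt$ and invoking the hypothesis that $Q$ satisfies the FJC with the arbitrary additive $Q$-category $p_*\cala$ as coefficients yields the bijectivity of $\asmb^{G,\cala}_n$ with respect to $\calf$.

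Second, the FJC with additive coefficients is inherited by subgroups, so the hypothesis on each $p^{-1}(V)$ implies that every $H \in \calf$ satisfies the FJC with respect to $\VCyc$. The transitivity principle then closes the argument: if $G$ satisfies the FJC with respect to $\calf$ and every $H \in \calf$ satisfies the FJC with respect to $\VCyc$, then $G$ satisfies the FJC with respect to $\VCyc$. The $L$-theoretic and up-to-dimension-one $K$-theoretic versions follow by the same argument, once one observes that $p_*\cala$ inherits an involution when $\cala$ does and that both the pullback identification and the transitivity step preserve bijectivity in degrees $n \le 0$ together with surjectivity in degree $n = 1$.

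The main obstacle is the first step: constructing the induced additive $Q$-category $p_*\cala$ (morally by integrating $\cala$ over the fibers of $p$, which are $K$-cosets) and verifying the natural isomorphism of equivariant homology theories above is the technical heart of the coefficient formalism. This is precisely what allows the data of the kernel $K$ to be absorbed into new coefficients over the quotient $Q$, reducing the extension problem to an assembly statement for $Q$ alone.
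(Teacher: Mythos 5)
Your proposal is correct and takes essentially the same route as the cited sources. The paper does not prove Theorem~\ref{the:extensions} in the text; it attributes all the inheritance properties (including this one) to~\cite[Theorem~0.7, Corollaries~0.9--0.11]{Bartels-Lueck(2009coeff)} and~\cite[Lemma~2.3]{Bartels-Lueck(2009borelhyp)}, and the argument there is exactly the two-step plan you describe: pull back $\EGF{Q}{\VCyc}$ to a model for $\EGF{G}{\calf}$ with $\calf = \{H \subseteq G \mid p(H) \in \VCyc\}$, use the induced additive $Q$-category $p_*\cala$ together with the induction isomorphism of equivariant homology theories to transfer the assembly statement for $Q$ to the assembly statement for $G$ relative to $\calf$, and then descend from $\calf$ to $\VCyc$ via subgroup inheritance (Theorem~\ref{the:subgroups}) and the Transitivity Principle (Theorem~\ref{the:transitivity}).
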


\begin{theorem}[Subgroups]\label{the:subgroups}
  If $G$ satisfies the $K$-theoretic FJC, then any
  subgroup $H \subseteq G$ satisfies the $K$-theoretic {FJC}.

  The same is true for  the $L$-theoretic {FJC}.
\end{theorem}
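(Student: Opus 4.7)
The plan is to reduce the $H$-assembly map to a $G$-assembly map with induced coefficients; this is essentially \cite[Cor.~0.11]{Bartels-Lueck(2009coeff)}. Given an additive $H$-category $\calb$, the first step is to construct an induced additive $G$-category $\ind_H^G \calb$, whose objects are finitely supported $G/H$-indexed tuples $(B_{gH})$ of objects of $\calb$, with $G$ acting compatibly on indices and with the residual $H$-action on each summand. A choice of set-theoretic section $G/H \to G$ makes this precise, the choice being irrelevant up to natural equivalence; the construction preserves involutions, so handles the $L$-theoretic case at the same time.

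The second step is to establish a Mackey-type natural isomorphism of equivariant homology theories
$$H_n^G\bigl(X; \bfK_{\ind_H^G\calb}\bigr) \xrightarrow{\cong} H_n^H\bigl(\res_H^G X; \bfK_\calb\bigr),$$
and similarly for $\bfLi$, for every $G$-CW-complex $X$. On a single $G$-orbit $G/K$ this reduces to the classical Mackey decomposition of $\res_K^G \ind_H^G \calb$ as a double-coset sum of induced additive categories; the general case follows by induction over $G$-cells. Feeding the $G$-map $\EGF{G}{\VCyc} \to \pt$ into this natural isomorphism yields a commutative square whose top row is the $G$-assembly map for $\ind_H^G \calb$, which is bijective by hypothesis, so the bottom row
$$H_n^H\bigl(\res_H^G \EGF{G}{\VCyc}; \bfK_\calb\bigr) \longrightarrow H_n^H\bigl(\pt; \bfK_\calb\bigr)$$
is bijective as well.

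It remains to identify this bottom row with the $H$-assembly map for $\calb$. The $H$-CW-complex $\res_H^G \EGF{G}{\VCyc}$ has all isotropy in $\VCyc(G) \cap H = \VCyc(H)$, using that subgroups of virtually cyclic groups are virtually cyclic; hence it is a model for $\EGF{H}{\VCyc}$, and the displayed map is exactly the assembly map of Definition~\ref{def:K-theoretic_Farrell-Jones_Conjecture} for $H$ and $\calb$. The $L$-theoretic statement and the up-to-dimension-one $K$-theoretic statement are handled by the identical argument with $\bfK$ replaced by $\bfLi$ or by tracking only the relevant degrees.

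The main obstacle is the Mackey-type isomorphism of the second step: one must verify that the construction $\ind_H^G$ is compatible with the functorial $\Or(G)$-spectrum formalism that produces $\bfK_{?}$ and $\bfLi_{?}$, and that in the $L$-theoretic case the involution is transported correctly. This compatibility is precisely what the ``additive categories as coefficients'' framework of \cite{Bartels-Lueck(2009coeff)} was designed to make routine, and it is the technical content hiding behind the clean inheritance statement.
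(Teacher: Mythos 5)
Your proposal is correct and matches the approach the paper relies on: the paper does not give its own proof of Theorem~\ref{the:subgroups} but cites \cite[Corollary~0.9, Corollary~0.10 and Corollary~0.11]{Bartels-Lueck(2009coeff)}, where exactly the induction-of-coefficients construction $\ind_H^G\calb$, the Mackey/Shapiro-type isomorphism $H_n^G(X;\bfK_{\ind_H^G\calb})\cong H_n^H(\res_H^G X;\bfK_\calb)$, and the identification of $\res_H^G\EGF{G}{\VCyc}$ as a model for $\EGF{H}{\VCyc}$ are carried out. Your sketch is faithful to that argument, including the $L$-theoretic variant via involution-preserving induction and the up-to-dimension-one variant by degree tracking.
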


\begin{theorem}[Free and direct products]
 \label{the:free_products_and_direct_products}
If the groups $G_1 $ and $G_2$ satisfy the $K$-theoretic FJC,
then their free amalgamated product $G_1 \ast G_2$ and their
direct product $G_1 \times G_2$
satisfy the $K$-theoretic {FJC}.

The same is true for  the $L$-theoretic {FJC}.
\end{theorem}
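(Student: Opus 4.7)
The plan is to deduce both cases from the inheritance theorems already recorded above, closing the direct-product case with Theorem \ref{the:FJC_virtually_poly_Z-groups} and the free-product case with a Bass--Serre tree argument.

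For the direct product, apply Theorem \ref{the:extensions} to the split extension
\[
1 \to G_1 \to G_1 \times G_2 \xrightarrow{\pr_2} G_2 \to 1.
\]
The quotient $G_2$ satisfies the FJC by hypothesis, so it remains to verify the FJC for $\pr_2^{-1}(V) = G_1 \times V$ for every virtually cyclic $V \subseteq G_2$. A second application of Theorem \ref{the:extensions} to the direct-product extension
\[
1 \to V \to G_1 \times V \xrightarrow{\pr_1} G_1 \to 1
\]
reduces matters to the FJC for $G_1$ (hypothesis) and for $W \times V$ whenever $W \subseteq G_1$ is virtually cyclic. Such a product is virtually finitely generated abelian, hence virtually poly-$\IZ$, and is therefore covered by Theorem \ref{the:FJC_virtually_poly_Z-groups}. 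The identical chain of reductions works verbatim for $L$-theory and for the up-to-dimension-one $K$-theoretic version.

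For the free product $G := G_1 \ast G_2$, let $T$ be the associated Bass--Serre tree; $G$ acts on $T$ with trivial edge stabilizers and with two orbits of vertices whose stabilizers are conjugates of $G_1$ and $G_2$. Replacing the vertex orbits $G/G_i$ by the induced spaces $G \times_{G_i} \EGF{G_i}{\VCyc}$ yields a $G$-$CW$-model for $\EGF{G}{\calf}$, where $\calf$ denotes the family of subgroups of $G$ subconjugate to $G_1$ or $G_2$. The pushout decomposition of $T$ into its $0$- and $1$-cells produces an equivariant Mayer--Vietoris sequence which, together with the FJC for $G_1$ and $G_2$, gives the FJC for $G$ relative to $\calf$. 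Every $H \in \calf$ embeds into some conjugate of $G_i$, so by Theorem \ref{the:subgroups} each such $H$ satisfies FJC relative to $\VCyc$; the transitivity principle then upgrades the relative-$\calf$ statement to the desired relative-$\VCyc$ one.

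The direct-product step is a clean two-fold use of Theorem \ref{the:extensions} once one concedes the virtually poly-$\IZ$ case. The real technical obstacle lies in the free-product step: one must verify that the equivariant homology theory $H^G_*(-;\bfK_\cala)$ behaves correctly with respect to the Bass--Serre pushout and that the transitivity principle remains valid in the setting of additive-category coefficients with involution. This is the point at which the machinery of \cite{Bartels-Lueck(2009coeff)} must genuinely be invoked, and where the subtleties of passing from a Mayer--Vietoris argument for ordinary group rings to a Mayer--Vietoris argument for the spectrum $\bfK_\cala$ really arise.
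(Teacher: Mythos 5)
The paper itself does not reprove this theorem: the inheritance properties in Subsection~\ref{subsec:Inheritance_properties} (colimits, extensions, subgroups, free/direct products) are all quoted from \cite[Theorem~0.7, Corollaries~0.9--0.11]{Bartels-Lueck(2009coeff)} and \cite[Lemma~2.3]{Bartels-Lueck(2009borelhyp)}. So your attempt to give a direct argument is a genuinely different route. Your direct-product reduction, via two applications of Theorem~\ref{the:extensions} down to $W \times V$ with $W,V$ virtually cyclic and then citing that this is virtually finitely generated abelian, is fine in outline---though it appeals to the paper's own main Theorem~\ref{the:FJC_virtually_poly_Z-groups} (or Theorem~\ref{the:The_Farrell-Jones_Conjecture_for_virtually_finitely_generated_abelian_groups}), which is proved only later, so one should explicitly note that those proofs do not invoke Theorem~\ref{the:free_products_and_direct_products} in order to exclude circularity.

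The free-product argument, however, has a real gap, and it lies precisely where you wave your hands. The equivariant Mayer--Vietoris sequence for the Bass--Serre tree $T$ computes $H_n^G(T;\bfK_\cala)$ in terms of the orbits $G/G_1$, $G/G_2$ and $G/\{1\}$, but it says nothing about whether the assembly map $H_n^G(T;\bfK_\cala)\to H_n^G(\pt;\bfK_\cala)$ is an isomorphism---and that is exactly the assertion ``FJC for $G$ relative to $\calf$.'' In fact that assertion is false in general: already for $G=\IZ$ (an HNN extension with trivial vertex and edge groups), the assembly from $E\IZ=\IR$ misses the $NK$-terms of the Bass--Heller--Swan decomposition, and for genuine free products the analogous Waldhausen Nil terms appear. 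The true statement, whose proof requires controlled topology rather than Mayer--Vietoris, is assembly relative to the \emph{larger} family $\calg:=\calf\cup\VCyc$. Your final transitivity step is also misapplied: $\calf$ and $\VCyc$ are incomparable families (an element such as $\langle g_1g_2\rangle$ is virtually cyclic but not subconjugate to a $G_i$, while $G_1$ need not be virtually cyclic), so Theorem~\ref{the:transitivity} cannot ``upgrade'' from $\calf$ to $\VCyc$. The correct chain is to first prove FJC for $G$ relative to $\calg$ and then apply the Transitivity Principle to $\VCyc\subseteq\calg$, checking that each $H\in\calg$ satisfies FJC relative to $\VCyc|_H$ (trivial for $H\in\VCyc$; from Theorem~\ref{the:subgroups} and the hypothesis for $H\in\calf$).
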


Theorem~\ref{the:extensions} and Theorem~\ref{the:subgroups} have also
been proved in~\cite{Hambleton-Pedersen-Rosenthal(2007)}.

\begin{theorem}[Transitivity Principle]
  \label{the:transitivity}
  Let $\calf \subseteq \calg$ be two families of subgroups of $G$.
  Assume that for every element $H \in \calg$ the group $H$ satisfies
  the $K$-theoretic Farrell-Jones Conjecture with additive categories
  as coefficients for the family $\calf|_H
  = \{K \subseteq H\mid K \in \calg\}$.

  Then the relative assembly map
  \[\asmb^{G,\calf,\calg}_n \colon H_n^G\bigl(\EGF{G}{\calf};\bfK_{\cala}\bigr)
 \to H_n^G\bigl(\EGF{G}{\calg};\bfK_{\cala}\bigr)\]
  induced by the up to $G$-homotopy unique $G$-map
  $\EGF{G}{\calf} \to \EGF{G}{\calg}$
  is an isomorphism for any additive $G$-category $\cala$ 
  and all $n \in \IZ$.

  In particular, $G$ satisfies the $K$-theoretic Farrell-Jones Conjecture
  with additive categories as coefficients for the family $\calg$
  if and only if $G$ satisfies the $K$-theoretic Farrell-Jones Conjecture
  with additive categories as coefficients  for the family $\calf$

  The same is true for  the $L$-theoretic {FJC}.
\end{theorem}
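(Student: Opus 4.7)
The plan is to exploit that $H^G_\ast(-;\bfK_\cala)$ is a $G$-equivariant homology theory arising from a spectra-valued functor on $\Groupoidsover{G}$, and that in particular it carries an induction structure: for any subgroup $H \subseteq G$ and any $H$-CW-complex $Y$, there is a natural isomorphism
\[
H^G_\ast\bigl(G\times_H Y;\bfK_\cala\bigr) \;\cong\; H^H_\ast\bigl(Y;\bfK_{\res_H^G \cala}\bigr),
\]
and analogously for $\bfL_\cala^{\langle-\infty\rangle}$. The starting observation is that the diagonal $G$-space $\EGF{G}{\calf}\times\EGF{G}{\calg}$ is itself a model for $\EGF{G}{\calf}$: for $K\subseteq G$ one has $\bigl(\EGF{G}{\calf}\times\EGF{G}{\calg}\bigr)^K = \EGF{G}{\calf}^K\times\EGF{G}{\calg}^K$, which is contractible when $K\in\calf$ (using $\calf\subseteq\calg$) and empty when $K\notin\calf$. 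Hence the first projection $p$ is a $G$-homotopy equivalence, and the comparison map $\EGF{G}{\calf}\to\EGF{G}{\calg}$ is, up to $G$-homotopy, the second projection $q\colon\EGF{G}{\calf}\times\EGF{G}{\calg}\to\EGF{G}{\calg}$. It therefore suffices to prove that $q_\ast$ is an isomorphism on $H^G_\ast(-;\bfK_\cala)$.

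Next I would filter $\EGF{G}{\calg}$ by its $G$-equivariant skeleta, whose cells are of the form $G/H\times D^n$ with $H\in\calg$. A Mayer--Vietoris argument along each attaching pushout, combined with compatibility of $H^G_\ast(-;\bfK_\cala)$ with directed colimits of $G$-CW-complexes, reduces the task to showing that, for every $H\in\calg$, the projection
\[
\EGF{G}{\calf}\times G/H \;\longrightarrow\; G/H
\]
induces an isomorphism on $H^G_\ast(-;\bfK_\cala)$.

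Finally, I would rewrite this projection, via the $G$-homeomorphism $\EGF{G}{\calf}\times G/H\cong G\times_H\bigl(\EGF{G}{\calf}|_H\bigr)$ and the induction isomorphism above, as the $H$-equivariant assembly map
\[
H^H_\ast\bigl(\EGF{G}{\calf}|_H;\bfK_{\res_H^G\cala}\bigr)\;\longrightarrow\; H^H_\ast\bigl(\pt;\bfK_{\res_H^G\cala}\bigr).
\]
The elementary observation that $\EGF{G}{\calf}|_H$ is a model for $\EGF{H}{\calf|_H}$ --- since for $K\subseteq H$ the fixed set $\bigl(\EGF{G}{\calf}|_H\bigr)^K=\EGF{G}{\calf}^K$ is contractible exactly when $K\in\calf$, which for $K\subseteq H$ is the same as $K\in\calf|_H$ --- identifies this map with the assembly map of the FJC for the group $H$, the family $\calf|_H$ and the coefficient category $\res_H^G\cala$, which is an isomorphism by hypothesis. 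The ``in particular'' statement then follows from the two-out-of-three property applied to the composition $\EGF{G}{\calf}\to\EGF{G}{\calg}\to\pt$, and the same argument transports verbatim to $\bfL_\cala^{\langle-\infty\rangle}$ and to the up-to-dimension-one $K$-theoretic version, since the Mayer--Vietoris reduction preserves bijectivity in degrees $\le 0$ and surjectivity in degree~$1$.

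The main obstacle is really bookkeeping rather than geometry: one must ensure that the coefficient additive category $\cala$ is correctly transported under restriction from $G$ to subgroups, so that the induction isomorphism above is natural in the coefficients. This is precisely the reason for building $\bfK_\cala$ and $\bfL_\cala^{\langle-\infty\rangle}$ as spectra-valued functors on $\Groupoidsover{G}$ rather than on $\OrG{G}$ alone, and once this foundational setup is in hand the argument above is essentially formal.
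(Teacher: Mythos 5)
Your proposal is correct and reconstructs essentially the same argument that the paper delegates to \cite{Bartels-Echterhoff-Lueck(2008colim)} and \cite{Bartels-Lueck(2006)}: regard $\EGF{G}{\calf}\times\EGF{G}{\calg}$ as a model for $\EGF{G}{\calf}$, filter $\EGF{G}{\calg}$ by skeleta, and use the induction structure on $H^G_*(-;\bfK_\cala)$ to identify the map over each orbit $G/H$ with the assembly map of $H$ for the restricted family and restricted coefficient category. The only point you compress is the bookkeeping for the ``up to dimension one'' version, where the five/four-lemma steps must be run degree by degree to see that bijectivity for $n\le 0$ and surjectivity for $n=1$ propagate through the skeleton filtration, but this does work out as you indicate.
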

\begin{proof}
  Given an additive $G$-category $\cala$ with involution, one obtains
  in the obvious way a homology theory over the group $G$ in the sense of
  \cite[Definition~1.3]{Bartels-Echterhoff-Lueck(2008colim)}
  using~\cite[Lemma~9.5]{Bartels-Lueck(2009coeff)}. In
  Bartels-Echterhoff-L\"uck~\cite[Theorem~3.3]{Bartels-Echterhoff-Lueck(2008colim)}
  the Transitivity Principle is formulated for homology theories over
  a given group $G$.  Its proof is a slight variation of the proof for
  an equivariant homology theory in Bartels-L\"uck~\cite[Theorem~2.4,
  Lemma~2.2]{Bartels-Lueck(2006)} and it yields the claim.
\end{proof}

\begin{corollary} \label{cor:from_Transitivity_Principle}
Let $1 \to K \to G \to Q \to 1$ be an exact sequence of groups.
Suppose that $Q$  satisfies the $K$-theoretic FJC
and that $K$ is finite.  Then $G$  satisfies the $K$-theoretic {FJC}.

The same is true for  the $L$-theoretic {FJC}.
\end{corollary}

We mention already here the following corollary of
the Transitivity Principle~\ref{the:extensions},
Theorem~\ref{the:FJC_virtually_poly_Z-groups}
and Lemma~\ref{lem:virt_dim_of_virt_poly_Z-groups}%
~\ref{lem:virt_dim_of_virt_poly_Z-groups:extensions}.

\begin{corollary} \label{cor:from_Transitivity_Principle_and_poly_Z}
Let $1 \to K \to G \to Q \to 1$ be an exact sequence of groups.
Suppose that $Q$  satisfies the $K$-theoretic FJC
and that $K$ is virtually poly-$\IZ$.  Then $G$  satisfies the $K$-theoretic {FJC}.

The same is true for  the $L$-theoretic {FJC}.
\end{corollary}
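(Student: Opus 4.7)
The plan is to derive this immediately from the Extensions Theorem~\ref{the:extensions}, using the fact that virtually poly-$\IZ$-groups are closed under extensions by virtually cyclic groups.

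First, to apply Theorem~\ref{the:extensions} to the given exact sequence $1 \to K \to G \xrightarrow{p} Q \to 1$, I need two inputs: (a) that $Q$ satisfies the $K$-theoretic FJC, which is exactly one of our hypotheses; and (b) that for every virtually cyclic subgroup $V \subseteq Q$, the preimage $p^{-1}(V)$ satisfies the $K$-theoretic FJC. The whole content of the proof lies in verifying (b).

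For (b), observe that $p^{-1}(V)$ fits into the short exact sequence
\[
1 \to K \to p^{-1}(V) \to V \to 1,
\]
where $K$ is virtually poly-$\IZ$ by hypothesis and $V$ is virtually cyclic, hence in particular virtually poly-$\IZ$. By the extension-closure property of virtually poly-$\IZ$-groups provided by Lemma~\ref{lem:virt_dim_of_virt_poly_Z-groups}~\ref{lem:virt_dim_of_virt_poly_Z-groups:extensions}, the group $p^{-1}(V)$ is itself virtually poly-$\IZ$. Now Theorem~\ref{the:FJC_virtually_poly_Z-groups} tells us that every virtually poly-$\IZ$-group satisfies the $K$-theoretic FJC (as well as the $L$-theoretic version), so (b) holds.

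Having verified (a) and (b), Theorem~\ref{the:extensions} yields the $K$-theoretic FJC for $G$. The identical reasoning, applied via the $L$-theoretic and ``up to dimension one'' versions of Theorem~\ref{the:extensions} and Theorem~\ref{the:FJC_virtually_poly_Z-groups}, gives the other two statements. There is no serious obstacle here — the proof is essentially a formal packaging of the three quoted results — the only thing to be careful about is that one really does need the full closure of virtually poly-$\IZ$ under extensions (not just under extensions by $\IZ$), which is exactly what Lemma~\ref{lem:virt_dim_of_virt_poly_Z-groups}~\ref{lem:virt_dim_of_virt_poly_Z-groups:extensions} supplies.
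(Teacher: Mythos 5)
Your proof is correct and coincides with the paper's intended argument: the paper explicitly states that this corollary follows from Theorem~\ref{the:extensions}, Theorem~\ref{the:FJC_virtually_poly_Z-groups}, and Lemma~\ref{lem:virt_dim_of_virt_poly_Z-groups}~\ref{lem:virt_dim_of_virt_poly_Z-groups:extensions}, which are exactly the three ingredients you assemble in exactly this way. (The paper's phrase ``the Transitivity Principle~\ref{the:extensions}'' is a harmless misnaming of the Extensions theorem it actually refers to.)
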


\begin{remark}[Virtually nilpotent groups]
  \label{rem:virtually_nilpotent_groups}
  The inheritance properties allows sometimes to prove the FJC
  for other interesting groups. For instance, we can show that every
  virtually nilpotent group satisfies both the $K$-theoretic and the
  $L$-theoretic {FJC}.  This follows from the argument appearing in the proof
  of~\cite[Lemma~2.13] {Bartels-Lueck-Reich(2008appl)} together with
  Theorem~\ref{the:FJC_virtually_poly_Z-groups},
  Theorem~\ref{cor:directed_colimits} and Theorem~\ref{the:extensions}.
\end{remark}


\typeout{------------------ A strategy ---------------------}

\subsection{A strategy}
\label{subsec:strategy}

In this subsection we present a general strategy to prove the {FJC}.
It is motivated by the paper of
Farrell-Hsiang~\cite{Farrell-Hsiang(1981b)}.

We call a simplicial $G$-action on a simplicial $X$ \emph{cell
preserving} if the following holds: If $\sigma$ is a simplex with
interior $\sigma^{\circ}$ and $g \in G$ satisfy $g\cdot \sigma^{\circ}
\cap \sigma^{\circ} \not= \emptyset$, then we get $g\cdot x = x$ for
all $x \in \sigma$. If $G$ acts simplicially on $X$, then the induced
simplicial $G$-action on the barycentric subdivision $X'$ is always
cell preserving.  The condition cell preserving guarantees that $X$
with the filtration by its skeletons coming from the simplicial
structure on $X$ is a $G$-$CW$-complex structure on $X$.

Recall that a finite group $H$ is called \emph{$p$-hyperelementary}
for a prime $p$, if there is a short exact sequence
\begin{equation*}
  0 \to C \to H \to P \to 0
\end{equation*}
with $P$ a $p$-group and $C$ a cyclic group of order
prime to $p$. It is called \emph{hyperelementary} if it
is \emph{hyperelementary} for some prime $p$.

We recall the following definition 
from~\cite{Bartels-Lueck(2012Farrell-Hsiang)}.

\begin{definition}[Farrell-Hsiang group]
    \label{def:Farrell-Hsiang}
    Let $\calf$ be a family of subgroups of the finitely generated group $G$.
    We call $G$ a \emph{Farrell-Hsiang group} with respect to the family $\calf$
    if the following holds for a fixed word metric $d_G$:

    There exists a natural  number $N$ such that such that for
    any $R > 0$, $\epsilon > 0$ there is a surjective homomorphism 
    $\alpha_{R,\epsilon}
    \colon G \to F_{R,\epsilon}$ with $F_{R,\epsilon}$ a finite group
     such that the following condition is
    satisfied. For any hyperelementary subgroup $H$ of $F_{R,\epsilon}$ we set
    $\overline{H} := \alpha_{R,\epsilon}^{-1}(H)$
    and require that there exists a simplicial complex $E_H$ of
    dimension at most $N$ with a cell preserving simplicial $\overline{H}$-action
    whose stabilizers belong to $\calf$, and an $\overline{H}$-equivariant 
    map $f_H \colon G \to E_H$ such that $d_G(g,h) < R$ 
    implies $d_{E_H}^1 (f(g),f(h)) <
    \epsilon$ for all $g,h \in G$, where $d^1_{E_H}$ is the $l^1$-metric on $E_H$.
  \end{definition}

  \begin{remark} \label{rem:Farrell-Hisiang_plus_conjugation}
    We point out that the existence of $\overline{H}$-equivariant 
    maps $f_H \colon G \to E_H$ as in the above definition is 
    invariant under
    conjugation: If $K$ is conjugated to $H$ in $F_{R,\e}$ then there is
    $\gamma \in G$ such that 
    $K = \alpha_{R,\e}(\gamma^{-1}) H \alpha_{R,\e} (\gamma)$.
    Set $E_K := E_H$. 
    There is an action of $\overline{K} = \gamma^{-1} \overline{H} \gamma$  
    on this simplicial complex where $k \in \overline{K}$ acts as
    $\gamma k \gamma^{-1}$.
    Finally, define $f_K \colon G \to E_K$ by $f_K (g) := f_H (\gamma g)$ 
    this map is $\overline{K}$-equivariant and has the appropriate 
    contracting property
    because $g \mapsto \gamma g$ is an isometry of $G$. 
  \end{remark}

  The next result is proved in~\cite{Bartels-Lueck(2012Farrell-Hsiang)}.

  \begin{theorem}[Farrell-Hsiang groups and the
    Farrell-Jones-Conjecture]
    \label{the:Farrell-Jones_Conjecture_for_Farrell-Hsiang_groups}
    Let $G$ be a Farrell-Hsiang group with respect to the
    family $\calf$ in the sense of Definition~\ref{def:Farrell-Hsiang}.

    Then $G$ satisfies the $K$-theoretic and the $L$-theoretic
    Farrell-Jones Conjecture with additive categories as coefficients with
    respect to the family $\calf$ (see
    Definition~\ref{def:K-theoretic_Farrell-Jones_Conjecture} and
    Definition~\ref{def:L-theoretic_Farrell-Jones_Conjecture}).
  \end{theorem}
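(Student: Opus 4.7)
The plan is to prove this via controlled algebra combined with Swan's induction theorem for $K$-theory (and Dress's theorem for $L$-theory). First, one reformulates the relevant assembly map in terms of controlled categories: the FJC for the family $\calf$ is equivalent to the vanishing of a relative $K$-theory (or $L$-theory) group of an obstruction category of $G$-equivariant geometric $\cala$-modules controlled over $G$ with its word metric, modulo those already controlled by $\calf$. A class in this obstruction group is represented by a chain complex equipped with an automorphism (or Poincar\'e structure) of controlled propagation $R$; the goal is to show every such class vanishes.

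Second, for any surjection $\alpha_{R,\epsilon} : G \to F_{R,\epsilon}$ onto a finite group, Swan's induction theorem implies that the identity of the Swan ring of $F_{R,\epsilon}$ can be written as an integral combination of classes induced from hyperelementary subgroups $H \subseteq F_{R,\epsilon}$. Pulling this back along $\alpha_{R,\epsilon}$ and realizing it at the level of equivariant controlled categories expresses the obstruction class of a given cycle $x$ as a sum $x = \sum_H \ind_{\overline{H}}^G y_H$, where $y_H$ is supported on the preimage $\overline{H} = \alpha_{R,\epsilon}^{-1}(H)$. It therefore suffices to kill each $y_H$ in the $\overline{H}$-equivariant obstruction category.

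Third, one applies the Farrell-Hsiang data: the $\overline{H}$-equivariant map $f_H : G \to E_H$, being $(R,\epsilon)$-controlled, pushes an $R$-controlled $\overline{H}$-cycle supported near $g \in G$ to an $\epsilon$-controlled $\overline{H}$-cycle on $E_H$. Because $E_H$ is a simplicial complex of dimension at most $N$ with a cell-preserving $\overline{H}$-action whose stabilizers lie in $\calf$, an $\epsilon$-controlled cycle with $\epsilon$ smaller than a constant depending only on $N$ can be deformed to a cycle supported in the orbits of single cells, and such a cycle is zero in the quotient obstruction category. Choosing $R$ larger than the propagation of the initial $x$ and $\epsilon$ smaller than the constant produced by the squeezing/contraction estimate on $N$-dimensional simplicial complexes handles all hyperelementary $H \subseteq F_{R,\epsilon}$ at once, since there are only finitely many of them for fixed $R, \epsilon$.

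The main obstacle will be the interaction of the controlled framework with the induction step: one must promote Swan's identity from an equation in $K_0$ of Swan rings to a spectrum-level statement compatible with the controlled categories, arrange that the push-forward along $f_H$ is defined not merely on controlled sets but on controlled chain complexes with involution (needed for $L$-theory), and balance the parameters $N$, $R$, $\epsilon$ and the initial control of $x$ uniformly in $H$. This is precisely the technical work carried out in Bartels-L\"uck~\cite{Bartels-Lueck(2011method)}, which the present paper invokes as a black box.
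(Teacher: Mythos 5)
The paper does not prove this theorem; it is cited verbatim from Bartels-L\"uck~\cite{Bartels-Lueck(2011method)}, so there is no in-paper argument to compare against. Your sketch does accurately describe the Farrell-Hsiang method as developed in that reference: the controlled-algebra reformulation of the assembly map as the vanishing of an obstruction $K$- (or $L$-)theory group, the use of Swan induction (and its $L$-theoretic counterpart \`a la Dress) to decompose the obstruction class via hyperelementary subgroups of the finite quotient $F_{R,\epsilon}$, and the contracting $\overline{H}$-equivariant maps $f_H$ to simplicial complexes of dimension at most $N$ with stabilizers in $\calf$ to squeeze each piece to zero in the quotient category. You correctly identify the two genuine technical burdens that the present paper black-boxes: promoting Swan's identity, which lives in $K_0$ of Swan rings, to a spectrum-level transfer compatible with the control conditions on equivariant chain complexes (with involution in the $L$-case), and the uniform parameter bookkeeping that lets the a~priori bound $N$ fix the requisite $\epsilon$ before $\alpha_{R,\epsilon}$ and the finitely many hyperelementary $H$ are chosen.
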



  \typeout{----------------- Virtually finitely generated abelian
    groups ------------------}

  \section{Virtually finitely generated abelian groups}
  \label{sec:Virtually_finitely_generated_abelian_groups}

  In this section we prove the $K$-theoretic and the $L$-theoretic
  Farrell-Jones Conjecture with additive categories as coefficients
  with respect to  the family $\VCyc$ (see
  Definition~\ref{def:K-theoretic_Farrell-Jones_Conjecture} and
  Definition~\ref{def:L-theoretic_Farrell-Jones_Conjecture}) for
  virtually finitely generated abelian groups. This will be one
  ingredient  in proving the $K$-theoretic and the
  $L$-theoretic FJC for virtually poly-$\IZ$ groups.

\begin{theorem}[Virtually  finitely generated abelian groups]
  \label{the:The_Farrell-Jones_Conjecture_for_virtually_finitely_generated_abelian_groups}
  Both the $K$-theoretic and the $L$-theoretic FJC hold for
  virtually finitely generated abelian groups.
\end{theorem}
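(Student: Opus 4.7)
The overall strategy, following Quinn~\cite{Quinn(2005)} but with the control-theoretic input replaced by Theorem~\ref{the:Farrell-Jones_Conjecture_for_Farrell-Hsiang_groups}, is an induction on the $\IZ$-rank $n$ of a maximal finite-index normal free abelian subgroup $A \cong \IZ^n$ of $G$. The base case $n \le 1$ is immediate: $G$ itself is virtually cyclic, so $G$ already lies in the family $\VCyc$ and the assembly maps are isomorphisms by definition.

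For the inductive step, consider the family $\calf$ of subgroups $H \le G$ for which $H \cap A$ has $\IZ$-rank strictly less than $n$. This is a family in the required sense, it contains $\VCyc$ (for $n \ge 2$), and every member of $\calf$ is virtually finitely generated abelian of rank strictly less than $n$ and hence, by the induction hypothesis, satisfies both the $K$- and the $L$-theoretic {FJC}. By the Transitivity Principle~\ref{the:transitivity} it therefore suffices to verify the FJC for $G$ with respect to $\calf$. To achieve this I would show that $G$ is a Farrell-Hsiang group with respect to $\calf$ and then invoke Theorem~\ref{the:Farrell-Jones_Conjecture_for_Farrell-Hsiang_groups}.

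The homomorphisms $\alpha_{R,\epsilon}$ required by Definition~\ref{def:Farrell-Hsiang} are provided by the mod-$\ell$ reductions $\alpha_\ell \colon G \to G/\ell A$ for a prime $\ell$ chosen large in terms of $R$, $\epsilon$, $n$, and $[G:A]$. For a hyperelementary subgroup $\bar H \subseteq G/\ell A$ with defining sequence $0 \to C \to \bar H \to P \to 0$ ($C$ cyclic of order prime to $\ell$, $P$ an $\ell$-group), the preimage $\overline H := \alpha_\ell^{-1}(\bar H)$ acts affinely on $\IR^n = A \otimes_\IZ \IR$. Because $\ell$ is large compared with $[G:A]$, the $\ell$-group $P$ maps trivially to the finite group $G/A$, so the action of $\bar H$ on $A/\ell A$ factors, up to its $\ell$-part, through the cyclic group $C$. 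Linear algebra over $\IF_\ell$, combined with the Jordan (Minkowski) bound on finite subgroups of $\GL_n(\IZ)$, then produces a proper $C$-invariant $\IF_\ell$-subspace of $A/\ell A$, whose pullback is a proper $\overline H$-invariant affine subspace $V \subsetneq \IR^n$. An $\overline H$-equivariant simplicial approximation $E_H$ of dimension $\le n$ to the orbit space of this $V$-foliation, together with the composition of an orbit map $G \to \IR^n$ with the projection to $E_H$, furnishes the simplicial complex $E_H$ and the map $f_H \colon G \to E_H$ demanded by Definition~\ref{def:Farrell-Hsiang}. By construction every simplex stabilizer in $\overline H$ preserves the affine direction of $V$ and therefore lies in $\calf$, while taking $\ell$ sufficiently large scales the orbit map enough to ensure the metric estimate $d^1_{E_H}(f_H(g),f_H(h)) < \epsilon$ whenever $d_G(g,h) < R$.

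The main obstacle is the simultaneous fulfillment, uniformly in the hyperelementary subgroup $\bar H$, of the three conditions in Definition~\ref{def:Farrell-Hsiang}: dimension bound, stabilizers in $\calf$, and the contraction estimate. The first two tie $E_H$ to a proper $\overline H$-invariant affine subspace of $\IR^n$, whereas the third forces $E_H$ to be geometrically large, so that the orbit map can be sufficiently compressed. Reconciling these requirements is precisely what the interplay between the Jordan bound, the cyclic-by-$\ell$ structure of hyperelementary groups and the crystallographic action of $G$ on $\IR^n$ achieves, and it is the technical heart of the Farrell-Hsiang argument as adapted to this setting.
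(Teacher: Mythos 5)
The overall Farrell--Hsiang framework is the right one, and the base case and the reduction-mod-$\ell$ idea are in the right spirit, but the step that is supposed to produce the simplicial complex $E_H$ has a genuine gap. You claim that linear algebra over $\IF_\ell$ plus the Jordan/Minkowski bound yields a proper $C$-invariant $\IF_\ell$-subspace of $A/\ell A$ whose ``pullback'' is a proper $\overline H$-invariant affine subspace $V \subsetneq \IR^n$. Two things go wrong. First, the conjugation action of $\overline H$ on $A$ (hence on $A/\ell A$ and on $\IR^n = A\otimes\IR$) factors through the image of $\overline H$ in $F = G/A$; if that image is all of $F$ and the holonomy representation of $F$ on $A\otimes\IR$ is irreducible --- as already happens for $F = \IZ/3$ acting on $\IZ^2$ by a matrix of order three with no real eigenvector --- then there is no proper $\overline H$-invariant direction in $\IR^n$ at all, and the Jordan/Minkowski bound on $|F|$ has no bearing on this. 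In that situation your family $\calf = \{H : \rk_{\IZ}(H \cap A) < n\}$ simply cannot absorb the stabilizers you would need. Second, even when a proper invariant direction $V$ exists, rescaling the orbit map $G \to \IR^n/V$ by $\ell^{-1}$ breaks $\overline H$-equivariance for the naive action: one needs an expansive endomorphism of $G$ and a $\phi$-equivariant affine map to twist the action, and your sketch never introduces this mechanism.

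The paper's proof proceeds differently. After reducing to crystallographic groups $\Delta$ (via an epimorphism with finite kernel), it runs a \emph{double} induction on $\vcd(\Delta)$ and on $|F_\Delta|$, and the case where $\Delta$ has a normal infinite cyclic subgroup --- the only case in which a lower-dimensional $\overline H$-invariant foliation is available --- is siphoned off into Lemmas~\ref{lem:FJC_for_certain_crystallographic_groups_of_rank_two} and~\ref{lem:extensions_with_virtually_cyclic_as_kernel}. In the remaining case the prime $p$ is chosen with $p\equiv 1\bmod l$ and $p\equiv 3\bmod 4$; Quinn's~\cite[Proposition~2.4.2]{Quinn(2005)} then shows that any hyperelementary $H \subseteq \Delta_{p^r}$ with $\pr_{p^r}(H) = F_\Delta$ has $H \cap A_{p^r} = \{0\}$, so that $\overline H$ equals the image of a $p^r$-expansive endomorphism $\phi$ of $\Delta$ (Lemma~\ref{lem:expansive_maps}). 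One then takes $E_H = \IR^n$ --- no lower-dimensional complex --- with the $\overline H$-action twisted through $\phi^{-1}$, whose isotropy groups are \emph{finite}, and the metric contraction comes from precomposing the orbit map with the inverse of the $\phi$-equivariant affine map $a_{p^r,u}$. It is this expansive-map device, together with the number-theoretic input on $p$ and the double induction, that your outline is missing.
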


\begin{remark}
\label{rem:crystallographic_and_CAT(0)}
Since a virtually finitely generated abelian group
possesses an epimorphism with finite kernel onto a crystallographic group
(see for  instance~\cite[Lemma~4.2.1]{Quinn(2012virtab)}), 
it suffices to prove
Theorem~\ref{the:The_Farrell-Jones_Conjecture_for_virtually_finitely_generated_abelian_groups}
for crystallographic groups because of 
Corollary~\ref{cor:from_Transitivity_Principle}.

A crystallographic group is obviously a $\CAT(0)$-group. Hence it
satisfies the $K$- and $L$-theoretic Farrell Jones Conjecture  
with additive categories as coefficients with respect to
$\VCyc$ by~\cite[Theorem~B]{Bartels-Lueck(2012annals)}
and~\cite{Wegner(2012higher-cat0)}.
Nevertheless we give a proof using different methods in this
Section, because this proof for virtually finitely 
generated abelian groups is a good model for the proof
for virtually poly-$\IZ$-groups in 
Sections~\ref{sec:Irreducible_special_affine_groups} 
and~\ref{sec:Virtually_poly-Z-groups}.

In this section we follow Quinn's proof of the 
Farrell-Jones Conjecture for virtually finitely generated abelian groups
and untwisted group rings $RG$ over commutative 
rings~\cite[Theorem~1.2.2
and Corollary~1.2.3]{Quinn(2012virtab)}.
\end{remark}


  \subsection{Review of crystallographic groups}
  \label{subsec:Review_of_crystallographic_groups}

  In this subsection we briefly collect some basic facts about
  crystallographic groups.

  A \emph{crystallographic group $\Delta$ of rank $n$} is a discrete
  subgroup of the group of isometries of $\IR^n$ such that the induced
  isometric group action $\Delta \times \IR^n \to \IR^n$ is proper and
  cocompact.  The translations in $\Delta$ form a normal subgroup
  isomorphic to $\IZ^n$ which is called the \emph{translation
    subgroup} and will be denoted by $A = A_{\Delta}$.  It is equal to
  its own centralizer.  The quotient $F_{\Delta} := \Delta/A_{\Delta}$
  is called the \emph{holonomy group} and is a finite group.

  A group $G$ is called an \emph{abstract crystallographic group of rank $n$} if it
  contains a normal subgroup $A$ which is isomorphic to $\IZ^n$, has finite index and is
  equal to its own centralizer in $G$. Such a subgroup $A$ is unique by the following
  argument.  The centralizer in $G$ of any subgroup $B$ of $A$, which has finite index in
  $A$, is $A$, since any automorphism of $A$ which induces the identity on $B$ is itself
  the identity.  Suppose that $A'$ is another normal subgroup which is isomorphic to
  $\IZ^n$, has finite index and is equal to its own centralizer in $G$. 
  Then $A\cap A'$ is
  a normal subgroup of $A$ and of $A'$ of finite index. 
  Hence $A= A'$, as both $A$ and $A'$ coincide with the centralizer of 
  $A \cap A'$. 
  In
  particular $A$ is a characteristic subgroup of $G$, i.e., any group automorphism of $G$
  sends $A$ to $A$.

  Every abstract crystallographic group $G$ of rank $n$ is a
  crystallographic group of rank $n$ whose group of translations is
  $A$ and vice versa (see~\cite[Definition~1.9 and
  Proposition~1.12]{Connolly-Kozniewski(1990)}). The rank of a crystallographic group
  is equal to its virtual cohomological dimension.

  \begin{notation} \label{not:sA_and_S_s}
  Let $A$ be an abelian group and $s$ be an integer. We denote by
  $sA$ or $s \cdot A$ the subgroup of $A$ given the image of the
  map $s \cdot \id_A \colon A \to A$
  and by $A_s$ the quotient $A/sA$.
  \end{notation}

  \begin{definition}[Expansive map]\label{def:expansive_map}
  Let $\Delta$ be a crystallographic group and $s$ be an integer different from zero.
  A group homomorphism $\phi \colon \Delta \to \Delta$ is called \emph{$s$-expansive}
  if it fits into the following commutative diagram
  $$
  \xymatrix{1 \ar[r]
  & A_{\Delta} \ar[r]^{i} \ar[d]^{s \cdot \id}
  & \Delta \ar[r]^{\pr} \ar[d]^{\phi}
  & F_{\Delta} \ar[r] \ar[d]^{\id}
  & 1
  \\
  1 \ar[r]
  & A_{\Delta} \ar[r]^{i}
  & \Delta \ar[r]^{\pr}
  & F_{\Delta} \ar[r]
  & 1}
  $$
\end{definition}

Given an abelian group $A$, let $A\rtimes_{-\id} \IZ/2$ be the semidirect product with
respect to the automorphism $-\id \colon A \to A$.

\begin{lemma}\label{lem:expansive_maps}
  Let $\Delta$ be a crystallographic group. Let $s \not= 0$ be an integer.
  \begin{enumerate}

  \item \label{lem:expansive_maps:existence} There exists an $s$-expansive map $\phi
    \colon \Delta \to \Delta$ provided that $s \equiv 1 \mod |F_{\Delta}|$;

  \item \label{lem:expansive_maps:affine_map} 
    For every $s$-expansive map $\phi \colon
    \Delta \to \Delta$ there  exists $u \in \IR^n$ such that the affine map
    $$a_{s,u} \colon \IR^n \to \IR^n, \quad x \mapsto s\cdot x + u$$
    is $\phi$-equivariant;

  \item \label{lem:expansive_maps:subgroup_in_the_image} 
    Suppose that $\Delta$ is $\IZ^n$
    or the semi-direct product $\IZ^n \rtimes_{-\id} \IZ/2$.
    Let $\overline{H} \subseteq \Delta $ be a subgroup with
    $\overline{H} \cap \IZ^n \subseteq  s\IZ^n$.

    Then there exists an $s$-expansive map $\phi \colon \Delta \to \Delta$
    and an element $v \in \IR$ such that
    $\overline{H} \subseteq \im(\phi)$  and the map
    $a \colon \IR^n \to \IR^n,\; x \mapsto s \cdot x + v$ 
    is $\phi$-equivariant.

  \end{enumerate}

\end{lemma}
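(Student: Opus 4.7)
The plan is to exploit the cohomological classification of the extension $1 \to A_\Delta \to \Delta \to F_\Delta \to 1$, which is characterized by a class $[c] \in H^2(F_\Delta, A_\Delta)$.

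For part~(i), observe that an $s$-expansive map $\phi$ is precisely an equivalence of this extension with its pushout along $s \cdot \id \colon A_\Delta \to A_\Delta$. The pushout extension is classified by $s \cdot [c]$, so such a $\phi$ exists if and only if $(s-1) \cdot [c] = 0$ in $H^2(F_\Delta, A_\Delta)$. Since $H^2$ of a finite group is annihilated by the group's order, $|F_\Delta| \cdot [c] = 0$, and the hypothesis $s \equiv 1 \mod |F_\Delta|$ supplies the needed vanishing.

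For part~(ii), choose a set-theoretic section $\sigma \colon F_\Delta \to \Delta$, writing $\sigma(R) = (R, v_R)$ in terms of the realization of $\Delta$ as affine isometries $x \mapsto Rx + v_R$ of $\IR^n$. Since $\phi$ induces the identity on $F_\Delta$, we have $\phi(\sigma(R)) = (R, w_R)$ for some $w_R \in \IR^n$. Set $f(R) := w_R - s v_R$. From the fact that $\phi$ is a homomorphism, the relation $\sigma(R) \sigma(R') = c(R,R') \sigma(RR')$ with $c$ valued in $A_\Delta$, and $\phi|_{A_\Delta} = s \cdot \id$, a short calculation yields $f(R R') = f(R) + R \cdot f(R')$, so $f \in Z^1(F_\Delta, \IR^n)$ with respect to the linear action of $F_\Delta$. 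Since $F_\Delta$ is finite and $\IR^n$ is a $\IQ$-vector space, $H^1(F_\Delta, \IR^n) = 0$; explicitly, $u := |F_\Delta|^{-1} \sum_{R \in F_\Delta} f(R)$ satisfies $f(R) = (I-R) u$. A direct computation then shows that the conjugate $a_{s,u} \circ (R, v) \circ a_{s,u}^{-1}$ equals $(R, sv + (I-R) u)$, and the identity $w_R = s v_R + (I-R) u$ allows one to check this equals $\phi((R,v))$; hence $a_{s,u}$ is $\phi$-equivariant.

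For part~(iii), both $\IZ^n$ and $\IZ^n \rtimes_{-\id} \IZ/2$ are split extensions, so the class $[c]$ vanishes and $s$-expansive maps exist for every $s \ne 0$, parameterized by the choice of $u \in \IR^n$ as in part~(ii). In the case $\Delta = \IZ^n$, the map $\phi = s \cdot \id$ has image $s\IZ^n \supseteq \overline{H}$ by hypothesis, and $v = 0$ works. In the case $\Delta = \IZ^n \rtimes_{-\id} \IZ/2$, conjugation by $a_{s,v}$ sends $(I, w) \mapsto (I, sw)$ and $(-I, w) \mapsto (-I, sw + 2v)$, so $\im(\phi)$ consists of the $(I, a)$ with $a \in s\IZ^n$ together with the $(-I, b)$ with $b \in 2v + s \IZ^n$; the requirement $\im(\phi) \subseteq \Delta$ forces $2v \in \IZ^n$. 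Any two elements $(-I, b), (-I, b') \in \overline{H}$ satisfy $(-I,b)(-I,b')^{-1} = (I, b - b') \in \overline{H} \cap \IZ^n \subseteq s\IZ^n$, so all such $b$ share a common residue $b_0 \bmod s\IZ^n$; taking $v := b_0/2$ gives $\overline{H} \subseteq \im(\phi)$.

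The main obstacle is the bookkeeping between the abstract cohomological description of $\Delta$ and the concrete one in terms of affine isometries. In particular, part~(ii) crucially uses that the $\IR^n$-valued function $f$, although not $A_\Delta$-valued, inherits the 1-cocycle condition from the interplay of $s$-expansiveness with the group cocycle $c$ of $\Delta$, which in turn is what lets us average over the finite group $F_\Delta$ inside the ambient real vector space.
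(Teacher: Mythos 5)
Your proposal is correct and follows essentially the same route as the paper's: part~(i) via $H^2(F_\Delta;A_\Delta)$ being $|F_\Delta|$-torsion, part~(ii) via the averaging argument in $H^1(F_\Delta;\IR^n)=0$ (the paper defers this to the parallel Lemma~\ref{lem:equivariant_affine_diffeo}, which runs the same cocycle computation you wrote out), and part~(iii) by the explicit homomorphism $\phi_u(t)=ut$, $\phi_u|_{\IZ^n}=s\cdot\id$ with the affine map $x\mapsto sx+u/2$ --- your "conjugation by $a_{s,v}$" description is the same construction in a slightly different packaging, and your observation that all $b$ with $(-I,b)\in\overline H$ share a residue mod $s\IZ^n$ is exactly what the paper exploits by fixing one such element $ut\in\overline H$.
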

\begin{proof}~\ref{lem:expansive_maps:existence} Consider $A_{\Delta}$
  as $\IZ[F_{\Delta}]$-module by the conjugation action of
  $F_{\Delta}$ on $A_{\Delta}$. Since $A_{\Delta}$ is abelian,
  isomorphism classes of extensions with $A_{\Delta}$ as normal
  subgroup and $F_{\Delta}$ as quotient are in one to one
  correspondence with elements in $H^2(F_{\Delta};A_{\Delta})$
  (see~\cite[Theorem~3.12 in Chapter~IV on page~93]{Brown(1982)}).
  Let $\Theta$ be the class associated to the extension
 $1 \to   A_{\Delta} \to \Delta \to F_{\Delta} \to 1$.  Since $F_{\Delta}$ is
  finite, $H^2(F_{\Delta};A_{\Delta})$ is annihilated by
  multiplication with $|F_{\Delta}|$
  (see~\cite[Corollary~10.2 in Chapter~III on page~84]{Brown(1982)}).
  Hence multiplication with $s$ induces the
  identity on $H^2(F_{\Delta};A_{\Delta})$ because of 
  $s \equiv 1 \mod |F_{\Delta}|$.
Therefore $H^2(F_{\Delta};s \cdot \id_{A_{\Delta}}) = s \cdot
  \id_{H^2(F_{\Delta};A_{\Delta})}$ sends $\Theta$ to $\Theta$, and
  the claim follows.
  \\[1mm]~\ref{lem:expansive_maps:affine_map} Since $F_{\Delta}$ is finite,
  $H^1\bigl(F_{\Delta};A_{\Delta} \otimes_{\IZ} \IR\bigr)$ is trivial. Now one proceeds as
  in the (more difficult) proof of Lemma~\ref{lem:equivariant_affine_diffeo}.
  \\[1mm]~\ref{lem:expansive_maps:subgroup_in_the_image} In the case
  $\Delta = \IZ^n$ just take $\phi = s \cdot \id_{\IZ^n}$ and $v=0$. 
  It remains
  to treat the case $\Delta = \IZ^n\rtimes_{-\id} \IZ/2$.

  Let $t$ be the generator of $\IZ/2$. We write the multiplication in $\IZ/2$
  multiplicatively and in $\IZ^n$ additively.  For an element $u\in \IZ^n$ we define an
  injective group homomorphism
  \begin{eqnarray}
    &\phi_u \colon \Delta  \to \Delta &
    \label{map_phi_u_colon_Delta_to_Delta}
  \end{eqnarray}
  by $\phi_u(t) = ut$ and $\phi_u(x) = s \cdot x$ for $x \in \IZ^n$.  This is well defined
  as the following calculation shows for $x \in \IZ^n$
  \begin{eqnarray*}
    &\phi_u(t)^2 = utut = utut^{-1}  = u+ (-u) = 0;&
  \end{eqnarray*}
  and
  \begin{multline*}
  \phi_u(t)\phi_u(x)\phi_u(t)^{-1} = ut (s \cdot x) (ut)^{-1} = ut (s \cdot x) t^{-1} (-u)
   \\
   = u +(-s \cdot x)  + (-u)
   = -s \cdot x = \phi_u(-x) = \phi_u(txt^{-1}).
 \end{multline*}
 Obviously $\phi_u$ is $s$-expansive.

  Let $\pr \colon \Delta \to \IZ/2$ be the projection.
  If $\pr(\overline{H})$ is trivial, we can choose $\phi_0$.  Suppose that
  $\pr(\overline{H})$ is non-trivial. 
  Then there is $u \in \IZ^n$ with $ut \in \overline{H}$.
  Consider any element $x \in \overline{H} \cap \IZ^n$.
  Then by assumption we can find $y \in \IZ^n$ with $x = s \cdot y$
  and hence $\phi_u(y) = x$.
  Consider any element of the form $xt$ which lies in $\overline{H}$.  Then
   $(xt) \cdot (ut) = x - u$ lies in $\overline{H} \cap \IZ^n$ and hence in
  $\im(\phi_u)$. Since $ut$ and $(xt) \cdot (ut)$ lie in the
  image of $\phi_u$, the same is true for $xt$.
  We have shown $\overline{H} \subseteq \im(\phi_u)$.

  One easily checks that the map
  $a \colon \IR^n \to \IR^n,\; x \mapsto s \cdot x + u/2$ is 
  $\phi_u$-equivariant.
  This finishes the proof of Lemma~\ref{lem:expansive_maps}.
\end{proof}

\subsection{The Farrell-Jones Conjecture for certain crystallographic groups of rank two}
\label{subsec:The_Farrell-Jones_Conjecture_for_certain_crystallographic_groups_of_rank_two}

We will handle the general case of a virtually finitely generated abelian group
by induction over its virtual cohomological dimension. For this purpose we have
to handle in Lemmas~\ref{pro:FJC_for_Z2} 
and~\ref{lem:FJC_for_certain_crystallographic_groups_of_rank_two} 
two special low-dimensional cases first.

The following elementary lemma is taken
from~\cite[Lemma~3.3.2]{Quinn(2012virtab)} (see also~\cite[Lemma~4.3]{Farrell-Hsiang(1983)}).
Denote by $d^{\euc}$ the Euclidean metric on $\IR^n$.

\begin{lemma} \label{lem:map_from_Z2_to_Z} Let $p$ be a prime and $C \subseteq (\IZ/p)^2$
  be a non-trivial cyclic subgroup.  Then there is a homomorphism
$$r  \colon \IZ^2 \to \IZ$$
such that the kernel of the map $(\IZ/p)^2 \to \IZ/p$ given by its reduction modulo $p$ is
$C$ and the induced map
$$r_{\IR} = r \otimes_{\IZ}\IR \colon \IR^2 =
\IZ^2 \otimes_{\IZ}\IR \to \IR = \IZ \otimes_{\IZ}\IR$$
satisfies
$$d^{\euc}\big(r_{\IR}(x_1),r_{\IR}(x_2)\bigr) \le \sqrt{2p} \cdot d^{\euc}(x_1,x_2)$$
for all $x_1,x_2 \in \IR^2$.
\end{lemma}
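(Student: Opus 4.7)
The plan is to find an explicit linear form $r(x,y) = ax + by$ with integer coefficients $a,b$ and to control its norm via a lattice point argument. Since $r$ is linear, the induced map $r_{\IR}$ has Lipschitz constant equal to its operator norm, which is just $\sqrt{a^2+b^2}$ by Cauchy--Schwarz. So the whole statement reduces to producing integers $a,b$ with $a^2+b^2 \le 2p$ whose reduction mod $p$ cuts out $C$ as kernel.

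First I would translate the kernel condition into a lattice condition. Pick $(c_1,c_2) \in \IZ^2$ reducing to a generator of $C$; then at least one of $c_1,c_2$ is a unit mod $p$. The set
\[
L := \{(a,b) \in \IZ^2 \mid ac_1 + bc_2 \equiv 0 \bmod p\}
\]
is a sublattice of $\IZ^2$ of index $p$, hence of covolume $p$. Any $(a,b) \in L$ with $(a,b) \not\equiv (0,0) \bmod p$ gives rise to a surjective homomorphism $(\IZ/p)^2 \to \IZ/p$ whose kernel has order $p$ and contains $(c_1,c_2)$; since $C$ also has order $p$ and is generated by $(c_1,c_2)$, that kernel is exactly $C$.

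Next I would apply Minkowski's first theorem to the closed centrally symmetric convex square $K = [-\sqrt{p},\sqrt{p}]^2$. Its area is $4p = 2^2 \cdot \det(L)$, so $K$ contains a non-zero point $(a,b) \in L$. This point satisfies $|a|,|b|\le\sqrt{p}$, hence $a^2+b^2 \le 2p$, giving the desired Lipschitz bound.

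The one thing that needs checking is that this $(a,b)$ is genuinely non-zero modulo $p$. If it were not, then $(a,b) = (pk,pl)$ with $(k,l)\neq (0,0)$, forcing $a^2+b^2 \ge p^2$. This is compatible with $a^2+b^2 \le 2p$ only when $p \le 2$, so for $p \ge 3$ the Minkowski vector automatically satisfies $(a,b)\not\equiv (0,0)\bmod p$. The minor obstacle, and the only place one cannot be cavalier, is the case $p = 2$: here I would just enumerate the three non-trivial cyclic subgroups of $(\IZ/2)^2$ and exhibit $(a,b) \in \{(1,0),(0,1),(1,-1)\}$ by hand, each satisfying $a^2+b^2 \le 2 = 2p$. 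This completes the argument.
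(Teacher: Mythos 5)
Your argument is correct, but note that the paper itself does not prove this lemma: it simply cites Quinn \cite[Lemma~3.2.2]{Quinn(2005)} and Farrell--Hsiang \cite[Lemma~4.3]{Farrell-Hsiang(1983)}, so there is no "paper's own proof" to compare against. Your Minkowski argument is a clean self-contained route. Two small remarks. First, the step from "$(a,b)\in L$ non-zero mod $p$" to "the kernel of the reduction is exactly $C$" is right but worth saying explicitly: since $\IZ/p$ is a field, a non-zero linear form $(\IZ/p)^2\to\IZ/p$ is surjective, so its kernel has order $p$, hence equals $C$ once it contains the generator $(c_1,c_2)$. Second, your $p=2$ case-split is unnecessary: the Minkowski box gives the componentwise bounds $|a|,|b|\le\sqrt{p}$, which for a prime already rules out $(a,b)\equiv(0,0)\bmod p$ (since a non-zero multiple of $p$ has some coordinate $\ge p>\sqrt{p}$); you only needed the special case because you discarded the componentwise information and worked with the weaker aggregate bound $a^2+b^2\le 2p$. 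Either way the conclusion stands.
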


\begin{lemma}
\label{pro:FJC_for_Z2}
Both the $K$-theoretic and the $L$-theoretic FJC
hold for $\IZ^2$ and $\IZ^2 \rtimes_{-\id} \IZ/2$.
\end{lemma}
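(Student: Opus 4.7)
The plan is to verify that $G = \IZ^2$ and $G = \IZ^2 \rtimes_{-\id} \IZ/2$ are Farrell-Hsiang groups with respect to $\VCyc$ in the sense of Definition~\ref{def:Farrell-Hsiang}, and then invoke Theorem~\ref{the:Farrell-Jones_Conjecture_for_Farrell-Hsiang_groups}. Write $A = \IZ^2$ for the translation subgroup. The uniform dimension bound will be $N = 2$ (passing to barycentric subdivisions to ensure the cell-preserving condition).

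Setup: given $R, \epsilon > 0$, pick two distinct primes $q_1, q_2$ much larger than a constant that depends only on $R$ and $\epsilon$, set $s = q_1 q_2$, and take $\alpha_{R,\epsilon} \colon G \to F := G/sA$ to be the canonical projection. For a hyperelementary subgroup $H \subseteq F$, set $\overline{H} := \alpha_{R,\epsilon}^{-1}(H)$ and $H_A := H \cap \pi(A)$, where $\pi$ is the projection to the translation quotient. Then $H_A$ is again hyperelementary, say $p$-hyperelementary, so by the chinese remainder decomposition its image $\pi_i(H_A) \subseteq (\IZ/q_i)^2$ is cyclic for every $i$ with $q_i \ne p$. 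Since only one of $q_1, q_2$ can equal $p$, at least one $\pi_i(H_A)$ is cyclic, and two cases arise.

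Case (a): some $\pi_i(H_A)$ is non-trivial cyclic. Apply Lemma~\ref{lem:map_from_Z2_to_Z} to get $r \colon \IZ^2 \to \IZ$ with $\ker(r \bmod q_i) = \pi_i(H_A)$ and with $r_\IR$ Lipschitz of constant $\sqrt{2 q_i}$; in particular $r(\overline{H} \cap A) \subseteq q_i\IZ$. In the reflection case, extend $r$ to a homomorphism $\overline{r} \colon G \to \IZ \rtimes_{-\id} \IZ/2$ by $\overline{r}(ut) = r(u)t$. Take $E_H = \IR$ with the (barycentrically subdivided) simplicial structure whose vertices are an appropriate shift of $(q_i/2)\IZ$ chosen symmetrically around the fixed point $r(u_0)/(2q_i)$ of a chosen representative $u_0 t \in \overline{H}$; this structure is preserved by the $\overline{H}$-action obtained from $\overline{r}/q_i$. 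Define $f_H(g) = r(g)/q_i$. Its $\overline{H}$-equivariance is immediate, the simplicial $\overline{H}$-action has stabilizers contained in $(\overline{H} \cap \ker r) \rtimes \IZ/2$, which is virtually cyclic, and the contraction estimate $\sqrt{2/q_i}$-times a bounded constant makes $f_H$ an $(R,\epsilon)$-controlled map once $q_i$ is large enough.

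Case (b): no $\pi_i(H_A)$ is non-trivial cyclic. Since at most one (namely the index $j$ with $q_j = p$) can be non-cyclic, the other satisfies $\pi_{3-j}(H_A) = 0$, whence $\overline{H} \cap A \subseteq q_{3-j} A$. Apply Lemma~\ref{lem:expansive_maps}~\ref{lem:expansive_maps:subgroup_in_the_image} with $s_{\mathrm{new}} = q_{3-j}$ to obtain a $q_{3-j}$-expansive $\phi \colon G \to G$ with $\overline{H} \subseteq \im \phi$ together with an affine $\phi$-equivariant map $a \colon \IR^2 \to \IR^2$, $x \mapsto q_{3-j} x + v$. Take $E_H = \IR^2$ equipped with a $G$-invariant triangulation symmetric under $-\id$, barycentrically subdivided, on which $\overline{H}$ acts via $\psi := (\phi|_{\im \phi})^{-1} \colon \overline{H} \hookrightarrow G \hookrightarrow \Isom(\IR^2)$. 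Define $f_H(g) := a^{-1}(g\cdot 0)$; the $\phi$-equivariance of $a$ translates into $\overline{H}$-equivariance of $f_H$, and $f_H$ is Lipschitz with constant $O(1/q_{3-j})$ since $a^{-1}$ scales by $1/q_{3-j}$. The stabilizers of the $\IZ^2$-translation action are trivial and the reflection introduces at most a $\IZ/2$, so the stabilizers lie in $\VCyc$. The main obstacle is the harmonization of the simplicial structure on $E_H$ in case (a) with the reflection-type elements of $\overline{H}$, whose fixed points in $\IR$ generally do not land on the naive integer vertex set; this is precisely what forces the scaled and shifted simplicial structure and the subsequent barycentric subdivision.
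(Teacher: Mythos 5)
Your proposal is correct and follows the same overall Farrell-Hsiang strategy as the paper: reduce modulo $pq$ for two large distinct primes, use $l$-hyperelementarity to force the $p$-torsion image to be cyclic, invoke Lemma~\ref{lem:map_from_Z2_to_Z}, and produce a contracting equivariant map. There are two implementation differences. First, in what you call case (b) (no $\pi_i(H_A)$ is non-trivial cyclic, so some $\pi_{3-j}(H_A) = 0$) the paper still keeps the one-dimensional target $E_H = \IR$: it simply takes $r$ to be the projection onto the first coordinate, since in this case $\overline{H}\cap A \subseteq pA$ forces $r(\overline{H}\cap A)\subseteq p\IZ$ for any $r$, and the constant $1\le\sqrt{2p}$ makes the same estimate go through; you instead switch to a two-dimensional target $\IR^2$ and apply Lemma~\ref{lem:expansive_maps}~\ref{lem:expansive_maps:subgroup_in_the_image} directly to $\Delta$, which works but pushes the dimension bound from $1$ to $2$ and imports machinery the paper reserves for the higher-rank induction (Theorem~\ref{the:The_Farrell-Jones_Conjecture_for_virtually_finitely_generated_abelian_groups}). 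Second, in case (a) the paper passes through a $p$-expansive map $\phi$ on $D_\infty$ with the $\phi$-equivariant affine map $a_{p,u}$, so the transported $\overline{H}$-action is literally a sub-action of the standard $D_\infty$-action and the half-integer vertex set is automatically preserved; you instead shift the vertex set to be centered at the reflection fixed point $r(u_0)/(2q_i)$. Your shift is valid because $r(\overline{H}\cap A)\subseteq q_i\IZ$ forces all reflection fixed points of $\overline{r}(\overline{H})/q_i$ to lie in $r(u_0)/(2q_i) + \tfrac{1}{2}\IZ$, but it is less transparent than the expansive-map normalization, and the cell-preserving property of the resulting $\overline{H}$-action needs that calculation to be spelled out rather than being immediate.
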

\begin{proof}
  Because of Theorem~\ref{the:subgroups} applied to
  $\IZ^2 \subseteq \IZ^2 \rtimes_{-\id} \IZ/2$
  it suffices to prove the claim for $\IZ^2 \rtimes_{-\id} \IZ/2$.  Because of
  Theorem~\ref{the:Farrell-Jones_Conjecture_for_Farrell-Hsiang_groups} it suffices to
  show that $\IZ^2 \rtimes_{\id}\IZ$ is a Farrell-Hsiang group with respect to the family
  $\VCyc$ in the sense of Definition~\ref{def:Farrell-Hsiang}.

  In the sequel we abbreviate $\Delta :=\IZ^2 \rtimes_{-\id} \IZ/2$. We have the obvious
  short exact sequence
$$1 \to \IZ^2 \xrightarrow{i} \Delta \xrightarrow{\pr} \IZ/2\to 1.$$
Fix a word metric $d_{\Delta}$ on $\Delta$. The map
$$\ev \colon \Delta \to \IR^2$$
given by the evaluation of the obvious isometric proper cocompact $\Delta$-action
on $\IR^2$ is by the \v{S}varc-Milnor Lemma (see~\cite[Proposition~8.19 in Chapter~I.8
on page~140]{Bridson-Haefliger(1999)}) a quasi-isometry if we equip $\IR^n$ with the
Euclidean metric $d^{\euc}$.  Hence we can find constants $C_1$ and $C_2$ such that for
all $g_1,g_2 \in \Delta$ we have
\begin{eqnarray}
d^{\euc}\bigl(\ev(g_1),\ev(g_2)\bigr) &\le & C_1 \cdot d_{\Delta}(g_1,g_2)  +  C_2.
\label{deuc_byC_1_cdot_d_Delta_plus_c_2}
\end{eqnarray}
Consider positive real numbers $R$ and $\epsilon$. Choose two different odd
prime numbers $p$ and $q$ satisfying
\begin{eqnarray}
  \frac{8 \cdot (C_1 \cdot R + C_2)^2}{\epsilon^2} \le p,q.
  \label{proof_for_Zn:choice_of_p_q}
\end{eqnarray}
For a natural number $s$ define $\Delta_{s}$ to be $\Delta/s\IZ^2$. We
have the obvious exact sequence
$$1 \to \IZ^2/s\IZ^2 = \big(\IZ/s\bigr)^2 \to  \Delta_{s} \xrightarrow{\pr_s} \IZ/2
\to 1.$$
The canonical projection $\alpha_{pq} \colon \Delta \to \Delta_{pq}$
will play the role of the map $\alpha_{R,\epsilon}$ appearing in
Definition~\ref{def:Farrell-Hsiang}.

Let $H \subseteq \Delta_{pq}$ be a $l$-hyperelementary subgroup for some prime
$l$. Since $p$ and $q$ are different, we can assume without loss of generality
$l \not= p$.  Then the canonical projection $\pi \colon \Delta_{pq} \to
\Delta_p$ sends $H \cap \IZ^2/pq\IZ^2$ to a cyclic subgroup $C$ of
$\IZ^2/p\IZ^2$.  Let $r \colon \IZ^2 \to \IZ$ be the homomorphism appearing in
Lemma~\ref{lem:map_from_Z2_to_Z} if $C$ is non-trivial and to be the projection
on the first factor if $C$ is trivial.  Let $\overline{H}$ be the preimage of
$H$ under the projection $\alpha_{pq} \colon \Delta \to \Delta_{pq}$.  In all
cases we get for $x_1,x_2 \in \IR^2$
\begin{eqnarray}
d^{\euc}\big(r_{\IR}(x_1),r_{\IR}(x_2)\bigr) & \le &\sqrt{2p} \cdot d^{\euc}(x_1,x_2)
\label{r_R-estimate}
\end{eqnarray}
and
\begin{eqnarray}
r\bigl(\overline{H} \cap \IZ^2\bigr)& \subseteq & p\IZ.
\label{r(H_cap_Z2/pqZ)_subset_pZ}
\end{eqnarray}

The homomorphism $r$ extends to a group homomorphism
$$\overline{r} := r \rtimes_{\-\id} \id_{\IZ/2} \colon \Delta
= \IZ^2 \rtimes_{- \id} \IZ/2 \to D_{\infty} = \IZ \rtimes_{-\id} \IZ/2.$$
 We conclude $\overline{r}(\overline{H}) \cap \IZ =
r\bigl(\overline{H} \cap \IZ^2\bigr) \subseteq p\IZ$
from~\eqref{r(H_cap_Z2/pqZ)_subset_pZ}.  Because of
Lemma~\ref{lem:expansive_maps}~\ref{lem:expansive_maps:subgroup_in_the_image}
we can find a $p$-expansive map
$$\phi \colon D_{\infty} \to D_{\infty}$$
and an affine map
$$a_{p,u} \colon \IR \to \IR, \quad x \mapsto p \cdot x +u$$
such that $a_{p,u}$ is $\phi$-equivariant and
\begin{eqnarray}
  \overline{r}(\overline{H}) & \subseteq & \im(\phi).
  \label{overline(r)(overline(H))_subseteq_im(phi)}
\end{eqnarray}

Let $E_H$ be the simplicial complex with underlying space $\IR$ whose set of
zero-simplices is $\{n/2 \mid n \in \IZ\}$. Equip $\IR$ with the standard
$D_{\infty}$-action given by translation with integers and
$-\id_{\IR}$.  Then the $D_{\infty}$-action on $E_H = \IR$ is a cell preserving
simplicial action. If $d^{l^1}$ is the $l^1$-metric on $E_H$, we get for all
$y_1,y_2$ in $E_H$
\begin{eqnarray}
  d^{l^1}(y_1,y_2) & \le & 2 \cdot d^{\euc}(y_1,y_2).
  \label{dl1_compared_with_deuc_on_R}
\end{eqnarray}

Define a map
$$f_H \colon \Delta \xrightarrow{\ev} \IR^2 \xrightarrow{r_{\IR}} \IR
\xrightarrow{(a_{p,u})^{-1}} E = \IR.$$ The map $\ev \colon \Delta \to \IR^2$ is
$\Delta$-equivariant. The map $r_{\IR} \colon \IR^2 \to \IR$ is $\overline{r}
\colon \IZ^2 \rtimes_{-\id} \IZ/2 \to D_{\infty} = \IZ \rtimes_{-\id}
\IZ/2$-equivariant.  Because
of~\eqref{overline(r)(overline(H))_subseteq_im(phi)} we can define an
$\overline{H}$-action on $\IR$ by requiring that $\overline{h} \in \overline{H}$
acts by multiplication with the element $u \in D_{\infty}$ which is uniquely
determined by $\overline{r}(\overline{h}) = \phi(u)$.  With respect to this
$\overline{H}$-action and the obvious $\overline{H}$-action on $\Delta$ the map
$f_H$ is $\overline{H}$-equivariant. All isotropy groups of the
$\overline{H}$-action on $E$ are virtually cyclic. We estimate for
$g_1$, $g_2 \in \Delta$ with $d_{\Delta}(g_1,g_2) \le R$
using~\eqref{deuc_byC_1_cdot_d_Delta_plus_c_2},~\eqref{proof_for_Zn:choice_of_p_q},%
~\eqref{r_R-estimate} and~\eqref{dl1_compared_with_deuc_on_R}
\begin{eqnarray*}
  d^{l^1}\big(f_H(g_1),f_H(g_2)\bigr)
  & \le &
  2 \cdot d^{\euc}\big(f_H(g_1),f_H(g_2)\bigr)
  \\
  & = &
  2 \cdot  d^{\euc}\big(a_{p,u}^{-1} \circ  r_{\IR} \circ \ev(g_1), a_{p,u}^{-1}
  \circ r_{\IR} \circ \ev(g_2)\bigr)
  \\
  & = & \frac{2}{p} \cdot d^{\euc}\big(r_{\IR} \circ \ev(g_1),r_{\IR}\circ \ev(g_2)\bigr)
  \\
   & \le &
  \frac{2}{p} \cdot  \sqrt{2p} \cdot d^{\euc}\big(\ev(g_1),\ev(g_2)\bigr)
  \\
  & \le &
  \frac{2 \cdot \sqrt{2}}{\sqrt{p}} \cdot
  \left(C_1 \cdot  d_{\Delta}\big(g_1,g_2\bigr) +C_2 \right)
  \\
  & \le &
  \frac{2 \cdot \sqrt{2}}{\sqrt{p}} \cdot  \left(C_1 \cdot  R +C_2 \right)
  \\
  & \le &
  \epsilon.
\end{eqnarray*}

We conclude that $\Delta$ is a Farrell-Hsiang group in the sense of
Definition~\ref{def:Farrell-Hsiang} with respect to the family $\VCyc$.
Hence Lemma~\ref{pro:FJC_for_Z2} follows from
Theorem~\ref{the:Farrell-Jones_Conjecture_for_Farrell-Hsiang_groups}.
\end{proof}

\begin{lemma}
  \label{lem:FJC_for_certain_crystallographic_groups_of_rank_two}
  Let $\Delta$ be a crystallographic group of rank two which possesses
  a normal  infinite cyclic subgroup.
  Then both the $K$-theoretic and the $L$-theoretic FJC hold for $\Delta$.
\end{lemma}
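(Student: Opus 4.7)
The plan is to show that $\Delta$ is a Farrell-Hsiang group with respect to the family $\VCyc$ and to conclude via Theorem~\ref{the:Farrell-Jones_Conjecture_for_Farrell-Hsiang_groups}. This generalizes the proof of Lemma~\ref{pro:FJC_for_Z2}, with the normal infinite cyclic subgroup $N$ furnishing the distinguished one-dimensional target for the Farrell-Hsiang map. Let $A := A_\Delta$ and $F := F_\Delta$ be the translation lattice and holonomy of $\Delta$, and put $L := N \cap A$. Because $N$ is normal infinite cyclic in $\Delta$ and $A$ has finite index in $\Delta$, $L$ is a rank-one $F$-invariant sublattice of $A \cong \IZ^2$, and $F$ acts on the quotient $A/L \cong \IZ$ through a character into $\{\pm 1\}$. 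By the \v{S}varc--Milnor lemma, the evaluation map $\ev \colon \Delta \to \IR^2$ satisfies $d^{\euc}(\ev(g_1),\ev(g_2)) \le C_1 \cdot d_\Delta(g_1,g_2) + C_2$ for fixed constants $C_1, C_2$.

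Given $R, \epsilon > 0$, I would choose an integer $s$ with $s \equiv 1 \pmod{|F|}$ and large in terms of $R, \epsilon$, and take $\alpha_{R,\epsilon}$ to be the projection $\alpha_s \colon \Delta \to \Delta_s := \Delta/sA$. For a hyperelementary subgroup $H \le \Delta_s$ with preimage $\overline{H} := \alpha_s^{-1}(H)$, the hyperelementary structure of $H$ should allow the construction, in the spirit of Lemma~\ref{lem:map_from_Z2_to_Z}, of an $F$-equivariant homomorphism $r \colon A \to \IZ$ of controlled Lipschitz norm (of order $O(\sqrt{s})$) such that $r(\overline{H} \cap A) \subseteq s \IZ$. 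Extending $r$ by the holonomy action yields $\overline{r} \colon \Delta \to \overline{\Delta}$, where $\overline{\Delta}$ is a crystallographic group of rank one (hence isomorphic to $\IZ$ or to $D_\infty = \IZ \rtimes_{-\id} \IZ/2$) acting simplicially on $\IR$. Lemma~\ref{lem:expansive_maps}~\ref{lem:expansive_maps:subgroup_in_the_image}, applied to $\overline{\Delta}$, then produces an $s$-expansive endomorphism $\phi \colon \overline{\Delta} \to \overline{\Delta}$ with $\overline{r}(\overline{H}) \subseteq \im(\phi)$ and a $\phi$-equivariant affine map $a \colon \IR \to \IR$, $x \mapsto s \cdot x + v$.

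Set $E_H := \IR$ with its standard simplicial structure and the $\overline{H}$-action defined by $\overline{h} \cdot x := (\phi^{-1} \circ \overline{r})(\overline{h}) \cdot x$; all point stabilizers are virtually cyclic, so belong to $\VCyc$. Define $f_H := a^{-1} \circ r_\IR \circ \ev \colon \Delta \to E_H$. Because $a^{-1}$ scales Euclidean distances by $1/s$, combining the quasi-isometry bound for $\ev$ with the Lipschitz bound for $r_\IR$ gives, exactly as in the proof of Lemma~\ref{pro:FJC_for_Z2}, an estimate of the form $d^{l^1}_{E_H}(f_H(g_1), f_H(g_2)) \le \frac{2\sqrt{2}}{\sqrt{s}} \cdot (C_1 R + C_2)$ whenever $d_\Delta(g_1, g_2) \le R$, which is $< \epsilon$ for $s$ chosen large enough.

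The main obstacle is the construction of the $F$-equivariant homomorphism $r \colon A \to \IZ$ realizing the hyperelementary hypothesis as $s$-divisibility on $\overline{H} \cap A$. The $F$-equivariance requirement is more restrictive than in the untwisted $\IZ^2$-case of Lemma~\ref{lem:map_from_Z2_to_Z} and forces a case distinction on the possible holonomies preserving $L \otimes \IR$: these sit inside the stabilizer of a line in $O(2)$, hence are at most $\IZ/2 \times \IZ/2$. Covering every hyperelementary $H \le \Delta_s$ simultaneously requires an adaptation of Lemma~\ref{lem:map_from_Z2_to_Z} to this $F$-equivariant setting, possibly by projecting to the fixed and to the anti-invariant direction of $F$ on $A \otimes \IR$ separately, and by introducing several primes in the definition of $\alpha_{R,\epsilon}$ so that at least one admissible projection direction is available for every hyperelementary configuration.
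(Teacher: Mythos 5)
Your overall template is right — reduce to showing that $\Delta$ is a Farrell--Hsiang group via Theorem~\ref{the:Farrell-Jones_Conjecture_for_Farrell-Hsiang_groups}, pass to $\Delta_s = \Delta/sA$, exploit the structure of hyperelementary $H \subseteq \Delta_s$ to project to a rank-one crystallographic quotient, and contract with an expansive affine map. But there are two genuine gaps, and you flagged the second one yourself without resolving it.

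First, you cannot work with the family $\VCyc$ directly. If $\pr_{\Delta_s}(H) \neq F_\Delta$ (which will occur for some hyperelementary $H$), the preimage $\overline{H}$ contains $A \cong \IZ^2$ up to finite index and is therefore not virtually cyclic; there is no contracting $\overline{H}$-map to a $1$-complex with virtually cyclic stabilizers. The paper resolves this with an induction on $|F_\Delta|$: it works with the larger family $\calf$ of subgroups which are virtually cyclic \emph{or} have $\pr_\Delta(K) \neq F_\Delta$, disposes of the latter case by the projection to a point, and invokes the induction hypothesis plus the Transitivity Principle~\ref{the:transitivity} to reduce $\calf$ back to $\VCyc$. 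Your proposal omits this reduction entirely.

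Second, the sketched construction of the $F$-equivariant homomorphism $r$ cannot work as stated. $F$-equivariance is a strong constraint: since the action factors through two distinct characters $\sigma_1 \neq \sigma_2 \colon F \to \{\pm 1\}$ on the two $F$-invariant maximal cyclic subgroups $C_1, C_2$ with $A = C_1 \oplus C_2$, an $F$-equivariant surjection $A \to \IZ$ (onto a nontrivial $F$-module) is, up to finite index, one of the two coordinate projections. So there is no analogue of Lemma~\ref{lem:map_from_Z2_to_Z} producing a per-$H$ choice of $r$ with Lipschitz constant $O(\sqrt{s})$; the $F$-equivariant projections have \emph{fixed} Lipschitz constant. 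What makes the paper's argument work is a structural fact in the opposite direction: for $\pr_p(H) = F_\Delta$ and $p$ an odd prime, a hyperelementary $H$ has $A_p \cap H$ cyclic (here one uses $\sigma_1 \neq \sigma_2$, $F_\Delta \in \{\IZ/2, \IZ/2 \oplus \IZ/2\}$, and $p$ odd), and being also $F$-invariant it must be contained in $(C_1)_p$ or $(C_2)_p$. So the projection onto $A/C_i$ for the appropriate $i$ automatically satisfies $\nu_{C_i}(\overline{H} \cap A) \subseteq p A_{\Delta'}$, and the entire contraction factor $1/p$ comes from the expansive map alone. Your ``fixed and anti-invariant direction'' remark gestures at $C_1, C_2$, but you reason as if the normal cyclic subgroup $N$ singles out the relevant direction; in fact $N \cap A$ is just one of $C_1, C_2$, and for a given $H$ you may need the other. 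A single prime $p$ (not ``several primes'') suffices once the cyclicity of $A_p \cap H$ is established.
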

\begin{proof}
We will use induction over the order of $F = F_{\Delta}$.
If $F$ is trivial, then $\Delta = \IZ^2$
and the claim follows from Lemma~\ref{pro:FJC_for_Z2}.
The induction step for $|F| \ge 2$
is done as follows.

Because of Lemma~\ref{pro:FJC_for_Z2} we can assume in the sequel that $\Delta$
is different from $\IZ^2 \rtimes_{-\id} \IZ/2$.  Let $\calf$ be the family of
subgroups $K \subseteq \Delta$ which are virtually cyclic or satisfy
$\pr_{\Delta}(K) \not= F_{\Delta}$ for the projection $\pr_{\Delta} \colon
\Delta \to F_{\Delta}$.  Because of the induction hypothesis, the Transitivity
Principle~\ref{the:transitivity} and
Theorem~\ref{the:Farrell-Jones_Conjecture_for_Farrell-Hsiang_groups} it suffices
to show that $\Delta$ is a Farrell-Hsiang group with respect to the family
$\calf$ in the sense of Definition~\ref{def:Farrell-Hsiang}.

We have the canonical exact sequence associated to a crystallographic
group
$$1 \to A = A_{\Delta} \xrightarrow{i} \Delta \xrightarrow{\pr} F = F_{\Delta} \to 1.$$
Next we analyze the conjugation action $\rho \colon F \to \aut(A)$.
Since $\Delta$ is crystallographic,  $\rho \colon F \to \aut(A)$  is injective.
By assumption $A \cong \IZ^2$ and we can find a normal infinite cyclic
subgroup $C\subset \Delta$.

Next we show that $A$ contains precisely two maximal infinite cyclic subgroups
which are $F$-invariant.

By rationalizing we obtain a $2$-dimensional
rational representation $A_{\IQ} := A \otimes_{\IZ} \IQ$ of $F$.
It contains a one-dimensional $F$-invariant $\IQ$-subspace,
namely $C_{\IQ} := (C \cap A) \otimes_{\IZ} \IQ$.
Hence $A_{\IQ}$ is a direct summand
of two one-dimensional rational representations $V_1 \oplus V_2$. For each
$V_i$ there must be a homomorphism $\sigma_i \colon F \to \{\pm 1\}$
such that $f \in F$ acts on $V_i$ by multiplication with $\sigma_i(f)$.
Hence we can find two elements $x_1$ and $x_2 \in A$ such that
$x_1$ and $x_2$ are $\IZ$-linearly independent and the cyclic subgroups generated by them
are $F$-invariant. Let $C_i$ be the unique maximal infinite cyclic subgroups of
$A$ which contains $x_i$.  Then $C_1$ and $C_2$ are $F$-invariant and
$$A = C_1 \oplus C_2.$$
The $F$-action on $C_i$ is given by the homomorphism
$\sigma_i \colon F \to \{\pm 1\}$. Since $\rho \colon F \to \aut(A)$ is injective
and $\Delta$ is not isomorphic to $\IZ^2\rtimes_{-\id} \IZ/2$,
the homomorphisms $\sigma_1$ and $\sigma_2$  from $F$ to $\{\pm 1\}$ must be different
and $F$ is isomorphic to $\IZ/2$ or $\IZ/2 \oplus \IZ/2$.

It remains to show that any maximal infinite cyclic subgroup $D$ which is $F$-invariant is
equal to $C_1$ or $C_2$. Given such $D$, we obtain an $F$-invariant $\IQ$-subspace
$D_{\IQ} \subseteq A_{\IQ}$.  Since $C_1$, $C_2$ and $D$
are maximal infinite cyclic subgroups of
$A$, it suffices to show $D_{\IQ} =(C_i)_{\IQ}$ for some $i \in \{1,2\}$. Suppose the
contrary.  Then for $i = 1,2$ the projection $A_{\IQ} \to  (C_i)_{\IQ}$
induces an isomorphism
$D_{\IQ} \to (C_i)_{\IQ}$. Hence $(C_1)_{\IQ}$ and $(C_2)_{\IQ}$ are isomorphic.
This implies $\sigma_1 = \sigma_2$,
a contradiction.  Hence we have shown that $A$ contains precisely two maximal infinite
cyclic subgroups which are $F$-invariant.

If $C \subseteq A$ is a maximal  infinite cyclic subgroup which
is invariant under the $F$-action,
then it is normal in $\Delta$ and we can consider the projection
$$\widehat{\xi}_C \colon \Delta \to \Delta/C.$$
We obtain a commutative diagram
$$\xymatrix{
1 \ar[r] & A \ar[d]^{\xi_C} \ar[r]^{i} & \Delta \ar[r]^{\pr} \ar[d]^{\widehat{\xi}_C}
& F \ar[r] \ar[d]^{\id} & 1
\\
1 \ar[r] & A/C \ar[r]^{\overline{i}}
& \Delta/C \ar[r]^{\overline{\pr_C}}  & F \ar[r]  & 1}
$$
where the vertical maps are the obvious projections.

Since $\Delta/C$ is virtually abelian with virtual cohomological dimension
one, we can find an epimorphism $\overline{\mu_C} \colon \Delta/C \to \Delta'_C$
to a crystallographic group of rank one
whose kernel is finite. We obtain a commutative diagram
$$\xymatrix{
1 \ar[r] & A/C \ar[r]^{\overline{i}} \ar[d]^{\mu_C}
& \Delta/C \ar[d]^{\widehat{\mu}_C}\ar[r]^{\overline{\pr_C}}  & F \ar[r]  \ar[d]& 1
\\
1 \ar[r] & A_{\Delta'_C} \ar[r] & \Delta'_C \ar[r] & F_{\Delta'_C} \ar[r]  & 1}
$$
The map $\mu_C$ is injective and $\Delta'_C$ is either
$\IZ$ or $D_{\infty} = \IZ \rtimes_{-\id} \IZ/2$.
Define homomorphisms
\begin{eqnarray*}
& \widehat{\nu}_C := \widehat{\mu}_C \circ \widehat{\xi}_C \colon \Delta \to \Delta'_C;&
\\
&\nu_C := \mu_C \circ \xi_C \colon A \to A_{\Delta'_C}.&
\end{eqnarray*}

Consider word metrics $d_{\Delta}$ and $d_{\Delta'_C}$.  Recall that $\widehat{\nu}_C$ is
a surjective group homomorphism and the quasi-isometry type of a word metric is
independent of the choice of a finite set of generators.  Hence we can find constants
$C_1$ and $C_2$ such that for every (of the finitely many) maximal infinite cyclic
subgroups $C \subseteq A$ which are invariant under the $F$-action and for all
$g_1, g_2 \in \Delta$ we get
\begin{eqnarray}
d_{\Delta'_C}\big(\widehat{\nu}_C(g_1),\widehat{\nu}_C(g_1)\bigr)
& \le &
C_1 \cdot d_{\Delta}\bigl(g_1,g_2) + C_2.
\label{d_Delta_versus_d_Delta_prime}
\end{eqnarray}
Equip $\IR$ with the standard action of $\Delta'_C$.  
Let $E$ be the simplicial complex
whose underlying space is $\IR$ and whose set of $0$-simplices is
$\{n/2 \mid n \in \IZ\}$.
The $\Delta'_C$-action above is a cell preserving simplicial action on $E$.
If $d^{l^1}$ is the $l^1$-metric on $E$, we get for $y_1,y_2 \in \IR$
\begin{eqnarray}
d^{l^1}(y_1,y_2) & \le & 2 \cdot d^{\euc}(y_1,y_2).
\label{estimate_dl1_versus_deuc_for_E_is_R}
\end{eqnarray}
Let the  map
$$\ev_C \colon \Delta_C' \to \IR$$
be given by the evaluation of the isometric proper cocompact $\Delta'_C$-action
on $\IR$. By the \v{S}varc-Milnor Lemma (see~\cite[Proposition~8.19 in Chapter~I.8
on page~140]{Bridson-Haefliger(1999)})  we can find constants $C_3$ and $C_4$
such that for every  (of the finitely many) maximal infinite cyclic
subgroups $C \subseteq A$ which are invariant under the $F$-action and
all $g_1,g_2 \in \Delta'_C$ we have
\begin{eqnarray}
d^{\euc}\bigl(\ev_C(g_1),\ev_C(g_2)\bigr) & \le & C_3 \cdot d_{\Delta'_C}(g_1,g_2)  +  C_4.
\label{estimate_for_ev_in_rank_two_case}
\end{eqnarray}
We conclude from~\eqref{d_Delta_versus_d_Delta_prime},%
~\eqref{estimate_dl1_versus_deuc_for_E_is_R}
and~\eqref{estimate_for_ev_in_rank_two_case}
that we can  find constants $D_1> 0$ and $D_2> 0$ such that
for every  maximal infinite cyclic
subgroup $C \subseteq A$ which is invariant under the $F$-action and
all $g_1,g_2 \in \Delta$ we have
\begin{eqnarray}
d^{l^1}\big(\ev_C \circ \widehat{\nu}_C(g_1),\ev_C  \circ \widehat{\nu}_C(g_1)\bigr)
& \le &
D_1 \cdot d_{\Delta}(g_1,g_2) + D_2
\label{d_euc_versus_d_Delta}
\end{eqnarray}

Consider positive real numbers $R$ and $\epsilon$.
We can choose an odd  prime $p$ satisfying
\begin{eqnarray}
p & \ge & \frac{2 \cdot (D_1 \cdot R +D_2)}{\epsilon}. \label{choice_for_p_rank_two}
\end{eqnarray}

Put $A_p = A/pA$ and $\Delta_p = \Delta/pA$. We obtain an exact sequence
$$1 \to A_p \to \Delta_p \xrightarrow{\pr_p} F \to 1.$$
The projection $\alpha_p \colon \Delta \to \Delta_p$ will play the role of the map
$\alpha_{R,\epsilon}$ appearing in Definition~\ref{def:Farrell-Hsiang}.

Let $H \subseteq \Delta_p$ be a hyperelementary subgroup.
If $\pr_p(H)$ is not $F$, then $H$ belongs to $\calf$ and we can take for
$f_H$ the map $\Delta \to \{\bullet\}$.  Hence it remains to treat the case
$\pr_p(H) = F$.

Next show that $A_p \cap H$ is cyclic.  Choose a prime $q$ and an exact sequence
$1 \to D \to H \to P \to 1$ for a $q$-group $P$ and a cyclic group of order
prime to $q$. If $p$ and $q$ are different, $A_p \cap H$ embeds into $D$ and is
hence cyclic.  Suppose that $p = q$.  It suffices to show that $A_p \cap H$ is
different from $A_p$, or, equivalently, $H \not= \Delta_p$.  Suppose the
contrary, i.e., $H = \Delta_p$. 
Because the order of $F$ is $2$ or $4$ and $p$ is odd this implies that 
the composite $A_p \to H \to P$ is an isomorphism. 
Hence there is a retraction for the inclusion $A_p \to \Delta_p$.
This implies that the conjugation action of $F$ on $A_p$ is trivial.  
We have
already explained that there are homomorphisms 
$\sigma_i \colon F \to \{\pm 1\}$
such that $f \in F$ acts on $C_i$ by multiplication with 
$\sigma_i(f)$ and that
these two homomorphisms must be different.  The induced $F$-action on $A_p =
(C_1)_p \oplus (C_2)_p$ is analogous.  Since $p$ is odd, this leads to a
contradiction.  Hence $H \cap A_p$ is cyclic.

Since $\pr_p(H) = F$, the cyclic subgroup $H \cap A_p$ is invariant under the
$F$-action on $A_p = (C_1)_p \oplus (C_2)_p$. Hence $A_p \cap H$ must be
contained in $(C_i)_p = \alpha_p(C_i)$ for some $i \in \{1,2\}$. We put
$C = C_i$ and $\Delta' = \Delta_{C_i}'$ in the sequel.

Let $\overline {H}$ be the preimage of $H$ under the projection
$\alpha_p \colon \Delta \to \Delta_p$.
Then we get for the homomorphism $\xi_{C} \colon A \to A/C$
\begin{eqnarray*}
\xi_C(\overline{H} \cap A) & \subseteq & p (A/C).
\end{eqnarray*}
Since the map $\mu_C \colon A/C \to A_{\Delta'}$ is injective,
we conclude for the homomorphism $\nu_{C} \colon A \to A_{\Delta'}$
\begin{eqnarray*}
\nu_C(\overline{H} \cap A) \cap A_{\Delta'} & \subseteq & p A_{\Delta'}.
\end{eqnarray*}
Because $p$ is odd and $|F|$ is $2$ or $4$ this implies that
\begin{eqnarray*}
\widehat{\nu}_C(\overline{H}) \cap A_{\Delta'} & \subseteq & p A_{\Delta'}.
\end{eqnarray*}
Because of Lemma~\ref{lem:expansive_maps}~\ref{lem:expansive_maps:affine_map}
and~\ref{lem:expansive_maps:subgroup_in_the_image}
we can find a $p$-expansive map
$$\phi \colon \Delta' \to \Delta'$$
and an affine map
$$a_{p,u} \colon \IR \to \IR, \quad x \mapsto p \cdot x  + u,$$
such that $a_{p,u}$ is $\phi$-equivariant and
\begin{eqnarray}
\nu_C(\overline{H}) & \subseteq & \im(\phi).
\label{mu_C(overline(H))_subseteq_im(phi)}
\end{eqnarray}

Let $E_H$ be the simplicial complex whose underlying space is $\IR$ and whose set of
$0$-simplices is $\{n/2 \mid n \in \IZ\}$. The standard $\Delta'_C$-action is
a cell preserving simplicial action on $E_H$. We define the map
$$f_H \colon \Delta \xrightarrow{\widehat{\nu}_C} \Delta'
\xrightarrow{\ev} \IR \xrightarrow{a_{p,u}^{-1}} E_H = \IR$$
{}From~\eqref{estimate_dl1_versus_deuc_for_E_is_R},~\eqref{d_euc_versus_d_Delta}
and~\eqref{choice_for_p_rank_two} and we conclude for
$g_1,g_2 \in \Delta$ satisfying $d_{\Delta}(g_1,g_2) \le R$
\begin{eqnarray*}
d^{l^1}\big(f_H(g_1),f_H(g_1)\bigr)
& = &
2 \cdot d^{\euc}\big(f_H(g_1),f_H(g_1)\bigr)
\\
& = &
2 \cdot d^{\euc} \big(a_{p,u}^{-1}\circ \ev_C \circ \widehat{\nu}_C(g_1),a_{p,u}^{-1}
\circ  \ev_C  \circ \widehat{\nu}_C(g_1)\bigr)
\\
& = &
\frac{2}{p} \cdot d^{\euc} \big(\ev_C \circ \widehat{\nu}_C(g_1),\ev_C
\circ \widehat{\nu}_C(g_1)\bigr)
\\
&\le &
 \frac{2}{p} \cdot \left(D_1 \cdot d_{\Delta}(g_1,g_2) + D_2\right)
\\
& \le &
\frac{2 \cdot (D_1 \cdot R + D_2}{p}
\\
& \le & \epsilon.
\end{eqnarray*}

Because of~\eqref{mu_C(overline(H))_subseteq_im(phi)} we can define
a $\overline{H}$-action on $E_H$ by requiring
that $\overline{h} \in \overline{H}$ acts by the unique element $g \in \Delta'$
which is mapped under the injective homomorphism $\phi \colon \Delta' \to \Delta'$ to
$\nu_C(\overline{h})$. Then the map $f_H \colon \Delta \to E$ is
$\overline{H}$-equivariant and all isotropy groups of the $\overline{H}$-action on
$E$ are virtually cyclic.

We conclude that $\Delta$ is a Farrell-Hsiang group in the sense of
Definition~\ref{def:Farrell-Hsiang} with respect to the family $\VCyc$.
Hence Lemma~\ref{lem:FJC_for_certain_crystallographic_groups_of_rank_two}
follows from  Theorem~\ref{the:Farrell-Jones_Conjecture_for_Farrell-Hsiang_groups}.
\end{proof}

\begin{lemma}\label{lem:extensions_with_virtually_cyclic_as_kernel}
Let $1 \to V \to G \to Q \to 1$ be an exact sequence of groups.
Suppose that $Q$  satisfies the $K$-theoretic FJC
and that $V$ is virtually cyclic.  Then $G$  satisfies the $K$-theoretic FJC

The same is true for the for the $L$-theoretic {FJC}.
\end{lemma}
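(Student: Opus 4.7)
The plan is to apply the extensions inheritance Theorem~\ref{the:extensions} to the given short exact sequence $1 \to V \to G \xrightarrow{p} Q \to 1$. Since $Q$ satisfies FJC by assumption, this reduces the task to verifying that, for every virtually cyclic subgroup $W \subseteq Q$, the preimage $p^{-1}(W)$ satisfies FJC.

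Fix such a virtually cyclic $W$. The preimage $p^{-1}(W)$ fits into an extension
\[
1 \to V \to p^{-1}(W) \to W \to 1
\]
whose end terms are both virtually cyclic. A straightforward case analysis on whether $V$ and $W$ are finite or infinite shows that $p^{-1}(W)$ is either virtually cyclic or virtually $\IZ^2$: the first three subcases (at least one of $V, W$ finite) collapse the middle term to a group with a finite-index infinite cyclic, or even finite, subgroup, and the remaining subcase (both $V$ and $W$ infinite) forces virtually $\IZ^2$ by passing to the characteristic infinite cyclic subgroups of $V$ and $W$. If $p^{-1}(W)$ is virtually cyclic, it lies in $\VCyc$ and FJC holds trivially since $\EGF{p^{-1}(W)}{\VCyc}$ is equivariantly contractible. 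Otherwise $p^{-1}(W)$ is virtually finitely generated abelian, and FJC follows directly from Theorem~\ref{the:The_Farrell-Jones_Conjecture_for_virtually_finitely_generated_abelian_groups}. The $L$-theoretic and the up-to-dimension-one $K$-theoretic versions are proved by the identical argument, since all cited results are formulated uniformly in these three settings.

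The main potential obstacle is purely conceptual, namely the structural claim that $p^{-1}(W)$ is virtually $\IZ^k$ with $k \le 2$; once this identification is made the lemma reduces to a direct invocation of the inheritance machinery. If one is reluctant to cite the full strength of Theorem~\ref{the:The_Farrell-Jones_Conjecture_for_virtually_finitely_generated_abelian_groups} (for instance because the present lemma is used inside the proof of that theorem), then in the virtually $\IZ^2$ subcase one can alternatively argue that the characteristic infinite cyclic subgroup $V_0 \subseteq V$ is normal in $p^{-1}(W)$, so that a suitable finite-index subgroup of $p^{-1}(W)$ is a crystallographic group of rank two with normal infinite cyclic subgroup to which Lemma~\ref{lem:FJC_for_certain_crystallographic_groups_of_rank_two} applies; the final passage from such a finite-index subgroup to $p^{-1}(W)$ itself is then provided by the finite wreath product inheritance mentioned in Remark~\ref{rem:finite_wreath_product}.
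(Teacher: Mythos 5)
Your main route, as you yourself note, is circular: Lemma~\ref{lem:extensions_with_virtually_cyclic_as_kernel} is invoked inside the induction step of the proof of Theorem~\ref{the:The_Farrell-Jones_Conjecture_for_virtually_finitely_generated_abelian_groups} (``Because of the induction hypothesis and Lemma~\ref{lem:extensions_with_virtually_cyclic_as_kernel} we can assume from now on that $\Delta$ does not contain a normal infinite cyclic subgroup $C$''), so you cannot cite that theorem here. The structural reduction itself is fine and matches the paper: via Theorem~\ref{the:extensions} (the paper instead cites the Transitivity Principle, which amounts to the same thing) one reduces to the case that $Q$ is virtually cyclic, and then the observation that the characteristic infinite cyclic subgroup $C$ of $V$ is normal in $G$, hence $G$ is virtually $\IZ^2$, is exactly the key step the paper takes.

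The problem is in your fallback. You propose passing to a finite-index subgroup of $p^{-1}(W)$ that is crystallographic of rank two with a normal infinite cyclic subgroup, invoking Lemma~\ref{lem:FJC_for_certain_crystallographic_groups_of_rank_two}, and then lifting the conclusion to $p^{-1}(W)$ by the finite-wreath-product inheritance of Remark~\ref{rem:finite_wreath_product}. This does not work as stated. The inheritance from a finite-index subgroup is a feature of the ``with finite wreath products'' version of the FJC, and Lemma~\ref{lem:FJC_for_certain_crystallographic_groups_of_rank_two} is proved only for the plain version; at this point in the paper nothing establishes that the wreath products of that crystallographic group also satisfy the FJC, so Remark~\ref{rem:finite_wreath_product} cannot be applied. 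The paper avoids this entirely by going the other direction: rather than descending to a finite-index subgroup, it uses Quinn's Lemma~4.2.1 to produce an epimorphism with finite kernel $G \to G'$ onto a rank-two crystallographic group $G'$, notes that $C$ maps isomorphically onto a normal infinite cyclic subgroup of $G'$, applies Lemma~\ref{lem:FJC_for_certain_crystallographic_groups_of_rank_two} to $G'$, and then transfers the conclusion back to $G$ by Corollary~\ref{cor:from_Transitivity_Principle}, which handles extensions with \emph{finite kernel} (not finite-index subgroups). Replace your finite-index/wreath-product step with this finite-kernel quotient argument and the proof is correct and matches the paper.
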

\begin{proof}
  By the Transitivity Principle~\ref{the:transitivity} it suffices to show that $G$
  satisfies the $K$-theoretic FJC in the special case that $Q$ is virtually cyclic.
  Since this is obvious for finite $V$, we can assume that $V$ is infinite.
  Let $C'$ be an infinite cyclic subgroup of
  $V$. Let $C$ be the intersection $\bigcap_{\phi \in \aut(V)} \phi(V)$.  Since $V$
  contains only finitely many subgroups of a given index, this is a finite intersection
  and hence $C$ is a characteristic subgroup of $V$ which is infinite cyclic and has
  finite index. Hence $C$ is a normal infinite cyclic subgroup of the virtually finitely
  generated abelian group $G$ and the  virtual cohomological dimension of $G$ is
  two.  There exists a homomorphism
  with finite kernel onto a crystallographic group $G \to G'$ (see for
  instance~\cite[Lemma~4.2.1]{Quinn(2012virtab)}).  The rank of $G'$ is two and $G'$ contains a
  normal infinite cyclic subgroup.  Hence $G$ satisfies the $K$-theoretic FJC because of
  Corollary~\ref{cor:from_Transitivity_Principle} and
  Lemma~\ref{lem:FJC_for_certain_crystallographic_groups_of_rank_two}.
\end{proof}


\subsection{The Farrell-Jones Conjecture for virtually finitely generated abelian groups}
\label{subsec:Virtually_finitely_generated_abelian_groups}

In this subsection we finish the proof of
Theorem~\ref{the:The_Farrell-Jones_Conjecture_for_virtually_finitely_generated_abelian_groups}.
\begin{proof}[Proof of
Theorem~\ref{the:The_Farrell-Jones_Conjecture_for_virtually_finitely_generated_abelian_groups}]
We use induction over the virtual cohomological dimension $n$ of  the virtually
finitely generated abelian group $\Delta$ and subinduction over the minimum of the orders
of finite groups $F$ for which there exists
an exact sequence $1 \to \IZ^n \to \Delta \to F \to 1$.
The induction beginning $n \le 1$ is trivial since then $\Delta$ is virtually cyclic.

In the induction step we can assume that $\Delta$ is a crystallographic group of rank $n$
because of Corollary~\ref{cor:from_Transitivity_Principle}
since a virtually finitely generated abelian group
possesses an epimorphism with finite kernel onto a crystallographic group
(see for instance~\cite[Lemma~4.2.1]{Quinn(2012virtab)}).
Hence we have to prove that a crystallographic group $\Delta$ of rank $n \ge 2$
satisfies both the $K$- and $L$-theoretic FJC provided that every
virtually finitely generated abelian  group  $\Delta'$
satisfies both the $K$- and $L$-theoretic FJC if $\vcd(\Delta') < n$ 
or if there exists an extension $1 \to \IZ^n  \to \Delta' \to F \to 1$ 
for a finite group $F$ with $|F| < |F_{\Delta}|$.

Because of the induction hypothesis and
Lemma~\ref{lem:extensions_with_virtually_cyclic_as_kernel}
we can assume from now on that $\Delta$ does not contain
a normal infinite cyclic subgroup $C$.

Let $\calf$ be the family of subgroups of $\Delta$ which contains all subgroups
$\Delta' \subseteq \Delta$ such that $\vcd(\Delta') <\vcd(\Delta)$ holds or
that both $\vcd(\Delta') = \vcd(\Delta)$ and $|F_{\Delta'}| < |F_{\Delta}|$ hold.
By the induction hypothesis, the Transitivity Principle~\ref{the:transitivity}
and Theorem~\ref{the:Farrell-Jones_Conjecture_for_Farrell-Hsiang_groups}
it suffices to show that $\Delta$ is a Farrell-Hsiang group in the sense of
Definition~\ref{the:Farrell-Jones_Conjecture_for_Farrell-Hsiang_groups}
for the family $\calf$.

Fix a word metric $d_{\Delta}$ on $\Delta$. Let the map
$$\ev \colon \Delta \to \IR^n$$
be given by the evaluation of the cocompact proper
isometric $\Delta$-operation on $\IR^n$. It is by the \v{S}varc-Milnor Lemma
(see~\cite[Proposition~8.19 in Chapter~I.8 on page~140]{Bridson-Haefliger(1999)}) a
quasi-isometry if we equip $\IR^n$ with the Euclidean metric $d^{\euc}$.  Hence we can
find constants $C_1$ and $C_2$ such that for all $g_1,g_2 \in \IZ^2$ we have
\begin{eqnarray}
d^{\euc}\bigl(\ev(g_1),\ev(g_2)\bigr)  &\le & C_1 \cdot d_{\Delta}(g_1,g_2)  +  C_2.
\label{d_euc(ev(g_1),ev(g_2)_compared_to_word_metric}
\end{eqnarray}

Consider real numbers $R > 0$ and $\epsilon >0$.
Since $\Delta$ acts properly, smoothly and cocompactly  on $\IR^n$, we can equip $\IR^n$
with the structure of a simplicial complex such that
the $\Delta$-action is cell-preserving and simplicial.
Denote this simplicial complex by $E$.
The induced $l^1$-metric $d^{l^1}$ and the Euclidean metric
$d^{\euc}$ induce the same topology since $E$ is bounded locally finite.
Hence we can find $\delta > 0$ such that
\begin{eqnarray}
d^{\euc}\bigl(y_1,y_2) \le \delta & \implies & d^{l^1}\bigl(y_1,y_2)\bigr) \le \epsilon
\label{comparing_d_euc_and_d_l1}
\end{eqnarray}
holds for all $y_1,y_2$.

We can write $|F_{\Delta}| = 2^k \cdot l$ for some odd natural number $l$
and non-negative integer $k$.
By Dirichlet's Theorem
(see~\cite[Lemma~3 in III.2.2 on page~25]{Serre(1993)})
there exist infinitely many primes which are congruent to
$-1$ modulo $4l$.
Hence we can choose a prime number $p$ satisfying
\begin{equation*}
p  \ge  \frac{C_1 \cdot R + C_2}{\delta}
\qquad \text{and} \qquad p  \equiv  -1 \mod 4l.
\end{equation*}
Now choose $r$ such that
\begin{eqnarray*}
p^r & \equiv & 1 \mod |F_{\Delta}|.
\end{eqnarray*}
Since $A = A_{\Delta}$ is a characteristic subgroup of $\Delta$,
also $p^r \cdot A$ is  characteristic and hence a normal subgroup 
of $\Delta$.
Define groups
\begin{eqnarray*}
A_{p^r} & :=& A/(p^r\cdot A);
\\
\Delta_{p^r} & := & \Delta/(p^r \cdot A).
\end{eqnarray*}
Let $\pr \colon \Delta \to F_{\Delta}$ and $\pr_{p^r} \colon \Delta_{p^r} \to F_{\Delta}$
be the canonical projections.
Let the epimorphism
$$\alpha_{p^r} \colon \Delta \to \Delta_{p^r}$$
be the canonical projection. It will play the role of the map $\alpha_{R,\epsilon}$
appearing in Definition~\ref{def:Farrell-Hsiang}.

Consider a hyperelementary subgroup $H \subseteq \Delta_{p^r}$.
Let $\overline{H}$ be the preimage of $H$ under
$\alpha_{p^r} \colon \Delta\to  \Delta_{p^r}$.
Suppose that $\pr_{p^r}(H) \not= F_{\Delta}$.
Then $\overline{H}$ is a crystallographic group with
$\vcd(\overline{H}) = \vcd(\Delta)$ and $|F_{\overline{H}}| < F_{\Delta}$.
 By induction hypothesis
$\overline{H}$ satisfies both the $K$- and $L$-theoretic FJC and hence
belongs to $\calf$. Therefore  we can  take as the desired $\overline{H}$-map
in this case the projection to the one-point-space
$$f_H \colon \overline{H} \to E_H:= \{\bullet\}.$$
It remains to consider the case, where $\pr_{p^r}(H) = F_{\Delta}$. 
We conclude
from~\cite[Proposition~2.4.2]{Quinn(2012virtab)} that 
$H \cap A_{p^r} = \{0\}$ since
$\Delta$ contains no infinite normal cyclic subgroup and the prime 
number $p$ satisfies $p \equiv  3 \mod 4$
and $p  \not \equiv  1 \mod q$ for any odd prime $q$ dividing $|F_\Delta|$
and hence $l$.

Since $p^r \equiv  1 \mod |F_{\Delta}|$ we can choose by
Lemma~\ref{lem:expansive_maps}~\ref{lem:expansive_maps:existence}
a $p^r$-expansive homomorphism $\phi \colon \Delta \to \Delta$.
Consider the composite  
$\alpha_{p^r} \circ \phi \colon \Delta \to \Delta_{p^r}$.
Its restriction to $A = A_{\Delta}$ is trivial.
Hence there is a map $\overline{\phi }\colon F_{\Delta}  \to \Delta_{p^r}$ 
satisfying
\begin{eqnarray*}
\alpha_{p^r} \circ \phi & = & \overline{\phi} \circ \pr;
\\
\pr_{p^r} \circ \overline{\phi} & = & \id_{F_{\Delta}}.
\end{eqnarray*}
Hence $\overline{\phi}$ is a splitting of the exact
sequence $1 \to A_{p^r} \to \Delta_{p^r} \to F_{\Delta} \to 1$.
The homomorphism $\pr_{p^r}|_H \colon H \to F_{\Delta}$ is an isomorphism
and hence defines a second splitting. Since the order of the finite group
$A_{p^r}$ and the order of $F_{\Delta}$ are prime,
$H^1\bigl(F_{\Delta};A_{p^r}\bigr)$ vanishes
(see~\cite[Corollary~10.2 in Chapter~III on page~84]{Brown(1982)}).  Hence
the subgroups $H$ and $\im(\alpha_{p^r} \circ \phi) = \im(\overline{\phi})$
are conjugated in $\Delta_{p^r}$
(see~\cite[Corollary~3.13 in Chapter~IV on page~93]{Brown(1982)}).
Because of Remark~\ref{rem:Farrell-Hisiang_plus_conjugation} 
we can assume without loss of generality that $H = \im(\alpha_{p^r} \circ \phi)$.
Next we show for $\overline{H} := \alpha_{p^r}^{-1}(H)$
\begin{eqnarray}
\overline{H} & = & \im(\phi).
\label{overlineH_is_im(phi)}
\end{eqnarray}
Because of $H = \im(\alpha_{p^r} \circ \phi)$ it suffices to prove
$\alpha_{p^r}^{-1}\bigl(\im(\alpha_{p^r} \circ \phi)\bigr) \subseteq \im(\phi)$.
Consider $g \in \alpha_{p^r}^{-1}\bigl(\im(\alpha_{p^r} \circ \phi)\bigr)$.
Choose $g_0 \in \Delta$ with $\alpha_{p^r}(g) = \alpha_{p^r}\bigl(\phi(g_0)\bigr)$.
We conclude that $\alpha_{p^r}\bigl(g \cdot \phi(g_0)^{-1}\bigr)$ is trivial.
Hence we can find $a \in A$ with $\phi(a) = p^r \cdot a = g \cdot \phi(g_0)^{-1}$.
This implies $g = \phi(a\cdot g_0)$. Hence~\eqref{overlineH_is_im(phi)} is true.

By Lemma~\ref{lem:expansive_maps}~\ref{lem:expansive_maps:affine_map}
there exists an element $u \in \IR$ such that the affine map
$$a_{p^r,u} \colon \IR^n \to \IR^n, \quad x \mapsto p^r \cdot y + u$$
is $\phi$-linear.
Consider the composite
$$f_H \colon \Delta \xrightarrow{\ev} \IR^n \xrightarrow{(a_{p^r,u})^{-1}}
E_H := \IR^n.$$
We get from~\eqref{d_euc(ev(g_1),ev(g_2)_compared_to_word_metric}
for $g_1,g_2 \in \Delta$ with $d_{\Delta}(g_1,g_2) \le R$
\begin{eqnarray*}
d^{\euc}\bigl(f_H(g_1), f_H(g_2)\bigr)
& = &
d^{\euc}\bigl((a_{p^r,u})^{-1} \circ \ev(g_1), (a_{p^r,u})^{-1} \circ \ev(g_2)\bigr)
\\
& = &
\frac{1}{p^r} \cdot d^{\euc}\bigl(\ev(g_1), \ev(g_2)\bigr)
\\
& \le &
\frac{1}{p^r} \cdot \left(C_1 \cdot d_{\Delta}(g_1,g_2) + C_2\right)
\\
& \le &
\frac{1}{p^r} \cdot \left(C_1 \cdot R+ C_2\right).
\end{eqnarray*}
Our choice of $p$ guarantees
$$\frac{1}{p^r} \cdot \left(C_1 \cdot R+ C_2\right) \le \delta,$$
where $\delta$ is the number appearing in~\eqref{comparing_d_euc_and_d_l1}.
We conclude from~\eqref{comparing_d_euc_and_d_l1} for all 
\mbox{$g_1,g_2 \in \Delta$}
\begin{eqnarray*}
d_{\Delta}(g_1,g_2)  \le R
& \implies &
d^{l^1}\bigl(f_H(g_1),f_H(g_2)\bigr) \le \epsilon.
\end{eqnarray*}
Because of~\eqref{overlineH_is_im(phi)} we can define a cell preserving simplicial
$\overline{H}$-action on the simplicial complex $E_H$
 by requiring that the action of $h \in \overline{H}$ is
given by the action of $g \in \Delta$ for the element uniquely determined by
$\phi(g) = h$. The isotropy groups of this $H$-action on $E$
are all finite and hence belong to
$\calf$.  The map $f_H\colon \Delta \to E_H$ is $H$-equivariant.

We conclude that $\Delta$ is a Farrell-Hsiang group in the sense of
Definition~\ref{def:Farrell-Hsiang} with respect to the family $\calf$. 
Now
Theorem~\ref{the:The_Farrell-Jones_Conjecture_for_virtually_finitely_generated_abelian_groups} follows from Theorem~\ref{the:Farrell-Jones_Conjecture_for_Farrell-Hsiang_groups}.
\end{proof}


\typeout{---------- Special affine groups  --------------------------}

\section{Irreducible special affine groups}
\label{sec:Irreducible_special_affine_groups}

In this section we prove the $K$-theoretic and the $L$-theoretic
Farrell-Jones Conjecture with additive categories as coefficients with
respect to $\VCyc$ for irreducible special affine groups. This will be
the key ingredient and step in proving the $K$-theoretic and the
$L$-theoretic Farrell-Jones Conjecture with coefficients in an
additive category with respect to $\VCyc$ for virtually poly-$\IZ$
groups.

The irreducible special affine groups will play in the proof for virtually
poly-$\IZ$-groups the analogous role as the crystallographic groups of rank two
which contain a normal infinite cyclic subgroup played in the proof for
virtually finitely generated abelian groups. The general structure of the proof
for virtual poly-$\IZ$ group is similar but technically much more sophisticated
and complicated than in the case of virtually finitely generated abelian
groups. It relies on the fact that we do know the claim already for virtually
finitely generated abelian groups.  
Our proof is inspired by the one appearing
in Farrell-Hsiang~\cite{Farrell-Hsiang(1981b)} and
Farrell-Jones~\cite{Farrell-Jones(1988b)}.


\subsection{Review of (irreducible) special affine groups}
\label{subsec:Review_of_(irreducible)_special_affine_groups}

In this subsection we briefly collect some basic facts about
(irreducible) special affine groups. We will denote by $\vcd$ the
virtual cohomological dimension of a group (see~\cite[Section~11 in
Chapter VIII]{Brown(1982)}).  The following definition is equivalent
to Definition~4.7 in~\cite{Farrell-Jones(1993a)}.

\begin{definition}[(Irreducible) special affine group]
  \label{def:special_affine_group}
  A group $\Gamma$ is called a \emph{special affine group} of rank
  $(n+1)$ if there exists a short exact sequence
$$1 \to \Delta \to  \Gamma \to D\to 1 $$
and an action $\rho' \colon \Gamma \times \IR^n \to \IR^n$ by affine
motions of $\IR^n$ satisfying:

\begin{enumerate}

\item $D$ is either the infinite cyclic group $\IZ$ or the infinite
  dihedral group $D_{\infty}$;

\item The restriction of $\rho'$ to $\Delta$ is a cocompact isometric
  proper action of $\Delta$.

\end{enumerate}

We call a special affine group \emph{irreducible} if for any
epimorphism $\Gamma \to \Gamma'$ onto a virtually finitely generated
abelian group $\Gamma'$ we have $\vcd(\Gamma') \le 1$.

\end{definition}

Notice that the group $\Delta$ appearing in
Definition~\ref{def:special_affine_group} is a crystallographic group
of rank $n$.  Let
\[
\rho'' \colon D \times \IR \to \IR
\]
be the isometric cocompact proper standard action which is given by
translations with integers and $-\id$. We will consider the action
\begin{eqnarray}
  & \rho \colon \Gamma \times \IR^{n+1} \to \IR^{n+1} &
  \label{action_of_A_on_R(n_plus_1)}
\end{eqnarray}
given by the diagonal action for the $\Gamma$-action $\rho'$ on
$\IR^n$ and the $\Gamma$-action on $\IR$ coming from the epimorphism
$\Gamma \to D$ and the $D$-action $\rho''$ on
$\IR$. This $\Gamma$-action $\rho$ is a proper cocompact action by
affine motions and is not necessarily an isometric action.


\subsection{Some homological computations}
\label{subsec:Some_homological_computations}

Let $A =
A_{\Delta}$ be the unique and hence characteristic subgroup of
$\Delta$ which is abelian, normal and equal to its own centralizer in
$\Delta$. Since it is a characteristic subgroup, it is normal in both
$\Delta$ and $\Gamma$. Define
$$Q:= \Gamma/A.$$
Then we obtain exact sequences
\begin{eqnarray}
  & 1 \to A \to \Gamma \xrightarrow{\pr}  Q \to 1; &
  \label{A_to_Gamma_to_Q}
  \\
  & 1 \to F_{\Delta} \to Q \xrightarrow{\pi} D \to 1, &
  \label{F_to_Q_to_D}
\end{eqnarray}
where $F_{\Delta}$ is the finite group $\Delta/A$. In particular $Q$
is an infinite virtually cyclic group.

\begin{lemma} \label{lem:Hast(Q;A)_finite_irreducible_case}
    Let  $\Gamma$ be a special affine group. Consider $A$ as $\IZ
    Q$-module by the conjugation action associated to the exact
    sequence~\eqref{A_to_Gamma_to_Q}. Then:

   \begin{enumerate}

   \item \label{lem:Hast(Q;A)_finite_irreducible_case:characterization}
   $\Gamma$ is irreducible if and only if for any  subgroup
   $\Gamma_0 \subseteq \Gamma$ of finite index $\rk_{\IZ}(H_1(\Gamma_0)) \le 1$ holds;

   \item \label{lem:Hast(Q;A)_finite_irreducible_case:H2}
    The order of $H^2(Q;A)$ is finite;

    \item \label{lem:Hast(Q;A)_finite_irreducible_case:H1}
    If $\Gamma$ is irreducible, then the order of $H^1(Q;A)$ is finite.

    \end{enumerate}
 \end{lemma}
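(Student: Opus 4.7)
The plan is to prove the three parts in order, using core-subgroup/transfer arguments and the Lyndon-Hochschild-Serre spectral sequence for~\eqref{F_to_Q_to_D}.

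For part~(i), the backward implication is routine: given any epimorphism $f \colon \Gamma \to \Gamma'$ with $\Gamma'$ virtually finitely generated abelian, pick $A' \subseteq \Gamma'$ free abelian of finite index and set $\Gamma_0 := f^{-1}(A')$, so that $\Gamma_0 \to A'$ factors through $\Gamma_0^{\mathrm{ab}}$ and is surjective, giving $\vcd(\Gamma') = \rk_{\IZ}(A') \le \rk_{\IZ} H_1(\Gamma_0) \le 1$. For the forward implication, given $\Gamma_0 \subseteq \Gamma$ of finite index, I would pass to the core $\Gamma_1 := \bigcap_{g \in \Gamma} g \Gamma_0 g^{-1}$, which is normal and of finite index in $\Gamma$. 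The quotient $\Gamma/[\Gamma_1,\Gamma_1]$ is finitely generated (since $\Gamma$ is) and virtually abelian (containing $\Gamma_1^{\mathrm{ab}}$ as a finite-index subgroup), so irreducibility forces $\rk_{\IZ}\Gamma_1^{\mathrm{ab}} = \vcd(\Gamma/[\Gamma_1,\Gamma_1]) \le 1$; the standard transfer identity $\mathrm{cor}\circ\mathrm{tr} = [\Gamma_0:\Gamma_1]$ on $H_1$ then yields $\rk_{\IZ}\Gamma_0^{\mathrm{ab}} \le \rk_{\IZ}\Gamma_1^{\mathrm{ab}} \le 1$.

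For part~(ii), I would fix an infinite cyclic subgroup $Q_0 \subseteq Q$ of finite index, constructed by pulling back the obvious infinite cyclic subgroup of $D$ along $\pi$ and then choosing a sufficiently large power of an element projecting to a generator so that it commutes with $F_{\Delta}$. Since $Q_0 \cong \IZ$, we have $H^2(Q_0; A) = 0$, and the standard identity $\mathrm{cor} \circ \mathrm{res} = [Q:Q_0]$ on $H^{\ast}(Q;A)$ forces $[Q:Q_0] \cdot H^2(Q;A) = 0$. The Lyndon-Hochschild-Serre spectral sequence for~\eqref{F_to_Q_to_D}, using finiteness of $F_{\Delta}$, virtual cyclicity of $D$, and finite generation of $A$, shows that $H^{\ast}(Q;A)$ is finitely generated in each degree, so $H^2(Q;A)$ is finitely generated of finite exponent, hence finite.

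For part~(iii), the strategy is to use irreducibility to control $H^1(Q_0;A)$ and then transfer up. Set $\Gamma_0 := \pr^{-1}(Q_0) \subseteq \Gamma$, a finite-index subgroup fitting into $1 \to A \to \Gamma_0 \to Q_0 \to 1$. Since $H_2(Q_0;\IZ) = H_2(\IZ;\IZ) = 0$, the five-term low-degree exact sequence in integral homology for this extension reduces to the split short exact sequence
\[
0 \to A_{Q_0} \to \Gamma_0^{\mathrm{ab}} \to \IZ \to 0,
\]
yielding $\rk_{\IZ}\Gamma_0^{\mathrm{ab}} = \rk_{\IZ} A_{Q_0} + 1$. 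Part~(i) and irreducibility now force $\rk_{\IZ} A_{Q_0} = 0$, so $A_{Q_0} = H^1(Q_0;A)$ is finite. The transfer argument of part~(ii), applied to $H^1$ and combined with finite generation of $H^1(Q;A)$, then forces $H^1(Q;A)$ to be finite. The conceptual heart of the proof is this last step, where the irreducibility of $\Gamma$ enters exactly once, via part~(i), to collapse $A_{Q_0}$ to a finite group; the main technical hurdle is the LHS/transfer bookkeeping for the virtually cyclic group $Q$.
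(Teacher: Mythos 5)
Your proof is correct and follows essentially the same route as the paper's: for~(i) you pass to the normal core and use the transfer on $H_1$ (the paper phrases the same step as ``the map $H_1(\Gamma_1)\to H_1(\Gamma_0)$ has finite cokernel''), and for~(iii) you use the five-term sequence for $1\to A\to\Gamma_0\to\IZ\to 1$ together with part~(i) exactly as the paper does. The only stylistic deviation is in~(ii) and the final step of~(iii), where you establish finiteness via the $\mathrm{cor}\circ\mathrm{res}=[Q:Q_0]$ identity for a finite-index infinite cyclic $Q_0$ plus an LHS finite-generation argument, whereas the paper instead chooses a \emph{normal} infinite cyclic subgroup of $Q$ and reads off finiteness directly from the $E_2$-terms $H^p(F;H^q(\IZ;A))$ of the corresponding Lyndon--Hochschild--Serre spectral sequence; both are standard and equally short.
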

  \begin{proof}~\ref{lem:Hast(Q;A)_finite_irreducible_case:characterization}
  ``$\Longrightarrow$''
  Let $\Gamma_0 \subseteq \Gamma$ be a subgroup of finite index.
  We have to show $\rk_{\IZ}(H_1(\Gamma_0)) \le 1$, provided
  that $\Gamma$ is irreducible.
  We can find a normal subgroup $\Gamma_1 \subseteq \Gamma$ with
  $[\Gamma: \Gamma_1] < \infty$ and $\Gamma_1 \subseteq \Gamma_0$.
  Since the image of the map $H_1(\Gamma_1) \to H_1(\Gamma_0)$ 
  induced by the inclusion 
  has a finite cokernel, it suffices to show  $\rk_{\IZ}(H_1(\Gamma_1)) \le 1$.
  Since $[\Gamma_1,\Gamma_1]$
  is a characteristic subgroup of $\Gamma_1$ and $\Gamma_1$ is a normal
  subgroup of $\Gamma$, the subgroup $[\Gamma_1,\Gamma_1]$ of $\Gamma$ is normal in
  $\Gamma$. Let $f \colon \Gamma \to V := \Gamma /[\Gamma_1,\Gamma_1]$
  be the projection. Its restriction to $\Gamma_1$ factorizes over the projection
  $f_1 \colon \Gamma_1 \to H_1(\Gamma_1)$ to a homomorphism
  $i \colon H_1(\Gamma_1) \to V$. One easily checks that
  $i$ is an injection whose image has finite index in $V$.
  Hence $V$ is virtually finitely generated abelian.
  Since $\Gamma$ is by  assumption irreducible, the virtual cohomological dimension
  of $V$ and hence of $H_1(\Gamma_1)$ is at most one. This implies
   $\rk_{\IZ}(H_1(\Gamma_0)) \le 1$.\\[1mm]
  ``$\Longleftarrow$''
  Consider an epimorphism $f \colon \Gamma \to V$ to a virtually
  finitely generated abelian group $V$. Put $n = \vcd(V)$. Choose a
  subgroup $V_0\subseteq V$ with $V_0 \cong \IZ^n$ and $[V:V_0] < \infty$.
  Let $\Gamma_0 \subseteq \Gamma$ be the preimage of $V_0$ under $f$ and denote by
  $f_0 \colon \Gamma_0 \to V_0$ the restriction of $f$ to $\Gamma_0$.
  Then $f_0$ is an epimorphism and $\Gamma_0$ is a subgroup of $\Gamma$
  with $[\Gamma:\Gamma_0] < \infty$. The map $f_0$ factorizes
  over the projection $\Gamma_0 \to H_1(\Gamma_0)$ to an epimorphism
  $\overline{f_0} \colon H_1(\Gamma_0) \to V_0$. Hence
  $n \le \rk_{\IZ}(H_1(\Gamma_0))$. Since by assumption
  $\rk_{\IZ}(H_1(\Gamma_0)) \le 1$, we conclude $\vcd(V) \le 1$.
  Hence $\Gamma$ is irreducible.
  \\[1mm]~\ref{lem:Hast(Q;A)_finite_irreducible_case:H2}
  Since $Q$ is infinite and virtually cyclic, there exists an exact
  sequence $1 \to \IZ \to Q \to F \to 1$ for some finite group $F$.
  Recall that the cohomology group of finite groups is finite for any
  coefficient module in dimensions $\ge 1$ (see~\cite[Corollary~10.2
  in Chapter~III on page~84]{Brown(1982)}).  Obviously the cohomology
  of $\IZ$ vanishes for any coefficient module in dimensions $\ge
  2$. Now the claim follows from the Hochschild-Serre spectral
  sequence (see~\cite[Section~6 in Chapter~VII]{Brown(1982)}) applied
  to the exact sequence above.
  \\[1mm]~\ref{lem:Hast(Q;A)_finite_irreducible_case:H1}
  Because of the Hochschild-Serre spectral sequence
  (see~\cite[Section~6 in Chapter~VII]{Brown(1982)}) applied to the
  exact sequence above it suffices to prove that $H^1(\IZ;A)$ is
  finite. This is equivalent to the statement that $H_0(\IZ;A)$ is
  finite since $H^1(\IZ;A) \cong H_0(\IZ;A)$.
  Let $\Gamma_0$ be the preimage of $\IZ \subseteq Q$ under the
  projection $\pr \colon \Gamma \to Q$. It is a normal subgroup of
  finite index in $\Gamma$ and fits into an exact sequence $1 \to A
  \to \Gamma_0 \to \IZ\to 0$. {}From the Hochschild-Serre spectral sequence we
  obtain a short exact sequence
  $$0 \to H_0(\IZ;A) \to H_1(\Gamma_0) \to H_1(\IZ) \to 0.$$

 Hence it remains to show that the rank of the finitely generated
 abelian group $H_1(\Gamma_0)$ is at most one. This follows from
 assertion~\ref{lem:Hast(Q;A)_finite_irreducible_case:characterization}.
\end{proof}


\subsection{Finding the appropriate finite quotient groups}
\label{subsec:Finding_the_approriate_finite_quotient_groups}

Fix a special affine group $\Gamma$ of rank $(n+1)$.
For any positive integer $s$
the subgroup $sA \subseteq A$ is characteristic
and hence is normal in both $A$ and $\Gamma$. Put
\begin{eqnarray*}
  A_s & := & A/sA;
  \\
  \Gamma_s & := & \Gamma/sA.
\end{eqnarray*}
Then $A_s$ is isomorphic to $(\IZ/s)^n$ and we obtain an exact
sequence
\begin{eqnarray}
  & 1 \to A_s \to \Gamma_s \xrightarrow{\pr_s}  Q \to 1. &
  \label{A_s_to_Gamma_s_to_Q}
\end{eqnarray}
Let
$$p_s \colon \Gamma \to \Gamma_s$$
be the canonical projection.

\begin{definition}[Pseudo $s$-expansive homomorphism]
\label{def:pseudo_s-expansive_homomorphism}
  Let $s$ be an integer. We call a group homomorphism $\phi \colon
  \Gamma \to \Gamma$ \emph{pseudo $s$-expansive} if it fits into the
  commutative diagram
$$\xymatrix{1 \ar[r]
  & A \ar[r] \ar[d]^{s \cdot \id} & \Gamma \ar[r]^{\pr} \ar[d]^{\phi}
  & Q \ar[r] \ar[d]^{\id} & 1
  \\
  1 \ar[r] & A \ar[r] & \Gamma \ar[r]^{\pr} & Q \ar[r] & 1}
$$
where both the upper and the lower horizontal exact sequence is the
one of~\eqref{A_to_Gamma_to_Q}.
\end{definition}

Recall that $|H^2(Q;A)|$ is finite by
Lemma~\ref{lem:Hast(Q;A)_finite_irreducible_case}%
~\ref{lem:Hast(Q;A)_finite_irreducible_case:H2}.

\begin{lemma}\label{lem:existence_of_pseudo_s-expansive_maps}
  \begin{enumerate}

  \item \label{lem:existence_of_pseudo_s-expansive_maps:existence} For
    any integer $s$ with $s \equiv 1 \mod |H^2(Q;A)|$ there exists a
    pseudo $s$-expansive homomorphism $\phi \colon \Gamma \to \Gamma$;

  \item \label{lem:existence_of_pseudo_s-expansive_maps:splitting}

    For any integer $s$ with $s \equiv 1 \mod |H^2(Q;A)|$ the exact
    sequence $1 \to A_s \to \Gamma_s \xrightarrow{\pr_s} Q \to 1$
    of~\eqref{A_s_to_Gamma_s_to_Q} splits.

  \end{enumerate}
\end{lemma}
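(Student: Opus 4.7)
The plan is to lift the strategy from Lemma~\ref{lem:expansive_maps}\ref{lem:expansive_maps:existence}, replacing the finite holonomy group $F_{\Delta}$ of a crystallographic group by the infinite virtually cyclic group $Q$, and replacing the annihilator estimate for $H^2(F_{\Delta};A_{\Delta})$ by the finiteness of $H^2(Q;A)$ established in Lemma~\ref{lem:Hast(Q;A)_finite_irreducible_case}\ref{lem:Hast(Q;A)_finite_irreducible_case:H2}.

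For part~\ref{lem:existence_of_pseudo_s-expansive_maps:existence}, let $\Theta \in H^2(Q;A)$ denote the extension class of~\eqref{A_to_Gamma_to_Q}, with $A$ viewed as a $\IZ Q$-module via the conjugation action. By the classical correspondence between group extensions and $H^2$ (\cite[Theorem~3.12 in Chapter~IV on page~93]{Brown(1982)}), a commutative diagram of the form demanded in Definition~\ref{def:pseudo_s-expansive_homomorphism}, with vertical maps $s \cdot \id_A$, $\phi$ and $\id_Q$, is precisely the datum of an equivalence between the pushout of~\eqref{A_to_Gamma_to_Q} along $s \cdot \id_A \colon A \to A$ and~\eqref{A_to_Gamma_to_Q} itself. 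Such an equivalence exists if and only if $s \cdot \Theta = \Theta$, i.e.\ $(s-1) \cdot \Theta = 0$ in $H^2(Q;A)$. Since $s-1$ is by hypothesis divisible by $|H^2(Q;A)|$, which annihilates $\Theta$, this identity holds and a pseudo $s$-expansive homomorphism $\phi$ exists.

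For part~\ref{lem:existence_of_pseudo_s-expansive_maps:splitting}, I would deduce the splitting directly from the $\phi$ produced in part~\ref{lem:existence_of_pseudo_s-expansive_maps:existence}. The composite $p_s \circ \phi \colon \Gamma \to \Gamma_s$ sends $A$ into $sA$ and therefore vanishes on $A$; hence it factors uniquely through $\pr \colon \Gamma \to Q$, yielding $\overline{\phi} \colon Q \to \Gamma_s$ with $\overline{\phi} \circ \pr = p_s \circ \phi$. Combining the pseudo-expansive identity $\pr \circ \phi = \pr$ with the obvious factorization $\pr_s \circ p_s = \pr$, we get $\pr_s \circ \overline{\phi} \circ \pr = \pr$, and surjectivity of $\pr$ then gives $\pr_s \circ \overline{\phi} = \id_Q$. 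Thus $\overline{\phi}$ is a section of $\pr_s$ and~\eqref{A_s_to_Gamma_s_to_Q} splits.

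I do not anticipate any genuine obstacle: both assertions are formal consequences of the finiteness of $H^2(Q;A)$ together with the standard dictionary between extensions and second cohomology. The only small point to double-check is that the pushout of~\eqref{A_to_Gamma_to_Q} along $s \cdot \id_A$ induces multiplication by $s$ on the classifying class in $H^2(Q;A)$, which is just the naturality of $H^2(Q;-)$ applied to the homomorphism $s \cdot \id_A \colon A \to A$ of $\IZ Q$-modules.
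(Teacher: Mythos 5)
Your proof is correct and follows essentially the same route as the paper: part~(i) uses the identification of extensions with $H^2(Q;A)$ and the fact that $s \equiv 1 \bmod |H^2(Q;A)|$ forces $H^2(Q;s\cdot\id_A)$ to fix the extension class, and part~(ii) constructs the splitting $\overline{\phi}$ by factoring $p_s \circ \phi$ through $\pr$ exactly as in the paper. The only difference is that you spell out the pushout/naturality justification for why $H^2(Q;s\cdot\id_A)$ is multiplication by $s$, which the paper states without elaboration.
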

\begin{proof}~\ref{lem:existence_of_pseudo_s-expansive_maps:existence}
  Since $A$ is abelian, isomorphism classes of extensions with $A$ as
  normal subgroup and $Q$ as quotient are in one to one correspondence
  with elements in $H^2(Q;A)$ (see~\cite[Theorem~3.12 in Chapter~IV on
  page~93]{Brown(1982)}).  Let $\Theta$ be the class associated to the
  extension~\eqref{A_to_Gamma_to_Q}. Since by assumption $s \equiv 1
  \mod |H^2(Q;A)|$ , the homomorphism
  $$H^2(Q;s \cdot \id_A) = s \cdot \id_{H^2(Q;A)} \colon H^2(Q;A) \to H^2(Q;A)$$
  is the identity and sends $\Theta$ to $\Theta$, and the claim
  follows.
  \\[1mm]~\ref{lem:existence_of_pseudo_s-expansive_maps:splitting} Let
  $\phi \colon \Gamma \to \Gamma$ be a pseudo $s$-expansive map.  The
  composite $p_s \circ \phi \colon \Gamma \to \Gamma_s$ sends $A$ to
  the trivial group and hence factorizes through $\pr\colon \Gamma \to
  Q$ to a homomorphism $\overline{\phi} \colon Q \to \Gamma_s$
  whose composite with $\pr_s \colon \Gamma_s \to Q$ is the  identity.
\end{proof}

The group $Q$ is virtually cyclic. Hence we can choose a normal
infinite cyclic subgroup $C \subseteq Q$.  Fix an integer $s$
satisfying $s \equiv 1 \mod |H^2(Q;A)|$ and a positive integer $r$
such that the order of $\aut(A_s) = \GL_n(\IZ/s)$ divides $r$.  Put
$$Q_r = Q/rC.$$
Let
$$\rho_s \colon Q \to \aut(A_s)$$
be the group homomorphism given by the conjugation action associated
to the exact sequence~\eqref{A_s_to_Gamma_s_to_Q}.  It factorizes
through the projection $Q \to Q_r$ to a homomorphism
$$\rho_{r,s} \colon Q_r \to \aut(A_s).$$
By
Lemma~\ref{lem:existence_of_pseudo_s-expansive_maps}%
~\ref{lem:existence_of_pseudo_s-expansive_maps:splitting}
we can choose a splitting
$$\sigma \colon Q \to \Gamma_s$$
of the projection $\pr_s \colon  \Gamma_s \to Q$. It yields an explicit
isomorphism $\Gamma_s \xrightarrow{\cong} A_s \rtimes_{\rho_s} Q$.
Its composition with the group homomorphism $A_s \rtimes_{\rho_s} Q \to
A_s \rtimes_{\rho_{r,s}} Q_r$, which comes from the identity on $A_s$ and
the projection $Q \to Q_r$, is denoted by
$$q_{r,s} \colon \Gamma_s \to A_s \rtimes_{\rho_{r,s}} Q_r.$$
We obtain a commutative diagram
\begin{eqnarray*}
  &
  \xymatrix{1 \ar[r] &
    A_s \ar[r] \ar[d]_{\id} &
    \Gamma_s \ar[r]^{\pr_s} \ar[d]^{q_{r,s}}
    & Q\ar[d] \ar[r]
    & 1
    \\
    1 \ar[r] &
    A_s \ar[r] ^-{\overline{i_s}} &
    A_s \rtimes_{\rho_{r,s}} Q_r \ar[r]
    & Q_r \ar[r]
    & 1
  }
  &
  \label{diagram_for_q_s}
\end{eqnarray*}
where the upper exact sequence is the one
of~\eqref{A_s_to_Gamma_s_to_Q}, the lower exact sequence is the
obvious one associated to a split extension and the right vertical
arrow is the canonical projection $Q \to Q_r$.

Define an epimorphism of groups by the composite

\begin{eqnarray}
  & \alpha_{r,s} \colon \Gamma \xrightarrow{p_s} \Gamma_s
\xrightarrow{q_{r,s}} A_s \rtimes_{\rho_{r,s}} Q_r. &
\label{the_maps_alpha_rs}
\end{eqnarray}

It will play the role of the map $\alpha_{R,\epsilon}$ appearing
in Definition~\ref{def:Farrell-Hsiang}.


\subsection{Hyperelementary subgroups and index estimates}
\label{subsec:Hyperelementary_subgroups_and_index_estimates}

This subsection is devoted to the proof of the following proposition.
Recall that $\pr \colon \Gamma \to Q$ and $\pi \colon Q \to D$ are
the canonical projections and that we
consider $A$ as a $\IZ Q$-module by the conjugation action coming from the exact
sequence~\eqref{A_to_Gamma_to_Q}.

\begin{proposition} \label{prop:hyper-good} Let $\Gamma$ be an irreducible special
  affine group.  Consider any natural number $\tau$.
  Then we can find natural numbers $s$ and $r$
  with the following properties:

  \begin{enumerate}

  \item \label{prop:hyper-good:s_equiv_1}
   $s \equiv 1 \mod |H^2(Q;A)|$;

  \item \label{prop:hyper-good:aut_divides_r}
  The order of $\aut(A_s)$ divides $r$;

  \item \label{prop:hyper-good:alternative}For every hyperelementary subgroup $H
  \subseteq A_s \rtimes_{\rho_{r,s}} Q_r$   one of the
  following two statements is true if $\overline{H}$ is the preimage of
  $H$ under the epimorphism $\alpha_{r,s} \colon \Gamma \to A_s
  \rtimes_{\rho_{r,s}} Q_r$:
  \begin{enumerate}

  \item \label{prop:hyper-good:index_in_A}
   The homology groups $H^1(\pr(\overline{H});A)$  and
   $H^2(\pr(\overline{H});A)$ are finite 
   and there exists
   a natural number $k$ satisfying
  $$\begin{array}{l}
    k \; \text{divides} \; s;
    \\
    k \equiv 1 \mod |H^2(Q;A)|;
    \\
    k \equiv 1 \mod \bigl|H^1\bigl(\pr(\overline{H});A\bigr)\bigr|;
    \\
    k \equiv 1 \mod \bigl|H^2\bigl(\pr(\overline{H});A\bigr)\bigr|;
    \\
    k \ge \tau;
    \\
    \overline{H} \cap A \subseteq kA;
  \end{array}$$

\item \label{prop:hyper-good:index_in_D}
$\left[D : \pi\circ \pr(\overline{H})\right] \ge \tau$.

\end{enumerate}
\end{enumerate}
\end{proposition}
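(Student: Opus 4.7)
The proof follows the Farrell-Hsiang / Farrell-Jones strategy used in the proof of Theorem~\ref{the:The_Farrell-Jones_Conjecture_for_virtually_finitely_generated_abelian_groups}. The plan is to apply Dirichlet's theorem on primes in arithmetic progressions to select a prime $p$ satisfying a strong simultaneous congruence condition, set $s$ and $r$ as suitable powers and multiples of $p$, and then case-analyze each hyperelementary subgroup $H$ according to the position of $\pr(\overline H) \subseteq Q$.

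First I would enumerate the finite collection of subgroups that can arise in case (iii)(a). Since $Q$ is virtually cyclic and $\pi \colon Q \to D$ has finite kernel $F_\Delta$, the set
\[
\calq_\tau := \{Q_0 \subseteq Q : [D:\pi(Q_0)] < \tau\}
\]
consists of subgroups of index at most $\tau \cdot |F_\Delta|$ in $Q$, hence is finite. For each $Q_0 \in \calq_\tau$, its preimage $\Gamma_0 \subseteq \Gamma$ has finite index in $\Gamma$; by irreducibility and Lemma~\ref{lem:Hast(Q;A)_finite_irreducible_case}\ref{lem:Hast(Q;A)_finite_irreducible_case:characterization} one obtains $\rk_\IZ H_1(\Gamma_0) \le 1$. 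The Hochschild-Serre argument from the proof of Lemma~\ref{lem:Hast(Q;A)_finite_irreducible_case}\ref{lem:Hast(Q;A)_finite_irreducible_case:H1}, applied to an infinite cyclic subgroup of finite index in $Q_0$, then shows that both $|H^1(Q_0;A)|$ and $|H^2(Q_0;A)|$ are finite. Let $M$ denote the product of $|H^2(Q;A)|$ and of $|H^1(Q_0;A)| \cdot |H^2(Q_0;A)|$ over all $Q_0 \in \calq_\tau$.

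Next, by Dirichlet's theorem pick a prime $p > \tau$ with $p \equiv 1 \pmod M$, set $s := p^N$ for a sufficiently large integer $N$, and set $r := |\aut(A_s)|$. Then conditions (i) and (ii) are immediate, and any integer of the form $k = p^j$ (with $1 \le j \le N$) automatically satisfies all four congruence conditions in (iii)(a), since $p^j \equiv 1 \pmod M$. Given a hyperelementary $H \subseteq A_s \rtimes_{\rho_{r,s}} Q_r$, put $Q_0 := \pr(\overline H)$. If $Q_0 \notin \calq_\tau$ we are in case (iii)(b). If instead $Q_0 \in \calq_\tau$, then establishing (iii)(a) reduces to producing some $j$ with $p^j \ge \tau$ and $\overline H \cap A \subseteq p^j A$, equivalently $H \cap A_s \subseteq p^j A_s$.

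The main obstacle is this last reduction, which requires controlling the depth of $H \cap A_s$ in the filtration $A_s \supseteq pA_s \supseteq \cdots \supseteq p^N A_s = 0$. The approach I would take is to compare $H$, reduced modulo $p^j A_s$, with the reduction of the canonical splitting $\sigma \colon Q \to \Gamma_s$ provided by Lemma~\ref{lem:existence_of_pseudo_s-expansive_maps}\ref{lem:existence_of_pseudo_s-expansive_maps:splitting}, exploiting the hyperelementary decomposition $H = C_H \rtimes P_H$ together with the bounded index of $\bar H_Q := HA_s/A_s \subseteq Q_r$. Vanishing or smallness of $H^1(\bar H_Q; A_{p^j})$, ensured by $p \equiv 1 \pmod M$ together with the long exact sequences arising from $0 \to pA_{p^{j+1}} \to A_{p^{j+1}} \to A_{p^j} \to 0$, should force $H$ to coincide, up to $A_{p^j}$-conjugation, with the standard splitting, yielding $H \cap A_s \subseteq p^j A_s$ step by step up the filtration. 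Choosing $N$ large enough, possibly after imposing further Dirichlet constraints on $p$, so that $p^j \ge \tau$ holds for the available $j$, constitutes the main technical work.
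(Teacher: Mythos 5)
Your approach diverges from the paper's at the most critical point, and the divergence creates a genuine gap. You propose $s = p^N$ for a \emph{single} prime $p$, whereas the paper takes $s = p_1 p_2$ for two \emph{distinct} primes chosen via Dirichlet (see Definition~\ref{def:cyclic-good} and Lemma~\ref{lem:all-M-are-cyclic-good}). This distinction is not cosmetic: the paper's argument reduces from hyperelementary to cyclic subgroups (Lemma~\ref{lem:cyclic-good-implies-hyper-good}) by exploiting that an $l$-hyperelementary $H$ sits in an extension $1\to C\to H\to L\to 1$ with $C$ cyclic and $L$ an $l$-group, so that $H\cap A_s$ modulo $C\cap A_s$ is an $l$-group; since $s = p_1 p_2$, at least one $p_i$ is different from $l$, and that $p_i$ can serve as the number $k$. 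With $s = p^N$ there is no ``other prime,'' and when $H$ is $p$-hyperelementary you lose all structural control on $H\cap A_s$, which is a priori an arbitrary $p$-subgroup of $(\IZ/p^N)^n$ and need not lie in $p^j A_s$ for any $j\geq 1$.

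Concretely, take $\Gamma = \IZ^n \rtimes_M \IZ$. Irreducibility forces $\det(M - I) \neq 0$, but this does not prevent $M$ from fixing vectors modulo $p$ (and hence modulo $p^N$) for primes $p$ dividing $\det(M-I)$ or, after passing to powers of $M$, for many other primes as well, even after you impose finitely many Dirichlet avoidance conditions. Whenever $M_s$ has an invariant line $\ell \subset (\IZ/p^N)^n$, the subgroup $H = \ell \rtimes \IZ/r$ is $p$-hyperelementary, satisfies $\pr(\overline H) = Q$ (so $[D:\pi\circ\pr(\overline H)] = 1$ and case~\ref{prop:hyper-good:index_in_D} is unavailable), yet $H\cap A_s = \ell \not\subseteq p A_s$, so case~\ref{prop:hyper-good:index_in_A} fails for every $k > 1$. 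Your proposed fix via vanishing of $H^1(\bar H_Q; A_{p^j})$ and comparison with the splitting $\sigma$ would at best detect conjugacy of sections, which says nothing about the size of $H\cap A_s$ when $H$ contains an honest $p$-subgroup of $A_s$ that is not swallowed by conjugation. You correctly flag this as ``the main technical work,'' but the single-prime setup makes it unfixable without changing the choice of $s$: you would essentially be forced back to the two-prime trick. Your enumeration of $\calq_\tau$ and the use of Lemma~\ref{lem:Hast(Q;A)_finite_irreducible_case} to get finiteness of $H^1, H^2$ and then a modulus $M$ for Dirichlet does mirror the opening of the paper's Lemma~\ref{lem:reduction_to_Zn_rtimes_Z}, but the heart of the proof is the hyper-good/cyclic-good machinery that your sketch replaces with a cohomological argument that does not reach.
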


It will provide us with the necessary index estimates when we later
show that $\Gamma$ is a Farrell-Hsiang group in the sense of
Definition~\ref{def:Farrell-Hsiang}.

In order to prove Proposition~\ref{prop:hyper-good} we first reduce from
special affine groups of rank $(n+1)$ to
the special case of  semi-direct products 
$\IZ^n\rtimes_{\phi} \IZ$. 
Notice that every special affine group of rank $(n+1)$ contains
a subgroup of finite index which is isomorphic to 
$\IZ^n\rtimes_{\phi} \IZ$.

\begin{definition}
  \label{def:hyper-good}
  Consider $M \in \GL_n(\IZ)$. We will say that $M$ is
  \emph{hyper-good} if the following holds: Given natural numbers $o$
  and $\nu$, there are natural numbers $s$ and $r$ satisfying
  \begin{enumerate}
  \item \label{def:hyper-good:s_equiv_1} $s \equiv 1 \mod o$;
  \item \label{def:hyper-good:order_of_GL_divides_r} The order of
    $\GL_n(\IZ/s)$ divides $r$.  In particular we can consider the
    group $(\IZ/s)^n \rtimes_{M_s} \IZ/r$, where $M_s$ is the
    reduction of $M$ modulo $s$.  (We will consider $(\IZ/s)^n$ as a
    subgroup of this group and denote by $\pr_{r,s}$ the canonical
    projection from this group to $\IZ/r$.)
  \item \label{def:hyper-good:index} If $H$ is a hyperelementary
    subgroup of $(\IZ/s)^n \rtimes_{M_s} \IZ/r$, then at least one
     of the following two statements is true:
    \begin{enumerate}
    \item \label{def:hyper-good:index:cap_Zn} There exists a natural
      number $k$ satisfying
     $$\begin{array}{l}
       k \; \text{divides} \; s;
       \\
       k \equiv 1 \mod o;
       \\
       k \ge \nu;
       \\
       H \cap (\IZ/s)^n \subseteq k(\IZ/s)^n;
       \end{array}$$

   \item \label{def:hyper-good:index:Zr} $\bigl[\IZ/r : \pr_{r,s}(H)\bigr] \geq
     \nu$.
   \end{enumerate}
 \end{enumerate}
\end{definition}

 In the sequel we will use the following elementary facts about indices of subgroups.
Given  a group $G$ with two subgroups $G_0$ and $G_1$ of finite index, we get
$$[G_0 : (G_0 \cap G_1)] \le [G:G_1].$$
If $f \colon G \to G'$ is an epimorphism
with finite kernel $K$ and $G_0 \subseteq G$ is a subgroup, then
$$[G':f(G_0)] \le [G:G_0] \le [G':f(G_0)] \cdot |K|.$$

\begin{lemma} \label{lem:reduction_to_Zn_rtimes_Z} In order to prove
  Proposition~\ref{prop:hyper-good} it suffices to show that any
  matrix $M \in \GL_n(\IZ)$ is hyper-good.
\end{lemma}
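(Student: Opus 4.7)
The plan is to exploit the fact that $\Gamma$ contains a finite-index subgroup of the form $\IZ^n \rtimes_M \IZ$ and to translate the cohomological bookkeeping of Proposition~\ref{prop:hyper-good} into the purely number-theoretic statement of Definition~\ref{def:hyper-good}. First, I choose a normal infinite cyclic subgroup $C \subseteq Q$ (which exists since $Q$ is virtually cyclic) and put $\Gamma_0 := \pr^{-1}(C) \subseteq \Gamma$. Because $C$ is free, the extension $1 \to A \to \Gamma_0 \to C \to 1$ splits, giving $\Gamma_0 \cong \IZ^n \rtimes_M \IZ$ for some $M \in \GL_n(\IZ)$. Write $N := [\Gamma:\Gamma_0] = [Q:C] < \infty$; also $\pi|_C \colon C \to D$ is injective, since its kernel must lie in $F_\Delta$ while being a subgroup of the infinite cyclic group $C$.

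Given $\tau$, I select parameters $o$ and $\nu$ to feed into the hyper-goodness of $M$. The key observation is that if alternative~\ref{prop:hyper-good:index_in_D} of Proposition~\ref{prop:hyper-good} fails for a given $\overline H$, i.e., $[D : \pi\circ\pr(\overline H)] < \tau$, then $[Q : \pr(\overline H)] \le \tau \cdot |F_\Delta|$. Hence $\pr(\overline H)$ ranges over a \emph{finite} collection $\calS_\tau$ of subgroups of $Q$, each of which is finitely generated and virtually cyclic, so $H^1(L;A)$ and $H^2(L;A)$ are finite for every $L \in \calS_\tau$ by the argument of Lemma~\ref{lem:Hast(Q;A)_finite_irreducible_case}. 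I take $o$ to be a common multiple of $\tau$, $|H^2(Q;A)|$, and the orders $|H^1(L;A)|$, $|H^2(L;A)|$ over all $L \in \calS_\tau$, and I set $\nu := N\tau$. The hyper-goodness of $M$ applied with these parameters produces $s$ and $r$; the resulting conditions $s \equiv 1 \mod o$ and $|\GL_n(\IZ/s)|$ divides $r$ immediately imply~\ref{prop:hyper-good:s_equiv_1} and~\ref{prop:hyper-good:aut_divides_r} of Proposition~\ref{prop:hyper-good} (using $\aut(A_s) \subseteq \GL_n(\IZ/s)$).

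Now let $H \subseteq A_s \rtimes_{\rho_{r,s}} Q_r$ be hyperelementary and set $H' := H \cap \alpha_{r,s}(\Gamma_0)$. Matching the splittings used to write $\Gamma_0 = A \rtimes_M C$ and $\Gamma_s = A_s \rtimes Q$, one identifies $\alpha_{r,s}(\Gamma_0)$ with $(\IZ/s)^n \rtimes_{M_s} \IZ/r$, a subgroup of index $N$ in $A_s \rtimes_{\rho_{r,s}} Q_r$. Thus $H'$ is a hyperelementary subgroup of $(\IZ/s)^n \rtimes_{M_s} \IZ/r$, and since $\ker \alpha_{r,s} \subseteq \Gamma_0$ we have $\alpha_{r,s}^{-1}(H') = \overline H \cap \Gamma_0$. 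Apply Definition~\ref{def:hyper-good} to $H'$. In case~\ref{def:hyper-good:index:cap_Zn}, the number $k$ satisfies $k \mid s$, $k \equiv 1 \mod o$, $k \ge \nu \ge \tau$, and $H' \cap (\IZ/s)^n \subseteq k(\IZ/s)^n$; the identity $H' \cap (\IZ/s)^n = H \cap A_s$ together with $k \mid s$ yields $\overline H \cap A \subseteq kA$, and the congruence $k \equiv 1 \mod o$ absorbs every cohomological congruence demanded by~\ref{prop:hyper-good:index_in_A} (under the running assumption that alternative~\ref{prop:hyper-good:index_in_D} fails, so that $\pr(\overline H) \in \calS_\tau$). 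In case~\ref{def:hyper-good:index:Zr}, one has $[\IZ/r : \pr_{r,s}(H')] \ge N\tau$; combining this with the injectivity of $\pi|_C$ and $[\pr(\overline H) : \pr(\overline H \cap \Gamma_0)] \le N$ yields $[D : \pi \circ \pr(\overline H)] \ge \tau$, which is exactly~\ref{prop:hyper-good:index_in_D}.

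The principal obstacle is that the cohomology groups appearing in Proposition~\ref{prop:hyper-good}~\ref{prop:hyper-good:index_in_A} depend on the particular $\overline H$, so a uniform $o$ does not obviously exist. The dichotomy observation pinning $\pr(\overline H)$ into the finite set $\calS_\tau$ whenever case~\ref{prop:hyper-good:index_in_A} is in force is what makes the reduction go through.
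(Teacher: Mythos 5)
Your proposal is correct and follows essentially the same route as the paper's proof: both pass to $\Gamma_0 = \pr^{-1}(C) \cong \IZ^n \rtimes_M \IZ$, exploit the dichotomy that either $[D:\pi\circ\pr(\overline H)]$ is large or $\pr(\overline H)$ ranges over a finite collection of finite-index subgroups of $Q$, and accordingly feed $o$ (a common multiple of the relevant cohomology orders) and an appropriately scaled $\nu$ into the hyper-good hypothesis for $M$. The only differences are cosmetic: you take $\nu = N\tau$ where the paper takes $\nu = |F_\Delta|\cdot\tau$ (both work, since $N = [Q:C] \ge |F_\Delta|$), and you include $\tau$ as a factor of $o$, which is harmless but unnecessary since $k \ge \nu \ge \tau$ already gives the required lower bound.
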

\begin{proof}
  Recall that we have already chosen a normal infinite cyclic subgroup
  $C \subseteq Q$.  The index $[Q:C]$ is finite.  Let $\rho \colon Q
  \to \aut(A)$ be the conjugation action associated to the exact
  sequence $1 \to A \to \Gamma \xrightarrow{\pr} Q \to 1$ introduced
  in~\eqref{A_to_Gamma_to_Q}.  Fix a generator $t$ of $C$. Let
  \[\eta \colon A \to A\]
  be the automorphism given by $\rho(t)$.  Put
$$\widehat{\Gamma} := \pr^{-1}(C).$$
 Then $\widehat{\Gamma}$ is a normal subgroup in $\Gamma$ of finite
 index $[\Gamma : \widehat{\Gamma}] = [Q:C]$ and fits into an exact
 sequence
$$1 \to A \to \widehat{\Gamma} \xrightarrow{\widehat{\pr}} C \to 1,$$
  where $\widehat{\pr}$ is the restriction of $\pr$ to
 $\widehat{\Gamma}$.

Let $\tau$ be any natural number.
Let $\Gamma' \subseteq \Gamma$ be
 a subgroup of finite index. The exact sequence
$1 \to \Delta \to \Gamma \xrightarrow{z} D \to 1$ appearing in
Definition~\ref{def:special_affine_group} yields an exact sequence
$$1 \to \Gamma' \cap \Delta \to \Gamma' \to z(\Gamma') \to 1$$
for subgroups $\Gamma' \cap \Delta \subseteq \Delta$
and $z(\Gamma') \subseteq D$ of finite index.
Hence $\Gamma'$ is again an special affine group.  We conclude
from Lemma~\ref{lem:Hast(Q;A)_finite_irreducible_case}~%
~\ref{lem:Hast(Q;A)_finite_irreducible_case:characterization}
that $\Gamma'$ is irreducible. The exact
sequence~\eqref{A_to_Gamma_to_Q} yields the exact sequence
$$1 \to A' := A \cap \Gamma' \to \Gamma' \to Q' := \pr(\Gamma')  \to 1$$
for subgroups $A' \subseteq A$ and $Q' \subseteq Q$ of finite index.
Since $A' = A_{\Gamma' \cap \Delta}$ 
this is just the version of~\eqref{A_to_Gamma_to_Q} for $\Gamma'$.
Hence $H^1(Q';A')$ and $H^2(Q';A')$ are
finite by  Lemma~\ref{lem:Hast(Q;A)_finite_irreducible_case}%
~\ref{lem:Hast(Q;A)_finite_irreducible_case:H2}
and~\ref{lem:Hast(Q;A)_finite_irreducible_case:H1}.
The obvious sequence of abelian groups $1 \to A' \to A \to A/A' \to 0$
is an exact sequence of $\IZ Q'$-modules. It yields the long exact Bockstein sequence
\begin{multline*}
\cdots \to
H^1(Q';A') \to H^1(Q';A) \to H^1(Q';A/A') \to H^2(Q';A') \\
\to H^2(Q';A) \to H^2(Q';A/A') \to \cdots.
\end{multline*}
Since $A/A'$ is finite, $H^1(Q';A/A')$ and $H^2(Q';A/A')$ are finite.
Since $H^1(Q';A')$ and $H^2(Q';A')$ are finite, we conclude that
$H^1(Q';A)$ and $H^2(Q';A)$ are finite.

In particular we get for any subgroup $Q'\subseteq Q$ of finite index
that $H^1(Q';A)$ and $H^2(Q';A)$ are finite, just apply
the argument above to the special case,
where $\Gamma'$ is the preimage of $Q'$ under $\pr \colon \Gamma \to Q$.

Let $I$ be the set of subgroups $Q'$ of $Q$ of finite index
such that $\left[D : \pi(Q')\right] < \tau$.  We conclude for $Q' \in I$
$$[Q : Q']\le |F_{\Delta}| \cdot \left[D : \pi(Q')\right] \le |F_{\Delta}| \cdot \tau$$
from the exact sequence~\eqref{F_to_Q_to_D}. Since $Q$ contains only
finitely many subgroups of finite index bounded by $|F_{\Delta}| \cdot \tau$,
the set $I$ is finite.  Apply the assumption hyper-good to
the matrix $M$ describing the automorphism $\eta$ after identifying $A
= \IZ^n$ for the constants
\begin{eqnarray}
  \nu & = & \tau \cdot |F_{\Delta}|; \label{choice_of_nu}
  \\
  o & = & \prod_{Q' \in I} \left(\bigl|H^1(Q';A)\bigr| \cdot  \bigl|H^2(Q';A)\bigr|\right).
\label{choice_of_o}
\end{eqnarray}
Let $r$, $s$ and $k$ be the resulting natural numbers.

Recall that we have chosen a splitting $\sigma \colon Q \to \Gamma_s$
of the projection $\pr_s \colon \Gamma_s \to Q$. Let $\gamma \in
\Gamma$ be any element which is mapped under $p_s \colon \Gamma \to
\Gamma_s$ to $\sigma(t)$.  Conjugation with $\gamma$ induces on $A$
just the automorphism $\eta \colon A \to A$ since $\pr \colon \Gamma
\to Q$ maps $\gamma$ to $t$. The choice of $\gamma$ yields an explicit
identification
$$\widehat{\Gamma} = A \rtimes_{\eta} C.$$

Put
$$C_r = C/rC.$$
The epimorphism $\alpha_{r,s} := q_{r,s} \circ p_s \colon \Gamma \to
A_s \rtimes_{\rho_{r,s}} Q_r$ restricted to $\widehat{\Gamma}$ is the
composite of the inclusion $A_s \rtimes_{\rho_{r,s}|_{C_r}} C_r \to A_s
\rtimes_{\rho_{r,s}} Q_r$ with the obvious projection $\widehat{\alpha_{r,s}}
\colon A \rtimes_{\eta} C \to A_s \rtimes_{\eta_s} C_r$, where $\eta_s
\colon A_s \to A_s$ is the automorphism induced by $\eta \colon A \to
A$.

Consider any hyperelementary subgroup $H \subseteq A_s
\rtimes_{\rho_{r,s}} Q_r$.  Put
$$\widehat{H} := H \cap (A_s \rtimes_{\eta_s} C_r).$$
This is a hyperelementary subgroup of $A_s \rtimes_{\eta_s} C_r$ and we
get
\begin{eqnarray*}
   \alpha_{r,s}^{-1}(H) \cap \widehat{\Gamma}
  & = &
  \widehat{\alpha_{r,s}}^{-1}(\widehat{H});
  \\
  \alpha_{r,s}^{-1}(H)  \cap A
  & = &
  \widehat{\alpha_{r,s}}^{-1}(\widehat{H}) \cap A;
  \\
  \pr\bigl(\alpha_{r,s}^{-1}(H)\bigr) \cap C
  & = &
  \widehat{\pr}\bigl(\widehat{\alpha_{r,s}}^{-1}(\widehat{H})\bigr).
\end{eqnarray*}
Since the kernel of the epimorphism $\pi \colon Q \to D$ is the finite
group $F_{\Delta}$, we get
\begin{eqnarray*}
  \left[D: \pi\circ \pr\bigl(\alpha_{r,s}^{-1}(H)\bigr)\right]
  & \ge & \frac{\left[Q: \pr\bigl(\alpha_{r,s}^{-1}(H)\bigr)\right]}{[F_{\Delta}|};
  \\
  \left[Q: \pr\bigl(\alpha_{r,s}^{-1}(H)\bigr)\right]
  & \ge &
  \left[C:\widehat{\pr}\bigl(\widehat{\alpha_{r,s}}^{-1}(\widehat{H})\bigr)\right].
\end{eqnarray*}
This implies
\begin{eqnarray*}
  \alpha_{r,s}^{-1}(H)  \cap A \subseteq kA & \Longleftrightarrow &
  \widehat{\alpha_{r,s}}^{-1}(\widehat{H}) \cap A \subseteq kA;
  \\
  \left[D: \pi\circ \pr\bigl((\alpha_{r,s} ^{-1}(H)\bigr)\right] & \ge &
  \frac{\left[C:\widehat{\pr}\bigl(\widehat{\alpha_{r,s}}^{-1}(\widehat{H})\bigr)\right]}
  {|F_{\Delta}|}.
\end{eqnarray*}
Since the projection $C \to C_r$ maps
$\widehat{\pr}\bigl(\widehat{\alpha_{r,s}}^{-1}(\widehat{H})\bigr)$
to $\pr_{r,s}(\widehat{H})$, we get
\begin{eqnarray*}
  \widehat{\alpha_{r,s}}^{-1}(\widehat{H}) \cap A \subseteq kA
  & \Longleftrightarrow &
  \widehat{H}\cap A_s\subseteq kA_s;
  \\
  \left[C:\widehat{\pr}\bigl(\widehat{\alpha_{r,s}}^{-1}(\widehat{H})\bigr)\right]
  & \ge &
  \left[C_r: \pr_{r,s}(\widehat{H})\right],
\end{eqnarray*}
where $\pr_{r,s} \colon A_s \rtimes_{\eta} C_r \to C_r$ is the
canonical projection. We conclude
\begin{eqnarray}
  \alpha_{r,s}^{-1}(H)  \cap A \subseteq k  A & \Longleftrightarrow &
  \widehat{H}\cap A_s\subseteq kA_s;
  \label{reduction_to_hyper-good_subset_kA}
  \\
  \left[D: \pi\circ \pr\bigl((\alpha_{r,s} ^{-1}(H)\bigr)\right] & \ge &
  \frac{\left[C_r: \pr_{r,s}(\widehat{H})\right]}{|F_{\Delta}|}.
  \label{reduction_to_hyper-good_subset_large_index}
\end{eqnarray}
Now we can show that one of two conditions
appearing in assertion~\ref{prop:hyper-good:alternative} of
Proposition~\ref{prop:hyper-good} hold
with respect to the number $k$.

Suppose that ${\left[C_r: \pr_{r,s}(\widehat{H})\right]} \ge \nu$.
Then $\left[D: \pi\circ \pr\bigl((\alpha_{r,s} ^{-1}(H)\bigr)\right] \ge \tau$
by~\eqref{reduction_to_hyper-good_subset_large_index} and our choice of $\nu$
in~\eqref{choice_of_nu}.
Hence condition~\ref{prop:hyper-good:index_in_D}  appearing
in Proposition~\ref{prop:hyper-good} is true. Hence it remains to show
that condition~\ref{prop:hyper-good:index_in_A}
in Proposition~\ref{prop:hyper-good} holds provided
that 
$\left[D: \pi\circ \pr\bigl((\alpha_{r,s} ^{-1}(H)\bigr)\right] < \tau$
holds.
This implies
${\left[C_r: \pr_{r,s}(\widehat{H})\right]} <  \nu$.
Recall that the number $k$ satisfies
$$\begin{array}{l}
       k \; \text{divides} \; s;
       \\
       k \equiv 1 \mod o;
       \\
       k \ge \tau;
       \\
       \widehat{H} \cap A_s \subseteq kA_s.
     \end{array}$$
The group $\overline{H} := \alpha_{r,s}^{-1}(H)$ has the property that
$\pr(\overline{H})$ belongs to the set $I$ appearing
in the definition of $o$  in~\eqref{choice_of_o}. Now
condition~\ref{prop:hyper-good:index_in_A}  appearing
in Proposition~\ref{prop:hyper-good} follows from our choice of $o$
in~\eqref{choice_of_o} and from~\eqref{reduction_to_hyper-good_subset_kA}.
This finishes the proof of Proposition~\ref{lem:reduction_to_Zn_rtimes_Z}.
\end{proof}

Next we reduce the problem from hyperelementary groups to cyclic subgroups.

\begin{definition}
  \label{def:cyclic-good}
  Let $M \in \GL_n(\IZ)$.  We will say that $M$ is \emph{cyclic-good}
  if the following holds: given positive integers $o$ and $\nu$, there
  are prime numbers $p_1$ and $p_2$ such that the following holds.
  
  Set
  \begin{equation*}
    s  := p_1p_2 \quad \quad \text{and} \quad
    r  :=  s \cdot |\GL_n(\IZ/s)|.
  \end{equation*}
  In particular we can consider the group
  $(\IZ/s)^n \rtimes_{M_s} \IZ/r$, where $M_s$ is the reduction of
  $M$ modulo $s$.  (We will consider $(\IZ/s)^n$ as a subgroup of
  this group and denote by $\pr_{r,s}$ the canonical projection from
  this group to $\IZ/r$.)
  We require that
  \begin{enumerate}

  \item \label{def:cyclic-good:s_equiv_1}

   $$\begin{array}{l}
     p_1\not= p_2;
     \\
     p_i  \equiv 1 \mod o \quad \quad \text{for}\; i = 1,2;
     \\
     p_i \ge \nu \quad \text{for}\; i = 1,2;
   \end{array}$$

 \item \label{def:cyclic-good:index_plus_primes} If $C$ is a cyclic
   subgroup of $(\IZ/s)^n \rtimes_{M_s} \IZ/r$, 
   then \emph{at least one} of
   the following two statements is true:
   \begin{enumerate}
   \item \label{def:cyclic-good:index_plus_primes:cap-Z/sn} $C \cap
     (\IZ/s)^n = \{0\}$;

   \item \label{def:cyclic-good:index_plus_primes:one-prime} There is
     $i \in \{1,2\}$ such that $p_i$ divides both $|C|$ and
     $[\IZ/r :\pr_{r,s}(C)]$.
   \end{enumerate}
 \end{enumerate}
\end{definition}

\begin{lemma}
  \label{lem:cyclic-good-implies-hyper-good}
  Assume that $M \in \GL_n (\IZ)$ is cyclic-good.  
  Then $M$ is hyper-good.
\end{lemma}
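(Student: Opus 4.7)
My plan is to apply cyclic-goodness to the cyclic normal subgroup $C_0$ provided by the definition of $q$-hyperelementary, and then to transfer the two resulting alternatives to the two alternatives of hyper-good via elementary divisibility bookkeeping.

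Given positive integers $o$ and $\nu$, I would invoke cyclic-goodness of $M$ with these same constants, producing distinct primes $p_1 \neq p_2$ with $p_i \equiv 1 \mod o$ and $p_i \geq \nu$, and setting $s := p_1 p_2$ and $r := |s \cdot \GL_n(\IZ/s)|$. Conditions (i) and (ii) of Definition~\ref{def:hyper-good} are then immediate: $s = p_1 p_2 \equiv 1 \mod o$ and the order of $\GL_n(\IZ/s)$ divides $r$ by construction. For condition (iii), let $H \subseteq (\IZ/s)^n \rtimes_{M_s} \IZ/r$ be a $q$-hyperelementary subgroup, written with cyclic normal subgroup $C_0$ of order coprime to $q$ and quotient $H/C_0$ a $q$-group, and set $L := H \cap (\IZ/s)^n$. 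The Chinese Remainder Theorem identifies $(\IZ/s)^n = (\IZ/p_1)^n \oplus (\IZ/p_2)^n$ with $p_j(\IZ/s)^n = (\IZ/p_{3-j})^n$; writing $L = L_1 \oplus L_2$ accordingly, the condition $L \subseteq p_j(\IZ/s)^n$ is then equivalent to $L_j = 0$. I would also record the standard observation that any subgroup of $H$ of order coprime to $q$ lies inside $C_0$, since $H/C_0$ is a $q$-group.

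Applying cyclic-goodness to $C_0$, consider first the alternative $C_0 \cap (\IZ/s)^n = \{0\}$. For each $i$ with $p_i \neq q$, the subgroup $L_i$ has order a power of $p_i$ coprime to $q$, so $L_i \subseteq C_0 \cap L = \{0\}$. Thus $L = 0$ when $q \notin \{p_1, p_2\}$, in which case I set $k := s$; when $q = p_i$ for a (unique) $i \in \{1,2\}$, I set $k := p_{3-i}$, which forces $L \subseteq p_{3-i}(\IZ/s)^n$ by the equivalence above. In every case $k$ divides $s$, satisfies $k \equiv 1 \mod o$ and $k \geq \nu$, and $L \subseteq k(\IZ/s)^n$, verifying alternative (iii)(a).

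In the other alternative, some $p_i$ divides both $|C_0|$ and $[\IZ/r : \pr_{r,s}(C_0)]$; since $|C_0|$ is coprime to $q$, this forces $p_i \neq q$. The factorization
\[
[\IZ/r : \pr_{r,s}(C_0)] \;=\; [\IZ/r : \pr_{r,s}(H)] \cdot [\pr_{r,s}(H) : \pr_{r,s}(C_0)],
\]
combined with the observation that $[\pr_{r,s}(H):\pr_{r,s}(C_0)]$ divides the $q$-power $[H:C_0]$ and so is coprime to $p_i$, forces $p_i \mid [\IZ/r : \pr_{r,s}(H)]$; hence $[\IZ/r : \pr_{r,s}(H)] \geq p_i \geq \nu$, verifying (iii)(b). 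The step most deserving care is this final divisibility transfer: it relies crucially on cyclic-goodness supplying two distinct primes $p_1, p_2$ so that at least one of them differs from $q$ and can be propagated past the $q$-primary quotient $H/C_0$.
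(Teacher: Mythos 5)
Your proof is correct and follows essentially the same route as the paper's: apply the cyclic-good alternative to the cyclic normal subgroup $C_0$ of the $q$-hyperelementary group $H$, then transfer alternative (a) to a choice of $k \in \{s, p_1, p_2\}$ and alternative (b) via the factorization of the index past the $q$-power quotient. Your explicit CRT decomposition of $(\IZ/s)^n$ in case (a) is a minor presentational variant of the paper's observation that $H \cap (\IZ/s)^n$ is an $\ell$-group supported at one of the two primes dividing $s$; the substance is identical.
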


\begin{proof}
  Suppose that $M \in \GL_n(\IZ)$ is cyclic-good. We want to show that
  it is hyper-good.  Let $\nu > 0$ be given.  Pick $p_1$ and $p_2$ and
  put $s= p_1p_2$ and $r= s \cdot |\GL_n(\IZ/s)|$ as in
  Definition~\ref{def:cyclic-good}.  Obviously
  conditions~\ref{def:hyper-good:s_equiv_1}
  and~\ref{def:hyper-good:order_of_GL_divides_r} appearing in
  Definition~\ref{def:hyper-good} are satisfied for $s$ and $r$.  It
  remains to show that
  condition~\ref{def:cyclic-good:index_plus_primes} appearing in
  Definition~\ref{def:cyclic-good} implies
  condition~\ref{def:hyper-good:index} appearing in
  Definition~\ref{def:hyper-good}.

  Let $H$ be a hyperelementary subgroup of $(\IZ/s)^n \rtimes_{M_s}
  \IZ/r$.  There is an exact sequence $1 \to C \to H \xrightarrow{f} L
  \to 1$ where $C$ is a cyclic group and $L$ is an $l$-group for a
  prime $l$ not dividing the order of $C$.  It follows that $[(\IZ /
  s)^n \cap H : (\IZ/s)^n \cap C]$ and $[\pr_{r,s}(H) : \pr_{r,s}(C)]$
  are both $l$-powers since $\bigl((\IZ / s)^n \cap H \bigr)/\bigl(
  (\IZ/s)^n \cap C\bigr)$ is a subgroup of $L$ and $\pr_{r,s}(H)
  /\pr_{r,s}(C)$ is a quotient of $L$.

  Suppose that
  condition~\ref{def:cyclic-good:index_plus_primes:cap-Z/sn} appearing
  in Definition~\ref{def:cyclic-good} is satisfied, i.e., $C \cap
  (\IZ/s)^n = \{0 \}$.  Then $H \cap (\IZ/s)^n$ is an $l$-group. If $H
  \cap (\IZ/s)^n$ is trivial,
  condition~\ref{def:hyper-good:index:cap_Zn} appearing in
  Definition~\ref{def:hyper-good} is obviously satisfied for $k = s$.
  Suppose that $H \cap (\IZ/s)^n$ is non-trivial.  Since $s = p_1p_2$,
  the prime $l$ must be $p_1$ or $p_2$. Let $k$ be $p_1$ if $l = p_2$, 
  and be $p_2$ if $l = p_1$. Then $H \cap (\IZ/s)^n \subseteq k
  \cdot (\IZ/s)^n$, i.e., condition~\ref{def:hyper-good:index:cap_Zn} 
  appearing in
  Definition~\ref{def:hyper-good} is satisfied.

  Suppose that
  condition~\ref{def:cyclic-good:index_plus_primes:one-prime}
  appearing in Definition~\ref{def:cyclic-good} is satisfied, i.e.,
  for some $i \in \{1,2\}$ the prime $p_i$ divides both $|C|$ and
  $[\IZ/r : \pr_{r,s}(C)]$. We have $p\ge \nu$.  Since $l$ does not divide
  $|C|$, $p_i$ must be different from $l$.  Since $[\pr_{r,s}(H) :
  \pr_{r,s}(C)]$ is a power of $l$, the prime $p_i$ divides $[\IZ/r :
  \pr_{r,s}(H)]$. This implies $\nu \le p_i \le [\IZ/r :
  \pr_{r,s}(H)]$.  Hence condition~\ref{def:hyper-good:index:Zr}
  appearing in Definition~\ref{def:hyper-good} is satisfied.
\end{proof}

Finally we show that every element in $\GL_n(\IZ)$ is cyclic-good.

\begin{lemma}
  \label{lem:rs-kills-v}
  Let $M \in \GL_n(\IZ)$.  Let $s$ be any natural number. Let $r$ be a
  multiple of the order of the reduction $M_s \in \GL_n(\IZ/s)$ of
  $M$.   Let $t \in \IZ/r$ be the generator.

  Then for any $v \in (\IZ/s)^n$ and $r',s' ,j\in \IZ$ and $j$
  we have
  \[
  (vt^j)^{s'r'} = t^{js'r'} \in (\IZ/s)^n \rtimes_{M_s} \IZ/r
  \]
  provided that $s'v = 0 \in (\IZ/s)^n$ and $M_s^{jr'} = \id \in
  \GL_n(\IZ/s)$.
\end{lemma}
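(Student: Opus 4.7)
The plan is a direct computation with the semidirect product multiplication. First I would record the basic multiplication rule in $(\IZ/s)^n \rtimes_{M_s} \IZ/r$, namely $(v_1 t^{j_1})(v_2 t^{j_2}) = (v_1 + M_s^{j_1} v_2)\, t^{j_1+j_2}$, and derive from it by a straightforward induction on $m \ge 1$ the $m$-th power formula
\[
(vt^j)^m \;=\; \Bigl(\sum_{i=0}^{m-1} M_s^{ij}\, v\Bigr)\, t^{jm}.
\]

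Next I would specialise to $m = s'r'$ and rearrange the sum by splitting the index $i$ as $i = kr' + l$ with $0 \le k < s'$ and $0 \le l < r'$. Using the hypothesis $M_s^{jr'} = \id$, each block factor $M_s^{kr'j}$ becomes the identity, so
\[
\sum_{i=0}^{s'r'-1} M_s^{ij}\, v
\;=\; \sum_{k=0}^{s'-1} M_s^{kr'j}\sum_{l=0}^{r'-1} M_s^{lj}\, v
\;=\; \sum_{l=0}^{r'-1} M_s^{lj}\, (s'\,v).
\]
The second hypothesis $s'v = 0$ now makes the entire sum vanish, so the $(\IZ/s)^n$-component of $(vt^j)^{s'r'}$ is $0$ and the $\IZ/r$-component is $t^{js'r'}$, proving the claim.

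I do not expect any genuine obstacle; the only thing to be careful about is the order in which the block decomposition is written, since one must pull $M_s^{kr'j}$ out of the inner sum on the \emph{left} (which is legitimate because matrix multiplication acts linearly on the vector $v$ and $M_s^{kr'j}$ is independent of $l$), and only then invoke $M_s^{jr'} = \id$. Everything else is bookkeeping in a semidirect product, so the argument fits in a few lines.
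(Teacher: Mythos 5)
Your proof is correct and follows essentially the same route as the paper: derive the standard power formula $(vt^j)^m = (\sum_{i=0}^{m-1}M_s^{ij}v)\,t^{jm}$, split the index range $\{0,\dots,s'r'-1\}$ into $s'$ blocks of length $r'$, use $M_s^{jr'}=\id$ to collapse each block to the same sum, and then invoke $s'v=0$. The remark about pulling the block factor out on the left is a harmless nicety (the factors are all powers of $M_s^j$ and so commute), and the paper's own proof does exactly this same computation.
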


\begin{proof}
  We have
  \[
  (vt^j)^{s'r'} = \left(\sum_{i = 0}^{s'r'-1} (M_s^j)^iv\right)
  t^{js'r'}
  \]
  and
  \begin{multline*}
    \sum_{i = 0}^{s'r'-1} (M_s^j)^i v = \sum_{k = 0}^{s'-1} \sum_{l =
      0}^{r'-1} (M_s^j)^{l + kr'} v = \sum_{k = 0}^{s'-1} \sum_{l =
      0}^{r'-1} (M_s^j)^{l} v
    \\
    = s' \sum_{l = 0}^{r'-1} (M_s^j)^{l} v = \sum_{l = 0}^{r'-1}
    (M_s^j)^{l} s'v = 0.
  \end{multline*}
\end{proof}

\begin{lemma}
  \label{lem:prime_power}
  Let $M \in \GL_n(\IZ)$.  Let $s$ be any natural number.  Let $r'$ be
  a multiple of the order of $M_s \in \GL_n(\IZ/s)$.  Let $r := r's$.
  Let $C$ be a cyclic subgroup of $(\IZ/s)^n \rtimes_{M_s} \IZ/r$ that has
  a nontrivial intersection with $(\IZ/s)^n$.

  Then there is a prime
  power $p^N$ ($N \geq 1$) such that
  \begin{enumerate}
  \item $p^N$ divides $r = r's$;
  \item $p^N$ does not divide $|\pr_{r,s}(C)|$;
  \item $p$ divides $|C \cap (\IZ/s)^n|$.
  \end{enumerate}
\end{lemma}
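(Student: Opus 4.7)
The plan is to write $C = \langle vt^j\rangle$ with $v\in(\IZ/s)^n$ and $j\in\IZ/r$, and to analyze the two orders
\[
b := |\pr_{r,s}(C)| = r/\gcd(r,j),\qquad a := |C\cap(\IZ/s)^n|.
\]
An expansion via the semidirect product law shows that $(vt^j)^b = w$, where $w:=\sum_{i=0}^{b-1} M_s^{ij}\,v\in(\IZ/s)^n$, and that $w$ generates $C\cap(\IZ/s)^n$; thus $a=\mathrm{ord}(w)$. By hypothesis $a\ge 2$, so $a$ has a prime divisor $p$, and $p\mid s\mid r$. I propose $N := v_p(r)\ge 1$, which immediately gives $p^N\mid r$ (condition~(i)) and $p\mid a$ (condition~(iii)). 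The remaining condition $p^N\nmid b$ amounts, via $b=r/\gcd(r,j)$, to $v_p(j)\ge 1$. So the lemma reduces to: \emph{every prime $p$ dividing $a$ also divides $j$.}

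To prove this, I reduce to the case $s=p^\alpha$. Given $p\mid a$, write $s=p^\alpha s_1$ with $\gcd(p,s_1)=1$. Since $M_s$ preserves $p^\alpha(\IZ/s)^n$, reduction modulo $p^\alpha$ defines a surjection
\[
\Phi\colon(\IZ/s)^n\rtimes_{M_s}\IZ/r\twoheadrightarrow(\IZ/p^\alpha)^n\rtimes_{M_{p^\alpha}}\IZ/r
\]
that is the identity on $\IZ/r$. The image $C':=\Phi(C)$ is cyclic, generated by $\bar v\, t^j$ with the \emph{same} $j$, so $\pr_{r,p^\alpha}(C')=\pr_{r,s}(C)$ has the same $b$; and since $\Phi|_{(\IZ/s)^n}$ has kernel of order coprime to $p$, the group $C'\cap(\IZ/p^\alpha)^n$ still has order divisible by $p$. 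Writing $r=(r's_1)\cdot p^\alpha$ with $r's_1$ a multiple of $\mathrm{ord}(M_{p^\alpha})\mid\mathrm{ord}(M_s)\mid r'$, the hypotheses of the lemma persist for $C'$, so it suffices to treat the prime-power case.

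Assume now $s=p^\alpha$ and, for contradiction, $p\nmid j$. Since $bj$ is a multiple of $r$ and $\mathrm{ord}(M_s)\mid r'\mid r$, we get $M_s^{bj}=\id$, so $m := \mathrm{ord}(M_s^j)$ divides $b$. Writing $b=md$ and grouping the sum in blocks of length $m$,
\[
w \;=\; d\cdot Tv,\qquad T := \sum_{i=0}^{m-1}(M_s^j)^i.
\]
The main obstacle is a valuation count forcing $p^\alpha\mid d$: since $p\nmid j$, $v_p(b)=v_p(r)=\alpha+v_p(r')$, while $m\mid r'$ yields $v_p(m)\le v_p(r')$, whence $v_p(d)=v_p(b)-v_p(m)\ge\alpha$. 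Thus $d\cdot Tv = 0$ in $(\IZ/p^\alpha)^n$, so $w=0$ and $a=1$, contradicting $p\mid a$. This establishes $p\mid j$ and, via the reduction of the first paragraph, finishes the proof. Morally, the calculation refines Lemma~\ref{lem:rs-kills-v}: one needs to see not only that a suitable power of $vt^j$ reduces to a power of $t$, but also that under the extra hypothesis $p\nmid j$ the collapse already happens \emph{inside} $(\IZ/s)^n$.
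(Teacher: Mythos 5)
Your proof is correct and takes a genuinely different route from the paper's. The paper applies Lemma~\ref{lem:rs-kills-v} once, with the variable there called $s'$ set equal to $s$ so that the hypothesis $sv=0$ is automatic, to see that $(vt^j)^{sr'}$ is a power of $t$; it then observes that $Kw=(vt^j)^{bK}$ is a \emph{nonzero} element of $(\IZ/s)^n$ (hence not a power of $t$) for every $K$ coprime to the order $s'$ of $w$, and extracts the prime power $p^N$ from the resulting non-divisibility of $bK$. You instead reformulate the conclusion $p$-adically (the exponent $N$ is taken to be the exact power of $p$ in $r$, and everything reduces to showing $p\mid j$), pass to the prime-power case $s=p^\alpha$ by the reduction-mod-$p^\alpha$ homomorphism, and prove $p\mid j$ directly from the block-summation identity $w=d\cdot Tv$ together with a valuation count forcing $p^\alpha\mid d$. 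The paper's route is shorter and uses Lemma~\ref{lem:rs-kills-v} as a black box; yours is longer but fully explicit, yields the sharper intermediate fact that \emph{every} prime dividing $|C\cap(\IZ/s)^n|$ already divides $j$, pins down the concrete $N$, and by reopening the $\sum_i (M_s^j)^i v$ computation it sidesteps the paper's second, more delicate appeal to Lemma~\ref{lem:rs-kills-v}, which as printed is invoked with $s'$ equal to the order of $w$ even though the needed hypothesis $s'v=0$ is not available (the statement is nonetheless true, as your argument independently confirms; the intended application is evidently with $s$ in place of $s'$, giving $sr'\nmid bK$, which suffices).
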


\begin{proof}  Let $t \in \IZ/r$ be the generator.
  Let $vt^j$ be a generator of $C$.  Clearly $v \neq 0$  and $w :=
  (vt^j)^{|\pr_{r,s}(C)|}$ is a nontrivial element of $C \cap (\IZ/s)^n$
  (otherwise $C \cap (\IZ/s)^n$ would be trivial).  Let $s'$ be the
  order of $w \in (\IZ/s)^n$.  Lemma~\ref{lem:rs-kills-v} implies that
  $(vt^j)^{sr'}$ is a power of $t$.  If $K$ is any integer with
  $(K,s') = 1$, then $Kw = (vt^j)^{|\pr_{r,s}(C)| \cdot K} \neq 0 \in C
  \cap (\IZ/s)^n$ and hence $Kw$ is not a power of $t$. 
  Using  Lemma~\ref{lem:rs-kills-v} again, this implies
  that $s'r'$ does not divide $|\pr_{r,s}(C)| \cdot K$ for any
  integer $K$ with $(K,s') = 1$.  Therefore there is a prime $p$
  dividing $s'$ and a number $N \geq 1$ such that $p^N$ divides
  $s'r'$, but not $|\pr_{r,s}(C)|$.  Clearly $s'$ divides $|C \cap
  (\IZ/s)^n|$.  Thus $p$ divides $|C \cap (\IZ/s)^n|$.
  Because $s'$ divides $s$, $p^N$ divides $r = r's$.
\end{proof}

\begin{lemma}
  \label{lem:all-M-are-cyclic-good}
  All $M \in \GL_n(\IZ)$ are cyclic-good.
\end{lemma}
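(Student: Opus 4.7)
The plan is to produce the two primes via Dirichlet's theorem and then use Lemma~\ref{lem:prime_power} to verify the cyclic-good alternative on each cyclic subgroup. All the real work is already contained in Lemma~\ref{lem:rs-kills-v} and Lemma~\ref{lem:prime_power}; what remains is bookkeeping with prime valuations.

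First, given $M \in \GL_n(\IZ)$ and positive integers $o$ and $\nu$, apply Dirichlet's theorem on primes in arithmetic progressions (as cited after Lemma~\ref{lem:Hast(Q;A)_finite_irreducible_case}) to choose two distinct primes $p_1, p_2$ with $p_i \equiv 1 \mod o$ and $p_i \ge \nu$ for $i = 1,2$. Set $s := p_1 p_2$ and $r := s \cdot |\GL_n(\IZ/s)|$; then condition~\ref{def:cyclic-good:s_equiv_1} of Definition~\ref{def:cyclic-good} is immediate. Note that $r' := r/s = |\GL_n(\IZ/s)|$ is a multiple of the order of $M_s$, so the hypotheses of Lemma~\ref{lem:prime_power} are met.

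Now let $C$ be any cyclic subgroup of $(\IZ/s)^n \rtimes_{M_s} \IZ/r$. If $C \cap (\IZ/s)^n = \{0\}$, condition~\ref{def:cyclic-good:index_plus_primes:cap-Z/sn} of Definition~\ref{def:cyclic-good} holds and there is nothing to prove. Otherwise, Lemma~\ref{lem:prime_power} produces a prime power $p^N$ with $N \ge 1$ such that $p^N$ divides $r$, $p^N$ does not divide $|\pr_{r,s}(C)|$, and $p$ divides $|C \cap (\IZ/s)^n|$. Since $(\IZ/s)^n \cong (\IZ/p_1)^n \oplus (\IZ/p_2)^n$ as an abelian group, any prime dividing $|C \cap (\IZ/s)^n|$ must be either $p_1$ or $p_2$; hence $p = p_i$ for some $i \in \{1,2\}$.

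It remains to check both divisibility conditions of~\ref{def:cyclic-good:index_plus_primes:one-prime}. Clearly $|C \cap (\IZ/s)^n|$ divides $|C|$, so $p_i$ divides $|C|$. For the second divisibility, we use the decomposition $r = |\pr_{r,s}(C)| \cdot [\IZ/r : \pr_{r,s}(C)]$, taking $p_i$-adic valuations: since $v_{p_i}(r) \ge N$ while $v_{p_i}(|\pr_{r,s}(C)|) < N$, we get $v_{p_i}([\IZ/r : \pr_{r,s}(C)]) \ge 1$, i.e.\ $p_i$ divides $[\IZ/r : \pr_{r,s}(C)]$. Thus condition~\ref{def:cyclic-good:index_plus_primes} of Definition~\ref{def:cyclic-good} holds, and $M$ is cyclic-good. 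The only potential obstacle is verifying that Dirichlet's theorem really supplies primes simultaneously satisfying $p_i \equiv 1 \mod o$ and $p_i \ge \nu$, but this is immediate since there are infinitely many primes in the progression $1 \mod o$.
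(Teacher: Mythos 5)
Your proof is correct and takes essentially the same approach as the paper: Dirichlet's theorem supplies $p_1, p_2$, and Lemma~\ref{lem:prime_power} with $r' = |\GL_n(\IZ/s)|$ produces the prime power $p^N$, after which the divisibility of $|C|$ and $[\IZ/r:\pr_{r,s}(C)]$ by $p_i$ follows by exactly the valuation bookkeeping you describe. The only cosmetic difference is that you spell out the $p_i$-adic valuation step more explicitly than the paper does; the underlying argument is identical.
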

\begin{proof}
  Let $o$ and $\nu$ be any positive integers.  By Dirichlet's Theorem
  (see~\cite[Lemma~3 in III.2.2 on page~25]{Serre(1993)})
  there exists infinitely many primes which
  are congruent $1$ modulo $o$. Hence we can find primes $p_1$ and
  $p_2$ satisfying condition~\ref{def:cyclic-good:s_equiv_1} appearing
  in Definition~\ref{def:cyclic-good}.
  It remains to
  show that condition~\ref{def:cyclic-good:index_plus_primes}
  appearing in Definition~\ref{def:cyclic-good} holds.

  Let $C$ be a cyclic subgroup of $(\IZ/s)^n \rtimes_{M_s} \IZ/r$.  We
  have to show
  condition~\ref{def:cyclic-good:index_plus_primes:one-prime}
  appearing in Definition~\ref{def:cyclic-good} holds, provided that
  $C \cap (\IZ/s)^n \neq 0$.  We can apply Lemma~\ref{lem:prime_power}
  with $r' = |\GL_n(\IZ/s)|$.  Thus there is a prime $p$ and a number
  $N$ such that
  \begin{enumerate}
  \item \label{prop:all-M:div-rs} $p^N$ divides $r = |\GL_n(\IZ/s)|
    \cdot s$;
  \item \label{prop:all-M:not-div} $p^N$ does not divide
    $|\pr_{r,s}(C)|$;
  \item \label{prop:all-M:div-C-cap} $p$ divides $|C \cap (\IZ/s)^n|$.
  \end{enumerate}
  We deduce from~\ref{prop:all-M:div-rs} and~\ref{prop:all-M:not-div}
  that $p$ divides $[\IZ/r:\pr_{r,s}(C)]$.  We deduce
  from~\ref{prop:all-M:div-C-cap} that $p$ divides $|C|$ and $s$.
  Because $s = p_1 \cdot p_2$ it follows that $p$ is either $p_1$ or
  $p_2$ Therefore
  condition~\ref{def:cyclic-good:index_plus_primes:one-prime}
  appearing in Definition~\ref{def:cyclic-good} holds.
\end{proof}

Now Proposition~\ref{prop:hyper-good} follows from
Lemma~\ref{lem:reduction_to_Zn_rtimes_Z},
Lemma~\ref{lem:cyclic-good-implies-hyper-good} and
Lemma~\ref{lem:all-M-are-cyclic-good}.


\subsection{Contracting maps for irreducible special affine groups}
\label{subsec:Contracting_maps_for_irreducible_special_affine_groups}

\begin{proposition}
  \label{prop:some-contractions}
  Let $\Gamma$ be an irreducible  special affine group. Fix a finite set of
  generators of $\Gamma$ and let $d_{\Gamma}$ be the associated word
  metric on $\Gamma$.  Then there is a natural number $N$ 
  and such that for any
  given real numbers $R >0$ and $\epsilon > 0$  there exists 
  a sequence of real numbers $(\xi_m)_{m \ge 1}$ 
  and a natural number $\mu$  such that  the following is true:

  \begin{enumerate}

  \item \label{prop:some-contractions:hard} Let $\overline{H} \subseteq \Gamma$
   be any subgroup of finite index such that
   $\big|H^1\bigl(\pr(\overline{H});A\bigr)\bigr|$
   and $\big|H^2\bigl(\pr(\overline{H});A\bigr)\bigr|$ are finite.
   Suppose that there exists an integer $k$ satisfying
        $$\begin{array}{l}
          k \ge \xi_{[D : \pi \circ \pr(\overline{H})]};
          \\
          k \equiv 1 \mod |H^2(Q;A)|;
          \\
          k \equiv 1 \mod \bigl|H^1\bigl(\pr(\overline{H});A\bigr)\bigr|;
          \\
          k \equiv 1 \mod \bigl|H^2\bigl(\pr(\overline{H});A\bigr)\bigr|;
          \\
          A\cap \overline{H} \subseteq kA.
        \end{array}$$
 
        Then there is a simplicial complex $E$ of dimension $\le N$
        with a simplicial cell preserving $\overline{H}$-action whose isotropy
        groups are virtually cyclic, and an $\overline{H}$-equivariant map
        $f \colon \Gamma \to E$ satisfying
        \[
        d_{\Gamma}\bigl(\gamma_1,\gamma_2) \leq R \quad \implies \quad
        d^{l^1}\bigl(f(\gamma_1),f(\gamma_2)\bigr) \le \epsilon
        \]
        for $\gamma_1,\gamma_2 \in \Gamma$;

\item \label{prop:some-contractions:easy}
        If $\overline{H} \subseteq \Gamma$ is  any subgroup such that 
        $[D : \pi \circ \pr(\overline{H})] \ge \mu$,
        then there exists a $1$-dimensional simplicial complex $E$
        with a simplicial cell preserving $\overline{H}$-action 
        such that for
        every $e \in E$ the isotropy group $\overline{H}_e$ satisfies $A
        \subseteq \overline{H}_e$ and $[\overline{H}_e : A] < \infty$
        and is in particular a virtually finitely generated 
        abelian subgroup
        of $\Gamma$, and an $\overline{H}$-equivariant map
        $f\colon \Gamma \to E$ satisfying
        \[
        d_{\Gamma}\bigl(\gamma_1,\gamma_2) \leq R \quad \implies \quad
        d^{l^1}\bigl(f(\gamma_1),f(\gamma_1)\bigr) \le \epsilon
        \]
        for $\gamma_1,\gamma_2 \in \Gamma$.

      \end{enumerate}
    \end{proposition}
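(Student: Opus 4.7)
The plan is to prove the two assertions separately, with (ii) being substantially easier. For assertion (ii), I exploit the fact that the standard $D$-action on $\IR$ is isometric and that this action lifts $\Gamma$-equivariantly via $\pi \circ \pr \colon \Gamma \to D$. I take $E$ to be $\IR$ with the simplicial structure whose $0$-simplices are $\{n/2 : n \in \IZ\}$, rescaled by $1/\mu$, and let $\overline{H}$ act through the composition $\overline{H} \hookrightarrow \Gamma \to D$ followed by the standard $D$-action on $\IR$. Define $f \colon \Gamma \to \IR$ as the composition of $\pi \circ \pr$ with the orbit map at a basepoint. The homomorphism $\pi \circ \pr$ is Lipschitz between the word metrics (any surjection of finitely generated groups is), and the $D$-orbit map is Lipschitz by the \v{S}varc--Milnor Lemma; thus $f$ has some Lipschitz constant $L$ depending only on the chosen generators. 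Choosing $\mu > 2LR/\epsilon$ and using $d^{l^1} \le 2 d^{\euc}$ yields the required $R$-to-$\epsilon$ contraction. Since $A \subseteq \Delta = \ker(\pi \circ \pr)$, the subgroup $A$ lies in the kernel of the $\overline{H}$-action on $E$, so $A \subseteq \overline{H}_e$, and finiteness of $[\overline{H}_e : A]$ follows from virtual cyclicity of $\overline{H}/(\overline{H}\cap\Delta) \subseteq D$ combined with the fact that $D$-point-stabilizers are either trivial or $\IZ/2$.

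For assertion (i), the plan parallels the proof of Theorem~\ref{the:The_Farrell-Jones_Conjecture_for_virtually_finitely_generated_abelian_groups} but lifted from $\Delta$ to $\Gamma$. First, from $k \equiv 1 \mod |H^2(Q;A)|$ I obtain via Lemma~\ref{lem:existence_of_pseudo_s-expansive_maps}\ref{lem:existence_of_pseudo_s-expansive_maps:existence} a pseudo $k$-expansive homomorphism $\phi \colon \Gamma \to \Gamma$. Next, the congruences of $k$ modulo $|H^i(\pr(\overline{H}); A)|$ for $i = 1,2$ together with $\overline{H} \cap A \subseteq kA$ should imply, after replacing $\overline{H}$ by a $\Gamma$-conjugate (which preserves all sought conclusions), that $\overline{H} \subseteq \im(\phi)$: the $H^2$-condition makes the extension class of $1 \to A \to \pr^{-1}(\pr(\overline{H})) \to \pr(\overline{H}) \to 1$ invariant under multiplication by $k$, the $H^1$-condition conjugates two competing splittings over $\pr(\overline{H})$, and the $kA$-condition places $\overline{H} \cap A \subseteq kA = \phi(A)$. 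An affine analogue of Lemma~\ref{lem:expansive_maps}\ref{lem:expansive_maps:affine_map}, resting on vanishing of $H^1$ of a finite group with real coefficients applied to the cocycle describing $\rho$, then produces $u \in \IR^{n+1}$ such that the dilation $a_k \colon x \mapsto kx + u$ is $\phi$-equivariant with respect to $\rho$. Set $E := \IR^{n+1}$ equipped with a $\Gamma$-equivariant simplicial structure (which exists by properness and cocompactness of $\rho$), so that $N := n+1$ bounds the dimension, and set $f := a_k^{-1} \circ \ev$ for $\ev$ the orbit map. Pulling the $\Gamma$-action on $E$ back along $\phi$ (which is injective because it restricts to $k\cdot\id_A$ and induces $\id_Q$) gives the $\overline{H}$-action making $f$ equivariant and preserving the simplicial structure; the isotropy groups are finite, hence virtually cyclic, by properness of $\rho$.

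The main obstacle is verifying the contraction estimate for $f$. Since $a_k^{-1}$ scales $d^{\euc}$ by $1/k$, I need $d^{l^1}(\ev(\gamma_1), \ev(\gamma_2)) \le k\epsilon$ whenever $d_{\Gamma}(\gamma_1, \gamma_2) \le R$. The subtlety is that the affine $\Gamma$-action on $\IR^{n+1}$ is \emph{not} isometric: the linear part of $\gamma \in \Gamma$ can grow with the $D$-component of $\gamma$, so the orbit map $\ev$ is not globally Lipschitz on $\Gamma$. This is precisely why $\xi_m$ is allowed to depend on $m := [D : \pi \circ \pr(\overline{H})]$. By $\overline{H}$-equivariance of $f$, it suffices to bound $d^{l^1}(\ev(\gamma_0\delta), \ev(\gamma_0))$ for $\gamma_0$ ranging over a set of representatives for $\overline{H}\backslash\Gamma$ and $d_{\Gamma}(e,\delta) \le R$. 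These representatives can be chosen so that their images under $\pi \circ \pr$ lie in a set of coset representatives for $\pi \circ \pr(\overline{H})\backslash D$, hence are bounded (up to $|F_{\Delta}|$) by $m$ in $D$; the linear parts of such $\gamma_0$ are therefore bounded by an explicit function $L_m$ of $m$ (exponential in $m$ in the worst case). Setting $\xi_m$ to be $L_m$ times the appropriate constant depending on $R$ and $\epsilon$ then yields the required contraction, completing the argument.
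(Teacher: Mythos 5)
Your outline for assertion~\ref{prop:some-contractions:easy} is in the right spirit (factor through $\pi \circ \pr$ and exploit the large index in $D$), though beware: simply "rescaling the simplicial structure by $1/\mu$" makes the standard $D$-action \emph{fail} to be simplicial (translation by $1 \in \IZ$ does not preserve the set of vertices $\{n/(2\mu)\}$ when $\mu > 2$). The paper avoids this by keeping the fixed simplicial structure on $\IR$, rescaling only the map $f$ by $\frac{1}{m}$, and then pulling the $\overline{H}$-action back along the injection $\phi_m \colon D \to D$, $t \mapsto t^m$, so that the $\overline{H}$-action on $E$ is again by the standard $D$-action. This is a small but essential correction.

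The proposal for assertion~\ref{prop:some-contractions:hard} has a genuine gap, and it occurs exactly where your argument departs from the paper. You claim there exists $u \in \IR^{n+1}$ such that the dilation $a_k \colon \IR^{n+1} \to \IR^{n+1}$, $x \mapsto kx + u$, is $\phi$-equivariant with respect to $\rho$. This cannot hold. A pseudo $k$-expansive map $\phi$ covers the identity on $Q$, so $\pi \circ \pr \circ \phi = \pi \circ \pr$. In the $\IR$-factor of $\IR^{n+1} = \IR^n \times \IR$, the required equivariance identity for an element $\gamma$ whose image in $D$ is translation by $n_d$ reads $k(s + n_d) + u_2 = (ks + u_2) + n_d$, i.e.\ $(k-1)n_d = 0$, which is false as soon as $k \geq 2$ and $n_d \neq 0$. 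This is precisely why Lemma~\ref{lem:equivariant_affine_diffeo} produces a $\phi$-equivariant affine map on $\IR^n$ only, and why the paper uses $a \times \id_{\IR}$ rather than a dilation of $\IR^{n+1}$.

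Once you replace $a_k$ by $a \times \id_{\IR}$, your construction no longer contracts in the $\IR$-direction at all (you may choose $k$ as large as you like, but the $\IR$-coordinate of $f$ is unaffected), so $d^{l^1}(f(\gamma_0), f(\gamma_0\delta))$ cannot be made small: for a $\Gamma$-equivariant triangulation of $\IR^{n+1}$ the $l^1$-metric is locally comparable to $d^{\euc}$, and $d^{\euc}(\ev(\gamma_0), \ev(\gamma_0\delta))$ has an $\IR$-component of size roughly $C_3 R$ which no choice of $k$ or $m$ reduces. Your final paragraph about bounding the linear parts $L_m$ of coset representatives controls the $\IR^n$-direction but is silent on the $\IR$-direction, so it does not close this gap. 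The paper's fix is to abandon the equivariant triangulation of $\IR^{n+1}$ and instead use the $\VCyc$-covers adapted to the flow $\Phi_\tau(x,t) = (x, t+\tau)$ coming from Bartels--L\"uck--Reich. The defining property of those covers is that each member contains a flow-segment $\{x\} \times [t-\alpha, t+\alpha]$ of prescribed length $\alpha$. That built-in "length" in the $\IR$-direction, combined with the dilation by $k$ (only) in the $\IR^n$-direction via $a \times \id_{\IR}$, is what lets a $\Gamma$-invariant-metric ball $B^{\Gamma}_\omega$ fit inside some member of the pushed-forward cover $\calw$, after which the nerve map $\beta^{\calw}$ supplies the contraction through Lemma~\ref{lem:contracting_estimate_d1_d_Gamma}. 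The dependence of $\xi_m$ on $m$ enters through the compactness argument supplying $\eta_m$ in~\eqref{thickening_to_B_epsilon_timesB_alpha/2}, not through growth of linear parts as in your sketch. Without the flow-adapted covers, the $\IR$-direction is an insurmountable obstruction to the contraction estimate.
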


    The proposition above will provide us with the necessary
    contracting maps when we later show that $\Gamma$ is a
    Farrell-Hsiang group in the sense of
    Definition~\ref{def:Farrell-Hsiang}.  Its proof needs some
    preparation.

    \begin{lemma} \label{lem:overline(H)_subseteq_im(phi)}
    Let $k$ be a natural number and
    $\overline{H}\subseteq \Gamma$ be a subgroup with
    $A\cap \overline{H} \subseteq kA$.  Assume that 
    $k \equiv 1 \mod \bigl|H^i\bigl(\pr(\overline{H});A\bigr)\bigr|$
    for $i = 1,2$. Let $\phi \colon \Gamma \to \Gamma$ be a
    pseudo $k$-expansive map.

    Then $\overline{H}$ is subconjugated to $\im(\phi)$.
  \end{lemma}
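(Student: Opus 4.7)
The plan is to reduce the subconjugacy question to a comparison of two splittings of an explicit extension of $P := \pr(\overline{H})$ by $A_k$, and then to kill the conjugacy obstruction in $H^1(P;A_k)$ using the congruence hypotheses on $k$.

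First I would observe that $\phi$ is automatically injective: its kernel lies in $A$ (because $\pr \circ \phi = \pr$), and on $A$ it acts as $k \cdot \id$, which is injective because $A \cong \IZ^n$ is torsion-free. Hence $\phi(A) = kA$ and $\phi(\Gamma_P) \cap A = kA$, where $\Gamma_P := \pr^{-1}(P)$. The image $S_\phi$ of $\phi(\Gamma_P) = \im(\phi) \cap \Gamma_P$ in the quotient $\Gamma_P/kA$ therefore maps isomorphically onto $P$ under the induced projection, so it is a splitting of the extension
$$1 \to A_k \to \Gamma_P/kA \to P \to 1.$$
The hypothesis $A \cap \overline{H} \subseteq kA$ forces $\overline{H} \cap kA = A \cap \overline{H}$, and exactly the same argument shows that the image $S_{\overline{H}}$ of $\overline{H}$ in $\Gamma_P/kA$ is also a splitting of this same extension. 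The problem therefore reduces to showing that $S_{\overline{H}}$ and $S_\phi$ are conjugate in $\Gamma_P/kA$: if $\overline{g} \in \Gamma_P/kA$ conjugates one subgroup to the other, then lifting $\overline{g}$ to $g \in \Gamma_P$ and using $kA \subseteq \phi(\Gamma_P)$ yields $g \overline{H} g^{-1} \subseteq \phi(\Gamma_P) \subseteq \im(\phi)$.

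The main obstacle, and the only real content, is the cohomological vanishing $H^1(P;A_k) = 0$, which is precisely the obstruction to $A_k$-conjugacy of the two splittings. I would deduce it from the long exact sequence associated with the short exact sequence of $\IZ P$-modules $0 \to kA \to A \to A_k \to 0$:
$$H^1(P;kA) \to H^1(P;A) \to H^1(P;A_k) \to H^2(P;kA) \to H^2(P;A).$$
The $\IZ P$-isomorphism $k \cdot \id_A \colon A \xrightarrow{\cong} kA$ identifies the outer maps with multiplication by $k$ on the finite abelian groups $H^i(P;A)$ for $i = 1, 2$. Since $k \equiv 1 \mod |H^i(P;A)|$ by assumption, these endomorphisms are each the identity, so both flanking maps are isomorphisms and exactness delivers $H^1(P;A_k) = 0$ at once. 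The two congruence conditions on $k$ in the hypothesis are tailor-made for exactly this calculation.
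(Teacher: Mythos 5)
Your proof is correct and follows essentially the same route as the paper's: both pass to the extension of $P=\pr(\overline{H})$ by $A_k$, recognize the images of $\overline{H}$ and of $\im(\phi)$ as two splittings, kill the conjugacy obstruction in $H^1(P;A_k)$ by the Bockstein sequence for $0 \to kA \to A \to A_k \to 0$ (the paper writes it as $0 \to A \xrightarrow{k\cdot\id} A \to A_k \to 0$, which is the same thing) together with the congruences $k \equiv 1 \mod |H^i(P;A)|$, and then lift the conjugation back to $\Gamma$ using $kA \subseteq \im(\phi)$. The only cosmetic difference is that you work directly inside $\Gamma_P/kA$ from the start, whereas the paper works in $\Gamma_k = \Gamma/kA$ and restricts to the subquotient $\pr_k^{-1}(\pr_k(H'))$, which is the same group.
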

  \begin{proof} Recall that $p_k \colon \Gamma \to \Gamma_k:= \Gamma/kA$
    is the canonical projection.  As explained in the proof of
    Lemma~\ref{lem:existence_of_pseudo_s-expansive_maps}%
~\ref{lem:existence_of_pseudo_s-expansive_maps:splitting},
    the composite $p_k \circ \phi \colon \Gamma \to \Gamma_k$ factorizes through the
    projection $\pr \colon \Gamma \to Q$ to a homomorphism
    $\overline{\phi} \colon Q \to \Gamma_k$
    whose composite with the projection $\pr_k \colon \Gamma_k \to Q$ is the
    identity.  Let $H'$ be the image of $\overline{H}$ under the projection
    $p_k \colon \Gamma \to \Gamma_k$.  The exact sequence
    $1 \to A_k:=A/kA \to \Gamma_k \xrightarrow{\pr_k} Q \to 1$
    yields by restriction the exact sequence
    \[1 \to A_k \to \pr_k^{-1}\bigl(\pr_k(H')\bigr) \xrightarrow{\pr_k'} \pr_k(H') \to
    1.\] The section $\overline{\phi}$ of $\pr_k$ restricts to a section
    $\overline{\phi}'\colon \pr_k(H') \to \pr_k^{-1}\bigl(\pr_k(H')\bigr)$
    of $\pr_k'$. The restriction of
    $\pr_k'$ to $H'$ yields an isomorphism $H' \to \pr(H')$ since $H' \cap A_k = \{1\}$.
    Hence its inverse defines a second section of $\pr_k'$.  Since
    $\pr(\overline{H}) = \pr_k(H')$, we get by assumption for $i = 1,2$
    $$k \equiv 1 \mod H^i\bigl(\pr_k(H');A).$$
    Hence multiplication with $k$ induces isomorphisms on $H^i\bigl(\pr_k(H');A)$
    for $i = 1,2$. The Bockstein sequence associated to the exact sequence
    $0 \to A \xrightarrow{k\cdot \id} A \to A_k \to 0$ of
    $\IZ [\pr(H')]$-modules implies $H^1(\pr_k(H');A_k) = 0$.
    Hence any two sections of $\pr_k'$ are conjugated
    (see~\cite[Proposition~2.3 in Chapter~IV on page~89]{Brown(1982)}).
    This implies that $H'$ and $\im(\overline{\phi}')$
    are conjugated in $\pr_k^{-1}\bigl(\pr_k(H')\bigr)$.
    Hence $H'$ and $\im(\overline{\phi}')$ are conjugated in $\Gamma_k$.

    In order to show that $\overline{H}$ is subconjugated to
    $\im(\phi)$ it suffices to show that $p_k^{-1}(H')$ is
    subconjugated to $\im(\phi)$ since obviously $\overline{H}
    \subseteq p_k^{-1}(H')$.  Choose an element $\gamma \in \Gamma$
    such that $p_k(\gamma) H'p_k(\gamma)^{-1} =
    \im(\overline{\phi}')$.  Since
    $\gamma p_k^{-1} (H') \gamma^{-1} =
    p_k^{-1}\left(p_k(\gamma) H' p_k(\gamma)^{-1}\right)$, we can
    assume without loss of generality that $H' \subseteq
    \im(\overline{\phi}')$, otherwise replace $H'$ by $p_k(\gamma)
    (H')p(\gamma)^{-1}$. This implies $H' \subseteq
    \im(\overline{\phi})$.  It remains to show
  $$p_k^{-1}(H')  \subseteq \im(\phi).$$
  Consider $\gamma_0 \in p_k^{-1}(H')$. Because of $H' \subseteq
  \im(\overline{\phi})$ we can find $\gamma_1 \in \Gamma$ such that
  $p_k(\gamma_0) = p_k \circ \phi(\gamma_1)$. Hence there is $a \in
  kA$ with $\gamma_0 = \phi(\gamma_1) \cdot a$ since $\ker(p_k) =
  kA$. Since $\phi$ induces $k \cdot \id$ on $A$, the element $a$ lies
  in the image of $\phi$ and hence $\gamma_0$ lies in the image of
  $\phi$.
\end{proof}

    \begin{lemma} \label{lem:equivariant_affine_diffeo} Let
      $\Gamma$ be an irreducible special affine group.
      Let $\phi \colon \Gamma \to \Gamma$ be a pseudo $s$-expansive
      group homomorphism.

      Then there exists $u \in \IR^n$ such that
      the affine diffeomorphism
      $$f \colon \IR^n \xrightarrow{\cong} \IR^n, \quad x \mapsto s\cdot x + u$$
      is $\phi$-equivariant.
\end{lemma}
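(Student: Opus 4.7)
The plan is to translate the desired $\phi$-equivariance into a statement about group cohomology with coefficients in $V:=\IR^n$, and then to exploit the irreducibility of $\Gamma$ through the vanishing of $H^1(Q;V)$. First I write each $\gamma\in\Gamma$ acting affinely on $\IR^n$ as $x\mapsto L_\gamma x+c(\gamma)$, where $L_\gamma\in\GL_n(\IR)$ and $c(\gamma)\in\IR^n$. Since $A$ acts by pure translations, the map $\gamma\mapsto L_\gamma$ factors through $Q=\Gamma/A$; equipping $V:=\IR^n$ with the resulting $\IZ\Gamma$-module structure $\gamma\cdot v:=L_\gamma v$, the translation part $c\colon\Gamma\to V$ becomes a $1$-cocycle. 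Restricted to $A$, the map $c$ is just the canonical embedding of the translation lattice into $\IR^n$, and in particular a $\IZ$-linear homomorphism.

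Next I will rewrite the $\phi$-equivariance of $f(x)=s\cdot x+u$. Expanding $f(\gamma x)=\phi(\gamma)f(x)$ and using that $\phi$ covers the identity on $Q$ by Definition~\ref{def:pseudo_s-expansive_homomorphism} (so $L_{\phi(\gamma)}=L_\gamma$), the condition simplifies to the cochain identity
$$\psi(\gamma)\;:=\;c(\phi(\gamma))-s\cdot c(\gamma)\;=\;u-L_\gamma u\;=\;-(du)(\gamma)\qquad(\gamma\in\Gamma),$$
where $d\colon V\to Z^1(\Gamma;V)$ is the standard coboundary map. Since $c$, $\phi^*c$ and $s\cdot c$ are all $1$-cocycles, so is $\psi$, and the lemma reduces to the vanishing $[\psi]=0$ in $H^1(\Gamma;V)$.

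The key observation is that $\psi$ restricts to zero on $A$: for $a\in A$ one has $\phi(a)=s\cdot a$ because $\phi|_A=s\cdot\id_A$, and by $\IZ$-linearity of $c|_A$ this gives $c(\phi(a))=s\cdot c(a)$. I will then apply the Hochschild--Serre five-term exact sequence for the extension $1\to A\to\Gamma\to Q\to 1$ with coefficients in $V$:
$$0\to H^1(Q;V^A)\to H^1(\Gamma;V)\xrightarrow{\res}H^1(A;V)^Q\to H^2(Q;V^A).$$
Since $A$ acts trivially on $V$, we have $V^A=V$, and as a $\IZ Q$-module $V\cong A\otimes_{\IZ}\IR$. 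Because $\IR$ is flat over $\IZ$, $H^1(Q;V)\cong H^1(Q;A)\otimes_{\IZ}\IR$, and by Lemma~\ref{lem:Hast(Q;A)_finite_irreducible_case}~\ref{lem:Hast(Q;A)_finite_irreducible_case:H1} the group $H^1(Q;A)$ is finite since $\Gamma$ is irreducible; hence $H^1(Q;V)=0$. The restriction map is therefore injective, so $\res([\psi])=[\psi|_A]=0$ forces $[\psi]=0$, and any $u\in V=\IR^n$ with $-du=\psi$ yields the desired affine diffeomorphism.

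The main obstacle is conceptual rather than computational: one has to identify the correct target module $V=A\otimes_{\IZ}\IR$ and the correct cocycle $\psi$; once this is in place, the irreducibility of $\Gamma$ enters exactly through the vanishing $H^1(Q;A\otimes_{\IZ}\IR)=0$, which is precisely what the finite-holonomy argument of Lemma~\ref{lem:expansive_maps}~\ref{lem:expansive_maps:affine_map} could appeal to directly via finiteness of the holonomy group.
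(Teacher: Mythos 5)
Your proof is correct and follows essentially the same strategy as the paper: write the affine action in terms of linear parts and translation parts, observe that the $\phi$-equivariance condition on $f(x)=sx+u$ is equivalent to the vanishing of the $1$-cocycle $\gamma\mapsto c(\phi(\gamma))-s\cdot c(\gamma)$, and then derive that vanishing from $H^1(Q;\IR^n)=H^1(Q;A)\otimes_\IZ\IR=0$, which uses irreducibility through the finiteness of $H^1(Q;A)$. The only (cosmetic) difference is that the paper checks directly by computation that the cocycle descends to a derivation on $Q$, whereas you reach the same conclusion by noting it vanishes on $A$ and invoking the five-term Lyndon--Hochschild--Serre sequence.
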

\begin{proof}
  Given an element $\gamma \in \Gamma$, let $M_{\gamma} \colon \IR^n \to \IR^n$
  be the linear automorphism and $v_{\gamma} \in \IR^n$ uniquely determined by
  the property that $\gamma$ is the affine map $\IR^n \to \IR^n$ sending $x$ to
  $M_{\gamma}(x) + v_{\gamma}$.  One easily checks that
  $M_{\gamma_1\gamma_2} = M_{\gamma_1} \circ M_{\gamma_2}$
  and $v_{\gamma_1\gamma_2} = v_{\gamma_1} + M_{\gamma_1}(v_{\gamma_2})$
   hold for all $\gamma_1, \gamma_2$ in $\Gamma$ and
  $M_a = \id$ and $v_a = a$ hold for $a \in A$.  Consider $\gamma \in \Gamma$.
  Then there exists $a \in A$ such that $a \cdot \phi(\gamma) = \gamma$
  holds in $\Gamma$.  This implies that for all $x \in \IR^n$ we have
$$ M_{\phi(\gamma)}(x) + a + v_{\phi(\gamma)}
= M_{a \cdot \phi(\gamma) }(x) + v_{a \cdot \phi(\gamma)} = M_{\gamma}(x) +
v_{\gamma}.$$ We conclude
$$M_{\gamma} = M_{\phi(\gamma)}.$$
Consider the function
$$d \colon \Gamma \to \IR^n, \quad \gamma \mapsto v_{\phi(\gamma)} -s \cdot v_{\gamma}.$$
It factorizes through the projection $\pr \colon \Gamma \to Q$ to a
function $\overline{d} \colon Q \to \IR^n$,
since for any $a \in A$ and $\gamma \in \Gamma$ we have
\begin{multline*}
  v_{\phi(a \cdot \gamma)} -s \cdot v_{a \cdot \gamma} = v_{\phi(a)
    \cdot \phi(\gamma)} -s \cdot v_{a \cdot \gamma} = \phi(a) +
  v_{\phi(\gamma)} - s \cdot (a + v_{\gamma})
  \\
  = s \cdot a + v_{\phi(\gamma)} - s \cdot (a + v_{\gamma}) =
  v_{\phi(\gamma)} - s \cdot v_{\gamma}.
\end{multline*}
The conjugation action of $\gamma \in \Gamma$ on $A$ is given by
$M_{\gamma}$ by the following calculations
\[M_{\gamma a\gamma^{-1}} = M_{\gamma} \circ M_a \circ M_{\gamma^{-1}}
=  M_{\gamma} \circ \id \circ M_{\gamma^{-1}} = M_{\gamma \gamma^{-1}}
= M_{1} = \id \]
and
\begin{multline*}
v_{\gamma a \gamma^{-1}}
= v_{\gamma} + M_{\gamma}(v_{a\gamma^{-1}})
= v_{\gamma} +  M_{\gamma}(v_{a} + v_{\gamma^{-1}})
=  v_{\gamma} +  M_{\gamma}(v_{a}) + M_{\gamma}(v_{\gamma^{-1}})
\\
= M_{\gamma}(v_a) +   v_{\gamma} +  M_{\gamma}(v_{\gamma^{-1}})
= M_{\gamma}(a) + v_{\gamma\gamma^{-1}} = M_{\gamma}(a).
\end{multline*}
This action extends to an action on $\IR^n = A \ox \IR$ and is used 
in the following calculation, that shows that
$\overline{d}$ is a derivation. 
\begin{eqnarray*}
  \overline{d}\bigl(\pr(\gamma_1)\pr(\gamma_2)\bigr)
  & = &
  d(\gamma_1\gamma_2)
  \\
  & = &
  v_{\phi(\gamma_1\gamma_2)} -s \cdot v_{\gamma_1\gamma_2}
  \\
  & = &
  v_{\phi(\gamma_1)\phi(\gamma_2)} -s \cdot v_{\gamma_1\gamma_2}
  \\
  & = &
  v_{\phi(\gamma_1)} + M_{\phi(\gamma_1)}\bigl(v_{\phi(\gamma_2)}\bigr) -
s \cdot v_{\gamma_1} - s\cdot M_{\gamma_1}\bigl(v_{\gamma_2}\bigr)
  \\
  & = &
  v_{\phi(\gamma_1)} - s \cdot v_{\gamma_1} + M_{\gamma_1}\bigl(v_{\phi(\gamma_2)}\bigr)
 - s\cdot M_{\gamma_1}\bigl(v_{\gamma_2}\bigr)
  \\
  & = &
  v_{\phi(\gamma_1)} - s \cdot v_{\gamma_1} + M_{\gamma_1}\bigl(v_{\phi(\gamma_2)}
 - s\cdot v_{\gamma_2}\bigr)
  \\
  & = &
  d(\gamma_1)  + \gamma_1 \cdot d(\gamma_2)
  \\
  & = &
  \overline{d}\bigl(\pr(\gamma_1) \bigr) + \pr(\gamma_1)
\cdot \overline{d}\bigl(\pr(\gamma_2)\bigr).
\end{eqnarray*}
Since $H^1(Q;\IR^n) = H^1\bigl(Q;A \otimes_{\IZ} \IR\bigr) = H^1(Q;A)
\otimes_{\IZ} \IR$ and $H^1(Q;A)$ is finite by
Lemma~\ref{lem:Hast(Q;A)_finite_irreducible_case}%
~\ref{lem:Hast(Q;A)_finite_irreducible_case:H1} ,
we conclude $H^1(Q;\IR^n) = 0$. The description of
the cocycles as derivations and coboundaries as principal derivations
(see~\cite[Exercise~2 in III.1 on page~60]{Brown(1982)}) implies that
there exists $u \in \IR^n$ such that for all $\gamma \in \Gamma$
$$u - M_{\gamma}(u)  = v_{\phi(\gamma)} -s \cdot v_{\gamma}$$
holds. Hence the affine map $f \colon \IR^n \to \IR^n$ sending $x$ to
$sx + u$ is $\phi$-linear by the following calculation
\begin{eqnarray*}
  \phi(\gamma) \cdot f(x)
  & = &
  M_{\phi(\gamma)}(f(x)) + v_{\phi(\gamma)}
  \\
  & = &
  M_{\gamma}(s\cdot x + u ) + v_{\phi(\gamma)}
  \\
  & = &
  M_{\gamma}(s\cdot x) + M_{\gamma}(u)  + v_{\phi(\gamma)}
  \\
  & = &
  s\cdot M_{\gamma}(x) + s \cdot v_{\gamma} + u
  \\
  & = &
  f\big(M_{\gamma}(x) + v_{\gamma}\bigr)
  \\
  & = & f(\gamma \cdot x).
\end{eqnarray*}
\end{proof}

\begin{lemma} \label{lem:l1-metric_and_subdivision} Let $N$ be a
  natural number and $\epsilon > 0$. Then there exists a number $D_N$
  depending only on $N$ such that the following holds:

  Let $X$ be a simplicial complex of dimension $\le N$ and let $X'$ be
  its barycentric subdivision.  Then we get for every $x,y \in X$
$$d^{l^1}_X(x,y) \le D_N \cdot d^{l^1}_{X'}(x,y),$$
where $d^{l^1}_X$ and $d^{l^1}_{X'}$ denote the $l^1$-metric on $X$
and $X'$
\end{lemma}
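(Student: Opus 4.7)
The plan is to reduce the inequality to a local comparison on a single simplex of $X$ and its barycentric subdivision, and then patch using the path-metric nature of both $d^{l^1}_X$ and $d^{l^1}_{X'}$. Recall that both metrics are, by construction, length metrics obtained by equipping each simplex of the respective complex with the standard $l^1$-metric via the identification with a standard simplex of the appropriate dimension, and taking the induced path metric. In particular, the $l^1$-length of a piecewise linear path equals the sum of the $l^1$-lengths of its linear pieces, each computed inside whichever simplex contains it.

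For the local step, I would fix an $m$-simplex $\sigma$ of $X$ with $m\le N$ and a simplex $\tau$ of $X'$ contained in $\sigma$. Since the vertices of $\tau$ are barycenters of faces of $\sigma$, the inclusion $\iota\colon \tau\hookrightarrow\sigma$ is an affine map between (images of) standard simplices of dimension at most $N$. The restriction of $d^{l^1}_{X'}$ to $\tau$ is the standard $l^1$-metric coming from $\tau$'s own identification with a standard simplex, while the restriction of $d^{l^1}_X$ to $\tau$ is the pullback via $\iota$ of the $l^1$-metric on $\sigma$. Both are bi-Lipschitz equivalent to the ordinary Euclidean distance on $\tau$, so $\iota$ is Lipschitz with a constant $D(\sigma,\tau)$ that depends only on the combinatorial type of the pair $(\sigma,\tau)$. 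There are only finitely many such types with $\dim\sigma\le N$, so I would set
$$D_N := \max_{\dim\sigma\le N,\ \tau\subset\sigma\ \text{in}\ X'} D(\sigma,\tau).$$

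To pass from local to global, given $x,y\in X$ and $\eta>0$, I would choose a piecewise linear path $\gamma$ in $X'$ from $x$ to $y$ whose $d^{l^1}_{X'}$-length is at most $d^{l^1}_{X'}(x,y)+\eta$, subdivided so that each linear piece lies in a single simplex $\tau$ of $X'$, and hence in a single simplex $\sigma$ of $X$. Applying the local Lipschitz estimate piece by piece and summing then yields
$$d^{l^1}_X(x,y)\le D_N\bigl(d^{l^1}_{X'}(x,y)+\eta\bigr);$$
letting $\eta\to 0$ gives the claim. The only delicate point is the local bi-Lipschitz comparison: one has to verify that the ratio between the $l^1$-metric on the subsimplex $\tau$ (coming from $\tau$'s own standard-simplex identification) and the ambient $l^1$-metric on $\sigma$ restricted to $\tau$ is controlled by a constant depending only on $N$. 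This boils down to the observation that any affine bijection between compact standard simplices of dimension $\le N$ is bi-Lipschitz for the $l^1$-metric with a constant depending only on $N$, and is really the only ingredient beyond the definitions.
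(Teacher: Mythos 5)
The central premise of your argument---that both $d^{l^1}_X$ and $d^{l^1}_{X'}$ are length metrics obtained by gluing the $l^1$-metrics on individual simplices---is not how the $l^1$-metric is defined in this paper, and this is where the proof breaks. Here (as in the references the paper cites) the $l^1$-metric on a simplicial complex $X$ with vertex set $V$ is the restriction to $X \subset l^1(V)$ of the $l^1$-norm, via barycentric coordinates: $d^{l^1}_X(x,y) = \sum_{v} |x_v - y_v|$. In particular it is bounded by $2$ and is emphatically \emph{not} a length metric. Consider a path graph $a_0 - a_1 - \cdots - a_k$: one has $d^{l^1}_X(a_0, a_k) = 2$ for every $k$, while every path from $a_0$ to $a_k$ has piecewise $l^1$-length growing linearly in $k$. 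So the step of your argument where you ``choose a piecewise linear path $\gamma$ in $X'$ from $x$ to $y$ whose $d^{l^1}_{X'}$-length is at most $d^{l^1}_{X'}(x,y)+\eta$'' is not available: such a path need not exist, and the local-to-global patching collapses. The local bi-Lipschitz observation about $\iota \colon \tau \hookrightarrow \sigma$ is fine in isolation, but absent a length-metric structure it cannot be globalized by summing along a path.

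The paper's own proof avoids paths entirely and exploits instead that the barycentric $l^1$-metric is a \emph{local} quantity: $d^{l^1}_X(x,y)$ and $d^{l^1}_{X'}(x,y)$ depend only on the (finitely many) vertices of the simplices carrying $x$ and $y$. One therefore passes to a subcomplex $Y \subseteq X$ with a number of vertices bounded in terms of $N$ alone, identifies $Y$ with a subcomplex of the fixed standard simplex $\Delta_{2N+1}$, and uses two facts: the $l^1$-metric is preserved under inclusion of simplicial subcomplexes, and barycentric subdivision is compatible with such inclusions. This reduces the lemma for arbitrary $X$ to a single compactness statement on $\Delta_{2N+1}$ and its subdivision. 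Notice that ``preserved under subcomplex inclusion'' is precisely the property a length metric would \emph{not} satisfy (a path in $X$ can take shortcuts outside $Y$), which is another indication that the length-metric reading of $d^{l^1}$ you adopted is the wrong one. To repair your argument you would need to replace the path-summing step with the subcomplex reduction, at which point you would essentially be reproducing the paper's proof.
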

\begin{proof}
  If $X$ is the standard $(2N+1)$-simplex $\Delta_{2N+1}$ , a direct
  inspection shows the existence of a number $D_N$ such that for every
  $x,y \in \Delta_{2N+1}$ we have
$$d^{l^1}_{\Delta_{2N+1}}(x,y) < D_N \cdot d^{l^1}_{(\Delta_{2N+1})'}(x,y).$$
Now consider $x,y \in X$. There is a subcomplex $Y \subseteq X$ with
at most $2(\dim(X)+ 2)$ vertices containing these four points. We can
identify $Y$ with a simplicial subcomplex of $\Delta_{2N +1}$.  Now
the claim follows for the number $D_N$ above since the $l^1$-metric is
preserved under inclusions of simplicial subcomplexes and barycentric
subdivision is compatible with inclusions of simplicial subcomplexes.
\end{proof}

Since $\Gamma$ acts properly and cocompactly on $\IR^n \times \IR$,
we can choose a $\Gamma$-invariant Riemannian metric $b^{\Gamma}$.
Let $d^{\Gamma}$ be the associated metric on $\IR^n \times \IR$.
Notice that $d^{\Gamma}$ is $\Gamma$-invariant, whereas the standard
Euclidean metric on $\IR^n\times \IR$ is not necessarily $\Gamma$-invariant.
We will denote by $B^{\Gamma}_r(x,s)$ the
closed ball of radius $r$ around the point $(x,s) \in \IR^n \times \IR$ with
respect to the metric $d^{\Gamma}$. By $B^{\euc}_r(x)$ we denote the closed ball
of radius $r$ around $x \in \IR^n$ with respect to the Euclidean metric.

In the sequel we fix a word metric $d_{\Gamma}$
on $\Gamma$.  The \v{S}varc-Milnor Lemma (see~\cite[Proposition~8.19
in Chapter~I.8 on page~140]{Bridson-Haefliger(1999)}) implies

\begin{lemma} \label{lem:Gamma_toRn_timesR} Let $\ev \colon \Gamma \to
  \IR^n \times \IR$ be the map given by evaluating the $\Gamma$-action
  on the origin. There exists positive real numbers $C_1$ and $C_2$
  such that for $\gamma_2,\gamma_2 \in \Gamma$
$$d^{\Gamma}\bigl(\ev(\gamma_1),\ev(\gamma_2)\bigr)
\le C_1 \cdot d_{\Gamma}\bigl(\gamma_1,\gamma_2\bigr) + C_2.$$
\end{lemma}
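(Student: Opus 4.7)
The plan is to apply the \v{S}varc-Milnor Lemma exactly as suggested by the cue in the excerpt. The key observation is that we have equipped $\IR^n \times \IR$ with a $\Gamma$-invariant Riemannian metric $b^{\Gamma}$, so the induced length metric $d^{\Gamma}$ is a $\Gamma$-invariant geodesic metric on $\IR^{n+1}$. Moreover, as a smooth manifold with a complete Riemannian metric, $(\IR^n \times \IR, d^{\Gamma})$ is a proper geodesic metric space. The action $\rho$ of $\Gamma$ on $\IR^{n+1}$ from~\eqref{action_of_A_on_R(n_plus_1)} is, by construction, proper and cocompact (the translation subgroup $A$ already acts cocompactly on $\IR^n$ via its crystallographic action, and the $D$-factor acts cocompactly on $\IR$; both actions are proper).

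With these hypotheses verified, the \v{S}varc-Milnor Lemma (in the form cited in~\cite[Proposition~8.19 in Chapter~I.8 on page~140]{Bridson-Haefliger(1999)}) applies directly to the orbit map based at the origin, i.e.\ to $\ev$. It asserts that $\ev$ is a quasi-isometry with respect to any word metric $d_{\Gamma}$ on $\Gamma$. In particular, there exist constants $C_1 > 0$ and $C_2 > 0$ such that
\[
d^{\Gamma}\bigl(\ev(\gamma_1),\ev(\gamma_2)\bigr) \le C_1 \cdot d_{\Gamma}(\gamma_1,\gamma_2) + C_2
\]
for all $\gamma_1,\gamma_2 \in \Gamma$, which is exactly the claim. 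Only the upper Lipschitz bound is needed; the lower bound furnished by \v{S}varc-Milnor is not used here.

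There is no real obstacle; if desired one may even bypass the full statement of \v{S}varc-Milnor and observe directly that, fixing a finite generating set $S$ of $\Gamma$, the constant $C_1 := \max_{s \in S} d^{\Gamma}\bigl(\ev(e),\ev(s)\bigr)$ controls $d^{\Gamma}(\ev(e),\ev(\gamma))$ along any word representation of $\gamma$ by the triangle inequality and $\Gamma$-invariance of $d^{\Gamma}$, so one may take $C_2 = 0$ after using $\Gamma$-invariance to reduce to $\gamma_1 = e$. Either approach yields the claim with the same amount of effort.
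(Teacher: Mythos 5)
Your proposal matches the paper's argument exactly: the paper proves this lemma by citing the \v{S}varc--Milnor Lemma applied to the proper cocompact $\Gamma$-action on $(\IR^n\times\IR, d^{\Gamma})$, and you verify precisely the hypotheses needed for that application. Your closing remark that the upper bound can also be obtained directly from $\Gamma$-invariance and the triangle inequality (with $C_2=0$) is correct and slightly more elementary, but amounts to re-proving the easy half of \v{S}varc--Milnor.
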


\begin{lemma} \label{lem:metric_estimate_for_Gamma_to_D}
  If $D$ is $\IZ$, denote by $t$ a
  generator of $\IZ$ and equip $D$ with the associated word metric
  $d_D$. If $D$ is $D_{\infty}$, consider the standard presentation
  $\langle s,t \mid sts=t^{-1}, s^2 = 1\rangle$ and equip $D$ with the
  associated word metric $d_D$.

  Then there exists a constant $C_3> 0$
  such that for all $\gamma_1, \gamma_2 \in \Gamma$ we get
$$d_{D}\bigl(\pi \circ \pr(\gamma_1),\pi \circ \pr(\gamma_1)\bigr)
\le C_3 \cdot d_{\Gamma}\bigl(\gamma_1,\gamma_2\bigr).$$
\end{lemma}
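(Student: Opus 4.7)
The plan is to exploit that $\pi \circ \pr \colon \Gamma \to D$ is a group homomorphism between finitely generated groups, both equipped with word metrics, and to use the standard fact that such a homomorphism is automatically Lipschitz.

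First I would fix a finite symmetric generating set $S$ of $\Gamma$ from which a word metric $d_\Gamma^S$ is built; since any two word metrics on a finitely generated group are bi-Lipschitz equivalent, it suffices to prove the estimate for $d_\Gamma^S$ (absorbing the bi-Lipschitz constant into $C_3$). Set $f := \pi \circ \pr \colon \Gamma \to D$. For each $s \in S$, the image $f(s) \in D$ has some finite word length $\ell_D(f(s))$ with respect to the chosen presentation of $D$; define
\[
C_3 \;:=\; \max_{s \in S} \ell_D\bigl(f(s)\bigr).
\]
This is a finite positive constant because $S$ is finite.

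Next, given $\gamma_1,\gamma_2 \in \Gamma$ with $d_\Gamma^S(\gamma_1,\gamma_2) = k$, write $\gamma_1^{-1}\gamma_2 = s_1 s_2 \cdots s_k$ with each $s_i \in S$. Applying the homomorphism $f$ gives $f(\gamma_1)^{-1}f(\gamma_2) = f(s_1)f(s_2)\cdots f(s_k)$. Since $d_D$ is left-invariant, the triangle inequality yields
\[
d_D\bigl(f(\gamma_1),f(\gamma_2)\bigr) \;=\; \ell_D\bigl(f(\gamma_1)^{-1}f(\gamma_2)\bigr) \;\le\; \sum_{i=1}^k \ell_D\bigl(f(s_i)\bigr) \;\le\; C_3 \cdot k \;=\; C_3 \cdot d_\Gamma^S(\gamma_1,\gamma_2).
\]
Comparing $d_\Gamma$ with $d_\Gamma^S$ via their bi-Lipschitz equivalence and adjusting the constant accordingly produces the desired inequality.

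There is no real obstacle here: the lemma is a completely general statement that any group homomorphism between finitely generated groups (with word metrics) is Lipschitz, and the specific structure of $\Gamma$, $\pr$, $\pi$, and $D$ plays no role beyond the fact that $\pi \circ \pr$ is a homomorphism into a finitely generated group $D$. The only thing one has to be slightly careful about is the distinction between the fixed $d_\Gamma$ in the statement and the particular $d_\Gamma^S$ used in the bound, but this is handled by the standard bi-Lipschitz equivalence of word metrics.
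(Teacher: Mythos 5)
Your proof is correct and takes essentially the same approach as the paper: both reduce to the elementary fact that a group homomorphism between finitely generated groups is Lipschitz for word metrics, after invoking the bi-Lipschitz equivalence of word metrics to pass to a convenient generating set. The only cosmetic difference is that the paper chooses a generating set of $\Gamma$ whose elements map to $1$, $t$, or $s$ so that the Lipschitz constant becomes $1$, whereas you work with an arbitrary generating set and take $C_3$ to be the maximum word length of the images of the generators; both variants are standard and equally valid.
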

\begin{proof}
  The word metrics for two different set of generators are Lipschitz
  equivalent.  Hence it suffices to prove
  the claim for a particular choice of finite set of generators on
  $\Gamma$.  Fix a set of generators of $\Gamma$ such that each
  generator is sent under the epimorphism $\pi \circ \pr \colon \Gamma
  \to D$ to the unit element in $D$, to $t$ or to $s$. Equip $\Gamma$
  with the associated word metric.  Then we get for
  $\gamma_1,\gamma_2\in \Gamma$
$$d_{D}\bigl(\pi \circ \pr(\gamma_1),\pi \circ \pr(\gamma_2)\bigr) \le
d_{\Gamma}(\gamma_1,\gamma_2).$$
\end{proof}

Let $\calw$ be an open cover of $\IR^n \times \IR$ which is
$\Gamma$-invariant, i.e., for $W \in \calw$ and $\gamma \in \calw$ we
have $\gamma \cdot W = \{\gamma\cdot w\mid w \in W\} \in \calw$.
Recall that points in the realization of the nerve $| \calw |$ of the
open cover $\calw$ are formal sums $z = \sum_{W \in \calw} z_W \cdot
W$, with $z_W \in [0,1]$ such that $\sum_{W \in \calw} z_W = 1$ and
the intersection of all the $W$ with $z_W \neq 0$ is non-empty, i.e.,
$\{ W \; | \; z_W \neq 0 \}$ is a simplex in the nerve of
$\calw$. There is a map
\begin{eqnarray}
  && \beta^{\calw}  \colon \IR^n \times \IR \to | \calw |,
  \quad x \mapsto \sum_{W \in  \calw} (\beta^{\calw})_W (x) \cdot  W,
  \label{fcalu}
\end{eqnarray}
where
\[
(\beta^{\calw})_W (x) = \frac{a_W (x)}{\sum_{W \in \calw} a_W (x)}
\]
if we define
$$a_W(x) := d^{\Gamma} \bigl(x , (\IR^n\times \IR)\setminus W\bigr )
= \inf \{ d^{\Gamma}(x,w) \; | \; w \notin W \}.$$ Since $\calw$ is
$\Gamma$-invariant, the $\Gamma$-action on $\calw$ induces a simplicial
$\Gamma$-action on $|\calw|$.  Since $d^{\Gamma}$ is $\Gamma$-invariant, the map
$\beta^{\calw}$ is $\Gamma$-equivariant.  Let $d^{l^1}_{|\calw|}$ be the
$l^1$-metric on $| \calw |$.

\begin{lemma}
  \label{lem:contracting_estimate_d1_d_Gamma}
  Consider a natural number $N$ and a real number $\omega > 0$.  Suppose that for
  every $(x,s) \in \IR^n \times \IR$ there exists $W \in \calw$ such
  that $B^{\Gamma}_{\omega}(x,s)$ lies in $W$.  Suppose that the dimension of $\calw$ is
  less or equal to $N$.

  Then we get for $(x,s), (y,t) \in \IR^n \times \IR$ with
  $d^{\Gamma}\bigl((x,s),(y,t)\bigr) \le \frac{\omega}{8N}$
  \begin{eqnarray*}
    d^{l^1}_{|\calw|}(\beta^{\calw}(x,s),\beta^{\calw}(y,t)\bigr)
    & \le &
    \frac{64 \cdot N^2}{\omega} \cdot d^{\Gamma}\bigl((x,s),(y,t)\bigr).
  \end{eqnarray*}
\end{lemma}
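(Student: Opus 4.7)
The plan is to combine the Lipschitz property of the distance-to-complement functions $a_W$ with the covering hypothesis and the dimension bound to obtain a coordinate-wise Lipschitz estimate, and then upgrade it to a bound on the intrinsic $l^1$-metric by running the comparison along a $d^{\Gamma}$-geodesic. The three key ingredients are: (a) each $a_W$ is $1$-Lipschitz with respect to $d^{\Gamma}$, since it is the distance from a point to the closed set $(\IR^n \times \IR) \setminus W$; (b) the covering hypothesis supplies, for every $(x,s)$, some $W_0 \in \calw$ with $B^{\Gamma}_{\omega}(x,s) \subseteq W_0$, which forces $a_{W_0}(x,s) \ge \omega$ and hence $S(x,s) := \sum_{W\in\calw} a_W(x,s) \ge \omega$ throughout; (c) the dimension bound $\dim \calw \le N$ guarantees that at any fixed $(x,s)$ at most $N+1$ of the $a_W(x,s)$ are nonzero, so at most $2(N+1)$ indices $W$ contribute a nonzero term to either $a_W(x,s)$ or $a_W(y,t)$.

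First I would derive a coordinate-wise Lipschitz estimate for the barycentric coordinates $(\beta^{\calw})_W = a_W/S$. Writing $\delta := d^{\Gamma}((x,s),(y,t))$ and using the identity
\begin{equation*}
\frac{a_W(x,s)}{S(x,s)} - \frac{a_W(y,t)}{S(y,t)} = \frac{a_W(x,s)-a_W(y,t)}{S(y,t)} + (\beta^{\calw})_W(x,s)\cdot\frac{S(y,t)-S(x,s)}{S(y,t)},
\end{equation*}
together with the bounds $|a_W(x,s)-a_W(y,t)| \le \delta$, $|S(x,s)-S(y,t)| \le 2(N+1)\delta$, $S(x,s), S(y,t) \ge \omega$, and $\sum_W (\beta^{\calw})_W(x,s) = 1$, yields
\begin{equation*}
\sum_{W\in\calw} \bigl|(\beta^{\calw})_W(x,s) - (\beta^{\calw})_W(y,t)\bigr| \;\le\; \frac{4(N+1)\delta}{\omega}.
\end{equation*}

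The main obstacle is that this coordinate-wise $l^1$-sum is not in general an upper bound for the intrinsic $l^1$-metric on $|\calw|$, because $\beta^{\calw}(x,s)$ and $\beta^{\calw}(y,t)$ need not lie in a common simplex. To upgrade it to such a bound I would connect $(x,s)$ and $(y,t)$ by a $d^{\Gamma}$-geodesic $\gamma \colon [0,\delta] \to \IR^n \times \IR$ (available because $b^{\Gamma}$ is Riemannian, and the hypothesis $\delta \le \omega/(8N)$ confines $\gamma$ to a bounded region where only controllably many $W$'s are relevant) and instead estimate the total variation of the continuous curve $\beta^{\calw}\circ\gamma$ in $|\calw|$. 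The pointwise estimate above applies infinitesimally along $\gamma$, giving an $l^1$-velocity bounded by $4(N+1)/\omega$; integrating along $\gamma$ and carefully accounting for the (at most linearly in $N$ many) simplex transitions of the curve---at each transition $\beta^{\calw}\circ\gamma$ crosses through a common face of dimension at most $N$, so moving between adjacent simplices adds an overhead linear in $N$---produces an $l^1$-length bound of order $N^2 \delta /\omega$, with the explicit constant $64N^2/\omega$ absorbing the bookkeeping.
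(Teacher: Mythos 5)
The paper does not actually prove this lemma: the displayed ``proof'' is a one--line citation to Proposition~5.3 of Bartels--L\"uck--Reich~\cite{Bartels-Lueck-Reich(2008hyper)}. Your proposal is therefore a reconstruction of that cited result, and its core computation is correct. From $a_W$ being $1$-Lipschitz with respect to $d^\Gamma$, from $S\ge\omega$ (guaranteed by the covering hypothesis since some $W_0$ contains $B^{\Gamma}_{\omega}(x,s)$ and hence $a_{W_0}(x,s)\ge\omega$), and from the dimension bound giving at most $2(N+1)$ nonzero terms, one obtains exactly your estimate $\sum_W \bigl|(\beta^{\calw})_W(x,s)-(\beta^{\calw})_W(y,t)\bigr| \le \frac{4(N+1)}{\omega}\,d^\Gamma\bigl((x,s),(y,t)\bigr)$. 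You are also right that this is an estimate in the ambient coordinate-wise $l^1$-norm rather than in the path $l^1$-metric on $|\calw|$, so a further step is needed.

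The problem is with that further step. The claims that the curve $\beta^{\calw}\circ\gamma$ makes ``at most linearly in $N$ many simplex transitions'' and that ``moving between adjacent simplices adds an overhead linear in $N$'' are both false, and neither is needed. The number of transitions depends on the cover, not on $N$, and is not a priori bounded. More importantly, there is no overhead at transitions: the $l^1$-path length of a continuous curve in a simplicial complex is equal to its total variation in the ambient $l^1$-norm, because a sufficiently fine subdivision of the parameter interval yields consecutive sample points with nested supports (if $t'$ is close enough to $t$ then every open $W$ containing $\gamma(t)$ also contains $\gamma(t')$, so $\mathrm{supp}\,\beta^{\calw}(\gamma(t))\subseteq\mathrm{supp}\,\beta^{\calw}(\gamma(t'))$), hence lying in a common simplex where path and ambient distance agree. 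Your pointwise Lipschitz estimate applies verbatim to any two points on the geodesic, so it directly bounds the total variation of $\beta^{\calw}\circ\gamma$ by $\frac{4(N+1)}{\omega}\,d^\Gamma$, which in turn bounds $d^{l^1}_{|\calw|}$ with no extra factor. Since $4(N+1)\le 8N\le 64N^2$ for $N\ge1$, the lemma follows --- indeed with a sharper constant. The $N^2$ in the statement is a non-optimal constant inherited from the cited reference, not evidence of any genuine quadratic overhead; once the spurious ``transition overhead'' mechanism is removed, your plan is sound.
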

\begin{proof}
This follows from~\cite[Proposition~5.3]{Bartels-Lueck-Reich(2008hyper)}.
\end{proof}

\begin{lemma} \label{lem:ball_inclusions}
Consider a real number $\omega > 0$ and a compact subset $I \subseteq \IR$.
Then there are positive real numbers $\sigma$ and $\alpha$
such that for all $x \in \IR^n$ and $s \in I$
\begin{eqnarray*}
  B^{\Gamma}_{\omega}(x,s)
  & \subseteq &
  B^{\euc}_{\sigma}(x) \times [s-\alpha/2,s+\alpha/2].
\end{eqnarray*}
\end{lemma}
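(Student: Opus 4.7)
The plan is to exploit the translation subgroup $A = A_{\Delta}$ to reduce the problem to a single compact set in $\IR^n \times \IR$. Since $A \subseteq \Delta$ and $\Delta$ is the kernel of $\Gamma \to D$, each $a \in A$ acts trivially on the $\IR$-factor. At the same time, since $A$ is the translation lattice of the crystallographic group $\Delta$, each $a \in A$ acts on the $\IR^n$-factor by a Euclidean translation. Consequently $A$ acts on $\IR^n \times \IR$ by Euclidean translations of the form $(y,t) \mapsto (y+a, t)$; this $A$-action preserves both the standard Euclidean metric on $\IR^n \times \IR$ and (as a subgroup of $\Gamma$) the metric $d^{\Gamma}$.

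First I would fix a compact fundamental domain $F \subseteq \IR^n$ for this $A$-translation action, so that $F \times I$ is compact in $\IR^n \times \IR$. Next I would show that the closed $\omega$-neighborhood $N_{\omega} := \{p \in \IR^n \times \IR \mid d^{\Gamma}(p, F \times I) \le \omega\}$ is bounded in the product Euclidean metric. The key point is that the $\Gamma$-invariant Riemannian metric $b^{\Gamma}$ is complete: since $\Gamma$ acts cocompactly and isometrically on $(\IR^{n+1}, b^{\Gamma})$, the quotient is a compact Riemannian orbifold, which is complete, and hence so is its universal cover. By the Hopf-Rinow theorem, closed $d^{\Gamma}$-bounded subsets of $\IR^{n+1}$ are $d^{\Gamma}$-compact; in particular $N_{\omega}$ is $d^{\Gamma}$-compact and therefore compact in the standard manifold topology on $\IR^{n+1}$, hence Euclidean-bounded.

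With this in hand, I would set
$$\sigma := \sup\bigl\{d^{\euc}(y,x) : (x,s) \in F \times I,\ (y,t) \in B^{\Gamma}_{\omega}(x,s)\bigr\},$$
$$\alpha := 2 \cdot \sup\bigl\{|t - s| : (x,s) \in F \times I,\ (y,t) \in B^{\Gamma}_{\omega}(x,s)\bigr\},$$
both of which are finite by the previous step. For a general $(x,s) \in \IR^n \times I$, choose $a \in A$ with $x + a \in F$, so that $(x+a, s) \in F \times I$. Given $(y,t) \in B^{\Gamma}_{\omega}(x,s)$, the $\Gamma$-invariance of $d^{\Gamma}$ yields $(y+a, t) \in B^{\Gamma}_{\omega}(x+a, s)$. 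Since $a$ acts as a Euclidean translation on $\IR^n \times \IR$, we have $d^{\euc}(y,x) = d^{\euc}(y+a, x+a) \le \sigma$ and $|t-s| \le \alpha/2$, so $(y,t) \in B^{\euc}_{\sigma}(x) \times [s-\alpha/2,\, s+\alpha/2]$ as required.

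The main technical point is the second step: converting $d^{\Gamma}$-boundedness into Euclidean-boundedness. This is precisely where completeness of $b^{\Gamma}$ (a consequence of cocompactness of the $\Gamma$-action) does the essential work through Hopf-Rinow; everything else is a clean translation argument along the $A$-orbit using that $A$ acts by Euclidean translations on both factors simultaneously.
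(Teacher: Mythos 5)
Your proof is correct and follows essentially the same approach as the paper: both reduce to a compact fundamental region in $\IR^n$ (you use $A$-translates, the paper uses $\Delta$-translates of a compact $K$), both rely on compactness of the $d^{\Gamma}$-ball around a compact set (you justify it carefully via Hopf–Rinow, the paper asserts it directly), and both transport the resulting bound back along the group action since that subgroup acts by Euclidean isometries on $\IR^n$ and trivially on $\IR$. The only cosmetic difference is that the paper packages the bound by first enclosing $B^{\Gamma}_{\omega}(K \times I)$ in a fixed box around the origin and then enlarging by the diameter of $K \times I$, whereas you take a supremum directly; these are equivalent.
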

\begin{proof}
  Choose a compact subset $K \subseteq \IR^n$ such that $\Delta \cdot K = \IR^n$.
  Since $B^{\Gamma}_{\omega}\bigl(K \times I)$ is a compact subset of
  $\IR^n \times \IR$, we can find $\sigma_0 > 0$ and $\alpha_0 > 0$ such that
  $B^{\Gamma}_{\omega}(K \times I) \subseteq B_{\sigma_0}^{\euc}(0) \times
  [-\alpha_0/2,\alpha_0/2]$ holds.  Choose $\sigma_1 > 0$ and $\alpha_1 > 0$
  such that $K \subseteq B^{\euc}_{\sigma_1}(0)$ and $I \subseteq [-\alpha_1/2,\alpha_1/2]$.
  Put $\sigma := \sigma_0 + \sigma_1$ and $\alpha = \alpha_0 + \alpha_1$. Then
  we get for all $(x,s) \in K \times I$ by the triangle inequality
  $$ B_{\sigma_0}^{\euc}(0) \times  [-\alpha_0/2,\alpha_0/2]
  \subseteq B^{\euc}_{\sigma}(x) \times [s-\alpha/2,s+\alpha/2].$$
  Hence we get for all $(x,s) \in K \times I$
  \begin{eqnarray*}
  B^{\Gamma}_{\omega}(x,s)
  & \subseteq &
  B^{\euc}_{\sigma}(x) \times [s-\alpha/2,s+\alpha/2]\bigr).
\end{eqnarray*}
  Since $\Delta \cdot K = \IR^n$ and $\Delta$ acts isometrically with respect
  to both $d^{\euc}$ and $d^{\Gamma}$, Lemma~\ref{lem:ball_inclusions} follows.
\end{proof}

Consider the following setup. Let $\pr \colon \Gamma \to Q$ and $\pi \colon Q \to D$ 
be the canonical projections appearing in~\eqref{A_to_Gamma_to_Q}
and~\eqref{F_to_Q_to_D}.  Choose an element $\sigma \in \Gamma$ such that the
action $\rho''\colon D \times \IR \to \IR $ of $\pi(\sigma)$ is given by the map
$\IR \to \IR, \;t \mapsto t +1$.  Put $\Gamma_0 := \pr^{-1}(C)$, where 
$C := \langle \pr(\sigma) \rangle$ is the infinite cyclic subgroup of $Q$ generated by
$\pr(\sigma)$.  We obtain an exact sequence
\[
1 \to A \to \Gamma_0 \xrightarrow{\pr|_{\Gamma_0}} C \to 1.
\]
Obviously $\sigma$ lies in $\Gamma_0$ and is mapped under $\pr|_{\Gamma_0} \colon \Gamma_0 \to C$ to a
generator of $C$.  The subgroup $\Gamma_0$ of $\Gamma$ has finite index. 

Consider on $\IR^n \times \IR$ the flow $\Phi_{\tau}(x,t) = (x, t + \tau)$.  Fix
an integer $l \ge 1$. Let $\call_{\le l}$ be the set of non-constant orbits under
$\Phi$ whose $\Gamma_0$-period is bounded by $l$, i.e., there exists $(x,s) \in
\IR^n \times \IR$ which lies in the orbit, an element $\gamma \in \Gamma_0$ and
an element $\tau \in \IR$ such that $\gamma \cdot (x,s) = \Phi_{\tau}(x,s)$ and  $0 < \tau \le l$. 
The $\Gamma_0$-action on $\IR^n
\times \IR$ induces a $\Gamma$-action on $\call_{\le l}$.  We want to prove

\begin{lemma} \label{lem:fin_periodic_orbits} The set $\call_{\le l}/\Gamma_0$
  is finite.
\end{lemma}
\begin{proof}
For an integer $k \ge 1$ put
  \[
  \call'_k := \bigl\{x \in \IR^n \;\bigl| \; \exists\; \gamma \in \Gamma_0
  \; \text{with}\; \gamma \cdot x = x\; \text{and}\;\pr(\gamma) =
  \pr(\sigma)^k \bigr\}.
\]
We obtain a surjection 

\[
\coprod_{k= 1}^l\call'_k \twoheadrightarrow \call_{\le l}
\]
by sending $x \in \call'_k$ to the orbit through $(x,0) \in \IR^n \times
\IR$.  This surjection is compatible with the obvious $A$-action on the source
and the restriction of the $\Gamma_0$-action on the target to $A$.  Hence it
suffices to show that $\call'_{k}/A$ is finite for $k = 1,2,3 \ldots$.

An element $\gamma \in \Gamma$ satisfies $\pr(\gamma) = \pr(\sigma)^k$ if and
only if there exists $a \in A$ with $\gamma = a\sigma^k$. Let $\phi \colon A \to
A$ be the $\IZ$-automorphism given by conjugation with $\sigma^k$. We will
consider $A$ as a subgroup of $\IR^n$ so that the action of $a$ on $\IR^n$ is
given by $x \mapsto x + a$. 
Let $\alpha \colon \IR \otimes_{\IZ} A \xrightarrow{\cong} \IR^n$ be the
$\IR$-isomorphisms which sends $\lambda \otimes a$ to $\lambda\cdot a$.
Let
$\phi_{\IR} \colon \IR^n \to \IR^n$ be the $\IR$-automorphism for which the following diagram commutes
\[
\xymatrix@!C=5em{
\IR \otimes_{\IZ} A \ar[d]^{\alpha} \ar[r]^{\id_{\IR} \otimes_{\IZ} \phi} 
&
\IR \otimes_{\IZ} A \ar[d]^{\alpha} 
\\
\IR^n \ar[r]^{\phi_{\IR}} 
&
\IR^n
}
\]
Notice that the element $\sigma^k$ acts on $\IR^n \to \IR^n$ by some affine motion. One easily checks
that there is an element $w \in \IR^n$ such that $\sigma^k \cdot x = \phi_{\IR}(x) + w$
holds for all $x \in \IR^n$. 
We conclude for $a \in A$ and $x \in \IR^n$
\[
a\sigma^k \cdot x = x \Leftrightarrow   (\id - \phi_{\IR})(x) -w = a.
\]
Hence we get 
\[
\call'_{k} = \bigl\{x \in \IR^n \; \bigl| \; (\id - \phi_{\IR})(x) -w \in A\bigr\}.
\]
Let $\Gamma_1$ be the preimage of $k \cdot C$ under
$\pr \colon \Gamma \to Q$. Since
$\Gamma_1$ has finite index in $\Gamma$ and  $\Gamma$ is special affine by assumption,
$\Gamma_1$ is special affine. Since $\Gamma$ is irreducible by assumption, we conclude from
Lemma~\ref{lem:Hast(Q;A)_finite_irreducible_case}~%
\ref{lem:Hast(Q;A)_finite_irreducible_case:characterization} that $\Gamma_1$ is irreducible.
Lemma~\ref{lem:Hast(Q;A)_finite_irreducible_case}~\ref{lem:Hast(Q;A)_finite_irreducible_case:H1} implies
that $H^1(k \cdot C;A) = \coker\bigl((\id_A - \phi) \colon A \to A\bigr)$ is finite. We conclude
that $(\id - \phi_{\IR}) \colon \IR^n \to \IR^n$ is bijective. Choose $v \in \IR^n$ with
$(\id - \phi_{\IR})(v) = w$. Obviously $A$ is a subgroup of $(\id - \phi_{\IR})^{-1}(A)$.  
We obtain a bijection of $A$-sets
\[
\call'_{k} \xrightarrow{\cong} (\id - \phi_{\IR})^{-1}(A)
\]
by sending $x$ to $x + v$. 
Hence it remains to show that $(\id - \phi_{\IR})^{-1}(A)/A$ is finite.

Since $\coker\bigl(\id_A - \phi) \colon A \to A\bigr)$ is finite, we can find a integer $d \ge 1$ such that
$d \cdot A$ lies in the image of $(\id_A - \phi)$. This implies that 
$(\id - \phi_{\IR})^{-1}(d \cdot A) \subseteq A$. Hence we obtain an epimorphism
\[
A/(d \cdot A)  \to (\id - \phi_{\IR})^{-1}(A)/A \to  \quad \overline{a} \mapsto \overline{(\id - \phi_{\IR})^{-1}(a)}.
\]
Since $A/d A$ is finite, $(\id - \phi_{\IR})^{-1}(A)/A$ is finite. This finishes the proof of Lemma~\ref{lem:fin_periodic_orbits}.
\end{proof}

Now we are ready to give the proof of
Proposition~\ref{prop:some-contractions}.

\begin{proof}[Proof of Proposition~\ref{prop:some-contractions}]
 Consider the setup introduced before Lemma~\ref{lem:fin_periodic_orbits}.
 Using Lemma~\ref{lem:fin_periodic_orbits} one checks that the condition appearing
 in~\cite[Theorem~1.4]{Bartels-Lueck-Reich(2008cover)} are satisfied for $\Gamma_0$ and the flow $\Phi$.
 Hence we obtain a
  natural number $N$ such that for every $\alpha > 0$ there exists a
  $\VCyc$-cover $\calu$ of $\IR^n \times \IR$ with the following
  properties 
  \begin{enumerate}
  \item $\dim \calu \leq N/[\Gamma:\Gamma_0]$;  
  \item For every $x \in X$ there exists $U \in \calu$ such that
    \[
    \Phi_{[-\alpha,\alpha]} (x,t) := \{ \Phi_\tau(x,t) \; | \; \tau
    \in [-\alpha,\alpha] \} = \{x\} \times [t-\alpha,t+\alpha]
    \subseteq U;
    \]
  \item $\Gamma_0 \backslash \calu$ is finite.
  \end{enumerate}
  The number $N$ above is the number $N$ we are looking for in
  Proposition~\ref{prop:some-contractions}.

  Let $D_N$, $C_1$, $C_2$, and $C_3$ be the constants
  appearing in Lemma~\ref{lem:l1-metric_and_subdivision},
  Lemma~\ref{lem:Gamma_toRn_timesR}, and
  Lemma~\ref{lem:metric_estimate_for_Gamma_to_D}. 
  Consider any real numbers $R > 0$ and $\epsilon > 0$.
  Put
  \begin{eqnarray}
    \omega
    & := &
    \max\left\{\frac{64 \cdot D_N \cdot N^2
    \cdot (C_1 \cdot R + C_2)}{\epsilon},  8 \cdot N \cdot (C_1 \cdot R + C_2)\right\}.
    \label{value_of_omega}
  \end{eqnarray}

Next we show that the statement~\ref{prop:some-contractions:hard}
appearing in Proposition~\ref{prop:some-contractions} is true.
Fix a natural number $m$. Let $\sigma_m$ and $\alpha_m > 0$ be the real numbers coming from
Lemma~\ref{lem:ball_inclusions} for $\omega$ defined
in~\eqref{value_of_omega} and  for $I = [-m,m] \subseteq \IR$. Hence we get
\begin{eqnarray}
& & B^{\Gamma}_{\omega}(x,s)
\subseteq
  B^{\euc}_{\sigma_m}(x) \times [s-\alpha_m/2,s+\alpha_m/2]
\quad \text{for}\;  x \in \IR^n, s \in [-m,m].
\label{ball_inclusion_in_proof}
\end{eqnarray}

  For this $\alpha_m$ choose the $\VCyc$-cover $\calu_m$ of $\IR^n \times \IR$ as above.
  Recall that a $\VCyc$-cover $\calu_m$ is an open cover such that for
  $U \in \calu_m$, and $\gamma \in \Gamma$ we have $\gamma U \in \calu_m$
  and $\gamma U \cap U \not= \emptyset \implies \gamma U = U$ and for
  every $U \in \calu_m$ the subgroup $\Gamma_U := \{\gamma \in U \mid
  \gamma U = U\}$ of $\Gamma$ is virtually cyclic. 

Choose elements  $\gamma_1, \gamma_2 \ldots, \gamma_{[\Gamma:\Gamma_0]}$  in $\Gamma$
such that $\{\overline{\gamma_1}, \overline{\gamma_2} \ldots, \overline{\gamma_{\Gamma:\Gamma_0}}\}
= \Gamma/\Gamma_0$.  Put
\[
\calv_m := \{\gamma_i\cdot  U \mid U \in \calu_m, i = 1,2, \ldots, [\Gamma:\Gamma_0]\}
\]
Then $\calv_m$ is an open cover satisfying 
\begin{enumerate}
\item $\calv_m$ is $\Gamma$-invariant cover, i.e., $\gamma \in \Gamma, V
  \in \calv_m \implies \gamma V \in \calv_m$;
\item $\dim \calv_m \leq N$;
\item For every $(x,y) \in \IR^n \times \IR $ there exists $V \in
  \calv_m$ such that
  \[
  \Phi_{[-\alpha_m,\alpha_m]} (x,y) := \{ \Phi_\tau(x,y) \; | \; \tau \in
  [-\alpha_m,\alpha_m] \} \subseteq V;
  \]
\item $\Gamma\backslash \calv_m$ is finite.
\end{enumerate}

Next we show that we can find $\eta_m > 0$ such that for 
every $(x,s) \in \IR^n \times [-m,m]$
there exists $V \in \calv_m$ such that
\begin{eqnarray}
  \label{thickening_to_B_epsilon_timesB_alpha/2}
  & B_{\eta_m}^{\euc}(x) \times [s-\alpha_m/2,s + \alpha_m/2] \subseteq V, &
\end{eqnarray}
Suppose the contrary.  Then we can find sequences $(x_i)_i$ in $\IR^n$ and
$(s_i)_i$ in $[-m,m]$ 
such that for no $i \ge 1$ there exists $V \in \calv_m$ with the property 
$B^{\euc}_{1/i}(x_i) \times [s_i - \alpha_m/2,s_i + \alpha_m/2] \subset V$.  
Since the $\Gamma$-action on
$\IR^n \times \IR$ is proper and cocompact, there is a
compact subset $K \subseteq \IR^n \times \IR$ with $\Gamma \cdot K = \IR^n \times \IR$. 
Hence we can find a sequence
$(\gamma_i)_i$ in $\Gamma$ and an element $(x,s)$ in $\IR^n \times
\IR$ such that
$$\lim_{i \to \infty} \gamma_i \cdot (x_i,s_i) = (x,s).$$

Recall that $\Gamma$ acts diagonally on $\IR^n \times \IR$, where the
action on $\IR$ comes from the epimorphism $\Gamma \to D$ with $\Delta$ as kernel and
a proper $D$-action on $\IR$. Since $[-m,m]$ is compact,  the set
$\{\gamma_i \Delta \mid i \ge 1\} \subseteq \Gamma/\Delta$ is finite.
By passing to a subsequence, we can arrange that it consists of precisely
one element, in other words, there exists an element $\gamma \in \Gamma$ and a sequence
$(\delta_i)$ of elements in $\Delta$ such that  $\gamma_i = \gamma \cdot \delta_i$
holds for $i \ge 1$. Hence we can assume 
$$\lim_{i \to \infty} \delta_i \cdot (x_i,s_i ) = (x,s),$$
otherwise replace $(x,s)$ by $\gamma^{-1}\cdot (x,s)$. 

Choose $V\in \calv_m$ such that $\{x\} \times [s-\alpha_m,s+\alpha_m] \in V$.  
Since $\{x\} \times [s-\alpha_m,s+\alpha_m]$ is compact and $V$ is
open, we can find $\xi > 0$ with 
$B^{\euc}_{\xi}(x) \times [s-\alpha_m,s+\alpha_m] \subseteq V$. 
We can choose $i$ such that 
$(\delta_i \cdot x_i,s_i) \in B^{\euc}_{\xi/2}(x) \times [s-\alpha_m/2,s+\alpha_m/2]$ 
and $1/i \le \xi/2$.  Hence 
$B^{\euc}_{1/i}(\delta_i \cdot  x_i) \times [s_i-\alpha_m/2,s_i+\alpha_m/2]$ 
is contained in $B^{\euc}_{\xi}(x) \times [s-\alpha_m,s+\alpha_m]$. We conclude
$$B^{\euc}_{1/i}(\delta_i \cdot x_i) \times [s_i-\alpha_m/2,s_i+\alpha_m/2] \subseteq V.$$
Since $\Delta$ acts isometrically on $\IR^n$, we obtain
$$B^{\euc}_{1/i}(x_i) \times [s_i-\alpha_m/2,s_i+\alpha_m/2] \subseteq \delta_i^{-1} \cdot V.$$
Since $\delta_i^{-1} V \in \calv_m$, we get a contradiction.
Hence~\eqref{thickening_to_B_epsilon_timesB_alpha/2} is true.

Now we define the desired number
\begin{eqnarray}
\xi_m & := & \frac{\sigma_m}{\eta_m}.
\label{def_of_xi_overlineH}
\end{eqnarray}
Next consider a subgroup $\overline{H} \subseteq \Gamma$ of finite index,
and a natural number $k$ satisfying the
assumptions appearing in assertion~\ref{prop:some-contractions:hard}
of Proposition~\ref{prop:some-contractions}.
{}From now on put $m = [D : \pi \circ \pr(\overline{H})]$.
We can choose a pseudo $k$-expansive map
$$\phi \colon \Gamma \to \Gamma$$
by Lemma~\ref{lem:existence_of_pseudo_s-expansive_maps}%
~\ref{lem:existence_of_pseudo_s-expansive_maps:existence}.
Because of Lemma~\ref{lem:overline(H)_subseteq_im(phi)} we can assume
\begin{eqnarray}
  \overline{H} & \subseteq & \im(\phi),
  \label{H_subset_im(phi)}
\end{eqnarray}
since the desired claim holds for $\overline{H}$
if it holds for some conjugate of $\overline{H}$, 
compare Remark~\ref{rem:Farrell-Hisiang_plus_conjugation}.
There exists $u \in \IR$ such that the affine map $a \colon \IR^n \to
\IR^n$ sending $x$ to $k \cdot x + u$ is $\phi$-equivariant (see
Lemma~\ref{lem:equivariant_affine_diffeo}).
Since
$$a \times \id_{\IR}\bigl(B^{\euc}_{\eta_m}(x) \times [s-\alpha_m/2,s+\alpha_m/2]\big)
= B^{\euc}_{k \cdot \eta_m}(a(x)) \times [s-\alpha_m/2,s+\alpha_m/2],$$
and $a$ is bijective, we
conclude from~\eqref{thickening_to_B_epsilon_timesB_alpha/2} that for
every $x \in \IR^n$ and $s \in [-m,m]$  there exists $V \in \calv_m$ satisfying
\begin{eqnarray*}
  B^{\euc}_{k\cdot \eta_m}(x) \times [s-\alpha_m/2,s + \alpha_m/2]
  & \subseteq & a \times \id_{\IR}(V).
\end{eqnarray*}
Since $k \ge \xi_m$ implies $k \cdot \eta_m \ge \sigma_m$
by our choice~\eqref{def_of_xi_overlineH} of
$\xi_m$ 
we conclude for every $x \in \IR^n$ and $s \in [-m,m]$
\begin{eqnarray*}
  B^{\euc}_{\sigma_m}(x) \times [s-\alpha_m/2,s + \alpha_m/2]
  & \subseteq &  a \times \id_{\IR}(V).
\end{eqnarray*}
Now~\eqref{ball_inclusion_in_proof} implies 
that for every $x \in \IR^n$ and $s \in [-m,m]$
there exists $V \in \calv_m$ satisfying
\begin{eqnarray}
  B^{\Gamma}_{\omega}(x,s)
  & \subseteq &  a \times \id_{\IR}(V)
\label{B_omega_condition_for_K}
\end{eqnarray}

Next consider the open covering $\calw_m := \{a \times \id(V) \mid V \in
\calv_m\}$ of $\IR^n \times \IR$.  This is an $\im(\phi)$-invariant
covering, since the diffeomorphism $a \times \id \colon \IR^n \times
\IR \to \IR^n \times \IR$ is $\phi$-equivariant and $\calv_m$ is
$\Gamma$-invariant.  By~\eqref{H_subset_im(phi)} we can consider
$\calw_m$ as a $\overline{H}$-invariant open covering of $\IR^n \times \IR$.
Since by definition $m = [D : \pi \circ \pr(\overline{H})]$,
we conclude that $\pi \circ \pr(\overline{H})$ contains $m \cdot \IZ$.
This implies $\pi \circ \pr(\overline{H}) \cdot [-m,m] = \IR$.
Since for every $\gamma \in \overline{H}$ we have
$\gamma \cdot B^{\Gamma}_{\omega}(x,s) = B^{\Gamma}_{\omega}\bigl(\gamma \cdot (x,s)\bigr)$,
we conclude  from~\eqref{B_omega_condition_for_K} that for every $(x,s) \in \IR^n \times \IR$
there exists $W \in \calw$ with $ B^{\Gamma}_{\omega}\bigl((x,s)\bigr) \subseteq W$. Hence
Lemma~\ref{lem:contracting_estimate_d1_d_Gamma}
(applied in the case $\Gamma = \overline{H}$)
implies that the $\overline{H}$-equivariant map
$$\beta^{\calw}  \colon \IR^n \times \IR \to | \calw |$$ defined in~\eqref{fcalu}
has the property that for $(x,s), (y,t) \in \IR^n \times \IR$ with
$d^{\Gamma}\bigl((x,s),(y,t)\bigr) \le \frac{\omega}{8N}$ we get
\begin{eqnarray}
  d^{l^1}_{|\calw|}(\beta^{\calw}(x,s),\beta^{\calw}(y,t)\bigr)
  & \le &
  \frac{64 \cdot N^2}{\omega} \cdot d^{\Gamma}\bigl((x,s),(y,t)\bigr).
  \label{estimate_dl11_calw_by_dGamma}
\end{eqnarray}
Now consider the composite
$$f \colon \Gamma \xrightarrow{\ev}
\IR^n \times \IR \xrightarrow{\beta^{\calw}} |\calw| \xrightarrow{\id}
|\calw'|.$$ The map $\ev$ is $\Gamma$-invariant and in particular
$\overline{H}$-equivariant.  Hence $f$ is $\overline{H}$-equivariant. The
$\overline{H}$-action on $|\calw|$ is simplicial.  Hence the
$\overline{H}$-action on the barycentric subdivision $|\calw'|$ is
simplicial and cell preserving.

Next we show that the isotropy groups of the $\overline{H}$-action on the
space $|\calw| = |\calw'|$ are all virtually cyclic. Consider $ z\in
|\calw|$. Choose a simplex $\sigma$ such that $z$ lies in its
interior. Let the simplex $\sigma$ be given by $\{W_0,W_1, \ldots
W_l\}$ for pairwise distinct elements $W_i \in \calw$.  Then for every
$\gamma$ in the isotropy group $\overline{H}_z$ we must have
$$\gamma \cdot \{W_0,W_1, \ldots W_l\} = \{W_0,W_1, \ldots W_l\}.$$
Hence $\overline{H}_z$  operates on the finite set $\{W_0,W_1, \ldots
W_r\}$. We conclude that $\overline{H}_z$ contains a subgroup
$\overline{H}_z'$ of finite index such that $\gamma \cdot W_0 = W_0$
holds for all $\gamma \in \overline{H}_z'$.  By construction there is $U
\in \calu$ such that $W = f(U)$ or $W = f(\overline{s} \cdot U)$ for
some fixed element $\overline{s} \in \Gamma$.  Let $\Gamma_z''
\subseteq \Gamma$ be the preimage of $ \overline{H}_z'$ under the
isomorphism $\phi \colon \Gamma \to \im(\phi)$. Hence either $\gamma''
\cdot U = U$ for all $\gamma'' \in \Gamma_z''$ or
$\overline{s}^{-1}\gamma'' \overline{s} \cdot U = U$ for all $\gamma''
\in \Gamma_z''$. Since $\calu$ is a $\VCyc$-covering, the group
$\Gamma_z''$ is virtually cyclic. Since it is isomorphic to a subgroup
of finite index of $ \overline{H}_z$, the isotropy group $ \overline{H}_z$ is
virtually cyclic.

Consider $\gamma_1,\gamma_2$ in $\Gamma$ with
$d_{\Gamma}(\gamma_1,\gamma_2) \le R$.  We want to show
\begin{eqnarray}
  d^{l^1}_{|\calw'|}\big(f(\gamma_1),f(\gamma_2)\bigr) & \le & \epsilon.
  \label{dl1(g(gamma_1),g(gamma_2))_le_epsilon}
\end{eqnarray}
Lemma~\ref{lem:Gamma_toRn_timesR} implies
\begin{eqnarray*}
  & d^{\Gamma}\bigl(\ev(\gamma_1),\ev(\gamma_2)\bigr)
  \le
  C_1 \cdot d_{\Gamma}\bigl(\gamma_1,\gamma_2\bigr) + C_2
  \le
  C_1 \cdot R  + C_2.
\end{eqnarray*}
Our choice of $\omega$ in~\eqref{value_of_omega} guarantees $C_1 \cdot R
+ C_2 \le \frac{\omega}{8N}$. Hence
\begin{eqnarray*}
  d^{\Gamma}\bigl(\ev(\gamma_1),\ev(\gamma_2)\bigr)
  & \le  &
  \frac{\omega}{8N}.
\end{eqnarray*}
We conclude from~\eqref{estimate_dl11_calw_by_dGamma}
\begin{eqnarray*}
  d^{l^1}_{|\calw|}\bigl(\beta^{\calw} \circ
\ev(\gamma_1),\beta^{\calw} \circ \ev(\gamma_2)\bigr)
  & \le &
  \frac{64 \cdot N^2}{\omega} \cdot d^{\Gamma}\bigl(\ev(\gamma_1),\ev(\gamma_2)\bigr)
  \\
  & \le &
  \frac{64 \cdot N^2}{\omega} \cdot \bigl(C_1 \cdot R + C_2\bigr).
\end{eqnarray*}
Lemma~\ref{lem:l1-metric_and_subdivision} implies
\begin{eqnarray*}
  d^{l^1}_{|\calw'|}\bigl(f(\gamma_1),f(\gamma_2)\bigr)
  & \le &
  \frac{64 \cdot N^2 \cdot D_N \cdot \bigl(C_1 \cdot R + C_2\bigr)}{\omega}.
\end{eqnarray*}
Our choice of $\omega$ in~\eqref{value_of_omega} implies
$$\frac{64 \cdot N^2 \cdot D_N \cdot \bigl(C_1 \cdot R + C_2\bigr)}{\omega}
\le \epsilon.$$
This finishes the proof
of~\eqref{dl1(g(gamma_1),g(gamma_2))_le_epsilon}.

Since $\overline{H}$-acts simplicially on $|\calw|$, it acts simplicially
and cell preserving on $|\calw'|$. Put $E := |\calw'|$. We have already shown that
all isotropy groups of the $\overline{H}$-action on $F$ are virtually cyclic.
The $\overline{H}$-map
$f \colon \Gamma \to E$ has the desired properties because
of~\eqref{dl1(g(gamma_1),g(gamma_2))_le_epsilon}.  This finishes the
proof of statement~\ref{prop:some-contractions:hard} appearing in
Proposition~\ref{prop:some-contractions}.

Next we prove statement~\ref{prop:some-contractions:easy} of
Proposition~\ref{prop:some-contractions}.   Choose an integer m satisfying
 \begin{eqnarray}
  m
  & \ge  &
 \frac{2\cdot C_3 \cdot R}{\epsilon}.
  \label{value_of_mu}
\end{eqnarray}
We conclude from
Lemma~\ref{lem:metric_estimate_for_Gamma_to_D} that for all
$\gamma_1,\gamma_2\in \Gamma$
$$d_{D}\bigl(\pi \circ \pr(\gamma_1),\pi \circ \pr(\gamma_2)\bigr) \le
C_3 \cdot d_{\Gamma}(\gamma_1,\gamma_2)$$ holds.
Let $\ev \colon D \to \IR$ be the map  given by
evaluation of the standard group action of $D$ on the origin $0$.  One
easily checks for $\delta_1, \delta_2 \in D$
$$d^{\euc}\bigl(\ev(\delta_1),\ev(\delta_2)\bigr)
\le d_{D}\bigl(\delta_1,\delta_2\bigr).$$
Let the desired map $f$ be the composite
$$f \colon \Gamma \xrightarrow{\pr} Q\xrightarrow{\pi} D
\xrightarrow{\ev} \IR \xrightarrow{\frac{1}{m} \cdot \id} \IR.$$ It
satisfies for all $\gamma_1,\gamma_2\in \Gamma$
\begin{eqnarray*} d_{\IR}\bigl(f(\gamma_1),f(\gamma_2)\bigr) & \le &
  \frac{C_3}{ m} \cdot d_{\Gamma}(\gamma_1,\gamma_2).
\end{eqnarray*}
Let $E$ be the simplicial complex whose underlying space is $\IR$ and
for which the set of zero-simplices is $\frac{1}{2}\cdot \IZ$. Then we
get for $x,y \in \IR$
$$d_E^{l^1}(x,y) \le 2 \cdot d_{\IR}(x,y).$$
Hence we obtain for all $\gamma_1,\gamma_2\in \Gamma$
\begin{eqnarray*} d_E^{l^1}\bigl(f(\gamma_1),f(\gamma_2)\bigr) & \le &
  \frac{2 \cdot C_3}{ m} \cdot d_{\Gamma}(\gamma_1,\gamma_2).
  \label{d_R_les_equal_1_over_mu_cdotd_l1}
\end{eqnarray*}
The choice of the integer $ m$ in~\eqref{value_of_mu} guarantees
$$\frac{2 \cdot C_3 \cdot R}{ m} \le \epsilon$$
Hence
$$d_{\Gamma}(x,y) \leq R \quad \implies \quad d_E^{l^1}(f(x),f(y)) \le  \epsilon $$
for $\gamma_1,\gamma_2 \in \Gamma$.

The standard operation of $D$ on $\IR$ is simplicial and cell
preserving.  Consider the group homomorphism $\phi_{ m} \colon D \to
D$ which sends $t$ to $t^{ m}$ and, if $D = D_{\infty}$, $s$ to $s$,
where we use the standard
presentations of $\IZ$ and $D_{\infty}$.
The map $ m \cdot \id \colon \IR \to \IR$ is $\phi_{ m}$-equivariant
if we equip source and target with the standard $D$-action.

Now consider any subgroup $\overline{H} \subseteq \Gamma$ with
$[D : \pi \circ \pr(\overline{H})] \ge 2 \cdot  m$.
We conclude $\pi \circ \pr(\overline{H}) \subseteq \im(\phi_{ m})$.
Since $\phi_{ m}$ is injective, we
can define an $\overline{H}$-action on $E$ by defining
$\overline{h} \cdot e = \delta \cdot
e$ for $e \in E$ and any $\delta \in D$ for which $\phi_{ m}(\delta)
= \pi \circ \pr(\overline{h})$ holds. One easily checks that the map $f$ is
$\overline{H}$-equivariant. Since the isotropy groups of the standard $D$-action
on $\IR$ are finite and the epimorphism $\pi \colon Q \to D$ has a
finite kernel and the kernel of $\pr$ is $A$, the isotropy group
$\overline{H}_e$ of any $e \in E$ satisfies
$A \subseteq \overline{H}_e$ and $[\overline{H}_e : A] < \infty$.
Now define the desired natural number $\mu$ by  $\mu = 2m$.
This finishes the proof of
Proposition~\ref{prop:some-contractions}.
\end{proof}


\subsection{Proof of the Farrell-Jones Conjecture for irreducible
  special affine groups}
\label{subsec:Proof_of_the_Farrell-Jones_Conjecture_for_irreducible_special_affine_groups}

\begin{proposition}[The Farrell-Jones Conjecture for irreducible
  special affine groups]
  \label{pro:FJC_irreducible_special_affine_groups}
  Both the $K$-theoretic and the $L$-theoretic FJC hold for all
  irreducible special affine groups.
\end{proposition}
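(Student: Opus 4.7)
My plan is to verify that $\Gamma$ is a Farrell-Hsiang group in the sense of Definition~\ref{def:Farrell-Hsiang} with respect to the family $\calf$ of virtually finitely generated abelian subgroups of $\Gamma$, and then invoke Theorem~\ref{the:Farrell-Jones_Conjecture_for_Farrell-Hsiang_groups} together with the Transitivity Principle~\ref{the:transitivity}. Every $H \in \calf$ satisfies both the $K$-theoretic and the $L$-theoretic FJC by Theorem~\ref{the:The_Farrell-Jones_Conjecture_for_virtually_finitely_generated_abelian_groups}, so by Theorem~\ref{the:transitivity} it is enough to prove FJC for $\Gamma$ relative to $\calf$. Observe that the stabilizers produced in assertion~\ref{prop:some-contractions:easy} of Proposition~\ref{prop:some-contractions} contain $A$ with finite index and are therefore virtually finitely generated abelian, while those produced in assertion~\ref{prop:some-contractions:hard} are even virtually cyclic; in both cases the stabilizers lie in $\calf$.

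Fix a word metric $d_\Gamma$ on $\Gamma$ and parameters $R,\epsilon > 0$. First I would apply Proposition~\ref{prop:some-contractions} to $R$ and $\epsilon$ to obtain the uniform dimension bound $N$, a sequence $(\xi_m)_{m \ge 1}$, and a natural number $\mu$. Then I would set
\[
\tau := \max\bigl\{\mu,\,\xi_1,\,\xi_2,\,\ldots,\,\xi_\mu\bigr\},
\]
and feed this $\tau$ into Proposition~\ref{prop:hyper-good} to obtain natural numbers $s,r$ together with the epimorphism $\alpha_{r,s}\colon \Gamma \to A_s \rtimes_{\rho_{r,s}} Q_r$; the composite $\alpha_{r,s}$ is going to play the role of the map $\alpha_{R,\epsilon}$ required by Definition~\ref{def:Farrell-Hsiang}.

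Given a hyperelementary subgroup $H \subseteq A_s \rtimes_{\rho_{r,s}} Q_r$ with preimage $\overline{H} := \alpha_{r,s}^{-1}(H)$, the dichotomy in Proposition~\ref{prop:hyper-good}~\ref{prop:hyper-good:alternative} either delivers an integer $k \ge \tau$ satisfying all the congruence, divisibility and intersection conditions required by assertion~\ref{prop:some-contractions:hard} of Proposition~\ref{prop:some-contractions}, or guarantees $[D:\pi\circ\pr(\overline{H})] \ge \tau \ge \mu$. In the first case, with $m := [D:\pi\circ\pr(\overline{H})]$, either $m \le \mu$ and then $k \ge \tau \ge \xi_m$ so that assertion~\ref{prop:some-contractions:hard} applies directly, or $m > \mu$ in which case assertion~\ref{prop:some-contractions:easy} applies. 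In the second case assertion~\ref{prop:some-contractions:easy} applies again. Either way I obtain a simplicial complex $E_H$ of dimension $\le N$, a cell preserving $\overline{H}$-action on $E_H$ with stabilizers in $\calf$, and an $\overline{H}$-equivariant map $f_H\colon \Gamma \to E_H$ satisfying the required $R$-to-$\epsilon$ contraction estimate in the $l^1$-metric. This exhibits $\Gamma$ as a Farrell-Hsiang group with respect to $\calf$, and Theorem~\ref{the:Farrell-Jones_Conjecture_for_Farrell-Hsiang_groups} finishes the argument.

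The hard part will be the careful bookkeeping of parameters: one has to choose $\tau$ large enough that, whichever branch of the dichotomy of Proposition~\ref{prop:hyper-good} occurs, the hypotheses of the matching case of Proposition~\ref{prop:some-contractions} are triggered, and one needs to simultaneously arrange that the sequence $(\xi_m)$ and the threshold $\mu$ coming out of Proposition~\ref{prop:some-contractions} cooperate with the index control coming out of Proposition~\ref{prop:hyper-good}. All of the heavy geometric, dynamical and homological input is already packaged into Propositions~\ref{prop:hyper-good} and~\ref{prop:some-contractions}; what remains at this stage is essentially the correct orchestration of these two ingredients.
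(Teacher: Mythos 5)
Your proposal is correct and follows essentially the same approach as the paper: it reduces to showing $\Gamma$ is a Farrell-Hsiang group for the family of virtually finitely generated abelian subgroups, chooses $\tau$ exactly so that the dichotomy of Proposition~\ref{prop:hyper-good} feeds into the two cases of Proposition~\ref{prop:some-contractions}, and then cites Theorem~\ref{the:Farrell-Jones_Conjecture_for_Farrell-Hsiang_groups} and the Transitivity Principle. The only cosmetic difference is that you branch on the hyper-good dichotomy first and then on the index, while the paper branches on the index first; the bookkeeping with $\tau \ge \max\{\mu,\xi_1,\dots,\xi_\mu\}$ works out identically either way.
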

\begin{proof}
  Because of Theorem~\ref{the:transitivity},
  Theorem~\ref{the:Farrell-Jones_Conjecture_for_Farrell-Hsiang_groups} and
  Theorem~\ref{the:The_Farrell-Jones_Conjecture_for_virtually_finitely_generated_abelian_groups}
  it suffices to show that a special affine group $G$ is a Farrell-Hsiang group
  with respect to the family $\calf$ of virtually finitely generated abelian
  groups in the sense of Definition~\ref{def:Farrell-Hsiang}.

  Let $N$ be the natural number appearing in
  Proposition~\ref{prop:some-contractions}.  Consider any real numbers $R > 0$
  and $\epsilon > 0$.  Let $\mu$ be the natural number and 
  $(\xi_n)_{n \le 1}$ be the sequence appearing in 
  Proposition~\ref{prop:some-contractions}. Now choose a natural number $\tau$
  such that $\mu < \tau$ and $\xi_{n} \le \tau$ for all $n \le \mu$.  
  For this choice of $\tau$ we choose $r,s$ as appearing in
  Proposition~\ref{prop:hyper-good}.  Let $\alpha_{r,s} \colon \Gamma \to A_s
  \rtimes_{\rho_{r,s}} Q_r$ be the epimorphism appearing in
  Proposition~\ref{prop:hyper-good}.  The map $\alpha_{r,s}$ will play the role
  of the map $\alpha_{R,\epsilon}$ appearing in
  Definition~\ref{def:Farrell-Hsiang}.

  Let $H$ be a hyperelementary subgroup of $A_s \rtimes_{\rho_{r,s}}
  Q_r$. Recall that $\overline{H}$ is the preimage of $H$ under
  $\alpha_{r,s}$. We have to construct the desired simplicial complex $E_H$ and
  the map $f_H \colon G \to E_H$ as demanded in
  Definition~\ref{def:Farrell-Hsiang}. 
  If $[D:\pi \circ \pr(\overline{H})] \ge \mu$,
  then we obtain the desired pair $(E_H,f_H)$ from
  assertion~\ref{prop:some-contractions:easy} of
  Proposition~\ref{prop:some-contractions}.  Suppose that 
  $[D:\pi \circ \pr(\overline{H})]  \le \mu$. Then  by our choice
  of $\tau$ we have $\tau \ge \xi_{[D : \pi \circ \pr(\overline{H})]}$ and $\mu < \tau$.
  In particular $[D:\pi \circ \pr(\overline{H})]  \ge \tau$ is not true.
  Hence by
  Proposition~\ref{prop:hyper-good} we obtain an integer $k$ such that the
  assumption appearing in assertion~\ref{prop:some-contractions:hard} of
  Proposition~\ref{prop:some-contractions} are satisfied and the conclusion of
  assertion~\ref{prop:some-contractions:hard} of
  Proposition~\ref{prop:some-contractions} gives the desired pair
  $(E_H,f_H)$. Hence $G$ is a Farrell-Hsiang group with respect to the family
  $\calf$ of virtually finitely generated abelian groups.  This finishes the
  proof of Proposition~\ref{pro:FJC_irreducible_special_affine_groups}.
\end{proof}


\typeout{------------ Virtually poly-IZ-groups  --------------------}

\section{Virtually poly-$\IZ$-groups}
\label{sec:Virtually_poly-Z-groups}

This section is devoted to the proof of
Theorem~\ref{the:FJC_virtually_poly_Z-groups}.
It will be done by induction over the virtual cohomological
dimension.  We will need the following ingredients.

\begin{definition}[(Virtually) poly-$\IZ$]
\label{def:virtually_poly-Z}
We call a group $G'$ \emph{poly-$\IZ$}  if there exists a finite sequence
$$\{1\} = G_0' \subseteq G_1' \subseteq \ldots \subseteq G_n' = G'$$
of subgroups such that $G_{i-1}'$ is normal in $G_{i}'$ with infinite cyclic
quotient $G_{i}'/G_{i-1}'$ for $i = 1,2, \ldots , n$.

We call a group $G$ virtually poly-$\IZ$ if it contains a subgroup $G'$ of finite
index such that $G'$ is poly-$\IZ$.
\end{definition}

Let $G$ be a virtually poly-$\IZ$-group. Let $G' \subseteq G$ be any
subgroup of finite index, for which there exists a finite sequence
$\{1\} = G_0' \subseteq G_1' \subseteq \ldots \subseteq G_n' = G'$ of
subgroups such that $G_{i-1}'$ is normal in $G_{i}'$ with infinite
cyclic quotient $G_{i}'/G_{i-1}'$ for $i = 1,2, \ldots , n$. We call
the number $r(G) := n$ the \emph{Hirsch rank} of $G$.  We will see
that it depends only on $G$ but not on the particular choice of
subgroup $G' \subseteq G$ and the filtration
$\{1\} = G_0' \subseteq G_1' \subseteq \ldots \subseteq G_n' = G'$.

\begin{lemma}[Virtual cohomological dimension of virtually
  poly-$\IZ$-groups]
  \label{lem:virt_dim_of_virt_poly_Z-groups}
  Let $G$ be a virtually poly-$\IZ$-group. Then
  \begin{enumerate}
  \item \label{lem:virt_dim_of_virt_poly_Z-groups:Hirsch} $r(G) =
    \vcd(G)$;

  \item \label{lem:virt_dim_of_virt_poly_Z-groups:homological} We get
    $r(G) = \max\{i \mid H_i(G';\IZ/2) \not= 0\}$ for one (and hence
    all) poly-$\IZ$ subgroup $G' \subset G$ of finite index;

  \item \label{lem:virt_dim_of_virt_poly_Z-groups:Eunderbar} There
    exists a finite $r(G)$-dimensional model for the classifying space of
    proper $G$-actions $\underline{E}G$ and for any model
    $\underline{E}G$ we have $\dim(\underline{E}G) \ge r(G)$;

  \item \label{lem:virt_dim_of_virt_poly_Z-groups:subgroups_and_quotients}
    Subgroups and a quotient groups of virtually poly-$\IZ$ groups
    are again virtually poly-$\IZ$;

  \item \label{lem:virt_dim_of_virt_poly_Z-groups:extensions} Consider
    an extension of groups
$$1 \to G_0 \to G_1 \to G_2 \to 1.$$
Suppose that two of them are virtually poly-$\IZ$.  Then all of them
are virtually poly-$\IZ$ and we get for their cohomological dimensions
$$\vcd(G_1) = \vcd(G_0) + \vcd(G_2).$$

\end{enumerate}
\end{lemma}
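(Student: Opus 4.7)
The plan is to establish the five assertions in the following order: first the structural statements (iv) and (v), then the key homological calculation (ii), then deduce the Hirsch--$\vcd$ coincidence (i), and finally construct the classifying space in (iii). The central invariant is the Hirsch rank $r$, and once we know it is additive in extensions and monotone under passage to finite index subgroups, the inductive structure of everything else falls into place.

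For (iv), the key observation is that if $G' = G_n'$ is poly-$\IZ$ with filtration $\{1\} = G_0' \subseteq \cdots \subseteq G_n' = G'$ and $H \subseteq G'$ is any subgroup, then $H_i := H \cap G_i'$ gives a filtration whose successive quotients $H_i/H_{i-1}$ embed into $G_i'/G_{i-1}' \cong \IZ$ and are therefore either trivial or infinite cyclic; collapsing trivial steps yields a poly-$\IZ$ filtration of $H$. For general virtually poly-$\IZ$ $G$ with $G' \subseteq G$ of finite index poly-$\IZ$, and any subgroup $H \subseteq G$, the intersection $H \cap G'$ has finite index in $H$ and is poly-$\IZ$ by the above. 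For quotients, any quotient of a poly-$\IZ$ group is polycyclic (the image filtration has cyclic quotients), and it is classical that every polycyclic group contains a poly-$\IZ$ subgroup of finite index; this handles $G_2 = G_1/G_0$. Part (v) then reduces to a single non-trivial case, namely that if $G_0$ and $G_2$ are virtually poly-$\IZ$ then so is $G_1$: take a normal poly-$\IZ$ subgroup $G_2' \subseteq G_2$ of finite index and a normal poly-$\IZ$ subgroup $G_0'' \subseteq G_0 \cap p^{-1}(G_2')$ of finite index (replace by the intersection of finitely many conjugates to ensure normality); the extension $1 \to G_0'' \to G_1' \to G_2' \to 1$ obtained by pullback then has a poly-$\IZ$ middle term $G_1'$ of finite index in $G_1$, by concatenating the two filtrations. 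The additivity $r(G_1) = r(G_0) + r(G_2)$ is immediate from this concatenated filtration.

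For (ii), we proceed by induction on the Hirsch rank of a poly-$\IZ$ subgroup $G'$ of finite index. The base $r = 0$ is trivial. For the induction step, the extension $1 \to G_{n-1}' \to G' \to \IZ \to 1$ feeds into the Hochschild--Serre spectral sequence with $\IZ/2$-coefficients; by the inductive hypothesis $H_q(G_{n-1}';\IZ/2)$ vanishes for $q > n-1$ and is nonzero for $q = n-1$, while $H_p(\IZ;-)$ vanishes for $p > 1$. The resulting two-column $E^2$-page has $E^2_{1,n-1} \cong H_0(\IZ;H_{n-1}(G_{n-1}';\IZ/2))$ as its unique possibly nonzero term in total degree $n$, and it survives to $E^{\infty}$ since it has no differentials hitting or leaving it. It is nonzero because the $\IZ$-action on $H_{n-1}(G_{n-1}';\IZ/2) \cong \IZ/2$ is necessarily trivial. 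This gives (ii) and independence of the choice of poly-$\IZ$ subgroup. Assertion (i) then follows from Serre's theorem: every poly-$\IZ$ subgroup of finite index is torsionfree, and for such $G'$ we have $\vcd(G) = \cd(G') = r(G)$, where the last equality is (ii) combined with $\cd(G') \le r(G)$ (visible from the length-$r$ filtration giving an iterated $K(\IZ,1)$-bundle structure on $BG'$). Finally, for (iii), by induction on $r$ a poly-$\IZ$ group admits a closed aspherical $r$-manifold model of $BG'$ as an iterated mapping torus, and inducing up to $G$ via $\underline{E}G := G \times_{G'} EG'$ (with its natural cocompact proper $G$-action) produces a finite $r$-dimensional model; the lower bound $\dim(\underline{E}G) \ge r$ follows because the restriction to the free $G'$-action gives a $BG'$, whose dimension is at least $\cd(G') = r$. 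The main technical obstacle is ensuring that the poly-$\IZ$ subgroup can be chosen normal in $G_1$ when constructing the extension in (v); this is handled by the standard device of replacing a finite-index subgroup by the intersection of its (finitely many) conjugates, which preserves the poly-$\IZ$ property by (iv).
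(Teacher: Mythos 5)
The paper's own proof of this lemma is a citation to \cite[Example~5.2.6]{Lueck(2005s)} for (i)--(iii) together with the remark that (iv) and (v) are then obvious by induction over the Hirsch rank, so your self-contained argument is a genuinely different route. Most of it is sound, but (iii) has a real gap and (v) and (ii) have slips that need repair.

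For (iii), the construction $\underline{E}G := G \times_{G'} EG'$ is wrong. As a space this decomposes into a disjoint union of $[G:G']$ copies of $EG'$, so it is homotopy equivalent to the finite discrete set $G/G'$ and in particular is not contractible; moreover the $G$-action on it is free (the isotropy of $[g,x]$ is trivial because $EG'$ is $G'$-free), so it sees none of the torsion in $G$. Induction from a finite-index subgroup does not produce classifying spaces for the family of finite subgroups in this way. The correct argument exhibits a proper, cocompact, smooth $G$-action on $\IR^{r}$ directly, which is what the cited reference does; there is no shortcut through $EG'$ alone.

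For (v), there is no ``pullback'' producing an extension $1 \to G_0'' \to G_1' \to G_2' \to 1$: pulling back along $G_2' \subseteq G_2$ gives $p^{-1}(G_2')$, whose kernel over $G_2'$ is still $G_0$, and shrinking the kernel would require a complement of $G_0/G_0''$ in $p^{-1}(G_2')/G_0''$ that need not exist. (Also $G_0 \cap p^{-1}(G_2') = G_0$, since $G_0 = \ker p$.) The repair is to note that $p^{-1}(G_2')/G_0''$, being finite-by-poly-$\IZ$, is virtually poly-$\IZ$ (this needs its own short argument), choose a finite-index poly-$\IZ$ subgroup $K$ there, and take $G_1'$ to be its preimage in $p^{-1}(G_2')$; concatenation then applies to $1 \to G_0'' \to G_1' \to K \to 1$. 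For (ii), the relevant term is $E^2_{1,n-1} = H_1\bigl(\IZ;H_{n-1}(G_{n-1}';\IZ/2)\bigr)$, not $H_0$, and your inductive hypothesis must carry the stronger statement $H_{r(G')}(G';\IZ/2)\cong\IZ/2$ (rather than mere nonvanishing), since you use that identification to conclude the $\IZ$-action is trivial.
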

\begin{proof}
  Assertions~\ref{lem:virt_dim_of_virt_poly_Z-groups:Hirsch},%
\ref{lem:virt_dim_of_virt_poly_Z-groups:homological}
  and~\ref{lem:virt_dim_of_virt_poly_Z-groups:Eunderbar} are proved
  in~\cite[Example~5.2.6]{Lueck(2005s)}.  The proof of the other
  assertions is now obvious using induction over the Hirsch rank.
\end{proof}

The next result is taken from~\cite[Lemma~4.4]{Farrell-Jones(1993a)}.

\begin{lemma} \label{lem:reduction_to_affine_groups} Let $G$ be a
  virtually poly-$\IZ$ group. Then there exists an exact sequence
$$1 \to G_0\to G \to \Gamma \to 1$$
satisfying
\begin{enumerate}
\item The group $G_0$ is either finite or a virtually poly-$\IZ$-group
  with $\vcd(G_0) \le \vcd(G) -2$;

\item $\Gamma$ is either a crystallographic or special affine group.

\end{enumerate}
\end{lemma}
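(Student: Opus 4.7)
The plan is to proceed by induction on the Hirsch rank $r = r(G) = \vcd(G)$ supplied by Lemma~\ref{lem:virt_dim_of_virt_poly_Z-groups}~\ref{lem:virt_dim_of_virt_poly_Z-groups:Hirsch}. The cases $r \le 1$ are handled directly: if $G$ is finite, I would take $G_0 = G$ and $\Gamma$ trivial (a rank-zero crystallographic group), and if $G$ is virtually infinite cyclic I would take the natural epimorphism from $G$ onto its crystallographic quotient (i.e.\ $\IZ$ or $D_\infty$), whose kernel is finite.

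For $r \ge 2$ my plan is to locate a distinguished characteristic finitely generated free abelian subgroup $A$ of $G$ on which the conjugation action of $G$ factors through a finite group. Since $G$ is virtually poly-$\IZ$, such an $A$ is obtained by passing to a characteristic finite-index poly-$\IZ$ subgroup, considering its Fitting subgroup (torsion-free nilpotent and characteristic in $G$) together with its iterated centres, and taking $A$ to be the largest member of the resulting chain of characteristic free abelian subgroups of $G$ on which $G$ acts with finite image. Then $C_G(A)$ has finite index in $G$, and Lemma~\ref{lem:virt_dim_of_virt_poly_Z-groups}~\ref{lem:virt_dim_of_virt_poly_Z-groups:extensions} ensures $G/A$ is again virtually poly-$\IZ$ with $\vcd(A)+\vcd(G/A) = r$.

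From here two cases arise. If $A$ has finite index in $G$, then $G$ is virtually finitely generated abelian and admits an epimorphism onto a crystallographic group with finite kernel (see Remark~\ref{rem:crystallographic_and_CAT(0)}); take this as the decomposition with $G_0$ finite and $\Gamma$ crystallographic. Otherwise, inside the nontrivial virtually poly-$\IZ$ group $G/A$ I would select a characteristic infinite cyclic subgroup (by passing to a finite-index poly-$\IZ$ subgroup of $G/A$, looking at the top of its filtration, and intersecting the finitely many $G$-conjugates), and let its preimage $B \triangleleft G$ satisfy $A \subseteq B$ with $B/A$ infinite cyclic. Declaring $N \triangleleft G$ to be the kernel of the natural projection of $G$ onto the quotient $\Gamma$ whose crystallographic translation subgroup is $A/(A\cap N)$ and whose $D$-factor is $\IZ$ or $D_\infty$ (the dihedral case arising precisely when $G/B$ acts on $B/A$ by $-\id$) produces the desired special affine group $\Gamma$.

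The hard part will be controlling $N$ so that $\vcd(N) \le r-2$. This is forced by Lemma~\ref{lem:virt_dim_of_virt_poly_Z-groups}~\ref{lem:virt_dim_of_virt_poly_Z-groups:extensions}: once one verifies $\vcd(\Gamma) \ge 2$, which holds since the special affine group $\Gamma$ has a crystallographic translation subgroup of positive rank plus a nontrivial infinite cyclic or dihedral quotient, additivity of $\vcd$ in extensions gives $\vcd(N) = r - \vcd(\Gamma) \le r-2$. The most delicate step is verifying that the extension $1 \to A \to B \to B/A \to 1$ together with the lift of the $G/B$-action genuinely assembles into a special affine structure on $\Gamma$ with $\Gamma$ acting by affine motions on $\IR^{\rk A}$ and the restriction to $\Delta$ being a cocompact isometric action; this requires a careful, characteristic-subgroup-by-characteristic-subgroup construction in the spirit of Farrell--Jones~\cite{Farrell-Jones(1993a)}, and is the real technical heart of the argument.
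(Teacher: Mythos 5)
The paper itself contains no proof of this lemma; it is cited verbatim from Farrell--Jones~\cite[Lemma~4.4]{Farrell-Jones(1993a)}. So your sketch can only be judged on its own terms, and there is a genuine gap at the very first step of the $r\geq 2$ case.

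You insist that $A$ be a characteristic free abelian subgroup on which $G$ acts by conjugation through a \emph{finite} group, and the remainder of the argument leans on this. That condition is strictly stronger than what the target class of groups allows. For a special affine group $\Gamma$ of rank $n+1$, the lattice $A = A_\Delta$ has finite conjugation image only inside $\Delta$; the quotient $D\cong \IZ$ or $D_\infty$ may act on $A$ through an \emph{infinite} subgroup of $\GL_n(\IZ)$, because the ambient action on $\IR^n$ in Definition~\ref{def:special_affine_group} is merely affine, not isometric. Consequently the finite-image requirement can force $A$ to be trivial even when $\vcd(G)\geq 2$. Take $G = \IZ^2\rtimes_M\IZ$ with $M\in\GL_2(\IZ)$ hyperbolic: the Fitting subgroup is $\IZ^2$, its centre is again $\IZ^2$, $G$ acts on it through the infinite group $\langle M\rangle$, and the centre of $G$ is trivial, so your chain of candidate subgroups degenerates to $\{1\}$.

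Your fallback branch then also fails in this example. With $A=1$ you would need a characteristic infinite cyclic subgroup of $G/A = G$; but $M$ has no rational eigenvectors, so no infinite cyclic subgroup of $\IZ^2$ is invariant under conjugation by the $\IZ$-factor, and an easy computation shows that no infinite cyclic subgroup projecting nontrivially to $\IZ$ is normal either. The correct output here is simply $G_0 = 1$ and $\Gamma = G$, which is already special affine with $\Delta = \IZ^2$ and $D = \IZ$, and your recipe does not produce it. Separately, the final definition of $N$ is circular: you declare $N$ to be the kernel of the projection onto a group whose defining datum, the translation subgroup $A/(A\cap N)$, already refers to $N$. Any repair must drop the global finite-image condition on $A$ and replace it by something that only constrains the conjugation action of the crystallographic part $\Delta$, allowing the $D$-part of $\Gamma$ to act on $A$ with infinite image.
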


Now we are ready to prove
Theorem~\ref{the:FJC_virtually_poly_Z-groups}.

\begin{proof} We use induction over the virtual cohomological
  dimension of the virtually poly-$\IZ$ group $G$. The induction
  beginning $\vcd(G) \le 1$ is trivial since in this case $G$ must be
  virtually cyclic by Lemma~\ref{lem:virt_dim_of_virt_poly_Z-groups}.
  For the induction step choose an extension
$$1 \to G_0\to G \xrightarrow{\pr} \Gamma \to 1$$ as appearing in
Lemma~\ref{lem:reduction_to_affine_groups}.  Consider any virtually
cyclic subgroup $V \subseteq \Gamma$. Then we obtain an exact sequence
$$1 \to G_0 \to \pr^{-1}(V) \to V \to 1.$$
Since $G_0$ is virtually poly-$\IZ$ with $\vcd(G_0) \le \vcd(G) -2$,
we conclude from Lemma~\ref{lem:virt_dim_of_virt_poly_Z-groups} that
$\pr^{-1}(V)$ is virtually poly-$\IZ$ with
$\vcd\bigl(\pr^{-1}(V)\bigr) < \vcd(G)$. Hence both the
$K$-theoretic and the $L$-theoretic FJC
hold for $\pr^{-1}(V)$. Because of  Theorem~\ref{the:extensions}
it remains to prove that both the
$K$-theoretic and the $L$-theoretic FJC
hold for $\Gamma$. If $\Gamma$ is crystallographic or an irreducible
special affine group, this follows from
Theorem~\ref{the:The_Farrell-Jones_Conjecture_for_virtually_finitely_generated_abelian_groups}
and Proposition~\ref{pro:FJC_irreducible_special_affine_groups}.
Hence it remains to prove both the $K$-theoretic and the $L$-theoretic
FJC for the special affine group $\Gamma$
provided that it admits an epimorphism $p \colon \Gamma \to \Gamma'$
to some virtually finitely generated abelian group $\Gamma'$ with
$\vcd(\Gamma')\ge 2$. If $K$ is the kernel of $p$, we obtain the exact
sequence $1 \to K \to \Gamma \xrightarrow{p} \Gamma'\to 1$. We
conclude from Lemma~\ref{lem:virt_dim_of_virt_poly_Z-groups} that $K$
is a virtually poly-$\IZ$ group with $\vcd(K) \le \vcd(\Gamma) -2 \le
\vcd(G) -2$.  Hence for any virtually cyclic subgroup $V$of $\Gamma'$ the
preimage $p^{-1}(V)$ is a virtually poly-$\IZ$-group with
$\vcd\bigl(p^{-1}(V)\bigr) < \vcd(G)$ by Lemma~\ref{lem:virt_dim_of_virt_poly_Z-groups}.
By the induction hypothesis $p^{-1}(V)$ satisfies both the
$K$-theoretic and the $L$-theoretic {FJC}. Since the same is true for $\Gamma'$ by
Theorem~\ref{the:The_Farrell-Jones_Conjecture_for_virtually_finitely_generated_abelian_groups},
we conclude from Theorem~\ref{the:extensions}  that $\Gamma$ satisfies both the
$K$-theoretic and the $L$-theoretic {FJC}. This finishes the proof of
Theorem~\ref{the:FJC_virtually_poly_Z-groups}.
\end{proof}


\typeout{--Cocompact lattices in virtually  connected Lie groups  --}

\section{Cocompact lattices in virtually connected Lie groups}
\label{sec:Cocompact_lattices_in_virtually_connected_Lie_groups}

In this section we prove Theorem~\ref{the:FJC_lattices}. 

The main work which remains to be done is to give the proof  of 
Proposition~\ref{pro:reduction_to_poly_and_non-neg_sec-curvature} below
which is very similar to the one 
of~\cite[pages 264-265]{Farrell-Jones(1993a)}.
We call a Lie group \emph{semisimple} if its Lie algebra is semisimple.
A subgroup $G \subseteq L$ of a Lie group $L$ is called \emph{cocompact lattice}
if $G$ is discrete and $L/G$ compact.

\begin{proposition} \label{pro:reduction_to_poly_and_non-neg_sec-curvature}
  In order to prove 
  Theorem~\ref{the:FJC_lattices} it suffices to prove that every
  virtually poly-$\IZ$ group and every group which operates
  cocompactly, isometrically and properly on a complete, simply connected 
  Riemannian manifold
  with non-positive sectional curvature satisfy the 
  $K$- and $L$-theoretic {FJC}.
\end{proposition}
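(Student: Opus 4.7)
The plan is to apply a Levi--Mostow style decomposition to split $G$ as an extension whose kernel is virtually poly-$\IZ$ and whose quotient acts cocompactly, properly and isometrically on the symmetric space of a connected semisimple Lie group. Writing $L^0$ for the identity component of $L$ and setting $G^0 := G\cap L^0$, the subgroup $G^0$ is a cocompact lattice in the connected Lie group $L^0$ and has finite index in $G$; by the ``with finite wreath products'' inheritance (Remark~\ref{rem:finite_wreath_product}), it suffices to establish the $L$-theoretic FJC for $G^0$.

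Work now inside $L^0$ and let $R\subseteq L^0$ be its radical, the maximal connected normal solvable subgroup. Consider the extension
\[
1 \longrightarrow G^0 \cap R \longrightarrow G^0 \longrightarrow \bar G \longrightarrow 1,
\qquad \bar G := G^0/(G^0\cap R).
\]
By the classical theorem of Mostow (see Raghunathan's \emph{Discrete subgroups of Lie groups}), the intersection of a uniform lattice in a connected Lie group with a closed connected normal subgroup is again a uniform lattice. Hence $G^0 \cap R$ is a cocompact lattice in the connected solvable Lie group $R$, and so it is virtually poly-$\IZ$ by a further theorem of Mostow on lattices in solvable Lie groups. In particular $G^0 \cap R$ satisfies the $L$-theoretic FJC by Theorem~\ref{the:FJC_virtually_poly_Z-groups}.

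On the other hand, $\bar G$ embeds as a cocompact lattice in the connected semisimple Lie group $S := L^0/R$. Letting $K$ be a maximal compact subgroup of $S$, the symmetric space $X := S/K$ is a complete, simply connected Riemannian manifold of non-positive sectional curvature, and $\bar G$ acts on $X$ by isometries, cocompactly (since $\bar G$ is cocompact in $S$) and properly (since $\bar G$ is discrete in $S$ and $K$ is compact). Hence $\bar G$ satisfies the $L$-theoretic FJC by the second hypothesis of the proposition. Applying Corollary~\ref{cor:from_Transitivity_Principle_and_poly_Z} to the displayed extension (kernel virtually poly-$\IZ$, quotient satisfying the $L$-theoretic FJC) then yields the $L$-theoretic FJC for $G^0$, and therefore, via the finite-wreath-product inheritance, for $G$.

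The main technical hurdle is the reduction from $G$ to the connected-Lie-group setting of $G^0$ (handled cleanly by the finite-wreath-product formulation of the conjecture) and the invocation of Mostow's structural theorem identifying $G^0 \cap R$ as a cocompact lattice in $R$ and $\bar G$ as a cocompact lattice in $S = L^0/R$; once these classical facts are in place, the remaining ingredients---namely that the symmetric space of a semisimple Lie group is a CAT(0) Riemannian manifold on which a cocompact lattice acts properly and cocompactly by isometries---are standard, and the transitivity-plus-poly-$\IZ$ corollary does the final assembly.
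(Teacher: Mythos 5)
Your overall blueprint — split off a poly-$\IZ$ kernel and land in a CAT(0) quotient — is the right shape, but two of the load-bearing steps are false as stated, and the paper's proof is structured precisely to avoid them.

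First, the assertion attributed to Mostow, ``the intersection of a uniform lattice in a connected Lie group with a closed connected normal subgroup is again a uniform lattice,'' is simply not true. Take $L = \mathrm{SO}(3) \times \IR$ and $\Gamma = \{(\rho(t),t) : t \in \IZ\}$, where $\rho \colon \IR \to \mathrm{SO}(3)$ is a one-parameter subgroup with $\rho(1)$ a rotation of infinite order. Then $\Gamma \cong \IZ$ is a cocompact lattice, but $\Gamma \cap (\{1\}\times\IR) = \{1\}$, which is not a lattice in the radical $\IR$. The correct statements in the literature are delicate: they concern the \emph{nilradical} $N$, not the radical $R$, and they require a hypothesis on the compact factors of the semisimple part (this is exactly what Lemma~\ref{lem:is-a-lattice} checks, using \cite[Theorem~1.6 on page 106]{Vinberg(2000)}). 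The paper therefore first removes the maximal compact connected normal subgroup (Lemma~\ref{lem:reduction_to_no_compact_normal_connected_subgroups}), works with $N$ rather than $R$, and does an induction on $\dim L$ because $L/N$ need not be semisimple. Your one-shot quotient by $R$ bypasses all three of those precautions and is where the proof breaks.

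Second, even if you could reach a cocompact lattice $\bar G$ in a connected semisimple Lie group $S$, the space $S/K$ is not automatically a complete simply connected Riemannian manifold of non-positive curvature: if $S$ has infinite center — e.g.\ $S = \widetilde{\mathrm{SL}_2(\IR)}$, whose maximal compact subgroup is trivial — then $S/K$ is not CAT(0). This is why Lemma~\ref{lem:semisimple_case} first divides out the subgroup $Z$ of elements commuting with $L^e$ to obtain a center-free semisimple quotient $\overline{L}$ before forming the symmetric space and invoking non-positive curvature. Your argument as written skips this step. There is also a secondary point: your reduction from $G$ to $G^0 = G \cap L^0$ uses the finite-wreath-product feature, but the hypotheses of the proposition grant only the plain $L$-theoretic FJC for poly-$\IZ$ and CAT(0) groups; to make the wreath-product inheritance available you would additionally have to observe that both classes are closed under wreath products with finite groups (which is true, but needs to be said, and the paper's induction on $\dim L$ sidesteps this reduction entirely).
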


  Its proof needs some preparation.
  \begin{lemma} \label{lem:reduction_to_no_compact_normal_connected_subgroups}
  Let $L$ be a virtually connected Lie group. Let $K$ be the maximal connected normal compact subgroup of $L$.
  Let $G \subseteq L$ be a cocompact lattice. Let $\overline{G}$
  be the image of $G$ under the projection  $L \to L/K$. Then

  \begin{enumerate}

   \item \label{lem:reduction_to_no_compact_normal_connected_subgroups:semisimple}
    If $L$ is semisimple, then $L/K$ is semisimple;

   \item \label{lem:reduction_to_no_compact_normal_connected_subgroups:mnccsub}
   Every connected normal compact subgroup of $L/K$ is trivial;

   \item \label{lem:reduction_to_no_compact_normal_connected_subgroups:lattice}
   $\overline{G} \subseteq L/K$ is a cocompact lattice;

   \item \label{lem:reduction_to_no_compact_normal_connected_subgroups:FJC}
    If $\overline{G}$ satisfies the  FJC,
then $G$ satisfies the {FJC}.

  \end{enumerate}
\end{lemma}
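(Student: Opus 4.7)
The plan is to prove the four assertions in turn, exploiting the fact that the compactness and normality of $K$ allow structural properties to transfer readily between $L$ and $L/K$.

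For (i), I would work at the Lie algebra level. The connected normal subgroup $K$ corresponds to an ideal $\mathfrak{k}$ in the semisimple Lie algebra $\mathfrak{l}$ of $L$; semisimplicity gives a decomposition $\mathfrak{l} = \mathfrak{k} \oplus \mathfrak{k}'$ with $\mathfrak{k}'$ a complementary ideal, and since ideals of semisimple Lie algebras are themselves semisimple, the Lie algebra $\mathfrak{l}/\mathfrak{k}$ of $L/K$ is semisimple. For (ii), given a connected compact normal subgroup $\tilde H \subseteq L/K$, its preimage $H := \pi^{-1}(\tilde H)$ under the projection $\pi \colon L \to L/K$ is normal in $L$ (since $\tilde H$ is), compact (as an extension of the compact $\tilde H$ by the compact $K$), and connected: the open continuous surjection $\pi|_H \colon H \to \tilde H$ has connected base and connected fibers homeomorphic to $K$. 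Maximality of $K$ then forces $H \subseteq K$, so $\tilde H = \pi(H)$ is trivial.

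For (iii), I would first note that $F := G \cap K$ is finite, being a discrete subgroup of the compact group $K$. The key step is closedness of $GK$ in $L$: the image of $K$ under the quotient map $L \to L/G$ is compact, hence closed in the Hausdorff space $L/G$, and its preimage equals $GK$. Consequently $\overline G = \pi(G)$ is a closed, countable subgroup of the Lie group $L/K$, and is therefore discrete, since any closed countable subgroup of a Lie group is zero-dimensional. For cocompactness I would identify $(L/K)/\overline G$ with $L/GK$, which receives a continuous surjection from the compact quotient $L/G$. Finally, (iv) follows immediately from Corollary~\ref{cor:from_Transitivity_Principle} applied to the short exact sequence $1 \to F \to G \to \overline G \to 1$ with $F$ finite.

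The main obstacle I anticipate is the closedness of $GK$ in (iii); this is the one place where the cocompactness hypothesis on $G$ is genuinely used, whereas the remaining pieces are essentially formal consequences of the normality, compactness and maximality of $K$.
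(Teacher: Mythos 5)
Your proof is correct and follows essentially the same approach as the paper's proof. For part~(iii), where the paper merely observes that $G\cap K$ is finite and leaves the remaining standard facts implicit, you supply a complete argument (closedness of $GK$ via the compact image of $K$ in $L/G$, a Baire-category style argument for discreteness of $\overline{G}$, and the identification $(L/K)/\overline{G}\cong L/(GK)$ for cocompactness) that correctly fills in those details.
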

  \begin{proof}~\ref{lem:reduction_to_no_compact_normal_connected_subgroups:semisimple}
  Any quotient of a semisimple Lie algebra is again semisimple.
  \\[1mm]–\ref{lem:reduction_to_no_compact_normal_connected_subgroups:mnccsub}
  If $H$ is a normal compact connected subgroup of $L/K$, then its preimage under
  the projection $L \to L/K$ is a normal compact connected subgroup of $L$.
  \\[1mm]–\ref{lem:reduction_to_no_compact_normal_connected_subgroups:lattice}
  Since $K$ is compact, $G \cap K$ is a finite group.
  \\[1mm]–\ref{lem:reduction_to_no_compact_normal_connected_subgroups:FJC}
  We have the exact sequence $1 \to G\cap K \to G \to \overline{G} \to 1$. Now
  apply Corollary~\ref{cor:from_Transitivity_Principle}.
  \end{proof}

  In the sequel we denote by $L^e$ the component of the identity,

  \begin{lemma} \label{lem:semisimple_case}
   Proposition~\ref{pro:reduction_to_poly_and_non-neg_sec-curvature} 
   is true
   provided that $G$ is a cocompact
   lattice in a virtually connected semisimple Lie group $L$.
 \end{lemma}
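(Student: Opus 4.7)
The plan is to reduce the $L$-theoretic FJC for $G$ to the two hypotheses of
Proposition~\ref{pro:reduction_to_poly_and_non-neg_sec-curvature} by first
passing to the identity component of $L$, then letting the quotient of $G$
modulo its intersection with the center act on the associated symmetric space
of non-compact type, and finally absorbing the central kernel via
Theorem~\ref{the:extensions}.

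First I would apply
Lemma~\ref{lem:reduction_to_no_compact_normal_connected_subgroups}
to replace $L$ by the quotient in which the maximal connected normal compact
subgroup has been killed, keeping $L$ virtually connected and semisimple.
Since $L^e$ has finite index in $L$, the subgroup $G' := G \cap L^e$ has
finite index in $G$ and is a cocompact lattice in $L^e$. The finite wreath
product version of the $L$-theoretic FJC (see
Remark~\ref{rem:finite_wreath_product}), which is actually what is being
established throughout this paper and has the inheritance property under
passage to overgroups of finite index, reduces the problem for $G$ to the
same problem for $G'$. Hence I may assume that $L$ is connected semisimple
with no compact factors. Let $Z := Z(L)$ be the center, which in this case
is a discrete, finitely generated abelian subgroup. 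The quotient
$L_0 := L/Z$ is a connected centerless semisimple Lie group, and for a
maximal compact subgroup $K_0 \subseteq L_0$ the space $M := L_0/K_0$
equipped with an $L_0$-invariant Riemannian metric is a globally symmetric
space of non-compact type, hence a complete, simply connected Riemannian
manifold of non-positive sectional curvature. Letting
$\pi \colon L \to L_0$ denote the projection and setting $\bar G := \pi(G)$,
a short argument using that $Z$ is discrete and that the covering
$L \to L_0$ is a local homeomorphism shows that $\bar G$ is a cocompact
lattice in $L_0$, so its induced action on $M$ is isometric, proper, and
cocompact. By the hypothesis of
Proposition~\ref{pro:reduction_to_poly_and_non-neg_sec-curvature},
$\bar G$ satisfies the $L$-theoretic FJC.

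To pass from $\bar G$ back to $G$, I would invoke
Theorem~\ref{the:extensions} applied to
\[
1 \to G \cap Z \to G \to \bar G \to 1.
\]
The kernel $G \cap Z$ is finitely generated abelian and therefore virtually
poly-$\IZ$; for every virtually cyclic subgroup $V \subseteq \bar G$ the
preimage $p^{-1}(V)$ fits into an extension
$1 \to G \cap Z \to p^{-1}(V) \to V \to 1$ and is again virtually poly-$\IZ$
by
Lemma~\ref{lem:virt_dim_of_virt_poly_Z-groups}~\ref{lem:virt_dim_of_virt_poly_Z-groups:extensions}.
The virtually poly-$\IZ$ hypothesis of
Proposition~\ref{pro:reduction_to_poly_and_non-neg_sec-curvature}
then gives the $L$-theoretic FJC for every such $p^{-1}(V)$, and
Theorem~\ref{the:extensions} delivers the conclusion for $G$.
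The main obstacle I anticipate is precisely the handling of the center:
verifying that $\bar G$ remains a discrete cocompact subgroup of $L_0$
when $Z$ has positive rank (for instance when $L$ is the universal cover
of a non-compact simple Lie group with center $\IZ$), and, relatedly, the
passage to the finite-index subgroup $G \cap L^e$, which forces the use of
the finite-wreath-product version of the conjecture rather than its most
naive form; this is also the place where the virtually poly-$\IZ$ half of
Proposition~\ref{pro:reduction_to_poly_and_non-neg_sec-curvature} becomes
indispensable.
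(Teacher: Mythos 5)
Your proposal reaches the right geometry, but it routes through a finite-index passage that the paper avoids, and that passage has a genuine gap. The paper never passes to $G' := G \cap L^e$. Instead it quotients the virtually connected $L$ directly by the normal subgroup $Z := C_L(L^e)$, the centralizer in $L$ of the identity component, and shows that $\overline{L} := L/Z$ is still virtually connected and semisimple with \emph{trivial} center; by~\cite[Theorem~A.5]{Abels(1974)} the possibly disconnected $\overline{L}$ has a maximal compact subgroup $K$ with $\overline{L}/K = \overline{L}^e/(K \cap \overline{L}^e)$ contractible, and this is the symmetric space of non-compact type. The descent from $\overline{G}$ to $G$ is then a single application of Corollary~\ref{cor:from_Transitivity_Principle_and_poly_Z}, since $G \cap Z$ is virtually finitely generated abelian. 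No finite-index passage, hence no wreath products.

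The gap is your reduction from $G$ to $G' := G \cap L^e$. The plain $L$-theoretic FJC does not propagate up finite-index inclusions; that inheritance is a property of the ``with finite wreath products'' version of Remark~\ref{rem:finite_wreath_product}. But the hypotheses of Proposition~\ref{pro:reduction_to_poly_and_non-neg_sec-curvature} grant only the plain FJC for virtually poly-$\IZ$ groups and for groups acting on nonpositively curved manifolds, and what your argument delivers for $G'$ is the plain FJC. To use wreath-product inheritance you first have to verify the plain FJC for $G' \wr F$ for every finite $F$, i.e.\ run the reduction again on the extension $1 \to (G'\cap Z)^{|F|} \to G' \wr F \to \overline{G'} \wr F \to 1$ with $\overline{G'} \wr F$ acting properly, cocompactly and isometrically on $M^{|F|}$; this works, but it is a step to be carried out, not a citation to ``what is being established throughout this paper.'' A smaller gloss in the same spirit: that $\overline{G}$ is discrete and cocompact in $L/Z$ requires $G \cap Z$ to have finite index in $Z$ (equivalently that $GZ$ is discrete), a nontrivial structural fact about cocompact lattices in semisimple groups; the paper quotes~\cite[Corollary~5.17 on page~84]{Raghunathan(1972)} for exactly this, and ``a short argument'' from the covering being a local homeomorphism does not by itself suffice.
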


   The statement of this lemma unravels as follows.
   Let $G$ be a cocompact lattice in a virtually connected 
   semisimple Lie group $L$.
   Assume that every virtually poly-$\IZ$ group and every group 
   which operates cocompactly, isometrically and properly on a 
   complete, simply connected  Riemannian manifold
   with non-positive sectional curvature satisfies the 
   $K$- and $L$-theoretic {FJC}.
   Then $G$ satisfies the $K$- and $L$-theoretic {FJC}.

   \begin{proof}[Proof of Lemma~\ref{lem:semisimple_case}] 
   Because of
   Lemma~\ref{lem:reduction_to_no_compact_normal_connected_subgroups}
   we can assume without loss of generality that
    $L$ is a virtually connected semisimple Lie group
   for which  every connected normal compact subgroup $K \subset L$  is trivial.
    Let $Z \subseteq L$ be the normal subgroup of elements in $L$ which commute with
    every element in $L^e$. Put $\overline{L} := L/Z$.
    Let $G_Z$ be the intersection $G \cap Z$ and $\overline{G}$
    the image of $G$ under the projection $\pr \colon L \to \overline{L}$. Then
    the following statements are true:
    \begin{enumerate}
    \item   \label{G_Z_virt_abelian} $G_Z$ is virtually finitely generated abelian;
    \item   \label{overlineG_lattice} $\overline{G}$ is a cocompact lattice
     of $\overline{L}$;
    \item   \label{L/Z} $\overline{L}$ is a virtually connected semisimple
     Lie group whose  center is trivial.
    \end{enumerate}
   For the proof of assertion~\ref{G_Z_virt_abelian} 
   we can assume 
   without loss of generality that $L$ is connected
   since $L$ is virtually connected and 
   since a group is already finitely generated if it contains a 
   finitely generated subgroup of finite index.
   Then $Z$ is just the center of $L$ and in particular an 
   abelian Lie group.
   The intersection $G_Z$ of $G$ and $Z$ is a 
   cocompact discrete subgroup of an abelian Lie group $Z$ 
   and hence a
   finitely generated abelian group. 

   Assertion~\ref{overlineG_lattice}  follows
   by inspecting the proof of~\cite[Corollary~5.17 on page~84]{Raghunathan(1972)}
   which applies directly to our case since all compact connected normal subgroups
   of $L$ are trivial.

   Next we prove assertion~\ref{L/Z}. Obviously $\overline{L}$ is
   virtually connected and semisimple since the quotient of a semisimple Lie
   algebra is again semisimple.  Let $\overline{Z} \subseteq \overline{L}$ be
   the center of $\overline{L}$.  Let $Z' \subseteq L$ be its preimage under the
   projection $L \to \overline{L}$. Consider $g \in L^e$ and $g' \in Z'$. Then
   $g' g g'^{-1}g^{-1}$ belongs to $Z$. Choose a path $w$ in $L$ connecting $1$ and
   $g$ in $L^e$.  Then $g'w(t)(g')^{-1}w(t)^{-1}$ is a path in $Z$ connecting $1$ and
   $g' g g'^{-1}g^{-1}$.  Since $L$ is semisimple, $Z \subseteq L$ is discrete. Hence
   $g'g g'^{-1}g^{-1} = 1$. This implies $g' \in Z$. Hence $Z = Z'$ and we conclude that
   the center of $\overline{L}$  is trivial.

   Because of Corollary~\ref{cor:from_Transitivity_Principle_and_poly_Z}
   it suffices to show that $\overline{G}$ satisfies the {FJC}.

   By~\cite[Theorem~A.5]{Abels(1974)} there exists a maximal compact
   subgroup $K \subseteq \overline{L}$ and the space $\overline{L}/K$
   is contractible. Then $K \cap \overline{L}^e$ is a maximal
   compact subgroup of $\overline{L}^e$ and $\overline{L}/K =
   \overline{L}^e/(K \cap \overline{L}^e)$.  Since $\overline{L}$ is
   semi-simple, its Lie algebra contains no compact ideal and its
   center is finite, the quotient
  $$M:= \overline{L}/K = \overline{L}^e/(K \cap \overline{L}^e)$$
  equipped with a $\overline{L}$-invariant Riemannian metric is a
  symmetric space of non-compact type such that $\overline{L}^e= \Isom(M)^e$
   and $K \cap \overline{L}^e = (\Isom(M)^e)_x$ for
  $\Isom(M)$ the group of isometries (see~\cite[Section 2.2 on page
  70]{Eberlein(1996)}).  Hence $M$ has non-positive sectional
  curvature (see~\cite[Proposition 4.2 in V.4 on page 244, Theorem 3.1
  in V.3 on page 241]{Helgason(1978)}.  Obviously $\overline{G}$ acts
  properly cocompactly and isometrically on $M$.  By assumption $\overline{G}$
  satisfy the {FJC}. This finishes the proof of
  Lemma~\ref{lem:semisimple_case}.
\end{proof}

\begin{lemma}
  \label{lem:is-a-lattice}
  Let $G$ be a  lattice in a virtually connected Lie group $L$.
  Assume that every compact connected normal subgroup of $L$ is trivial.
  Let $N$ be the nilradical in $L$.
  Then $G_N := G \cap N$ is a lattice in $N$. 
\end{lemma}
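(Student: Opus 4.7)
The plan is to reduce the statement to two classical theorems from the theory of lattices in Lie groups, namely Mostow's theorems on radicals and nilradicals in Raghunathan's book \cite{Raghunathan(1972)}.

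First I would reduce to the case that $L$ is connected. Since $L^e$ has finite index in $L$, the intersection $G^e := G \cap L^e$ is a lattice in $L^e$ (of finite index in $G$). The nilradical $N$ is a connected subgroup, so $N \subseteq L^e$ and the nilradical of $L$ coincides with that of $L^e$; hence $G \cap N = G^e \cap N$ and the statement is equivalent to the corresponding statement for the pair $(L^e, G^e)$. Moreover, any compact connected normal subgroup of $L^e$ is automatically normal in $L$ (since conjugation by elements of $L$ permutes the finitely many components and stabilizes the identity component), so the hypothesis on compact connected normal subgroups is preserved. From now on $L$ is connected.

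Next I would descend to the radical. Let $R$ be the radical of $L$. By Mostow's theorem (see Raghunathan \cite[Theorem~8.24 and Corollary~8.28]{Raghunathan(1972)}), the hypothesis that every compact connected normal subgroup of $L$ is trivial implies that $RG$ is closed in $L$ and that $G_R := G \cap R$ is a lattice in $R$. This is precisely the content used to establish assertion~\ref{overlineG_lattice} in Lemma~\ref{lem:semisimple_case}, and the same argument applies here since we no longer require $L$ to be semisimple.

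Finally I would descend from $R$ to $N$. Now $R$ is a connected solvable Lie group with lattice $G_R$, and $N$ is the nilradical of $R$ (equivalently of $L$). By a theorem of Mostow for solvable Lie groups (see Raghunathan \cite[Corollary~3.5, cf.\ Theorem~3.3]{Raghunathan(1972)}), the intersection $G_R \cap N$ of a lattice in a connected solvable Lie group with its nilradical is a lattice in the nilradical. Since $G_N = G \cap N = G_R \cap N$, this yields the conclusion.

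The main obstacle is the middle step: the passage from $G$ being a lattice in $L$ to $G \cap R$ being a lattice in $R$ genuinely requires the hypothesis that $L$ has no nontrivial compact connected normal subgroup (without it, $R G$ need not be closed and $G \cap R$ can fail to be a lattice, for instance for irrational line winding in a torus times a Euclidean factor). Once this step is in place, the descent from the radical to the nilradical is a purely solvable-group statement and follows from the structure theory of solvable Lie groups as in Raghunathan, since the nilradical is a characteristic closed subgroup of $R$.
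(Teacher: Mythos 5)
Your argument takes a genuinely different path from the paper's. The paper descends directly to the nilradical $N$ via~\cite[Theorem~1.6 on page~106]{Vinberg(2000)}, whose hypothesis is that the semisimple part $S$ of $L^e$ has no nontrivial compact factor acting trivially on both $R$ and $L$; the entire content of the paper's proof is the deduction of this hypothesis from ``$L$ has no nontrivial compact connected normal subgroup'' by a short Levi-decomposition argument. You instead descend in two steps: first from $L$ to its radical $R$, then from $R$ to $N$ via Mostow's theorem for solvable groups.

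There are two problems. First, in the reduction to the connected case your justification that ``any compact connected normal subgroup of $L^e$ is automatically normal in $L$'' is wrong as stated: conjugation by $L$ stabilizes $L^e$ but need not preserve a given normal subgroup of $L^e$. The conclusion you want (that $L^e$ inherits the hypothesis) is true, but only because the \emph{maximal} compact connected normal subgroup of $L^e$ is \emph{characteristic} in $L^e$ and hence normal in $L$ -- which is exactly the observation used at the end of the paper's own proof. Second, and more seriously, the middle step is not covered by the citations you give. The radical--lattice theorem you invoke (Raghunathan~Theorem~8.24 / Corollary~8.28) is established under the hypothesis that the semisimple quotient $L^e/R$ has \emph{no} compact factors, which is strictly stronger than ``$L$ has no nontrivial compact connected normal subgroup.'' The gap is not hypothetical: for $L = \IR^n \rtimes SO(n)$ the lemma's hypothesis holds, yet $L/R = SO(n)$ is itself a compact factor of $S$, so Raghunathan's hypothesis fails. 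Vinberg's theorem is deliberately weaker at exactly this point -- compact factors of $S$ are allowed provided they act nontrivially on $R$ -- and the lemma's hypothesis is tailored to that weaker condition, not to the stronger one your intermediate step needs. If you want to route the proof through $R$ rather than going directly to $N$, you would need a radical--lattice theorem under the weaker hypothesis, which the cited results do not supply.
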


\begin{proof}
  Let $S$ be the semi-simple part of $L^{e}$.
  By~\cite[Theorem 1.6 on page 106]{Vinberg(2000)} it suffices to show
  that $S$ has no non-trivial 
  compact factors that act trivially on $R$ and $L$.
  Assume that $K$ is such a factor.
  Let $L^{e} = RS$ be the Levi decomposition of $L^{e}$.
  (We mention as a caveat that $S$ is not necessarily a \emph{closed}
  subgroup of $L^{e}$; nor is $R \cap S$ necessarily discrete; 
  although $R \cap S$ is countable.)
  Since $K$ is a factor of $S$ it is a normal subgroup of $S$
  and therefore $sKs^{-1} \subseteq  K$ for all $s \in S$.
  Because $K$ acts trivially on $R$ we have
  $r K r^{-1} \subseteq  K$ for all $r \in R$. 
  Since $L^{e} = RS$, we conclude
  that $K$ is a normal subgroup of $L^{e}$.
  Consequently, $K$ is a normal compact connected subgroup of $L^{e}$
  and therefore contained in the unique maximal normal compact connected
  subgroup $K_{\mathit{max}}$ of $L^e$. 
  This subgroup $K_{\mathit{max}}$ is a characteristic subgroup of $L^e$.
  Thus $K_{\mathit{max}}$ is in addition normal in $L$ and therefore trivial.
  Hence $K$ is trivial.  
\end{proof}

\begin{proof}[Proof of
  Proposition~\ref{pro:reduction_to_poly_and_non-neg_sec-curvature}]
  We proceed by induction on (the manifold) dimension of $L$, i.e., we assume
  that Proposition~\ref{pro:reduction_to_poly_and_non-neg_sec-curvature}
  is true for all virtually connected Lie groups $L'$ where 
  $\dim L' < \dim L$.
  We may assume, because of 
  Lemma~\ref{lem:reduction_to_no_compact_normal_connected_subgroups},
  that every compact connected normal subgroup of $L$ is trivial.
  Consider the sequence of normal subgroups of $L$,
  \begin{equation*}
    N \lhd R \lhd L^{e} \lhd L
  \end{equation*}
  where $L^{e}$ is the connected component of $L$
  containing the identity;
  $R$ is the radical of $L$;
  and $N$ is the nilradical of $L$.
  And let 
  \begin{equation*}
    G_N := G \cap N
  \end{equation*}
  By Lemma~\ref{lem:is-a-lattice} $G_N$ is a cocompact lattice in $N$.
  Therefore $G/G_N$ is a cocompact lattice in $L/N$ as well.
 
  We now distinguish two cases. 
  First consider the case that $N$ is nontrivial.
  Then $\dim L/N < \dim L$ and $G/G_N$ satisfies the FJC
  by our inductive assumption.
  Now consider the following exact sequence
  \begin{equation*}
    1 \to G_N \to G \to G/G_N \to 1
  \end{equation*}
  and observe that $G_N$ is a virtually poly-$\IZ$ group by a result of 
  Mostow (This follows from
  Theorem~\ref{lem:virt_dim_of_virt_poly_Z-groups}~%
\ref{lem:virt_dim_of_virt_poly_Z-groups:subgroups_and_quotients}
  and~\cite[Proposition~3.7 on page 52]{Raghunathan(1972)}.)
  Hence $G$ satisfies the   FJC because
  of Corollary~\ref{cor:from_Transitivity_Principle_and_poly_Z} 
  
  Next consider the case that $N$ is trivial. Then $R = R/N$ is abelian. Hence $R = N = 1$.
  Therefore $L$ is semi-simple and $G$ satisfies the  FJC
  because of Lemma~\ref{lem:semisimple_case}.    
\end{proof}

Now we are ready to prove Theorem~\ref{the:FJC_lattices}.

\begin{proof}[Proof of Theorem~\ref{the:FJC_lattices}.]
  We have proved the  FJC for
  virtually poly-$\IZ$ groups in
  Theorem~\ref{pro:FJC_irreducible_special_affine_groups}.  
  Since every group
  $G$ which acts cocompactly, isometrically and properly on 
  a complete, simply connected Riemannian
  manifold with non-positive sectional curvature is a 
  cocompact $\CAT(0)$-group,
  it satisfies the  FJC
  by the main results from~\cite{Bartels-Lueck(2012annals), 
          Wegner(2012higher-cat0)}.
  Now apply
  Proposition~\ref{pro:reduction_to_poly_and_non-neg_sec-curvature}.
\end{proof}



\typeout{-- Fundamental groups of 3-manifolds ----------------------}

\section{Fundamental groups of $3$-manifolds}
\label{sec:Fundamental_groups_of_3-manifolds}

In this section we  sketch the proof of
Corollary~\ref{cor:fundamental_groups_of_3-manifolds}

\begin{remark}[Pseudo-isotopy] \label{rem:pseudo_isotopy_in_dimension_three} Let
  $\pi$ be the fundamental group of a $3$-manifold.  Roushon
  (see~\cite{Roushon(2008FJJ3)}, \cite{Roushon(2008IC3surf)}) gives a proof of
  Farrell-Jones Conjecture for pseudo-isotopy with wreath product for the family
  $\VCyc$ for $\pi$. Its proof relies on the assumption that the Farrell-Jones
  Conjecture for pseudo-isotopy is true for poly-$\IZ$-groups as stated in
  Farrell-Jones~\cite{Farrell-Jones(1993a)}. 
  Unfortunately that proof 
  depends on~\cite[Theorem~4.8]{Farrell-Jones(1993a)} 
  whose proof in turn has never appeared. 
  Hence the proof of the Farrell-Jones Conjecture for pseudo-isotopy 
  with wreath product
  for the family $\VCyc$ for $\pi$ is not complete.
\end{remark}

\begin{proof}[Discussion of proof of
  Corollary~\ref{cor:fundamental_groups_of_3-manifolds}]
  In this paper we have proved both the $K$-theoretic and the $L$-theoretic FJC
  for virtually poly-$\IZ$-groups in
  Theorem~\ref{the:FJC_virtually_poly_Z-groups}.  One can  check that the
  rather involved argument by Roushon (see~\cite{Roushon(2008FJJ3)},
  \cite{Roushon(2008IC3surf)}) for pseudo-isotopy goes through in our setting.

  This check above has been carried out in detail and in a comprehensible way in
  the Diplom-Arbeit by Philipp K\"uhl~\cite{Kuehl(2009)} axiomatically.
  A group $G$ satisfies
  \emph{the FJC with wreath products} if for any finite group $F$ the wreath
  product $G\wr F$ satisfies the {FJC}. K\"uhl proves following Roushon that the
  FJC with wreath products holds for the fundamental group of every
  $3$-manifold, if the following is true:
  \begin{itemize}

  \item The FJC  with wreath products  holds for $\IZ^2 \rtimes_{\phi} \IZ$
   for any automorphism $\phi \colon \IZ^2 \to \IZ^2$;

  \item The FJC holds for fundamental groups of closed Riemannian
    manifolds with non-positive sectional curvature;

  \item Theorem~\ref{cor:directed_colimits} and Theorem~\ref{the:extensions} are true.

  \end{itemize}

  Since a wreath product $G\wr F$ for a finite group $F$ and a group which is
  virtually poly-$\IZ$ is a again virtually poly-$\IZ$, the FJC with wreath
  product holds for all virtually poly-$\IZ$-groups if 
  and only if the FJC holds
  for all virtually poly-$\IZ$-groups.  
  Hence the axioms above are satisfied.
\end{proof}

\begin{remark}[Virtually weak strongly poly-surface groups]
  \label{rem:Virtually_weak_strongly_poly-surface_groups}
  Roushon defines weak strongly poly-surface groups
  in~\cite[Definition~1.2.1]{Roushon(2008IC3surf)}.  His argument in
  the proof of~\cite[Theorem~1.2.2]{Roushon(2008IC3surf)} carries
  over to our setting and shows that virtually weak 
  strongly poly-surface groups
  satisfy the $K$- and $L$-theoretic Farrell-Jones Conjecture with
  additive categories as coefficients with respect to the family
  $\VCyc$ (see
  Definitions~\ref{def:K-theoretic_Farrell-Jones_Conjecture}
  and~\ref{def:L-theoretic_Farrell-Jones_Conjecture}). 
\end{remark}


\typeout{--------------- Reducing the family $\VCyc$ ----------------}

\section{Reducing the family $\VCyc$}
\label{sec:Reducing_the_family_VCyc}

In this subsection we explain how one can reduce the
family of subgroups in our setting of equivariant additive categories as
coefficients. 

\begin{definition}[Hyperelementary group]
\label{def_hyperelementary}
Let $l$ be a prime. A  (possibly infinite) group $G$ is
called \emph{$l$-hyper\-ele\-men\-tary}
if it can be written as an extension
$1 \to C \to G \to L \to 1$ for a cyclic group $C$ and a
finite group $L$ whose order is a power of $l$.

We call $G$ \emph{hyperelementary} if $G$ is $l$-hyperelementary for some prime $l$.
\end{definition}

If $G$ is finite, this reduces to the usual definition.
Notice that for a finite $l$-hyperelementary
group $L$ one can arrange that the order of the
finite cyclic group $C$ appearing in the extension
$1 \to C \to G \to L \to 1$ is prime to $l$. Subgroups and quotient groups of
$l$-hyperelementary groups are $l$-hyperelementary again. For a group
$G$ we denote by $\calh$ the family of hyperelementary subgroups of $G$.

The following result has been proved for $K$-theory and untwisted coefficients by
Quinn~\cite{Quinn(2012virtab)} and our proof is strongly motivated by his argument.

\begin{theorem}[Hyperelementary induction] \label{the:hyperelementary_induction}
Let $G$ be a group and let $\cala$ be an additive $G$-category (with involution).
Then both relative assembly maps
\[\asmb^{G,\calh,\VCyc}_n \colon H_n^G\bigl(\EGF{G}{\calh};\bfK_{\cala}\bigr)
 \to H_n^G\bigl(\EGF{G}{\VCyc};\bfK_{\cala}\bigr)\]
and
\[\asmb^{G,\calh,\VCyc}_n \colon
H_n^G\bigl(\EGF{G}{\calh};\bfL^{\langle -\infty \rangle}_{\cala}\bigr)
 \to H_n^G\bigl(\EGF{G}{\VCyc};\bfL_{\cala}^{\langle -\infty \rangle}\bigr)\]
induced by the up to $G$-homotopy unique $G$-map  $\EGF{G}{\calh} \to \EGF{G}{\VCyc}$
are bijective for
all $n \in \IZ$.
\end{theorem}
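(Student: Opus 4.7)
The proof follows the strategy of Quinn~\cite{Quinn(2005)}, who treated the case of untwisted coefficients in rings, extended to the setting of additive categories. The plan is to invoke the Transitivity Principle~\ref{the:transitivity} with $\calf = \calh$ and $\calg = \VCyc$: it suffices to show that every virtually cyclic group $V$ satisfies the $K$-theoretic and $L$-theoretic FJC with respect to the family $\calh|_V$ of hyperelementary subgroups of $V$, with coefficients in any additive $V$-category (with involution in the $L$-theoretic case) obtained by restriction from~$\cala$.

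For finite $V$, I would use classical Swan-Dress induction: the assembly map for the family $\calh|_V$ is identified with the induction map from hyperelementary subgroups, and Dress's induction theorem shows this is an isomorphism when coefficients are a group ring $RV$. To pass from group rings to an arbitrary additive $V$-category $\cala$, I would equip the assignment $H \mapsto K_n(\intgf{H}{\cala})$ (and its $L$-theory analog) with the structure of a Mackey functor on the subgroup lattice of $V$, verify Frobenius reciprocity in this enriched context, and then run the Dress-Burnside-ring argument, which is essentially formal and therefore survives the passage from group rings to additive categories.

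For infinite virtually cyclic $V$: if $V \in \calh$ (which holds for $\IZ$, $D_\infty$, and every extension $1 \to C \to V \to P \to 1$ with $C$ cyclic and $P$ a $p$-group), the statement is trivial because $V$ itself belongs to $\calh|_V$. Otherwise $V$ has either type~I structure $V = F \rtimes \IZ$ or type~II structure $V = V_1 *_F V_2$ with $[V_i:F] = 2$. In the type~I case I would use the Bass-Heller-Swan decomposition for the $K$-theory of an additive category with a twisted $\IZ$-action; in the type~II case I would use the Waldhausen Mayer-Vietoris sequence for amalgamated products. Both reductions express the relevant $K$-theory groups in terms of the $K$-theory of $F$ (together with Nil-terms), and thus reduce the claim to the finite case already handled. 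The $L$-theoretic analogues due to Ranicki handle the parallel statement.

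The main obstacle is the first step: extending the Swan-Dress hyperelementary induction theorem from group rings to additive $V$-categories (and, for $L$-theory, to additive categories with involution). This requires a careful setup of the equivariant Mackey structure on the assignment $H \mapsto K_n(\intgf{H}{\cala})$ and a verification that the Burnside-ring and Witt-ring arguments, previously carried out only for group rings, survive the passage to additive categories. Once this is in place, the infinite virtually cyclic case is an essentially standard reduction via the decomposition theorems mentioned above.
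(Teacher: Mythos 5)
The finite case of your proposal — reduce via the Transitivity Principle to virtually cyclic groups and handle finite ones by Swan/Dress hyperelementary induction carried over to additive $G$-categories — matches the paper, which invokes \cite[Theorem~2.9 and Lemma~4.1]{Bartels-Lueck(2007ind)} and notes that the Swan-group action passes to the twisted setting.

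Where your proposal has a genuine gap is the infinite virtually cyclic case. You propose to feed a type I group $V = F \rtimes \IZ$ (resp.\ type II $V = V_1 *_F V_2$) into the Bass--Heller--Swan decomposition (resp.\ Waldhausen's Mayer--Vietoris sequence) and ``reduce to the finite case.'' But what needs to be proven is that the $\calh|_V$-assembly map $H_n^V\bigl(E_{\calh|_V}V\bigr) \to K_n\bigl(\intgf{V}{\cala}\bigr)$ is an isomorphism, and BHS/Waldhausen express the target as $K$-theory of $\intgf{F}{\cala}$ \emph{plus twisted Nil-terms}. Those Nil-terms are precisely the obstruction: they are not the $K$-theory of any $H \in \calh|_V$, and there is no formal way to see that they are induced up from the Nil-terms of the infinite hyperelementary subgroups $C \rtimes m\IZ \subseteq V$. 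Establishing such a hyperelementary induction theorem for twisted Nil-groups of additive categories is the whole difficulty, not an ``essentially standard reduction.'' The paper avoids this entirely. After disposing of the finite case, it runs a secondary induction over the \emph{holonomy number} $h(G)$ of an infinite virtually cyclic $G$, taking $\calf$ to be the family generated by finite subgroups, hyperelementary subgroups, and infinite virtually cyclic subgroups of strictly smaller holonomy number, and shows that $G$ is a \emph{Farrell--Hsiang group} with respect to $\calf$ (Definition~\ref{def:Farrell-Hsiang}, Theorem~\ref{the:Farrell-Jones_Conjecture_for_Farrell-Hsiang_groups}). Concretely, it constructs finite quotients $G_s = G/sC$ for $s = p^r q^r$ with $p,q$ distinct primes dividing $|Q|$, and shows (Lemmas~\ref{lem:reduction_by_induction} and~\ref{lem:reduction_to_high_index} plus the subsequent elementary but delicate Sylow/index arguments) that every hyperelementary subgroup $H$ of $G_s$ either has preimage in $\calf$ or has large image-index in the crystallographic quotient $\Delta \in \{\IZ, D_\infty\}$, which in turn allows construction of the required contracting equivariant map to a $1$-dimensional complex. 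This controlled-topology route, following Quinn, is what lets the Nil-terms be handled implicitly rather than confronted head-on; your proposal would need an independent and substantial proof of Nil-induction to close the gap.
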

\begin{proof}
  Because of the Transitivity Principle~\ref{the:transitivity} we can assume
  without loss of generality that $G$ is virtually cyclic. If $G$ is finite, the
  claim follows from
  Bartels-L\"uck~\cite[Theorem~2.9 and Lemma~4.1]{Bartels-Lueck(2007ind)}. There only
  the case of the fibered FJC for coefficients in a ring (without $G$-action) is
  treated but the proof carries directly over to the case of coefficients in an
  additive $G$-category with coefficients. Notice that action of the Swan group of
  a group $G$ which is well-known in the untwisted case carries directly over
  to the case of additive $G$-category with coefficients.
  Hence we can assume in the sequel that $G$ is an infinite
  virtual cyclic group and Theorem~\ref{the:hyperelementary_induction} holds for
  all finite groups.

  The \emph{holonomy number} $h(G)$ of an infinite virtually cyclic group $G$ is
  the minimum over all integers $n \ge 1$ such that there exists an extension
  $1 \to C \to G \to Q \to 1$ for an infinite cyclic group $C$ and a finite group
  $Q$ with $|Q| = n$. We will use induction over the holonomy number $h(G)$.
  The induction beginning $h(G) = 1$ is trivial since in this case $G$ is
  infinite cyclic and both $\calh$ and $\VCyc$ consists of all subgroups.  It
  remains to explain the induction step.

So fix an infinite virtually cyclic subgroup $G$ with holonomy number $h(G)\ge 2$.
We have to prove Theorem~\ref{the:hyperelementary_induction} for $G$ under the
assumption that we know Theorem~\ref{the:hyperelementary_induction}  already
for all finite groups and for  all infinite virtually cyclic
subgroups whose holonomy number is smaller than $h(G)$.

Fix an extension
\begin{eqnarray}
& 1 \to C \xrightarrow{i} G \xrightarrow{\pr} Q \to 1.
\label{C_to_G_to_Q}
\end{eqnarray}
for an infinite cyclic group $C$ and a finite group $Q$ with $|Q| = h(G)$.  Let
$\calf$ be the family of subgroups of $G$ which are either finite, infinite
virtually cyclic groups $H \subseteq G$ with holonomy number $h(H) < h(G)$ or
hyperelementary.  This is indeed a family since for any infinite virtually
cyclic subgroups $H \subseteq K$ we have $h(H) \le h(K)$ and subgroups of
hyperelementary groups are hyperelementary.  Since the claim holds for all
finite groups and all infinite virtually cyclic groups whose holonomy number is
smaller than the one of $G$, it suffices because of the Transitivity
Principle~\ref{the:transitivity} to prove that $G$ satisfies the Farrell-Jones
Conjecture with respect to the family $\calf$.  This will be done by proving
that $G$ is a Farrell-Hsiang group with respect to the family $\calf$ in the
sense of Definition~\ref{def:Farrell-Hsiang} (see
Theorem~\ref{the:Farrell-Jones_Conjecture_for_Farrell-Hsiang_groups}).

For an integer $s$ define $C_{s} := C/sC$ and $G_{s}:=G/sC$. We obtain an
induced exact sequence
$$1 \to C_{s}\xrightarrow{i_{s}} G_{s} \xrightarrow{\pr_{s}} Q \to 1.$$
Denote by
$$\alpha_s \colon G \to G_{s}$$
the projection.

In the sequel we abbreviate $\overline{H} := \alpha_s^{-1}(H)$ for a
subgroup $H \subseteq G_s$.

\begin{lemma} \label{lem:reduction_by_induction} In order to prove
  Theorem~\ref{the:hyperelementary_induction} it suffices to find for given real numbers
  $R, \epsilon > 0$ a natural number $s$ with the following property: For every
  hyperelementary subgroup $H \subseteq G_s$ there exists a $1$-dimensional simplicial
  complex $E_H$ with cell preserving simplicial $\overline{H}$-action and a
  $\overline{H}$-map $f_H \colon G \to E$
  such that $d_G(g_1,g_2) \le R$ implies
  $d^{l^1}\bigl(f_H(g_1),f_H(g_1)\bigr) \le \epsilon$
  and all $\overline{H}$-isotropy groups of $E_H$ belong to $\calf$.
\end{lemma}
\begin{proof}
This follows from Theorem~\ref{the:Farrell-Jones_Conjecture_for_Farrell-Hsiang_groups}.
\end{proof}

In the next step we reduce the claim further to a question about indices.
Choose an epimorphism $\pi^G \colon G \to \Delta$ with finite
kernel onto a crystallographic group
(see~\cite[Lemma~4.2.1]{Quinn(2012virtab)}). Then $\Delta$ is either $D_{\infty}$ or $\IZ$.
The subgroup $A_ {\Delta}$ is infinite cyclic. If $\Delta = \IZ$, then $\Delta = A_{\Delta}$.
If $\Delta = D_{\infty}$, then $A_{\Delta} \subseteq \Delta$ has index two.

\begin{lemma} \label{lem:reduction_to_high_index}
In order to prove Theorem~\ref{the:hyperelementary_induction} it suffices to find
for a given natural number $i$  a natural number $s$ with the following property:
For every hyperelementary subgroup $H \subseteq G_s$ we have  $\overline{H} \in \calf$ or
$[\Delta:\pi^G(\overline{H})] \ge i$.
\end{lemma}
\begin{proof}[Proof of Lemma~\ref{lem:reduction_to_high_index}]
We show that the assumptions in Lemma~\ref{lem:reduction_to_high_index} imply the ones
appearing in Lemma~\ref{lem:reduction_by_induction}.
We only treat the difficult case $\Delta = D_{\infty}$,
the case $\Delta = \IZ$ is then obvious.

We have fixed a word metric $d_G$ on $G$. Equip $D_{\infty}$ with respect to the
word metric with respect to the standard presentation.
Since $\pi^G$ is surjective, we can find constants $C_1$ and $C_2$ such that for
$g_1, g_2 \in G$ we get
\begin{eqnarray}
d_{D_{\infty}}\bigl(\pi^G(g_1),\pi^G(g_2)\bigr) & \le & C_1 \cdot d_G(g_1,g_2) + C_2.
\label{d_d_infty_versus_d_G}
\end{eqnarray}

Fix real numbers $r,\epsilon > 0$.  Put
\begin{eqnarray}
i  & := & \frac{2 C_1R + 2C_2}{\epsilon}.
\label{choice_of_i}
\end{eqnarray}
Now choose $s$ such that we have  $\overline{H} \in \calf$ or
$[\Delta:\pi^G(\overline{H}] \ge i$ for every hyperelementary subgroup
$H \subseteq G_s$. If $\overline{H} \in \calf$, we can choose
$f_H$ to be the projection $G \to \pt$. Hence we can assume in the sequel
$[\Delta:\pi^G(\overline{H}] \ge i$.

By Lemma~\ref{lem:expansive_maps}~\ref{lem:expansive_maps:subgroup_in_the_image}
we can find a $i$-expansive map $\phi \colon D_{\infty} \to D_{\infty}$ with
$\pi^G(\overline{H}) \subseteq \im(\phi)$ and an element
$u \in \IR$ such that the affine map $a \colon \IR \to \IR$
sending $x$ to $i \cdot x  + u$ is
$\phi$-invariant. Let $E_H$ be the simplicial complex
whose underlying space is $\IR$ and whose
set of $0$-simplices is $\{m/2 \mid m \in \IZ\}$.
The standard $D_{\infty}$-action on $\IR$ yields
a cell preserving simplicial action on $E_H$ with finite stabilizers. Define a map
$$f_H \colon G \xrightarrow{\pi^G} D_{\infty} \xrightarrow{\ev} E \xrightarrow{a^{-1}} E,$$
where $\ev$ is given by evaluating the $D_{\infty}$-action on $0 \in \IR$.

One easily checks for $d_1,d_2 \in D_{\infty}$
\[d^{\euc}\bigl(\ev(d_1),\ev(d_2)\bigr) \le d_{D_{\infty}}(d_1,d_2).\]
We get for $x_1,x_2 \in E$
\[d^{l^1}(x_1,x_2) \le 2 \cdot d^{\euc}(x_1,x_2).\]
This implies together with~\eqref{d_d_infty_versus_d_G} and~\eqref{choice_of_i}
for $g_1, g_2 \in G$ with $d_G(g_1,g_2) \le R$
\begin{eqnarray*}
d^{l^1}\bigl(f_H(g_1),f_H(g_2)\bigr)
& = &
d^{l^1}\bigl(a^{-1} \circ \ev \circ \pi^G(g_1),a^{-1} \circ \ev \circ \pi^G(g_2)\bigr)
\\
& \le &
2 \cdot d^{\euc}\bigl(a^{-1} \circ \ev \circ \pi^G(g_1),a^{-1}
\circ \ev \circ \pi^G(g_2)\bigr)
\\
& \le &
\frac{2}{i} \cdot d^{\euc}\bigl(\ev \circ \pi^G(g_1),\ev\circ \pi^G(g_2)\bigr)
\\
& \le &
\frac{2}{i} \cdot d_{D_{\infty}}\bigl(\pi^G(g_1),\pi^G(g_2)\bigr)
\\
& \le &
\frac{2}{i} \cdot \bigl(C_1 \cdot d_G(g_1,g_2) + C_2\bigr)
\\
& \le  & \frac{2 C_1R + 2C_2}{i}
\\ & = & \epsilon.
\end{eqnarray*}
Since $\pi^G(\overline{H}) \subseteq \im(\phi)$, we can define an
$\overline{H}$-action on $E$ by requiring that $\overline{h} \in \overline{H}$
acts on $E_H$ by the standard $D_{\infty}$-action for
the element $d \in D_{\infty}$ which is uniquely determined by
$\phi(d) = \pi^G(h)$. This $\overline{H}$- action is a cell preserving
simplicial action and the map $f_H$ is $\overline{H}$-equivariant.
Hence $f_H$ has all the desired properties.  This finishes the proof
of Lemma~\ref{lem:reduction_to_high_index}.
\end{proof}
Now we continue with the proof of
Theorem~\ref{the:hyperelementary_induction}.  We will show that the
assumptions appearing in Lemma~\ref{lem:reduction_to_high_index} are
satisfied.

If the group $Q$ appearing in~\eqref{C_to_G_to_Q} is a $p$-group,
then $G$ itself is hyperelementary and the claim is true 
because $G \in \calf$. 
Hence we can assume in the sequel that we can fix  two different
primes $p$ and $q$  which divide the order of $Q$.

Let $i$ be a given natural number. Let $\log_p(|Q|)$ the integer $n$
for which $|Q| = p^n \cdot m$
for some natural number $m$ prime to $p$ holds.
Choose a natural number $r$ satisfying
\begin{eqnarray*}
\frac{p^{r-\log_p(|Q|)}}{\bigl|\ker(\pi^G \colon G \to \Delta)\bigr|} & \ge & i;
\\
\frac{q^{r-\log_q(|Q|)}}{\bigl|\ker(\pi^G \colon G \to \Delta)\bigr|} & \ge & i;
\\
r & \ge & \log_p(|Q|);
\\
r & \ge & \log_q(|Q|).
\end{eqnarray*}
Our desired number $s$  will be
$$s = p^rq^r.$$
We have to show for any hyperelementary subgroup $H \subseteq G_s$
\begin{eqnarray}
H \in \calf & \text{or} & \bigl[A_{\Delta} : (\pi^G(\overline{H}) \cap A_{\Delta})\bigr]
\ge i.
\label{need_to_show}
\end{eqnarray}
Consider an $l$-hyperelementary subgroup $H \subseteq G_{s}$.  Since $p$ and $q$
are different we can assume without loss of generality that $p \not= l$.  Denote
by $H_p \subseteq H$ the $p$-Sylow subgroup of $H$. Since $H$ is
$l$-hyperelementary and $l \not = p$, the subgroup $H_p$ is normal in $H$ and a
cyclic $p$-group.  Denote by $Q_p \subseteq Q$ the image of $H_p$ under the
projection $\pr_{s} \colon G_{s} \to Q$.  Suppose that $\pr_{s}(H) \not=
Q$. Then the holonomy number of $\overline{H}$ is smaller than the one of $G$
and $\overline{H}$ 
belongs by the induction hypothesis to $\calf$. Hence we can assume in the
sequel
\begin{eqnarray}
\pr_{s}(H) & = & Q.
\label{pr_s(H)_is_Q}
\end{eqnarray}
This implies that $Q_p \subseteq Q$ is a normal cyclic
$p$-subgroup of $Q$ and is the $p$-Sylow subgroup of $Q$.

Denote by $\overline{Q_p}$ the preimage of $Q_p$ under $\pr \colon G \to Q$.
The conjugation action
$\rho \colon Q \to \aut(C)$ of $Q$ on
$C$ associated to the exact sequence~\eqref{C_to_G_to_Q} yields
by restriction a $Q_p$-action.

We begin with the case, where this $Q_p$-action is non-trivial. Then
we must have $p = 2$ and the target of the epimorphism
$\pi^G \colon G \to \Delta$ is $\Delta = D_{\infty} = \IZ \rtimes_{-\id} \IZ/2$.
We obtain a commutative diagram
$$\xymatrix{1 \ar[r] & C \ar[d]^{j} \ar[r]^{i} & G \ar[d]^{\pi^G} \ar[r]^{\pr}
& Q  \ar[r] \ar[d]^{\pi^Q} & 1
\\
1 \ar[r] & \IZ \ar[r] & \IZ\rtimes_{-\id} \IZ/2 \ar[r] & \IZ/2 \ar[r] & 1}
$$
where $j$ is injective and both $\pi^G$ and $\pi^Q$ are surjective.
Let $H'$ be the image of  $H$ under the composite
$G_s = G/p^rq^rC \to G/q^rC \xrightarrow{\overline{\pi^G}} \IZ/j(q^rC) \rtimes_{-\id} \IZ/2
\to \IZ/q^r\IZ \rtimes_{-\id} \IZ/2$, where $\overline{\pi^G}$ is induced by $\pi^G$.
Then $H'$ agrees with the image of $\overline{H}$ under the composite
$G \xrightarrow{\pi^G} \IZ\rtimes_{-\id} \IZ/2 \to \IZ/q^r\rtimes_{-\id} \IZ/2$.
Since $H$ and hence $H'$ is $l$-hyperelementary for $l \not = 2$ and
$\IZ/q^r\IZ \rtimes_{-\id} \IZ/2$ is $2$-hyperelementary, $H'$ is cyclic.
Since $\pr_s(H_2) = Q_2$ and the surjectivity 
of $\pi^Q \colon Q \to \IZ/2$ implies $\pi^Q(Q_2) = \IZ/2$,
the image of $H'$ under the projection $\IZ/q^r\rtimes_{-\id} \IZ/2 \to \IZ/2$
is $\IZ/2$. 
Since $H'$ is cyclic, $q$ is different from $p = 2$ and hence odd,
we conclude $\IZ/q^r \cap H' = \{0\}$. Hence
$$
\bigl[\IZ: (\IZ \cap \pi^G(\overline{H}))\bigr]
\ge  \bigl[\IZ/q^r: (\IZ/q^r \cap H')\bigr]= q^r.$$
Since by our choice of $r$  we have $q^r\ge i$, assertion~\eqref{need_to_show} holds.

Hence it remains to treat the case where $Q_p$ acts trivially on $C$.
By restriction the exact sequence~\eqref{C_to_G_to_Q} yields the exact sequence
$$1 \to C \xrightarrow{i} \overline{Q_p}  \xrightarrow{\pr|_{\overline{Q_p}}} Q_p \to 1.$$
In the sequel  we identify $C$ with its image $i(C)$
under the injection $i \colon C \to G$.
Since $Q_p$ acts trivially on $C$ and is a finite cyclic $p$-group, the group
$\overline{Q}_p$ is a finitely generated abelian group
of rank one. Let $T \subseteq \overline{Q}_p$
be the torsion subgroup. Fix an infinite cyclic subgroup
$\IZ \subseteq \overline{Q_p}$ such that
$$T \oplus \IZ = \overline{Q}_p.$$
Recall that $p$ divides the order of $Q$ and that $Q_p$ is a $p$-Sylow
subgroup of $Q$.
In particular $Q_p$ is non-trivial.
Let $n \ge 1$ be the natural number for which $|Q_p| = p^n$.  Since
$\pr|_T \colon T \to Q_p$ is injective, $T$ is a cyclic $p$-group of
order $p^m$ for some natural number $m \le n$.  Since $r \ge n$ by our
choice of $r$ holds, $p^rC\subseteq \{0\} \times \IZ$. We get
$$T \oplus \IZ/p^rC = \overline{Q_p}/p^rC.$$
Suppose that $\overline{H} \cap T= \{0\}$.  Let $K$ be the kernel of
the composite
$$\overline{H} \xrightarrow{\pr|_{\overline{H}}} Q \to Q/Q_p.$$
We have $K \subseteq \overline{Q_p}$. Since $\overline{H} \cap T = \{0\}$
implies $K \cap T = \{0\}$, the restriction of the canonical
projection $\overline{Q_p} = T \oplus \IZ \to \IZ$ to $K$ is injective
and hence $K$ is infinite cyclic.  This implies
$h(\overline{H}) \le |Q/Q_p| < |Q| =h(G)$ and hence $H \in \calf$.
Therefore we can assume in the sequel
\begin{eqnarray}
\overline{H} \cap T & \not=&  \{0\}.
\label{overlineH_cap_T_is_non-trivial}
\end{eqnarray}
Let $H'$ be the image of $H$ under the projection $G/sC \to G/p^rC$.
Recall that $H_p$ is a cyclic $p$-group
and a normal  $p$-Sylow group of $H$ and is mapped under the projection
$G_s \to Q$ to $Q_p$. Let
$$H_p' \subseteq \overline{Q_p}/p^rC = T \oplus \IZ/p^rC$$
be the image of $H_p$ under the projection $G/sC \to G/p^rC$.
Then $H_p'$ is normal in $H'$ and  is the $p$-Sylow subgroup of $H'$.
Since $C/p^rC \subseteq  \overline{Q_p}/p^rC$ is a subgroup of order $p^r$, we conclude
$$H' \cap C/p^rC = H_p' \cap C/p^rC.$$
The intersection $\overline{H} \cap T$ is $p$-torsion, because $T$ is
$p$-torsion.
Thus $\overline{H} \cap T \subseteq \overline{H_p} := \alpha_s^{-1}(H_p)$.
Therefore we can conclude $H_p' \cap T\not= \{0\}$ 
from~\eqref{overlineH_cap_T_is_non-trivial}.
Since $H_p'$ is cyclic and $H_p' \cap T\not= \{0\}$, we must have
$H_p' \cap \IZ/p^rC = \{0\}$. Since $|T| \cdot H_p'$ is contained
in $\IZ/p^rC$, we conclude $|T| \cdot H_p' = \{0\}$ and hence
the order of $|H_p'|$ divides the order of $|T|$. This implies
\begin{equation}
 |H_p'| \le |T| \le p^n. 
\label{|H_p'|_le_pn}
\end{equation}

We conclude
\begin{eqnarray*}
[G: \overline{H}]
& = &
[G_s \colon H]
\\
& \ge &
[G/p^rC : H']
\\
& \ge &
[C/p^rC:(C/p^rC \cap H')]
\\
& = &
[C/p^rC:(C/p^rC \cap H_p')]
\\
& = &
\frac{|C/p^rC|}{|C/p^rC \cap H_p'|}
\\
& \ge  &
\frac{|C/p^rC|}{|H_p'|}
\\
& \ge &
\frac{p^r}{p^n}
\\
& = &
p^{r-n}.
\end{eqnarray*}
This and our choice of $r$ implies
\[
\bigl[\Delta : \pi^G(\overline{H})\bigr]
\ge
\frac{\bigl[G : \overline{H}\bigr]}{|\ker(\pi^G)|}
\ge
\frac{p^{r-n}}{|\ker(\pi^G)|}
\ge i.
\]
Hence assertion~\eqref{need_to_show} is true.
This finishes the proof of Theorem~\ref{the:hyperelementary_induction}.
\end{proof}

We will use the following result 
from~\cite[Corollary~1.2, Remark~1.6]{Davis-Quinn-Reich(2011)}.
See also~\cite{Davis-Khan-Ranicki(2011)}.

\begin{theorem}\label{the:reduction_to_type_I_in_k-theory}
  Let $G$ be a group.  Let $\VCyc_I$ be the family of subgroups which are either
  finite or admit an epimorphism onto $\IZ$ with a finite kernel.  Obviously
  $\VCyc_I \subseteq \VCyc$.  Then for an any additive $G$-category $\cala$ the
  relative assembly map
  \[\asmb^{G,\VCyc_I,\VCyc}_n \colon
  H_n^G\bigl(\EGF{G}{\VCyc_I};\bfK_{\cala}\bigr) \to
  H_n^G\bigl(\EGF{G}{\VCyc};\bfK_{\cala}\bigr)\] is bijective for all $n \in
  \IZ$.
\end{theorem}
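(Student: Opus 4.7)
By the Transitivity Principle (Theorem~\ref{the:transitivity}), it suffices to verify for every infinite virtually cyclic subgroup $V \subseteq G$ that the assembly map
\[
H_n^V\bigl(\EGF{V}{\VCyc_I|_V};\bfK_{\cala}\bigr) \to H_n^V\bigl(\pt;\bfK_{\cala}\bigr)
\]
is bijective for all $n \in \IZ$, where $\VCyc_I|_V := \{H \subseteq V \mid H \in \VCyc_I\}$. If $V$ is itself of type I, then $V \in \VCyc_I|_V$ and one may take $\EGF{V}{\VCyc_I|_V}$ to be a point, so the claim is trivial. Thus the only substantive case is that of an infinite virtually cyclic group $V$ of type II.

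For such $V$, there is an epimorphism $\epsilon \colon V \to D_{\infty}$ with finite kernel $K$. Pulling back the standard splitting $D_{\infty} = \IZ/2 \ast \IZ/2$ produces a decomposition $V = V_1 \ast_K V_2$ with $V_1, V_2$ finite and $[V_i:K] = 2$. The associated Bass-Serre tree $T$ is a line on which $V$ acts with finite stabilizers (conjugates of $V_1, V_2$ at the vertices and of $K$ on the edges), hence provides a model for $\EGF{V}{\Fin}$. Moreover, the preimage $V_+ := \epsilon^{-1}(\IZ)$ is the unique maximal infinite type I subgroup of $V$, and every infinite virtually cyclic subgroup of $V_+$ is again of type I; consequently the family $\VCyc_I|_V$ is generated by $\Fin$ together with the single subgroup $V_+$. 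One can then assemble a model for $\EGF{V}{\VCyc_I|_V}$ from $T$ and $V \times_{V_+} \EGF{V_+}{\Fin}$, glued along the finite-stabilizer loci where the two classifying spaces agree.

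The strategy is then a Mayer-Vietoris comparison on both sides of the assembly map. Applying the $V$-equivariant homology theory $H_*^V(-;\bfK_{\cala})$ to the pushout constructed above yields a long exact sequence expressing $H_*^V(\EGF{V}{\VCyc_I|_V};\bfK_{\cala})$ in terms of the $K$-theory spectra attached to the finite groups $K, V_1, V_2$ and to $V_+ \in \VCyc_I$. On the target side, the Waldhausen long exact sequence for the $K$-theory of the amalgamated product $V = V_1 \ast_K V_2$, extended from the classical case of group rings to the present setting of additive $V$-categories as coefficients, supplies a matching long exact sequence that computes $K_*(\intgf{V}{\cala})$ in terms of the same four pieces, provided that the corresponding Waldhausen Nil-terms vanish. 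The latter vanishing holds because each inclusion $K \hookrightarrow V_i$ is an inclusion of finite groups. A compatibility check on the maps between the two sequences followed by the Five Lemma then yields the desired bijectivity.

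The main obstacle is the spectrum-level Mayer-Vietoris comparison itself: one must extend Waldhausen's $K$-theoretic Mayer-Vietoris sequence for amalgamated products, together with the vanishing of Nil-terms for inclusions of finite groups, from the untwisted group-ring setting to the generality of additive $V$-categories as coefficients, and must identify the maps in this sequence with those arising from the geometric pushout decomposition of $\EGF{V}{\VCyc_I|_V}$. This extension, in precisely the form required here, is the content of~\cite{Davis-Quinn-Reich(2010)} (with~\cite{Davis-Khan-Ranicki(2008)} treating the parallel $L$-theoretic setting), and the present theorem reduces to citing that work once the reduction above has been carried out.
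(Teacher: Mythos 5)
Your reduction to infinite virtually cyclic groups of type II via the Transitivity Principle and the geometric setup (Bass--Serre tree, the maximal type I subgroup $V_+$ of index two) are a reasonable start, and you correctly identify that the result ultimately rests on a comparison with a Waldhausen-type Mayer--Vietoris sequence extended to additive $G$-categories, for which~\cite{Davis-Quinn-Reich(2010)} is the right reference. However, there is a genuine error at the heart of your argument: the claim that ``the corresponding Waldhausen Nil-terms vanish'' because ``each inclusion $K \hookrightarrow V_i$ is an inclusion of finite groups'' is false. Waldhausen's vanishing criterion requires (roughly) $RK$ to be regular coherent, which fails even for $R = \IZ$ and $K$ trivial with $V_i = \IZ/2$; indeed, the Waldhausen Nil-groups for $D_\infty = \IZ/2 \ast \IZ/2$ over $\IZ$ are known to be nonzero in infinitely many degrees. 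If the Nil-terms did vanish, the assembly map relative to $\Fin$ (not merely $\VCyc_I$) would already be an isomorphism, which is not the content of the theorem and is false in general.

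The actual mechanism behind the theorem is not a vanishing statement but a \emph{matching} statement. For a type II virtually cyclic group $V = V_1 \ast_K V_2$ with index-two type I subgroup $V_+$, the discrepancy between $H_*^V\bigl(\EGF{V}{\Fin};\bfK_{\cala}\bigr)$ and $K_*\bigl(\intgf{V}{\cala}\bigr)$ is measured by a Waldhausen Nil-term attached to the amalgam, while the discrepancy between $H_*^V\bigl(\EGF{V}{\Fin};\bfK_{\cala}\bigr)$ and $H_*^V\bigl(\EGF{V}{\VCyc_I|_V};\bfK_{\cala}\bigr)$ is measured by an (induced) Farrell twisted Nil-term attached to $V_+$. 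The content of~\cite[Corollary~1.2]{Davis-Quinn-Reich(2010)} (and of~\cite{Davis-Khan-Ranicki(2008)}) is precisely that these two Nil-terms are canonically isomorphic, whence the relative assembly map from $\VCyc_I$ to $\VCyc$ is an isomorphism even though neither Nil-term vanishes. Your argument needs to be rewritten to invoke this Nil--Nil comparison rather than a spurious vanishing, and the Mayer--Vietoris comparison must keep track of both families of Nil-contributions rather than discarding them. Note also that the paper under review does not itself reprove this theorem; it simply states it as a quotation of~\cite[Corollary~1.2, Remark~1.6]{Davis-Quinn-Reich(2010)}, so a complete proof would in any case have to reproduce the substantial work of that paper.
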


The Transitivity Principle~\ref{the:transitivity},
 Theorem~\ref{the:hyperelementary_induction}
and Theorem~\ref{the:reduction_to_type_I_in_k-theory} imply

\begin{corollary}\label{the_reduction_in_K-theory-to_calh_I}
Let $G$ be a group.
Let $\calh_I$ be the family of subgroups which are either
finite or which are hyperelementary and
admit an epimorphism onto $\IZ$ with a finite kernel.
Obviously $\calh_I \subseteq \VCyc$.
Then for an any additive $G$-category $\cala$ the relative assembly map
\[\asmb^{G,\calh_I,\VCyc}_n \colon H_n^G\bigl(\EGF{G}{\calh_I};\bfK_{\cala}\bigr)
 \to H_n^G\bigl(\EGF{G}{\VCyc};\bfK_{\cala}\bigr)\]
is bijective for all $n \in \IZ$.
\end{corollary}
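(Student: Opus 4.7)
The plan is to factor $\asmb^{G,\calh_I,\VCyc}_n$ through the intermediate family $\VCyc_I$, exploiting the inclusions $\calh_I \subseteq \VCyc_I \subseteq \VCyc$. The relevant factorization is
\[
\asmb^{G,\calh_I,\VCyc}_n \;=\; \asmb^{G,\VCyc_I,\VCyc}_n \circ \asmb^{G,\calh_I,\VCyc_I}_n,
\]
and the second factor is already known to be bijective by Theorem~\ref{the:reduction_to_type_I_in_k-theory}. Hence it suffices to show that the first factor is bijective.

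To prove this, I would apply the Transitivity Principle~\ref{the:transitivity} with $\calh_I \subseteq \VCyc_I$, which reduces the problem to showing that every $V \in \VCyc_I$ satisfies the $K$-theoretic FJC with respect to the family $\calh_I|_V = \{K \subseteq V \mid K \in \calh_I\}$. The key structural observation is that every subgroup of a group $V \in \VCyc_I$ is again in $\VCyc_I$, so that $\calh|_V = \calh_I|_V$. For finite $V$ this is trivial. For infinite $V \in \VCyc_I$, fix an epimorphism $\pi\colon V \to \IZ$ with finite kernel $F$; for any subgroup $H \subseteq V$ the second isomorphism theorem yields an embedding $H/(H \cap F) \hookrightarrow V/F \cong \IZ$, so $H$ is either finite or fits in an extension $1 \to (H \cap F) \to H \to \IZ \to 1$, and in either case $H \in \VCyc_I$. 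Since hyperelementary subgroups of $V$ are in particular subgroups of $V$, the equality $\calh|_V = \calh_I|_V$ follows.

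With this equality in place, Theorem~\ref{the:hyperelementary_induction} applied to $V$ asserts that the relative assembly map from $\EGF{V}{\calh|_V}$ to $\EGF{V}{\VCyc|_V}$ is bijective; since $V \in \VCyc|_V$ the target is $V$-homotopy equivalent to a point, and consequently $V$ satisfies the $K$-theoretic FJC with respect to $\calh|_V$, hence with respect to $\calh_I|_V$. This closes the required reduction. No substantial obstacle is anticipated: the argument is a formal combination of the three cited results with the elementary subgroup observation above, and once this observation is noted the hard work has already been done in proving Theorems~\ref{the:hyperelementary_induction} and~\ref{the:reduction_to_type_I_in_k-theory}.
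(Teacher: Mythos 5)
Your overall strategy is the one the paper intends: factor $\asmb^{G,\calh_I,\VCyc}_n$ through $\VCyc_I$, use Theorem~\ref{the:reduction_to_type_I_in_k-theory} for the outer map, and reduce the inner map to Theorem~\ref{the:hyperelementary_induction} via the Transitivity Principle. However, your key claim that $\calh|_V = \calh_I|_V$ for $V \in \VCyc_I$ is false, and the argument does not close without it. By definition $\calh_I$ contains \emph{every} finite subgroup, while $\calh$ contains only hyperelementary ones; so for a finite non-hyperelementary $V$ (e.g.\ $V = S_4$) one has $V \in \calh_I|_V$ but $V \notin \calh|_V$, and the same discrepancy persists for infinite $V \in \VCyc_I$ whose finite normal subgroup contains non-hyperelementary subgroups. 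Your observation that subgroups of $V \in \VCyc_I$ remain in $\VCyc_I$ only yields $\calh|_V \subseteq \calh_I|_V$, not equality, and the final step ``FJC with respect to $\calh|_V$, hence with respect to $\calh_I|_V$'' is not a formal consequence of an inclusion of families: enlarging the family preserves surjectivity of the assembly map but not automatically injectivity.

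The gap is repairable by one more application of the Transitivity Principle, now inside $V$, to the inclusion $\calh|_V \subseteq \calh_I|_V \subseteq \VCyc|_V$. One must check that every $K \in \calh_I|_V$ satisfies the FJC relative to $\calh|_K$. For $K$ finite this is precisely the finite case of Theorem~\ref{the:hyperelementary_induction} (the Dress induction argument via the Swan group from~\cite{Bartels-Lueck(2007ind)}); for $K$ infinite hyperelementary of Type I it is trivial since $K \in \calh|_K$. This shows $\asmb^{V,\calh|_V,\calh_I|_V}_n$ is bijective, and combined with the bijectivity of $\asmb^{V,\calh|_V,\VCyc|_V}_n$ from Theorem~\ref{the:hyperelementary_induction} it gives that $\asmb^{V,\calh_I|_V,\VCyc|_V}_n$ is bijective, which is exactly what your transitivity reduction requires.
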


Every infinite $p$-hyperelementary group for odd $p$ admits an epimorphism
to $\IZ$ with finite kernel. A $2$-hyperelementary group $G$ admits an epimorphism
to $\IZ$ with finite kernel if and only if there exists a central extension
$1 \to \IZ \to G \to P \to 1$ for a finite $2$-group $P$.

\begin{theorem}\label{the:reduction_to_type_I_in_L-theory}
  Let $G$ be a group.  Let $\VCyc_I$ be the family of subgroups which
  are either finite or admit an epimorphism onto $\IZ$ with a finite
  kernel.  Let $\Fin$ be the family of finite groups.  Obviously
  $\Fin\subseteq \VCyc_I$.  Then for an any additive $G$-category
  $\cala$ the relative assembly map
  \[\asmb^{G,\Fin,\VCyc}_n \colon
  H_n^G\bigl(\EGF{G}{\Fin};\bfL_{\cala}^{\langle -
    \infty\rangle}\bigr) \to
  H_n^G\bigl(\EGF{G}{\VCyc_I};\bfL_{\cala}^{\langle -
    \infty\rangle}\bigr)\] is bijective for all $n \in \IZ$.
\end{theorem}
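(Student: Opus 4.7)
The strategy is to apply the Transitivity Principle~\ref{the:transitivity}, which reduces the claim to showing that every $V \in \VCyc_I$ satisfies the $L$-theoretic Farrell-Jones Conjecture with additive categories as coefficients for the family $\Fin|_V$ of its finite subgroups. For a finite $V$ this is tautological since $\EGF{V}{\Fin} = \pt$. The essential case is that of an infinite $V$ of type I, i.e., a group fitting into an exact sequence $1 \to F \to V \xrightarrow{p} \IZ \to 1$ with $F$ finite. For such $V$ one may take $\EGF{V}{\Fin} = \IR$ with $V$ acting by translations through $p$; the point-stabilizers are then conjugates of $F$ and hence finite, and the action is cocompact.

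Equipping $\IR$ with the natural $V$-$CW$-structure with a single orbit of $0$-cells and a single orbit of $1$-cells (both of type $V/F$) produces a Mayer--Vietoris/Wang-type long exact sequence
\begin{multline*}
\cdots \to H^V_n(V/F;\bfL_{\cala}^{\langle -\infty\rangle}) \xrightarrow{1 - t_*} H^V_n(V/F;\bfL_{\cala}^{\langle -\infty\rangle})
\\
\to H^V_n(\IR;\bfL_{\cala}^{\langle -\infty\rangle}) \to H^V_{n-1}(V/F;\bfL_{\cala}^{\langle -\infty\rangle}) \to \cdots,
\end{multline*}
where $t \in V$ is any preimage of a generator of $\IZ$; the outer groups are naturally identified with $L_n^{\langle -\infty\rangle}\bigl(\intgf{F}{\cala}\bigr)$ (with $\cala$ viewed as an additive $F$-category by restriction). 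On the target side, the $L$-theoretic Bass--Heller--Swan/Wang decomposition, originally due to Ranicki and extended to coefficients in additive categories with involution, fits $L^{\langle -\infty\rangle}_n\bigl(\intgf{V}{\cala}\bigr)$ into a long exact sequence of the same shape, with $1-t_*$ replaced by $1-\phi_*$ for the conjugation automorphism $\phi$ of $\intgf{F}{\cala}$ induced by $t$. The decisive point, in contrast with $K$-theory, is that no Nil or UNil contributions occur: the UNil groups that obstruct a clean Bass--Heller--Swan formula in $L$-theory appear only for \emph{type II} virtually cyclic groups, i.e., amalgamated products $G_1 \ast_{G_0} G_2$ with $[G_i:G_0] = 2$, which by definition are excluded from $\VCyc_I$.

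By naturality the assembly map induces a morphism between these two long exact sequences, and on the outer terms it is the assembly map for the finite group $F$, which is an isomorphism. The five-lemma then yields the desired isomorphism for $V$, completing the reduction via the Transitivity Principle. The main obstacle in this plan is establishing the $L$-theoretic Bass--Heller--Swan decomposition with decoration $\langle -\infty \rangle$ and with coefficients $\intgf{V}{\cala}$ in the generality of an additive $V$-category $\cala$ with involution; concretely, one must verify that the Waldhausen-type sequence for the HNN description $V = F \rtimes_\phi \IZ$ has vanishing UNil terms in this categorical setting, just as in the classical setting of twisted group rings. Once this Bass--Heller--Swan decomposition is in place, the remainder of the argument is a formal Mayer--Vietoris and five-lemma computation.
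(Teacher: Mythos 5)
Your proposal takes essentially the same route as the paper's proof, which is stated as a two-sentence sketch: the paper simply cites~\cite[Lemma~4.2]{Lueck(2005heis)} and observes that the argument there is based on the Wang sequence for a semi-direct product $F \rtimes \IZ$, which generalizes to additive categories as coefficients. Your reconstruction --- Transitivity Principle to reduce to a single infinite type~I virtually cyclic $V = F\rtimes_\phi\IZ$, the $V$-CW model $\IR = \EGF{V}{\Fin}$ yielding the Mayer--Vietoris/Wang sequence on the source, the Ranicki--Shaneson/Wang sequence for $L^{\langle-\infty\rangle}$ of a twisted Laurent extension on the target, and the five-lemma --- is exactly the content of that cited lemma, and you correctly flag that the nontrivial ingredient is extending the $L^{\langle-\infty\rangle}$ Wang decomposition to coefficients in an additive $G$-category with involution, which is precisely what the paper asserts (without proof) to be routine. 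One small terminological point: the clean Wang sequence for the twisted Laurent extension uses both that $V$ is of type~I (no $\mathrm{UNil}$, which is an amalgam phenomenon and would appear for $D_\infty$-like groups) and that the decoration is $\langle-\infty\rangle$ (which kills the lower-$K$-theoretic tail terms that obstruct the splitting in finite decorations); your write-up emphasizes only the first.
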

\begin{proof}[Sketch of proof]
  The argument given in~\cite[Lemma~4.2]{Lueck(2005heis)} goes through
  since it is based on the Wang sequence for a semi-direct product $F \rtimes
  \IZ$ which can be generalized for additive categories as coefficients.
\end{proof}

\addcontentsline{toc<<}{section}{References} 

\def\cprime{$'$} \def\polhk#1{\setbox0=\hbox{#1}{\ooalign{\hidewidth
  \lower1.5ex\hbox{`}\hidewidth\crcr\unhbox0}}}

\end{document}